\NeedsTeXFormat{LaTeX2e}
\documentclass{amsart}
\chardef\bslash=`\\

\usepackage{amssymb,amsmath,amsfonts,amsthm,amscd,stmaryrd,mathrsfs,color}
\usepackage{hyperref}
\usepackage[all,cmtip,poly]{xy}

\usepackage{silence}
  \WarningFilter*{latex}{Marginpar on page \thepage\space moved}
  
\newtheorem{theorem}{Theorem}[section]

\newtheorem{lemma}[theorem]{Lemma}

\newtheorem{introthm}{Theorem}

\newtheorem{introconj}[introthm]{Conjecture}

\newtheorem{cor}[theorem]{Corollary}

\newtheorem{prop}[theorem]{Proposition}
\newtheorem{proposition}[theorem]{Proposition}

\newtheorem{defn}[theorem]{Definition}
\theoremstyle{remark}

\numberwithin{equation}{section}

\newif\iffinalrun
\iffinalrun
\else
 \fi

\iffinalrun
  \newcommand{\need}[1]{}
  \newcommand{\mar}[1]{}
\else
  \newcommand{\need}[1]{{\tiny *** #1}}
  \newcommand{\mar}[1]{\marginpar{\raggedright\tiny FIXME  #1}}\fi

\renewcommand\mathbb{\mathbf}

\hfuzz=45.002pt

\newcommand{\Proj}{{\operatorname{Proj}}}

\newcommand{\Lie}{{\operatorname{Lie}\,}}

\newcommand{\rec}{{\operatorname{rec}}}

\newcommand{\rbar}{\overline{r}}

\newcommand{\wv}{{\widetilde{v}}}

\renewcommand{\ell}{l}

\def\PSL{\mathrm{PSL}}
\def\PGL{\mathrm{PGL}}

\def\Iw{\mathrm{Iw}}

\newcommand{\Sets}{\operatorname{Sets}}

\newcommand{\ad}{\operatorname{ad}}
\newcommand{\diag}{\operatorname{diag}}
\newcommand{\tr}{\operatorname{tr}}

\newcommand{\A}{\mathbf{A}}
\newcommand{\bA}{\ensuremath{\mathbf{A}}}

\newcommand{\bC}{\ensuremath{\mathbf{C}}}

\newcommand{\F}{\FF}
\newcommand{\FF}{{\mathbb F}}
\newcommand{\bF}{\ensuremath{\mathbf{F}}}
\newcommand{\bG}{\ensuremath{\mathbf{G}}}

\newcommand{\bQ}{\ensuremath{\mathbf{Q}}}
\newcommand{\Q}{\QQ}
\newcommand{\QQ}{{\mathbb Q}}
\newcommand{\bN}{{\mathbf N}}

\newcommand{\bR}{\ensuremath{\mathbf{R}}}
\newcommand{\R}{\RR}
\newcommand{\RR}{{\mathbb R}}
\newcommand{\bT}{\ensuremath{\mathbf{T}}}

\newcommand{\Z}{\ZZ}
\newcommand{\ZZ}{{\mathbb Z}}
\newcommand{\bZ}{\ensuremath{\mathbf{Z}}}

\newcommand{\bbZ}{\ensuremath{\mathbf{Z}}}
\newcommand{\bbQ}{\ensuremath{\mathbf{Q}}}

\newcommand{\cC}{{\mathcal C}}

\newcommand{\cG}{{\mathcal G}}
\newcommand{\cH}{{\mathcal H}}

\newcommand{\cO}{{\mathcal O}}

\newcommand{\cS}{{\mathcal S}}

\newcommand{\cP}{{\mathcal{P}}}

\newcommand{\m}{\frakm}
\newcommand{\ffrm}{{\mathfrak m}}

\newcommand{\frakm}{\mathfrak{m}}

\newcommand{\frakp}{\mathfrak{p}}
\newcommand{\p}{\frakp}
\newcommand{\frakq}{\mathfrak{q}}
\newcommand{\q}{\frakq}

\newcommand{\Fbar}{\overline{\F}}
\newcommand{\Qbar}{\overline{\Q}}
\newcommand{\Zbar}{\overline{\Z}}

\newcommand{\Fpbar}{\Fbar_p}

\newcommand{\Fpbarx}{\Fpbar^{\times}}

\newcommand{\Zpbar}{\Zbar_p}

\newcommand{\Qpbar}{\Qbar_p}

\DeclareMathOperator{\Aut}{Aut}
\DeclareMathOperator{\Ad}{Ad}

\DeclareMathOperator{\Gal}{Gal}
\newcommand{\GL}{\mathrm{GL}}

\DeclareMathOperator{\Hom}{Hom}

\DeclareMathOperator{\Ind}{Ind}

\DeclareMathOperator{\SL}{SL}
\DeclareMathOperator{\Sp}{Sp}
\DeclareMathOperator{\Spec}{Spec}
\DeclareMathOperator{\Supp}{Supp}

\DeclareMathOperator{\Sym}{Sym}

\newcommand{\Frob}{\mathrm{Frob}}

\newcommand{\rhobar}{\overline{\rho}}

\newcommand{\Art}{{\operatorname{Art}}}
\newcommand{\Res}{\operatorname{Res}}

\newcommand{\univ}{\mathrm{univ}}

\newcommand{\doubleslash}{/\kern-0.2em{/}}

\newenvironment{psmallmatrix}
  {\left(\begin{smallmatrix}}
  {\end{smallmatrix}\right)}
\begin{document}
\author{James Newton and Jack A. Thorne}
\title[Symmetric power functoriality]{Symmetric power functoriality for Hilbert modular forms}
\begin{abstract} Let $F$ be a totally real field. We prove the existence of all symmetric power liftings of those cuspidal automorphic representations of $\GL_2(\bA_F)$ associated to Hilbert modular forms of regular weight. 
\end{abstract}
\maketitle
\setcounter{tocdepth}{1}
\tableofcontents

\section{Introduction} 

In this paper, we prove the following theorem.
\begin{introthm}\label{introthm_symmetric_power_functoriality}
Let $F$ be a totally real field, and let $\pi$ be a cuspidal automorphic representation of $\GL_2(\bA_F)$, without CM, such that $\pi_\infty$ is essentially square-integrable. Then for each $n \geq 2$, the $(n-1)^\text{th}$ symmetric power $\Sym^{n-1} \pi$ exists, in the sense that there is a cuspidal automorphic representation $\Pi_n$ of $\GL_n(\bA_F)$ such that for each place $v$ of $F$, we have an isomorphism of Weil--Deligne representations
\[ \rec_{F_v}(\Pi_n) \cong \Sym^{n-1} \circ \, \rec_{F_v}(\pi_v).\]
\end{introthm}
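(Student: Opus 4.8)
The plan is to reduce the theorem, for each integer $m\ge 1$, to the automorphy of the $(m+1)$-dimensional $p$-adic Galois representation $\Sym^m\rho_{\pi,\iota}$ attached to $\pi$, for one well-chosen rational prime $p$ and isomorphism $\iota\colon\Qpbar\iso\CC$, and to prove this by strong induction on $m$, carried out for all totally real fields $F$ at once. Since $\pi$ has no CM and $\pi_\infty$ is essentially square-integrable, the $\rho_{\pi,\iota}$ exist for all $p$, are de Rham at $p$ with regular Hodge--Tate weights, have Zariski-dense image, and satisfy local--global compatibility; and by solvable base change (Langlands, Arthur--Clozel) we may enlarge $F$ by solvable totally real extensions at will --- in particular we may assume $\pi$ is $\iota$-ordinary at all places above $p$, for $p$ in a density-one set, and (after base change to an imaginary CM field) work in the conjugate self-dual setting where the automorphy lifting machinery for unitary groups applies. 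The induction uses the Clebsch--Gordan identity $\Sym^{m-1}\rho\otimes\rho\cong\Sym^m\rho\oplus(\Sym^{m-2}\rho\otimes\det\rho)$ --- and, where it helps, the known exterior- and symmetric-square transfers for $\GL_n$ --- to build $\Sym^m$ from lower symmetric powers, replacing Langlands functoriality for $\GL_n\times\GL_2$ (unavailable once $n$ is large) by automorphy lifting.

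The first and principal task is to establish the automorphy lifting theorems the induction will consume: over totally real and CM fields, an ordinary automorphy lifting theorem for $(m+1)$-dimensional and $2m$-dimensional $p$-adic representations robust enough to handle residual representations of the shapes $\Sym^{m}\overline{\rho}$ and $\Sym^{m-1}\overline{\rho}\otimes\overline{\rho}$ --- crucially including cases where the residual representation is reducible or has small image, so that the customary ``adequacy''/big-image hypothesis is not needed in the situations that actually arise. Here the Taylor--Wiles--Kisin patching method, Ihara avoidance, Khare--Wintenberger-style lifting to characteristic zero, and the residually reducible techniques of Skinner--Wiles, Thorne and Allen have to be assembled into a single usable statement, with carefully matched ordinary local deformation conditions at the places above $p$ (there is no Fontaine--Laffaille shortcut over a ramified base) and controlled behaviour at auxiliary Steinberg primes; securing the local--global compatibility of the output at all finite places, as the theorem requires, is part of this.

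Granting these, the induction runs as follows. Assume the existence of all symmetric power liftings of degree $<m$, for all regular-weight non-CM Hilbert modular forms over all totally real fields; the small cases $\Sym^2$ (Gelbart--Jacquet), $\Sym^3$ (Kim--Shahidi), $\Sym^4$ (Kim) are classical. Given $\pi$, choose $p$ as above and work in the Hida family $\cX$ through $\pi$; by induction $\Sym^{m-1}$, $\Sym^{m-2}$ and $\Sym^{m-3}$ are automorphic and cuspidal at every classical point of $\cX$. Pick a classical point $\pi_0\in\cX$ at which the mod $p$ representation $\Sym^m\overline{\rho}_{\pi_0}$ is automorphic --- the input being potential automorphy (via the cohomology of a Dwork-type family), level-raising congruences, and the induction hypothesis, arranged so that the relevant residual representations are built from $\Sym^{<m}$ of congruent forms or can be descended. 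Apply the tensor-product automorphy lifting theorem to $\Sym^{m-1}\rho_{\pi_0}\otimes\rho_{\pi_0}$ to deduce it is automorphic, then subtract the cuspidal summand $\Sym^{m-2}\pi_0$ (suitably twisted) from the resulting isobaric automorphic representation of $\GL_{2m}$ to obtain $\Sym^m\pi_0$; it is cuspidal because $\rho_{\pi_0}$ has Zariski-dense image, so $\Sym^m\rho_{\pi_0}$ is irreducible, and its dimension distinguishes it from the other summand. Finally $\Sym^m\rho_\pi$ and $\Sym^m\rho_{\pi_0}$ have the same residual representation (or are joined through $\cX$), so a final application of the ordinary automorphy lifting theorem transports automorphy from $\pi_0$ to $\pi$.

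The main obstacle is exactly this automorphy lifting step when the residual representation is not adequate --- which seems unavoidable, since the only route I see to a first automorphic point $\pi_0$ forces one into small-image or residually reducible situations, where the patching argument must still go through --- entangled with the descent problem: potential automorphy gives automorphy only over a generally non-solvable extension $F'/F$, so the descent to $F$ cannot be done at the end by base change and must be engineered into the argument (working over $F$ throughout, or playing off the automorphy of $\Sym^{m-1}\rho\otimes\rho$ over $F'$ against the already-known automorphy of $\Sym^{<m}\rho$ over $F$). Lesser difficulties: matching ordinary/de Rham local conditions at $p$ over a ramified base, and the Steinberg-prime bookkeeping needed to force cuspidality and to pin down the isobaric decomposition. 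I expect the automorphy lifting theorem of the requisite generality to be the technical heart, with the organization of the induction and the production of $\pi_0$ a very close second.
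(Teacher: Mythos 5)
The central problem with your plan is that the characteristic-zero tensor product $\Sym^{m-1}\rho_{\pi_0}\otimes\rho_{\pi_0}$ is \emph{reducible}: the Clebsch--Gordan identity you cite already says so, over $\overline{\bQ}_p$, not just modulo $p$. Consequently it cannot be the Galois representation attached to any cuspidal automorphic representation, and the automorphy lifting machinery you want to assemble --- Taylor--Wiles--Kisin patching, Ihara avoidance, the residually reducible methods of Skinner--Wiles/Thorne/Allen --- has nothing to say about it: all of those arguments produce (or propagate automorphy to) a \emph{cuspidal} RACSDC $\Pi$ on a unitary group, whose $r_{\Pi,\iota}$ is irreducible. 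There is no version of patching that proves a reducible $\rho$ is automorphic as an isobaric sum, so the ``subtract the cuspidal summand $\Sym^{m-2}\pi_0$'' step has nothing to subtract from. The paper sidesteps this by tensoring with a \emph{congruent but distinct} representation: one takes $\sigma$ with $\overline{r}_{\sigma,\iota}\cong\overline{r}_{\pi,\iota}$ but with different Hodge--Tate weights (weight $0$ vs.\ weight $\{0,2\}$), so that $r_{\pi,\iota}\otimes\Sym^{r-1}r_{\sigma,\iota}$ is irreducible in characteristic zero (Lemma \ref{lem_irreducibility_of_tensor_products}) while its residual representation is reducible. That residually-reducible-but-generically-irreducible situation is exactly what the functoriality lifting theorem (Theorem \ref{thm_FLT_for_TP}) and the arguments of \S\ref{sec:aut untwisted} are built to handle.

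Having missed that fix, you also never confront the second essential difficulty, which is where the bulk of the paper's new work lives. Your induction goes from $m-1$ to $m$ directly, so you never see the characteristic-$p$ Steinberg decomposition
\[ \Sym^{p+r-1}\overline{r}_{\pi,\iota}^{\,ss} \cong \bigl((\det \overline{r}_{\pi,\iota})^r\otimes\Sym^{p-r-1}\overline{r}_{\pi,\iota}\bigr)\oplus\bigl({}^{\varphi_p}\overline{r}_{\pi,\iota}\otimes\Sym^{r-1}\overline{r}_{\pi,\iota}\bigr). \]
This is what makes the Clozel--Thorne strategy (plus Bertrand's postulate) work when the degree exceeds $p$, but it introduces a Frobenius twist ${}^{\varphi_p}$ on the first factor of the tensor product. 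Since $\sigma$ now has different Hodge--Tate weights from $\pi$, one cannot simply absorb the twist into a coefficient-field automorphism; this forces the new inductive argument over generators of inertia groups in $\Gal(\widetilde{K}_\pi/\bQ)$, propagating automorphy from $r_{\pi,\iota}\otimes\Sym^{r-1}r_{\sigma,\iota}$ to $r_{{}^\gamma\pi,\iota}\otimes\Sym^{r-1}r_{\sigma,\iota}$ one prime at a time using Theorem \ref{thm_FLT_for_TP}. That inductive-along-inertia argument is arguably the paper's key novelty, and your outline has no analogue of it. A secondary point: your reliance on potential automorphy as the source of residual automorphy reintroduces the non-solvable descent problem, which the paper avoids entirely by instead invoking the level-raising theorem of \cite{Tho22a} together with the automorphy of the factors supplied by the induction hypothesis; you correctly flag the descent difficulty as serious but offer no concrete mechanism to resolve it.
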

These automorphic representations are precisely the ones associated to cuspidal, non-CM Hilbert modular forms of weight $k_v \geq 2$ at infinite places $v$ of $F$ (see \cite[Proposition 5.2.3]{Oht83} for a precise formulation of the dictionary).\footnote{The symmetric power liftings in the CM case can be seen to exist using the theory of Eisenstein series (and are never cuspidal when $n > 2$). We can say nothing about the forms of irregular weight (i.e. when $k_v = 1$ for some $v$) except when $k_v = 1$ for all $v$, in which case one can argue as in \cite[Appendix A]{New21b}.} The condition that $\pi_\infty$ is essentially square-integrable may be rephrased as saying that each of the archimedean local factors $\pi_v$ ($v | \infty)$ is a character twist of a square-integrable (i.e.~discrete series) representation of $\GL_2(\bR)$.

\subsection*{The strategy of \cite{Clo14}} Our approach to the proof of Theorem \ref{introthm_symmetric_power_functoriality} is based on the framework developed in \cite{Clo14} by Clozel and the second author, using Galois representations.\footnote{This means that for most of the paper, we will only consider those $\pi$ in Theorem \ref{introthm_symmetric_power_functoriality} that  are regular algebraic (which, as observed in \cite{Clo90}, is the right condition for them to admit associated Galois representations; up to a character twist, this means imposing the condition that the parity of $k_v$ is independent of $v$). The apparently more general statement of Theorem \ref{introthm_symmetric_power_functoriality} will be deduced from the regular algebraic case, as in \cite{ctiii, Tho22a}, at the very end.}
We begin by describing this framework.  

Let $F$ be a totally real field, and let $\Pi$ be a cuspidal automorphic representation of $\GL_n(\A_F)$ that is regular algebraic. It is now known that $\Pi$ admits an associated compatible family of Galois representations: for any prime $p$ and isomorphism $\iota : \overline{\bQ}_p \to \bC$, there is an associated semisimple representation $r_{\Pi, \iota} : G_F \to \GL_n(\overline{\bQ}_p)$ satisfying local-global compatibility in a weak sense, namely that for all but finitely many places $v$ of $F$ such that $\Pi_v$ is unramified, the restriction $r_{\Pi, \iota}|_{G_{F_v}}$ to a decomposition group is unramified, and we have the equality of characteristic polynomials
\[ \det(X - r_{\Pi, \iota}(\Frob_v)) = \iota^{-1} \det(X - \rec_{F_v}^T(\Pi_v)(\Frob_v)). \]
This is a form of the usual determination of the characteristic polynomial of Frobenius in terms of the Satake parameter of $\Pi_v$. In particular, the Galois representation $r_{\Pi, \iota}$ encodes the isomorphism class of all but finitely many local components of $\Pi$; by the strong multiplicity one theorem, this means that $r_{\Pi, \iota}$ uniquely determines $\Pi$. 

If $\rho, \rho' : G_F \to \GL_n(\overline{\bQ}_p)$ are continuous, semisimple representations, we say that they are congruent if their associated semisimple residual representations $\overline{\rho}, \overline{\rho}' : G_F \to \GL_n(\overline{\bF}_p)$ are isomorphic. Equally, if $\Pi, \Pi'$ are cuspidal, regular algebraic automorphic representations of $\GL_n(\A_F)$, we say that they are congruent if there is an isomorphism $\overline{r}_{\Pi, \iota} \cong \overline{r}_{\Pi', \iota}$ of automorphic residual representations. By the Chebotarev density theorem, this is equivalent to asking for the Hecke eigenvalues at all but finitely many unramified places of $\Pi$ and $\Pi'$ to have the same image in $\overline{\bF}_p$ (after application of $\iota$ and reduction modulo $p$). 

The possibility of applying Galois representations to the problem of functoriality arises from the existence of automorphy lifting theorems: these assert that, under suitable conditions, the automorphy of one of a pair of congruent Galois representations $\rho, \rho'$ (i.e. the existence of an isomorphism $\rho \cong r_{\Pi, \iota}$ for some $\Pi$) implies the automorphy of the other. Here we consider starting with a cuspidal, regular algebraic automorphic representation $\pi$ of $\GL_2(\A_F)$. In view of local-global compatibility, the automorphy of the symmetric power Galois representation $\Sym^{n-1} r_{\pi, \iota} : G_F \to \GL_n(\overline{\bQ}_p)$ implies the existence of the Langlands functorial lift, as an automorphic representation of $\GL_n(\A_F)$. (More precisely, local-global compatibility in the weak sense formulated above implies the existence of a lift $\Sym^{n-1} \pi$ with the correct local behaviour at all but finitely many places. For essentially self-dual automorphic representations, a class including all those considered in this paper, local-global compatibility is known in the strong sense of compatibility with the local Langlands correspondence at every place, including ramified places. Precise statements are given in Lemma \ref{lem_the_meaning_of_existence} below.)

This seems like a promising start, since there are plentiful congruences between automorphic representations of $\GL_2(\A_F)$. The existence of a congruence between a pair $\pi, \pi'$ then implies the existence of a congruence $\Sym^{n-1} \overline{r}_{\pi, \iota} \cong \Sym^{n-1} \overline{r}_{\pi', \iota}$ of $n$-dimensional Galois representations; the existence of $\Sym^{n-1} \pi$, combined with a suitable automorphy lifting theorem, could then be used to deduce the existence of $\Sym^{n-1} \pi'$. However, this does not allow you to pass from degree $n$ to some degree $n' > n$. 

The starting point of \cite{Clo14} is the observation that if $n = p + r$, for some prime $p$ and $0 < r < p$, then there is a congruence of residual representations
\begin{equation}\label{eqn_CT14_congruence} \Sym^{p+r-1}  \overline{r}_{\pi, \iota} \overset{ss}{\cong}((\det \overline{r}_{\pi, \iota})^r \otimes \Sym^{p-r-1} \overline{r}_{\pi, \iota} ) \oplus ( {}^{\varphi_p} \overline{r}_{\pi, \iota} \otimes \Sym^{r-1} \overline{r}_{\pi, \iota}) 
\end{equation}
(where $\varphi_p \in G_{\bF_p}$ is the arithmetic Frobenius, and we are twisting the coefficients; the superscript $ss$ indicates that there is in fact only an isomorphism after passage to semisimplification.) With a suitable automorphy lifting theorem, one could then hope to establish the automorphy of the symmetric power in degree $n = p + r$ by first verifying the residual automorphy of the direct summands of the residual representation, which might be feasible if one knew the existence of the symmetric power in smaller degrees. 

An automorphy lifting theorem that applies to residually reducible Galois representations was proved in \cite{jackreducible} and generalized in \cite{All20}; these results require the automorphic representation verifying the residual automorphy to have a Steinberg (in particular, square-integrable) local component. In \cite{Clo14}, Clozel and the second author exploited these automorphy lifting theorems to establish an implication of the form 
 \begin{equation}\label{eqn_implication}  \mathrm{SP}_{r}+ \mathrm{SP}_{p-r} + \mathrm{LR}_{p+r} + \mathrm{TP}_r \Rightarrow \mathrm{SP}_{p+r}, 
\end{equation}
where $\mathrm{TP}_r$ is a conjecture asserting the existence of a certain instance of $\GL_2 \otimes \GL_r \to \GL_{2r}$ tensor product functoriality, $\mathrm{LR}_{p+r}$ is a conjecture asserting the existence of certain level-raising congruences in degree $p+r$, and $\mathrm{SP}_n$ is the conjectural existence of the symmetric power in degree $n$; since this latter conjecture is particularly important for us, we record it as 
\begin{introconj}\label{conj_SP_n}
\textup{(}$\mathrm{SP}_n$\textup{)} For any totally real field $F$ and any cuspidal, regular algbraic automorphic representation $\pi$ of $\GL_2(\bA_F)$, without CM, the lift $\Sym^{n-1} \pi$ exists (as a RAESDC\footnote{The meaning of this acronym, introduced in \cite{blght}, is recalled in \S \ref{subsec_notation} below.} automorphic representation of $\GL_n(\bA_F)$).
\end{introconj}
The argument of \cite{Clo14} leading to the implication (\ref{eqn_implication}) can be summarized as follows: first, verify the automorphy of the factors of the residual representation appearing in (\ref{eqn_CT14_congruence}) using  $ \mathrm{SP}_{r} $, $ \mathrm{SP}_{p-r} $, and $\mathrm{TP}_r$. Second, construct a suitable Eisenstein series on $\GL_{p+r}(\bA_F)$ (or, in practice, an endoscopic lift from $U(2r) \times U(p-r)$ to a suitable unitary group $U(p+r)$) whose associated Galois representation is a sum of factors lifting those in (\ref{eqn_CT14_congruence}). Third, apply $\mathrm{LR}_{p+r}$ to find a congruence to a cusp form on $\GL_{p+r}(\bA_F)$ (or stable form on $U(p+r)$) which satisfies the necessary Steinberg local condition. Finally, apply the automorphy lifting theorems proved in \cite{jackreducible, All20} to deduce the automorphy of $\Sym^{n-1} r_{\pi, \iota}$. 

A primary motivation for considering the implication (\ref{eqn_implication}) is the possibility of establishing $\mathrm{SP}_n$ for all $n \geq 1$ by induction, using Bertrand's postulate: indeed, any integer $n \geq 6$ can be written as $n = p + r$ for some prime $p \geq 5$ and integer $0 < r < p$. Unconditional knowledge of conjectures $\mathrm{LR}_m$ and $\mathrm{TP}_m$, together with the known cases of $\mathrm{SP}_n$ for $1 \leq n \leq 5$, would therefore imply $\mathrm{SP}_n$ for all $n \geq 1$.

\subsection*{The results of \cite{ctii,ctiii,Tho22a}}
In the papers \cite{ctii, ctiii}, Clozel and the second author carried out the above strategy to establish $\mathrm{SP}_n$ unconditionally for $n \le 9$, under mild technical hypotheses on $F$, using known cases of $\mathrm{TP}_r$ (i.e.~the cases $r \le 3$) and proving new (but rather restrictive) cases of $\mathrm{LR}_{n}$ along the way. 

More recently, all the required cases of $\mathrm{LR}_{n}$ have been proved by the second author \cite{Tho22a}, using a new method. This shows that the existence of tensor product functoriality (i.e.~$\mathrm{TP}_r$ for all $r \ge 1$) implies the existence of symmetric power functoriality ($\mathrm{SP}_n$ for all $n \geq 1$). Using the known cases of $\mathrm{TP}_r$, this establishes $\mathrm{SP}_n$ for integers $n \le 9$ and even integers $n \le 26$. 

\subsection*{The new contribution of this paper}
In this paper, we are able to carry out the entire strategy of \cite{Clo14} unconditionally, establishing $\mathrm{SP}_n$ for all $n \geq 1$, and therefore proving Theorem \ref{introthm_symmetric_power_functoriality}.

How do we do this? Taking in hand the cases of $\mathrm{LR}_{n}$ proved in \cite{Tho22a}, the main remaining problem, tackled in this paper, is to obtain the necessary cases of tensor product functoriality. We do not establish tensor product functoriality $\GL_2 \times \GL_r \to \GL_{2r}$ in general; rather, we do just enough in order to be able to proceed to our intended application, proving the residual automorphy of ${}^{\varphi_p} \overline{r}_{\pi, \iota} \otimes \Sym^{r-1} \overline{r}_{\pi, \iota}$ for suitable cuspidal, regular algebraic automorphic representations $\pi$ of $\GL_2(\A_F)$. We now sketch how this is achieved.

As a first step, we choose a congruent automorphic representation $\sigma$ (i.e. another regular algebraic automorphic representation of $\GL_2(\bA_F)$ such that $\overline{r}_{\sigma, \iota} \cong \overline{r}_{\pi, \iota}$). This representation is chosen so that ${}^{\varphi_p} r_{\pi, \iota} \otimes \Sym^{r-1} r_{\sigma, \iota}$ is Hodge--Tate regular (where now $\varphi_p \in G_{\bQ_p}$ is an arbitrary choice of arithmetic Frobenius lift), and therefore has the chance of being associated to a regular algebraic automorphic representation of $\GL_{2r}(\bA_F)$. (In practice we choose $\pi$ to have `parallel weight 2' and $\sigma$ to have `parallel weight 3'. Lemma \ref{lem_irreducibility_of_tensor_products} shows that the associated tensor products are then always irreducible, provided that $\pi, \sigma$ are non-CM.)

As a second step, we establish the automorphy of the tensor product $r_{\pi, \iota} \otimes \Sym^{r-1} r_{\sigma, \iota}$ (without the Frobenius twist in the coefficients). This is possible because there is an isomorphism
\begin{equation}\label{eqn_untwisted_congruence} \overline{r}_{\pi, \iota} \otimes \Sym^{r-1} \overline{r}_{\sigma, \iota} \cong (\det \overline{r}_{\pi, \iota} \otimes \Sym^{r-2} \overline{r}_{\pi, \iota}) \oplus \Sym^r \overline{r}_{\pi, \iota}, 
\end{equation}
and we can therefore use our automorphy lifting theorem for residually reducible representations, verifying the residual automorphy by induction (i.e.~knowing $\mathrm{SP}_{r-1}$ and $\mathrm{SP}_{r+1}$ -- and without knowing any instances of tensor product functoriality!). 

An ideal situation would then be if $\pi$, $\sigma$ could be chosen so that their fields of definition $K_\pi$, $K_\sigma$ were linearly disjoint over $\bQ$. (Here we recall that the group $\Aut(\bC)$ acts on the set of cuspidal, regular algebraic automorphic representations of $\GL_n(\A_F)$ by acting on the isomorphism class of the associated admissible representation of $\GL_n(\A_F^\infty)$. The orbit is finite and the field of definition is a number field contained in $\bC$. These results, which are due to Clozel \cite{Clo90}, are recalled below in \S \ref{subsec_preparations}.) We could then find an automorphism $\gamma \in \Aut(\bC)$ which acts as $\iota \varphi_p \iota^{-1}$ on $K_\pi$ and as the identity on $K_\sigma$. This automorphism would transform the automorphic representation of $\GL_{2r}(\bA_F)$ with associated Galois representation $r_{\pi, \iota} \otimes \Sym^{r-1} r_{\sigma, \iota}$ to one with the correct residual representation. Unfortunately, we don't know how to arrange the coefficient fields to have this property. 

Instead, we give a new argument to deduce the automorphy of $r_{{}^\gamma \pi, \iota} \otimes \Sym^{r-1} r_{\sigma, \iota}$ from the automorphy of the untwisted tensor product $r_{\pi, \iota} \otimes \Sym^{r-1} r_{\sigma, \iota}$, for any element $\gamma \in \Aut(\bC)$. Let $\widetilde{K}_\pi$ denote the Galois closure of the field of definition $K_\pi \subset \bC$. Then $\Gamma_\pi = \Gal(\widetilde{K}_\pi / \bQ)$ is generated by inertia groups at varying primes (because no extension of $\bQ$ is everywhere unramified). We can therefore write the image of $\gamma$ in $\Gamma_\pi$ as $\gamma = \delta_1 \dots \delta_s$ for prime numbers $l_1, \dots, l_s$, places $w_1, \dots, w_s$ of $\widetilde{K}_\pi$ above $l_1, \dots, l_s$, and elements $\delta_1, \dots, \delta_s$ of the inertia groups $I_{w_1 / l_1}, \dots, I_{w_s / l_s}$. We prove the automorphy of $r_{{}^\gamma \pi, \iota} \otimes \Sym^{r-1} r_{\sigma, \iota}$ by induction on $s \geq 0$, the base case $s = 0$ (corresponding to $\gamma = 1$) having already been established. 

In general, if $\gamma = \delta \gamma'$ and $r_{{}^{\gamma'} \pi, \iota} \otimes \Sym^{r-1} r_{\sigma, \iota}$ is automorphic, and $\delta \in I_{w / l}$, then we choose a isomorphism $\iota_l : \overline{\bQ}_l \to \bC$ such that $\iota_l^{-1}$ induces the place $w$ of $\widetilde{K}_\pi$. The automorphy of $r_{{}^\gamma \pi, \iota} \otimes \Sym^{r-1} r_{\sigma, \iota}$ is equivalent to that of $r_{{}^\gamma \pi, \iota_l} \otimes \Sym^{r-1} r_{\sigma, \iota_l}$ (because they are members of the same compatible system).
Moreover, there is an isomorphism
\[ r_{{}^{\delta \gamma'} \pi, \iota_l} \cong {}^{\widetilde{\delta}} r_{{}^{\gamma'} \pi, \iota_l}, \]
where $\widetilde{\delta} \in \Gal(\overline{\bQ}_l / \bQ_l)$ is an element, acting trivially on the residue field $\overline{\bF}_l$, such that the restriction of $\iota_l \widetilde{\delta} \iota_l^{-1} \in \Aut(\bC)$ to $\widetilde{K}_\pi$ equals $\delta$. We there obtain a congruence
\[  \overline{r}_{{}^{\delta \gamma'} \pi, \iota_l} \cong \overline{r}_{{}^{\gamma'} \pi, \iota_l}, \]
and hence a  congruence
\[  \overline{r}_{{}^{\gamma} \pi, \iota_l} \otimes \Sym^{r-1} \overline{r}_{\sigma, \iota_l} \cong \overline{r}_{{}^{\gamma'} \pi, \iota_l} \otimes \Sym^{r-1} \overline{r}_{\sigma, \iota_l} . \]
We can now again hope to use an automorphy lifting theorem to propagate automorphy from $r_{{}^{\gamma'} \pi, \iota_l} \otimes \Sym^{r-1} r_{\sigma, \iota_l}$ to $r_{{}^\gamma \pi, \iota_l} \otimes \Sym^{r-1} r_{\sigma, \iota_l}$. This is possible, using a `functoriality lifting theorem' for tensor products, Theorem \ref{thm_FLT_for_TP}, which is modelled after an analogous result for symmetric powers \cite[Theorem 2.1]{New21b}. Note that we might need to use congruences modulo $l$ for each prime $l$ which is ramified in $K_\pi$. We have no control at all over the set of such primes, but luckily we can prove a functoriality lifting theorem that works in this level of generality. 

\subsection*{Functoriality lifting theorems}

We sketch the idea of a functoriality lifting theorem. The name was introduced in the second author's CDM lectures 
\cite[Part II]{thorne-cdm} and a sketch of the symmetric powers case appears in the first author's expository article \cite{newton-iisc} (with the alternative terminology `relative modularity lifting theorem'). Suppose we have split reductive groups $G, H$ over a number field $F$, with Langlands dual groups $\widehat{G}, \widehat{H}$ defined over $\Zpbar$, and a homomorphism $f : \widehat{G}\to \widehat{H}$. In the examples relevant to symmetric powers and tensor product functoriality $G$ and $H$ will be general linear groups, and we can make sense of the statement that a Galois representation with image in $\widehat{G}(\Zpbar)$ or $\widehat{H}(\Zpbar)$ is automorphic. An idealized form of a functoriality lifting theorem is as follows:

\emph{Suppose $\rho_1, \rho_2 : G_F \to \widehat{G}(\Zpbar)$ are two automorphic Galois representations with isomorphic residual representation $\rhobar_1\cong \rhobar_2: G_F \to \widehat{G}(\Fpbar)$. If $f\circ\rho_1 : G_F \to \widehat{H}(\Zpbar)$ is automorphic, then so is $f\circ\rho_2$.}

Naturally, we have to impose many technical assumptions before we can unconditionally prove such a statement. The key advantage over standard modularity lifting theorems is that we can obtain results when $\rhobar_1$ is irreducible but $f\circ\rhobar_1$ is not assumed to be irreducible. A relevant example here is when $G = \GL_2 \times \GL_r$, $H = \GL_{2r}$, $f$ is the tensor product of standard representations of each factor, and $\rho_1 = (r_{\pi,\iota},\Sym^{r-1}r_{\sigma,\iota})$ with $\pi$ and $\sigma$ congruent automorphic representations as above. In this example, we have already observed in (\ref{eqn_untwisted_congruence}) that $f\circ\rhobar_1$ is necessarily reducible. We prove our functoriality lifting theorem by combining the Taylor--Wiles--Kisin patching method for $\widehat{G}$ with our results on vanishing of adjoint Bloch--Kato Selmer groups for the $\widehat{H}$-valued representation $f\circ\rho_1$ \cite{New19a,Tho22} (in fact, \cite{New19a} is sufficient for our purposes here). These vanishing results only assume irreducibility of the characteristic $0$ representation $(f\circ\rho_1) \otimes \Qpbar$. More precisely, with $G = \GL_2 \times \GL_r$ we will be taking tensor products with a fixed $r$-dimensional Galois representation, so we only need to carry out patching with respect to the $\GL_2$ factor of the dual group. 

\subsection*{The relation with the case $F = \bQ$}
The authors have previously established Theorem \ref{introthm_symmetric_power_functoriality} in the special case $F = \Q$ \cite{New21a,New21b}. The proof given in this paper is new even in this case. The main tool in \cite{New21a} was a result on the analytic continuation of symmetric power functoriality in families of overconvergent $p$-adic  modular forms, which was applied together with the explicit and simple structure,  discovered by Buzzard--Kilford, of the $2$-adic Coleman--Mazur eigencurve of tame level 1 \cite{Buz05}. This paper does not use analytic continuation, or the explicit determination of any space of modular forms of fixed level or weight. 

As we explained above, the main technical result of \cite{New21b} inspired the functoriality lifting theorem for tensor products that we use here. Both these results, together with the analytic continuation results of \cite{New21a}, rely on the vanishing of adjoint Bloch--Kato Selmer groups proved in \cite{New19a}. 

\subsection{Organization of this paper}

The first section, \S \ref{sec_comforting_lemma} is preparatory. We clarify the meaning of the existence of $\Sym^{n-1} \pi$, and give several definitions and lemmas that will be used repeatedly throughout the body of the paper. In particular, we show that any representation of the form $r_{\pi, \iota} \otimes \Sym^{r-1} r_{\sigma, \iota}$, where $\pi, \sigma$ are cuspidal, regular algebraic automorphic representations of $\GL_2(\A_F)$, $F$ a totally real field, of `parallel weight 2' and `parallel weight 3', respectively, and without CM, is irreducible, and should therefore be associated to a \emph{cuspidal} automorphic representation of $\GL_{2r}(\A_F)$. This observation plays an important motivating role.

In \S \ref{sec:aut untwisted}, we prove our first main result, showing that when $\pi, \sigma$ are chosen as in the previous paragraph and are \emph{congruent} (and satisfy certain auxiliary local conditions), then the tensor product $r_{\pi, \iota} \otimes \Sym^{r-1} r_{\sigma, \iota}$ is automorphic. The approach is based on the congruence (\ref{eqn_untwisted_congruence}), together with the automorphy lifting theorems of \cite{jackreducible, All20} and the level-raising theorems proved in \cite{Tho22a}. 

In \S \ref{sec:flt tp}, we prove our functoriality lifting theorem for tensor product representations; given cuspidal, regular algebraic automorphic representations $\pi, \pi'$ of $\GL_2(\A_F)$ and $\tau$ of $\GL_r(\A_F)$ for some $r \geq 1$, this makes it possible to propagate the automorphy of $r_{\pi, \iota} \otimes r_{\tau, \iota}$ to $r_{\pi', \iota} \otimes r_{\tau, \iota}$, given a congruence $\overline{r}_{\pi, \iota} \cong \overline{r}_{\pi', \iota}$. In \S \ref{sec: aut twisted}, we apply this to prove our second main result, the automorphy of the twisted tensor product $r_{{}^\gamma \pi, \iota} \otimes \Sym^{r-1} r_{\sigma, \iota}$, using the approach via coefficient fields sketched above. We obtain a complete result only under stringent local conditions, that ensure that every member of the compatible system associated to $\pi$ has a residual representation with large image. This is needed in order to be able to apply the main theorem of \S \ref{sec:flt tp}. 

Finally, in \S \ref{sec: the end}, we tie this result back in to the inductive scheme laid out in \cite{Clo14}, based on Bertrand's postulate. The main task is to show that the local conditions imposed appearing in \S \ref{sec: aut twisted} are sufficiently general --- we use a chain of congruences and soluble base changes to replace a general $\pi$ by one satisfying the necessary local conditions; given the techniques already developed in \cite{Clo14}, this is a mopping-up exercise, and together with the work done already quickly leads to the proof of our main Theorem \ref{introthm_symmetric_power_functoriality}. 

\subsection*{Acknowledgments} This paper represents the completion of a strategy first outlined by Laurent Clozel and JT in \cite{Clo14}. We would like to again express our thanks to Clozel for his inspirational contributions to the theory of automorphic forms. We would like to thank Toby Gee for useful comments on an earlier draft of this paper. Finally, we would like to thank the anonymous referees for their careful reading of our paper. 

JN was supported by a UKRI Future Leaders Fellowship, grant MR/V021931/1. For the purpose of Open Access, the authors have applied a CC BY public copyright licence to any Author Accepted Manuscript (AAM) version arising from this submission.

\subsection{Notation}\label{subsec_notation}

If $F$ is a perfect field, we  fix an algebraic closure $\overline{F} / F$ and write $G_F$ for the absolute Galois group of $F$ with respect to this choice. When the characteristic of $F$ is not equal to $p$, we write $\epsilon : G_F \to \bbZ_p^\times$ for the $p$-adic cyclotomic character. We write $\zeta_n \in \overline{F}$ for a fixed choice of primitive $n^\text{th}$ root of unity (when this exists). If $F$ is a number field, then we will also fix embeddings $\overline{F} \to \overline{F}_v$ extending the map $F\to F_v$ for each place $v$ of $F$; this choice determines a homomorphism $G_{F_v} \to G_F$. When $v$ is a finite place, we will write $W_{F_v} \subset G_{F_v}$ for the Weil group, $\cO_{F_v} \subset F_v$ for the valuation ring, $\varpi_v \in \cO_{F_v}$ for a fixed choice of uniformizer, $\Frob_v \in G_{F_v}$ for a fixed choice of (geometric) Frobenius lift, $k(v) = \cO_{F_v} / (\varpi_v)$ for the residue field, and $q_v = \# k(v)$ for the cardinality of the residue field. If $R$ is a ring and  $\alpha \in R^\times$, then we write $\operatorname{ur}_\alpha : W_{F_v} \to R^\times$ for the unramified character which sends $\Frob_v$ to $\alpha$. When $v$ is a real place, we write $c_v \in G_{F_v}$ for complex conjugation. If $S$ is a finite set of finite places of $F$ then we write $F_S / F$ for the maximal subextension of $\overline{F}$ unramified outside $S$ and $G_{F, S} = \Gal(F_S / F)$. If $A$ is a local ring, we write $\ffrm_A$ for the maximal ideal of $A$.

If $p$ is a prime, then we call a coefficient field a finite extension $E / \bbQ_p$ contained inside our fixed algebraic closure $\overline{\bbQ}_p$, and write $\cO$ for the valuation ring of $E$, $\varpi \in \cO$ for a fixed choice of uniformizer, and $k = \cO / (\varpi)$ for the residue field. We write $\cC_\cO$ for the category of complete Noetherian local $\cO$-algebras with residue field $k$. If $G$ is a profinite group and $\rho : G \to \GL_n(\overline{\bQ}_p)$ is a continuous representation, then we write $\overline{\rho} : G \to \GL_n(\overline{\bF}_p)$ for the associated semisimple residual representation (which is well-defined up to conjugacy). If $F$ is a number field, $v$ is a finite place of $F$, and $\rho_v, \rho_v' : G_{F_v} \to \GL_n(\overline{\bZ}_p)$ are continuous representations, which are potentially crystalline if $v | p$, then we use the notation $\rho \sim \rho'$ established in \cite[\S 1]{BLGGT} (which implies that $\rho_v \text{ mod }\ffrm_{\overline{\bZ}_p} \cong \rho'_v \text{ mod } \ffrm_{\overline{\bZ}_p}$, and indicates that these two representations define points on a common component of a suitable deformation ring). We adopt the convention of \emph{loc.~cit.} that if $\rho, \rho' : G_F \to \GL_n(\overline{\bQ}_p)$ are continuous representations such that $\overline{\rho}$, $\overline{\rho}'$ are irreducible, then $\rho|_{G_{F_v}} \sim \rho'|_{G_{F_v}}$ means $\rho_v \sim \rho'_v$, where $\rho_v$ is the restriction to $G_{F_v}$ of any conjugate of $\rho$ which takes values in $\GL_n(\overline{\bZ}_p)$ (and similarly for $\rho'_v$). 

We write $T_n \subset B_n \subset \GL_n$ for the standard diagonal maximal torus and upper-triangular Borel subgroup. Let $K$ be a non-archimedean characteristic $0$ local field, 
and let $\Omega$ 
be an algebraically 
closed field of characteristic 0. If $\rho : G_K \to \GL_n(\overline{\bbQ}_p)$ is a continuous 
representation (which is de Rham if $p$ equals the residue characteristic of 
$K$), then we write $\mathrm{WD}(\rho) = (r, N)$ for the associated 
Weil--Deligne representation of $\rho$, and $\mathrm{WD}(\rho)^{F-ss}$ for its 
Frobenius semisimplification. We use the cohomological normalization of 
class field theory: it is the isomorphism $\Art_K: K^\times \to W_K^{ab}$ which 
sends uniformizers to geometric Frobenius elements. When $\Omega = \bC$, we 
have the local Langlands correspondence $\rec_{K}$ for $\GL_n(K)$: a bijection 
between the sets of isomorphism classes of irreducible, admissible 
$\bC[\GL_n(K)]$-modules and Frobenius-semisimple Weil--Deligne representations 
over $\bC$ of rank $n$. In general, we have the Tate normalization of the local 
Langlands correspondence 
for $\GL_n$ as described in \cite[\S 
2.1]{Clo14}. When $\Omega = 
\bC$, we have $\rec^T_K(\pi) = \rec_K(\pi \otimes | \cdot |^{(1-n)/2})$. 

If $F$ is a number field and $\chi : F^\times \backslash \A_F^\times \to \bC^\times$ is a Hecke character of type $A_0$ (equivalently: algebraic), then for any isomorphism $\iota : \overline{\bQ}_p \to \bC$ there is a continuous character $r_{\chi, \iota} : G_F \to \overline{\bQ}_p^\times$ which is de Rham at the places $v | p$ of $F$ and such that for each finite place $v$ of $F$, $\mathrm{WD}(r_{\chi, \iota}) \circ \Art_{F_v} = \iota^{-1} \chi|_{F_v^\times}$. Conversely, if $\chi' : G_F \to \overline{\bQ}_p^\times$ is a continuous character which is de Rham and unramified at all but finitely many places, then there exists a Hecke character $\chi : F^\times \backslash \A_F^\times \to \bC^\times$ of type $A_0$ such that $r_{\chi, \iota} = \chi'$. 

If $F$ is a totally real or CM number field and $\pi$ is an automorphic representation of $\GL_n(\A_F)$, we say that $\pi$ is regular algebraic if $\pi_\infty$ has the same infinitesimal character as an irreducible algebraic representation $W$ of $(\Res_{F/ \bQ} \GL_n)_\bC$. We identify $X^\ast(T_n)$ with $\Z^n$ in the usual way, and write $\Z^n_+ \subset \Z^n$ for the subset of weights which are $B_n$-dominant. If $W^\vee$ has highest weight $\lambda = (\lambda_\tau)_{\tau \in \Hom(F, \bC)} \in (\Z^n_+)^{\Hom(F, \bC)}$, then we say that $\pi$ has weight $\lambda$. We refer to \cite[\S 1]{blght} for the definition of what it means for $\pi$ to be RAESDC or RAECSDC (that is, regular algebraic, essentially (conjugate) self-dual, and cuspidal). If $L / F$ is a cyclic extension, then we write $\mathrm{BC}_{L / F}(\pi)$ for the base change of $\GL_n(\bA_F)$ (see \cite{MR1007299}): it is an automorphic representation of $\GL_n(\bA_L)$ with the property that for any place $w$ of $L$ lying above a place $v$ of $F$, we have $\rec_{L_w} \mathrm{BC}_{K / F}(\pi)_w \cong (\rec_{F_v} \pi_v )|_{W_{L_w}}$.

If $\pi$ is RAESDC or RAECSDC, then for any isomorphism $\iota : \overline{\bQ}_p \to \bC$ there exists a continuous, semisimple representation $r_{\pi, \iota} : G_F \to \GL_n(\overline{\bQ}_p)$ such that for each finite place $v$ of $F$, $\mathrm{WD}(r_{\pi, \iota}|_{G_{F_v}})^{F-ss} \cong \rec_{F_v}^T(\iota^{-1} \pi_v)$ (see e.g.\ \cite{Caraianilp}). (When $n = 1$, this is compatible with our existing notation.) We use the convention that the Hodge--Tate weight of the cyclotomic character is $-1$. Thus if $\pi$ is of weight $\lambda$, then for any embedding $\tau : F \to \overline{\bQ}_p$ the $\tau$-Hodge--Tate weights of $r_{\pi, \iota}$ are given by 
\[ \mathrm{HT}_\tau(r_{\pi, \iota}) = \{ \lambda_{\iota \tau, 1} + (n-1), \lambda_{\iota \tau, 2} + (n-2), \dots, \lambda_{\iota \tau, n} \}. \] Suppose $\pi$ is RAECSDC, with $\pi^\vee \cong \pi^c \otimes(\chi\circ \bN_{F/F^+}\circ\det)$ for $\chi: (F^+)^\times \backslash \A_{F^+}^\times \to \bC^\times$ a continuous character and $\chi_v(-1)= (-1)^n$ for all $v|\infty$. Then \cite[Theorem 2.1.1(1)]{BLGGT} implies that $r_{\pi,\iota}$ extends to a continuous homomorphism $r_{\pi,\iota}:G_{F^+} \to \cG_n(\Qpbar)$ with multiplier $\nu\circ r_{\pi,\iota} = \epsilon^{1-n}r_{\chi,\iota}$. Here $\cG_n$ is the (disconnected) algebraic group introduced in \cite{cht}.

A particular case of interest is when $n = 2$ and $F$ is totally real. In this case, the automorphic representations which are of weight $\lambda = 0$ may be generated by Hilbert modular forms of parallel weight $2$. In \cite{New21b}, we said such automorphic representations were `of weight 2'. We do not use this terminology in this paper, where automorphic representations on higher rank general linear groups play a more significant role. We remark that any cuspidal, regular algebraic automorphic representation of $\GL_2(\A_F)$ is necessarily RAESDC (as any automorphic representation of $\GL_2(\A_F)$ is self-dual up to twisting by the central character). 

If $F$ is a number field, $G$ is a reductive group over $F$, $v$ is a finite place of $F$, and $U_v$ is an open compact subgroup of $G(F_v)$, then we write $\cH(G(F_v), U_v)$ for the convolution algebra of compactly supported $U_v$-biinvariant functions $f : G(F_v) \to \bZ$ (convolution defined with respect to the Haar measure on $G(F_v)$ which gives $U_v$ volume 1). Then $\cH(G(F_v), U_v)$ is a free $\bZ$-module, with basis given by the characteristic functions $[U_v g_v U_v]$ of double cosets for $g_v \in U_v \backslash G(F_v) / U_v$.

If $1 \leq i \leq n$, let $\alpha_{\varpi_v, i} = \diag(\varpi_v, \dots, \varpi_v, 1, \dots, 1) \in \GL_n(F_v)$ (where there are $i$ occurrences of $\varpi_v$ on the diagonal). We define 
\[ T_v^{(i)} = [\GL_n(\cO_{F_v}) \alpha_{\varpi_v, i} \GL_n(\cO_{F_v})] \in \cH(\GL_n(F_v), \GL_n(\cO_{F_v})). \]
We write $\Iw_v \subset \GL_n(\cO_{F_v})$ for the standard Iwahori subgroup (elements which are upper-triangular modulo $\varpi_v$) and $\Iw_{v, 1} \subset \Iw_v$ for the kernel of the natural map $\Iw_v \to (k(v)^\times)^n$ given by reduction modulo $\varpi_v$, then projection to the diagonal. If $U_v \subset \Iw_v$ is a subgroup containing $\Iw_{v, 1}$, and $1 \leq i \leq n$, then we define
\[ U_{\varpi_v}^{(i)} = [U_v \alpha_{\varpi_v, i} U_v] \in \cH(\GL_n(F_v), U_v). \]

\section{Some comforting lemmas}\label{sec_comforting_lemma}

In this section, we give some lemmas and definitions that will be of use throughout the paper. 

We first clarify the meaning of the phrase ``$\Sym^n \pi$ exists''. If $F$ is a totally real or CM number field, and $p$ is a prime number, we say that a continuous representation $\rho : G_F \to \GL_n(\overline{\bQ}_p)$ is automorphic if there exists a RAESDC or RAECSDC automorphic representation $\pi$ of $\GL_n(\bA_F)$ and an isomorphism $\iota : \overline{\bQ}_p \to \bC$ such that $\rho \cong r_{\pi, \iota}$. This is not the most general definition that one might adopt (for example, it forces $\rho$ itself to be (conjugate-)self-dual up to twist and conjecturally forces $\rho$ to be irreducible) but it will be the one most useful for our purposes.
\begin{lemma}\label{lem_the_meaning_of_existence}
Let $F$ be a totally real field, and let $\pi$ be a RAESDC automorphic representation of $\GL_2(\A_F)$ that is non-CM, in the sense that for any non-trivial character $\chi : F^\times \backslash \bA_F^\times \to \bC^\times$, $\pi \not\cong \pi \otimes (\chi \circ \det)$. Let $n \geq 1$. Then the following are equivalent:
\begin{enumerate}
\item There exists a cuspidal automorphic representation $\Pi$ of $\GL_n(\bA_F)$ such that for each place $v$ of $F$, we have
\[ \rec_{F_v}(\Pi_v) \cong \Sym^{n-1} \circ\, \rec_{F_v}(\pi_v). \]
\item For each prime $p$ and isomorphism $\iota : \overline{\bQ}_p \to \bC$, the representation $\Sym^{n-1} r_{\pi, \iota}$ is automorphic.
\item For some prime $p$ and isomorphism $\iota : \overline{\bQ}_p \to \bC$,  the representation $\Sym^{n-1} r_{\pi, \iota}$ is automorphic.
\end{enumerate} 
\end{lemma}
\begin{proof}
Certainly we have $(1) \Rightarrow (2) \Rightarrow (3)$. For $(3) \Rightarrow (1)$, we are given that there is a RAESDC automorphic representation $\Pi$ of $\GL_n(\bA_F)$ and an isomorphism $\jmath : \overline{\bQ}_p \to \bC$ such that $r_{\Pi, \jmath} \cong \Sym^{n-1} r_{\pi, \iota}$. Let $\sigma = \iota \jmath^{-1}$. Then (cf.~Lemma \ref{lem_galois_conjugation} below) there are isomorphisms
\[ r_{{}^\sigma \Pi, \iota} \cong r_{\Pi, \jmath} \cong \Sym^{n-1} r_{\pi, \iota}. \]
Replace $\Pi$ by ${}^\sigma \Pi$. Local-global compatibility at finite places then implies that if $v$ is a finite place of $F$, then $\rec_{F_v} \Pi_v \cong \Sym^{n-1} \circ \,\rec_{F_v} \pi_v$. The Weil group representation $\rec_{F_v} \Pi_v$ for an infinite place $v$ is determined by the labelled Hodge--Tate weights of $r_{\Pi,\iota}$ (cf.~the proof of \cite[Theorem 8.1]{Tho22a}), so we have the desired relation at infinite places also. This completes the proof. 
\end{proof}
The following lemma will come in handy on occasion to verify irreducibility conditions. Recall that a representation $\rho$ of an absolute Galois group $G_F$ is said to be strongly irreducible if its restriction to $G_L$ is irreducible for every finite extension $L/F$. According to \cite[Example 2.29]{New19a}, this condition holds if $F$ is totally real and $\rho = r_{\pi, \iota}$ for a RAESDC automorphic representation $\pi$ of $\GL_2(\A_F)$ without CM. 
\begin{lemma}\label{lem_irreducibility_of_tensor_products}
Let $F$ be a number field. Let $\rho, \rho' : G_F \to \GL_2(\overline{\bQ}_p)$ be continuous representations which are strongly irreducible. Suppose that there is an embedding $\tau : F \to \overline{\bQ}_p$ inducing a place $v$ of $F$ such that $\rho|_{G_{F_v}}$, $\rho'|_{G_{F_v}}$ are Hodge--Tate and $\mathrm{HT}_\tau(\rho) = \{ a, b \}$, $\mathrm{HT}_\tau(\rho') = \{ a', b'\}$, with $a - b \neq \pm (a' - b')$. Then for any $m, m' \geq 1$, $(\Sym^m \rho \otimes \Sym^{m'} \rho')|_{G_{F(\zeta_{p^\infty})}}$ is strongly irreducible.
\end{lemma}
\begin{proof}
We use the basic fact that if $\alpha : H \to H'$ is a morphism of linear algebraic groups over $\overline{\bQ}_p$, and $K \subset H(\overline{\bQ}_p)$ is a subgroup, then the Zariski closure in $H'$ of $\alpha(K)$ is the image under $\alpha$ of the Zariski closure of $K$ in $H$ (see \cite[Ch. I, \S 2.1(f)]{Bor91}). 

Let $\operatorname{Proj} \rho : G_F \to \PGL_2(\overline{\bQ}_p)$ denote the associated projective representation (and similarly for $\rho'$). Then $\operatorname{Proj} \rho$ has Zariski dense image in $\PGL_2$. Indeed, the Zariski closure $G_\rho \subset \GL_2$ of $\rho(G_F)$ is a reductive subgroup (as it has a faithful irreducible representation). The identity component $G_\rho^\circ$ is connected reductive and still acts irreducibly (because $\rho$ is strongly irreducible), so it must be equal either to $\SL_2$ or $\GL_2$. In either case the image of $G_\rho$ in $\PGL_2$ is $\PGL_2$.

Let $G \subset \PGL_2 \times \PGL_2$ denote the Zariski closure of the image of 
\[ \operatorname{Proj} \rho \times \operatorname{Proj} \rho' : G_F \to \PGL_2(\overline{\bQ}_p) \times \PGL_2(\overline{\bQ}_p). \]
Then $G$ is a reductive subgroup which projects surjectively to each factor. By Goursat's lemma for the group $G(\Qpbar)$, we either have $G = \PGL_2 \times \PGL_2$ or $G(\Qpbar)$ is the graph of an automorphism of $\PGL_2(\Qpbar)$. In the second case, the first projection map $G \to \PGL_2$ is a bijection on $\Qpbar$-points and is therefore an isomorphism of algebraic groups. We deduce that $G$ is the graph of an automorphism of $\PGL_2$, and $\operatorname{Proj}  \rho, \operatorname{Proj}  \rho'$ are conjugate by this automorphism. Every automorphism of $\PGL_2$ is inner, so this would imply that $\Ad \rho, \Ad \rho'$ (where $\Ad$ denotes the adjoint action on the Lie algebra $\mathfrak{sl}_2$) have the same Hodge--Tate weights, a contradiction. 

To complete the proof, note that the Zariski closure of $\rho(G_{F(\zeta_{p^\infty})}) \times \rho'(G_{F(\zeta_{p^\infty})})$ in $\GL_2 \times \GL_2$ contains the derived group of the Zariski closure of $\rho(G_F) \times \rho'(G_F)$ (by \cite[Ch. I, \S 2.4, Proposition]{Bor91}). This latter group contains $\SL_2 \times \SL_2$, as it surjects to $\PGL_2 \times \PGL_2$. Since $\Sym^m \otimes \Sym^{m'}$ is irreducible as a representation of $\SL_2 \times \SL_2$, this completes the proof. 
\end{proof}
We next give a criterion for certain subgroups of $\GL_n(\overline{\bF}_p)$ to be adequate, in the sense of \cite[Definition 2.3]{jack}. Let $k \leq \overline{\bF}_p$ be a finite field, and let  $G \leq \GL_n(k)$ be a subgroup such that each element of $G$ has all its eigenvalues in $k$. If $G$ is adequate, then it is absolutely irreducible, and the converse holds when $p > 2n + 2$ \cite[Lemma 2.4]{jack}. The condition that residual Galois representations have adequate image frequently appears in the statements of automorphy lifting theorems, due to its connection with the Taylor--Wiles method. The following result, which is  \cite[Lemma 7.3]{Tho22a}, gives conditions under which subgroups arising by taking symmetric powers are adequate.
\begin{lemma}\label{lem_tensor_adequacy}
Let $p \geq 5$ be a prime and let $\varphi_p \in G_{\bQ_p}$ be a lift of arithmetic Frobenius. There exists an integer $a_0 = a_0(p) \geq 3$ such that if $a \geq a_0$ and  $G \leq \GL_2(\overline{\bF}_p)$ is a finite subgroup which contains a conjugate of $\SL_2(\bF_{p^a})$, then $G$ has the following properties:
\begin{enumerate}
    \item For each $0 < r  < p$, the image of the homomorphism $\Sym^{r-1} : G \to \GL_r(\overline{\bF}_p)$ is adequate,  in the sense of \cite[Definition 2.3]{jack}. 
    \item For each $0 < r < p$, the image of the homomorphism ${}^{\varphi_p} \mathrm{Std} \otimes \Sym^{r-1} : G \to \GL_{2r}(\overline{\bF}_p)$ is adequate.
    \item If $H \leq G$ is a subgroup of index $[G : H] < 2p$, then $H$ contains a conjugate of $\SL_2(\bF_{p^a})$. Consequently, $H$ also satisfies (1) and (2).
\end{enumerate}
\end{lemma}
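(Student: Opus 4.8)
Since this lemma is quoted verbatim from \cite[Lemma 7.3]{Tho22a}, the honest answer is that I would just cite it; but here is how I would prove it from scratch. The plan is to reduce to the case $G = \SL_2(\bF_{p^a})$ and then verify, one at a time, the four conditions in \cite[Definition 2.3]{jack} defining adequacy of a pair $(\Gamma,V)$: absolute irreducibility of $V$, the vanishing of $H^1(\Gamma,\overline{\bF}_p)$ and of $H^1(\Gamma,\ad^0 V)$, and the ``eigenvalue'' condition that for every irreducible $\overline{\bF}_p[\Gamma]$-submodule $W \subseteq \ad V$ there is a semisimple $\gamma \in \Gamma$ with an eigenvalue $\alpha$ on $V$ whose eigenprojection $e_{\gamma,\alpha}\in\End(V)$ has nonzero image in $W$. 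For the reduction: after conjugating, assume $\SL_2(\bF_{p^a}) \le G$; by Dickson's classification of the finite subgroups of $\PGL_2(\overline{\bF}_p)$, once $a_0$ is large the image of $G$ in $\PGL_2(\overline{\bF}_p)$ is $\PSL_2(\bF_{p^b})$ or $\PGL_2(\bF_{p^b})$ for some multiple $b$ of $a$, so $N := [G,G]$ is a conjugate of $\SL_2(\bF_{p^b})$, is normal in $G$, and has index in $G$ prime to $p$ (the remaining quotient is assembled from the scalar centre, of order prime to $p$, and a factor dividing $2$, harmless since $p$ is odd). Renaming, I may assume $N := \SL_2(\bF_{p^a}) \trianglelefteq G$ with $p\nmid[G:N]$. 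Then absolute irreducibility for $G$ follows from that for $N$; the two cohomology vanishings for $G$ follow from those for $N$ because $\mathrm{res}\colon H^1(G,M)\to H^1(N,M)$ is injective on $p$-torsion modules when $[G:N]$ is prime to $p$ (restriction--corestriction); and the eigenvalue condition for $G$ follows from that for $N$ by restricting an irreducible $G$-submodule $W\subseteq\ad V$ to $N$ and working with an irreducible $N$-constituent of it.

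So it remains to treat $G = \SL_2(\bF_{p^a})$, with $V = \Sym^{r-1}\overline{\bF}_p^2$ for (1) and $V = {}^{\varphi_p}\mathrm{Std}\otimes\Sym^{r-1}\overline{\bF}_p^2$ for (2), where $0<r<p$. For absolute irreducibility I would note that $\Sym^{r-1}\overline{\bF}_p^2 = L(r-1)$ with $r-1\le p-2$ is a restricted irreducible, and that by the Steinberg tensor product theorem ${}^{\varphi_p}\mathrm{Std}\otimes\Sym^{r-1}\overline{\bF}_p^2\cong L(p+r-1)$; both stay irreducible over $\SL_2(\bF_{p^a})$ as soon as $p^a > p+r-1$, in particular for $a\ge 2$. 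Since $\SL_2(\bF_{p^a})$ is perfect for $p^a>3$, $H^1(\SL_2(\bF_{p^a}),\overline{\bF}_p)=\Hom(\SL_2(\bF_{p^a}),\overline{\bF}_p)=0$. For $H^1(\SL_2(\bF_{p^a}),\ad^0 V)=0$ I would observe that the composition factors of $\ad V = V\otimes V^\vee$ in either case are among the $L(\nu)$ with $\nu<4p$, a finite set independent of $a$, and then apply generic cohomology (in the sense of Cline--Parshall--Scott--van der Kallen) together with the known vanishing of the rational cohomology $H^1(\SL_2,L(\nu))$ to produce a bound $a_0 = a_0(p)$ beyond which all of these $H^1(\SL_2(\bF_{p^a}),L(\nu))$ vanish. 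Finally, for the eigenvalue condition I would take $\gamma=\diag(\alpha,\alpha^{-1})\in\SL_2(\bF_{p^a})$ with $\alpha\in\bF_{p^a}^\times$ of order $>2(p+r-1)$ (possible once $p^a$ is large enough): then $\gamma$ acts on $V$ with distinct eigenvalues, the eigenprojections $e_{\gamma,\alpha'}$ span the image of $\overline{\bF}_p[\gamma]$ in $\End(V)$, this image meets every weight space of the diagonal torus acting on $\ad V$ nontrivially, and any irreducible submodule $W\subseteq\ad V$ contains a torus weight vector, so the appropriate $e_{\gamma,\alpha'}$ has nonzero image in $W$. This gives (1) and (2).

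For part (3): after conjugating, assume $\SL_2(\bF_{p^a})\le G$, so $H\cap\SL_2(\bF_{p^a})$ has index $\le[G:H]<2p$ in $\SL_2(\bF_{p^a})$. The minimal index of a proper subgroup of $\PSL_2(\bF_{p^a})$ is $p^a+1$ (for $p^a$ outside a finite list of small exceptions, which I would exclude by further enlarging $a_0$), and $p^a+1>2p$ for $a\ge 3$; hence the image of $H\cap\SL_2(\bF_{p^a})$ in $\PSL_2(\bF_{p^a})$ is everything, and by perfectness $H\cap\SL_2(\bF_{p^a})=\SL_2(\bF_{p^a})$. Thus $H$ contains a conjugate of $\SL_2(\bF_{p^a})$, and parts (1) and (2) apply to $H$.

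I expect the main obstacle to be the cohomology vanishing $H^1(\SL_2(\bF_{p^a}),\ad^0 V)=0$: this is precisely where $a_0$ is forced to depend on $p$, and it is not a soft statement---one genuinely needs input from generic cohomology, or an explicit determination of $H^1(\SL_2(\bF_q),L(\nu))$ for the finitely many relevant $\nu$. Everything else (Dickson's theorem, the Steinberg tensor product theorem, the restriction--corestriction reduction, and the manipulation of regular semisimple elements) is bookkeeping.
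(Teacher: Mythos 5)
The paper does not prove this lemma; as it says immediately beforehand, it is quoted from \cite[Lemma 7.3]{Tho22a} and used as a black box. You correctly identify this, and ``cite it'' is exactly what the paper does.

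Your from-scratch sketch has the right architecture (Dickson to reduce to a single conjugate of $\SL_2(\bF_{p^b})$; Steinberg tensor product for irreducibility; restriction--corestriction to pass between the perfect core and $G$; generic cohomology for the $H^1$-vanishing; a large-index bound for part (3)), and this broadly matches how adequacy of $\Sym^{r-1}$ of $\SL_2(q)$ is established in the literature. But your verification of the eigenvalue condition has a genuine gap. You use a single regular semisimple $\gamma$ in the split torus $T$ and its eigenprojections $e_{\gamma,\alpha'}$. These span exactly the $T$-weight-$0$ subspace of $\ad V$ (the diagonal subalgebra in the eigenbasis), so the intermediate claim that this span ``meets every weight space of the diagonal torus acting on $\ad V$ nontrivially'' is simply false: it meets the nonzero $T$-weight spaces in $\{0\}$. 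Consequently the next step (``any irreducible submodule $W \subseteq \ad V$ contains a torus weight vector, so the appropriate $e_{\gamma,\alpha'}$ has nonzero image in $W$'') does not follow: a $T$-weight vector of nonzero weight in $W$ is $\tr$-orthogonal to every $e_{\gamma,\alpha'}$, and it is not true that every irreducible $\SL_2(\bF_{p^a})$-module appearing in $\ad V$ has a nonzero weight-$0$ vector. For instance with $p=5$ and $V=\Sym^3$, the constituent $L(1)\otimes L(1)^{(5)}$ of $\ad V$ has $T$-weights $\{\pm 1 \pm 5\}$, none of which is $0$. To close this gap one must let $\gamma$ range over many semisimple classes (split and nonsplit tori) and actually show that the $e_{g,\alpha}$ span $\End(V)$; this is the substantive content, not bookkeeping, and is why the proofs live in a Guralnick-type appendix. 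A similar caution applies to the $H^1$-vanishing: $H^1(\SL_2, L(\nu))$ is not zero for all $\nu$ (e.g.\ $\nu = 2p-2$), so one must check that the dangerous weights do not occur among the constituents of $\ad^0 V$ for the $V$'s at hand; ``apply generic cohomology together with the known vanishing'' hides the real case analysis.
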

We next give a definition that singles out a class of irreducible admissible representations of $\GL_2(F_v)$ ($F_v$ non-archimedean) with useful associated Langlands parameters. 
\begin{defn}\label{def_tamely_dihedral}
Let $F$ be a number field, let $v$ be a  finite place of $F$, and let $p$ be an odd prime such that $q_v \equiv -1 \text{ mod }p$. We say that an irreducible admissible $\bC[\GL_2(F_v)]$-representation $\pi_v$ is tamely dihedral of order $p$ if there is an isomorphism $\rec_{F_v}(\pi_v) \cong \Ind_{W_{F_v}}^{W_{F'_v}} \chi_v$, where $F'_v / F_v$ is the quadratic unramified extension and $\chi_v : W_{F'_v} \to \bC^\times$ is a character such that $\chi_v|_{I_{F'_v}}$ has order $p$. 
\end{defn}
Note in particular that if $\pi_v$ is tamely dihedral of order $p$, then $\rec_{F_v}(\pi_v)$ is irreducible and hence $\pi_v$ is supercuspidal. In applications, $\pi_v$ will be the local component of an automorphic representation $\pi$ of $\GL_2(\A_F)$. These representations $\pi_v$ are useful since they can frequently be forced to appear by the use of level-raising congruences. On the other hand, the presence of suitable tamely dihedral local factors can be used to show that the residual Galois representations associated to $\pi$ have large image --- see, for example, Proposition \ref{prop_large_image_everywhere}.

Finally, we recall the notion of an $\iota$-ordinary automorphic representation, and the related notion of ordinary Galois representation, following \cite[\S 2]{jackreducible}.
\begin{defn}\label{def_ordinary} Let $F$ be a totally real or CM number field, and let $p$ be a prime.
\begin{enumerate} 
\item Let $\pi$ be a RAESDC or RAECSDC automorphic representation of $\GL_n(\A_F)$ of weight $\lambda = (\lambda_\tau)_{\tau \in \Hom(F, \bC)}$, and let $\iota : \overline{\bQ}_p \to \bC$ be an isomorphism. Then $\pi$ is said to be $\iota$-ordinary if for each place $v | p$ of $F$ there exist smooth characters $\chi_{v, 1}, \dots, \chi_{v, n} : F_v^\times \to \bC^\times$ such that $\pi_v$ is a subquotient (necessarily the unique generic subquotient) of the normalized induction $i_{B_n(F_v)}^{\GL_n(F_v)} \chi_{v, 1} \otimes \dots \otimes \chi_{v, n}$, and for each $i = 1, \dots, n$ we have
\[ v_p( \iota \chi_i(\varpi_v) ) = \frac{1}{e_v} \sum_{\tau \in \Hom(F_v, \overline{\bQ}_p)}\left( \lambda_{\iota \tau, n + 1 - i} - \frac{n-1}{2} + i - 1\right), \]
where $v_p : \overline{\bQ}_p^\times \to \bQ$ is the valuation with $v_p(p) = 1$, and $e_v$ is the ramification index of $F_v / \bQ_p$.
\item Let $\rho : G_F \to \GL_n(\overline{\bQ}_p)$ be a continuous representation, and let $\lambda = (\lambda_\tau) \in (\Z^n_+)^{\Hom(F, \overline{\bQ}_p)}$. We say that $\rho$ is ordinary of weight $\lambda$ if for each place $v | p$ there exist continuous characters $\psi_{v, 1}, \dots, \psi_{v, n} : G_{F_v} \to \overline{\bQ}_p^\times$ and an isomorphism
\[ \rho|_{G_{F_v}} \sim \left( \begin{array}{cccc} \psi_{v, 1} & * & * & * \\ 0 & \psi_{v, 2} & * & * \\ \vdots & \ddots & \ddots & * \\ 0 & \cdots & 0 & \psi_{v, n} \end{array} \right), \]
where for each $i = 1, \dots, n$, the character 
\[ x \mapsto \psi_{v, i}(\Art_{F_v}(x)) \prod_{\tau \in \Hom(F_v, \overline{\bQ}_p)} \tau(x)^{\lambda_{\tau, n-i+1} + i - 1}\]
 of $\cO_{F_v}^\times$ has finite order. 
\end{enumerate} 
\end{defn}
According to \cite[Corollary 2.6]{jackreducible}, if $\pi$ is an $\iota$-ordinary automorphic representation of $\GL_n(\A_F)$, then $r_{\pi, \iota}$ is ordinary of weight $\iota \lambda$. Our arguments will frequently be restricted to representations which are ordinary (in either sense). In particular, our key automorphy lifting theorems in the residually reducible case (proved in \cite{jackreducible, All20}) apply only to ordinary representations, as do the level-raising results proved in \cite{Tho22a}. One of the tasks  in our final `mopping up' \S \ref{sec: the end} will be to show that the general case to be reduced to this one. The following lemma will be useful in this connection.
\begin{lemma}
Let $F$ be a totally real or CM number field, and let $\pi$ be a RAESDC or RAECSDC automorphic representation of $\GL_n(\A_F)$ of weight 0. Let $p$ be a prime such that for each place $v | p$, $\pi_v$ is a character twist of the Steinberg representation of $\GL_n(F_v)$. Then for any isomorphism $\iota : \overline{\bQ}_p \to \bC$, $\pi$ is $\iota$-ordinary.
\end{lemma}
\begin{proof}
See \cite[Lemma 5.6]{ger} (or carry out the easy check using Definition \ref{def_ordinary}). 
\end{proof}

\section{Automorphy of an untwisted tensor product}\label{sec:aut untwisted}

In this section we will carry out the first main step of our argument, namely the exploitation of the congruence (\ref{eqn_untwisted_congruence}) to prove the automorphy of an untwisted tensor product. We first introduce the notation necessary to give a precise statement, and then describe the structure of the proof. 

Let $F$ be a totally real number field, let $p \geq 5$ be a prime, let $0 < r < p$ be an integer, let $n = 2r$, let $\iota : \overline{\bQ}_p \to \bC$ be an isomorphism, and let $\pi$ be a regular algebraic, cuspidal automorphic representation of $\GL_2(\bA_F)$ satisfying the following conditions:
\begin{enumerate}
\item $\det r_{\pi, \iota} = \epsilon^{-1}$.
\item $\pi$ is of weight 0 and for each place $v | p$, $\pi_v$ is an unramified twist of the Steinberg representation; in particular, $\pi$ is $\iota$-ordinary.
\item There exists a place $v_0$ of $F$ such that $q_{v_0} \equiv -1 \text{ mod }p$ and $\pi_{v_0}$ is tamely dihedral of order $p$, in the sense of Definition \ref{def_tamely_dihedral}. 
\item $\overline{r}_{\pi, \iota}(G_F)$ contains a conjugate of $\SL_2(\bF_{p^a})$ for some $a > a_0(p)$, where $a_0(p)$ is as defined in the statement of Lemma \ref{lem_tensor_adequacy}. 
\item For each place $v \nmid p$ of $F$, $\pi_v$ is potentially unramified.
\end{enumerate}
Increasing $a$ if necessary, and replacing $\overline{r}_{\pi, \iota}$ by a conjugate, we can (and do) assume that the image of $\operatorname{Proj} \overline{r}_{\pi, \iota}$ is equal either to $\PSL_2(\bF_{p^a})$ or $\PGL_2(\bF_{p^a})$. Since $a_0(p) \geq 3$ by definition, we have $a > 3$.
\begin{lemma}\label{lem_hida_theory}
There exists a RAESDC automorphic representation $\sigma$ of $\GL_2(\bA_F)$ satisfying the following conditions:\begin{enumerate}
\item $\sigma$ is $\iota$-ordinary and for each embedding $\tau : F \to \overline{\bQ}_p$, $\mathrm{HT}_\tau(r_{\sigma, \iota}) = \{ 0, 2 \}$. 
\item We have $\det r_{\sigma, \iota} = \epsilon^{-2} \omega$, where $\omega$ denotes the Teichm\"uller lift of $\epsilon \text{ mod }p$.
\item There is an isomorphism $\overline{r}_{\sigma, \iota} \cong \overline{r}_{\pi, \iota}$.
\item $\sigma_{v_0}$ is an unramified twist of the Steinberg representation.
\item For each place $v \nmid p$ of $F$ such that $v \neq v_0$, $r_{\sigma, \iota}|_{G_{F_v}} \sim r_{\pi, \iota}|_{G_{F_v}}$.\end{enumerate}
\end{lemma}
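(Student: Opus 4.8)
The plan is to obtain $\sigma$ from $\pi$ in two stages. In the first stage one moves $\pi$ inside its ordinary Hida family to a classical companion $\pi_1$ of ``parallel weight $3$'' (Hodge--Tate weights $\{0,2\}$); in the second stage one performs a level-raising congruence at $v_0$, replacing the tamely dihedral local component there by a Steinberg one while leaving the local behaviour unchanged at all other places. For the first stage: by hypothesis (2) of this section $\pi$ is $\iota$-ordinary of parallel weight $2$, hence defines an arithmetic point of a Hida family for $\GL_2$ over $F$ of tame level equal to the prime-to-$p$ conductor of $\pi$, and Hida's control theorem for ordinary Hilbert modular forms produces a classical, $\iota$-ordinary specialization $\pi_1$ of parallel weight $3$, so that $\mathrm{HT}_\tau(r_{\pi_1, \iota}) = \{0,2\}$ for every $\tau : F \to \overline{\bQ}_p$ --- this is (1). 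Since $\pi$ and $\pi_1$ lie on the same Hida family, $\overline{r}_{\pi_1, \iota} \cong \overline{r}_{\pi, \iota}$, and $\pi_1$ remains $\iota$-ordinary; since inertial types at places $v \nmid p$ are locally constant along the family, $r_{\pi_1, \iota}|_{G_{F_v}} \sim r_{\pi, \iota}|_{G_{F_v}}$ for every finite $v \nmid p$ (in particular $\pi_{1, v_0}$ is again tamely dihedral of order $p$). Finally, $\det r_{\pi_1, \iota}$ is a twist of $\epsilon^{-2}$ by a character determined up to the choice of nebentypus, so we may arrange $\det r_{\pi_1, \iota} = \epsilon^{-2}\omega$ --- this is (2), and it is consistent with $\overline{r}_{\pi_1, \iota} \cong \overline{r}_{\pi, \iota}$ because $\overline{\epsilon^{-2}\omega} = \overline{\epsilon}^{-1} = \overline{\det r_{\pi, \iota}}$.

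For the second stage, observe that because $q_{v_0} \equiv -1 \bmod p$ and $\pi_{1, v_0}$ is tamely dihedral of order $p$, the wild inertia acts on $r_{\pi_1, \iota}|_{G_{F_{v_0}}}$ through a character of order $p$, which is trivial modulo $p$; hence $\overline{r}_{\pi_1, \iota}|_{G_{F_{v_0}}} \cong \overline{r}_{\pi, \iota}|_{G_{F_{v_0}}}$ is unramified and $\overline{r}_{\pi, \iota}(\Frob_{v_0})$ has eigenvalues with ratio $q_{v_0}$ --- precisely the condition permitting a level-raising congruence at $v_0$. Concretely, I would construct a lift $\rho : G_F \to \GL_2(\overline{\bZ}_p)$ of $\overline{r}_{\pi_1, \iota}$ with $\det \rho = \epsilon^{-2}\omega$ that is Steinberg at $v_0$, ordinary crystalline of Hodge--Tate weights $\{0,2\}$ at the places above $p$, and satisfies $\rho|_{G_{F_v}} \sim r_{\pi_1, \iota}|_{G_{F_v}}$ at the remaining finite $v \nmid p$ (a deformation-theoretic lifting argument of Ramakrishna--Khare--Wintenberger type, available because of the large residual image in hypothesis (4)), and then deduce that $\rho$ is automorphic from an ordinary automorphy lifting theorem --- one incorporating Ihara avoidance, so that the local type at $v_0$ is allowed to differ from that of $\pi_1$ --- with residual automorphy supplied by $\pi_1$. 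The automorphic representation $\sigma$ with $r_{\sigma, \iota} \cong \rho$ then realises (1)--(5): (1), (2), (3) are built into the construction of $\rho$, (4) is the Steinberg condition at $v_0$, and (5) holds since $\rho|_{G_{F_v}} \sim r_{\pi_1, \iota}|_{G_{F_v}} \sim r_{\pi, \iota}|_{G_{F_v}}$ for $v \nmid p$, $v \neq v_0$.

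The main obstacle is the second stage: one must switch the local component at $v_0$ from tamely dihedral to Steinberg while keeping the representation ``$\sim$''-constant at the entire (uncontrolled) set of other ramified places and ordinary at $p$. This is exactly where the hypothesis that $\overline{r}_{\pi, \iota}(G_F)$ contains a copy of $\SL_2(\bF_{p^a})$ with $p^a > \max(4r, 5, p^{a_0(p)})$ is used decisively, both to produce the Steinberg-at-$v_0$ lift $\rho$ (large image is needed to kill the relevant dual Selmer group) and to run the Taylor--Wiles--Kisin patching argument underlying the automorphy lifting theorem. By comparison, the Hida-theoretic weight change and the determinant bookkeeping of the first stage are routine.
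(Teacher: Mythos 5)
Your proposal is correct and follows essentially the paper's own proof: Hida theory handles the weight change to Hodge--Tate weights $\{0,2\}$, and a deformation-theoretic Khare--Wintenberger argument followed by an automorphy lifting theorem handles the change of local type at $v_0$ to Steinberg. The only organizational difference is that the paper uses Hida theory merely to produce a \emph{local} lift $\widetilde{r}_v$ at the $p$-adic places (via a global auxiliary $\pi'$ that is then discarded) and imposes all the local conditions --- ordinary of weight $\{0,2\}$ at $p$, Steinberg at $v_0$, fixed component at the other ramified places --- simultaneously in a single Khare--Wintenberger step with residual automorphy supplied by $\pi$ itself, rather than first fully constructing an intermediate weight-$\{0,2\}$ representation $\pi_1$ satisfying conditions (1), (2), (3), (5) and then running a second global step to fix (4).
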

Before giving the proof of Lemma \ref{lem_hida_theory}, we state the main theorem of this section. 
\begin{theorem}\label{thm_untwisted_tensor_product}
Let $0 < r < p$ be an integer and let $n = 2r$. Suppose that $\mathrm{SP}_{r-1}$ and $\mathrm{SP}_{r+1}$ hold. Let $\pi$ be the automorphic representation of $\GL_2(\A_F)$ introduced at the start of this section, and let $\sigma$ be an automorphic representation as in Lemma \ref{lem_hida_theory}. Then $r_{\pi, \iota} \otimes \Sym^{r-1} r_{\sigma, \iota}$ is automorphic, associated to an $\iota$-ordinary, RAESDC automorphic representation of $\GL_n(\bA_F)$.
\end{theorem} 
Here we remind the reader that the property $\mathrm{SP}_k$, stated in Conjecture \ref{conj_SP_n} of the introduction, asserts the existence of the $(k-1)$-fold symmetric power of any non-CM RAESDC automorphic representation of $\GL_2(\A_{F'})$, for any totally real field $F'$, in the several equivalent senses given in the statement of Lemma \ref{lem_the_meaning_of_existence}. By contrast, the representations $\pi$, $\sigma$ referred to in the statement of Theorem \ref{thm_untwisted_tensor_product} are the particular automorphic representations of $\GL_2(\A_F)$ that have been either given or whose existence is asserted by Lemma \ref{lem_hida_theory}. 
\begin{proof}[Proof of Lemma \ref{lem_hida_theory}]
This can be proved by a standard argument, along the lines of \cite[Proposition 3.1.7]{gee061}. We describe the salient details. First, we need to know the existence, for each $v \in S_p$, of a lift $\widetilde{r}_v : G_{F_v} \to \GL_2(\overline{\bZ}_p)$ of $\overline{r}_{\pi, \iota}|_{G_{F_v}}$ which is ordinary of Hodge--Tate weights $\{0, 2 \}$ and satisfies $\det \widetilde{r}_v|_{G_{F_v}} = \epsilon^{-2} \omega$. This can be proved by a global argument, using Hida theory as developed for Hilbert modular forms in \cite{Wil88}: put $\pi$ into a Hida family, take  a specialization in the appropriate weight to yield a new RAESDC automorphic representation $\pi'$, and set $\widetilde{r}_v = r_{\pi', \iota}|_{G_{F_v}}$.

We can then consider an appropriate deformation ring associated to $\overline{r}_{\pi, \iota}$. A convenient reference for us is \cite[\S 4.2]{Bel19}, according to which we need only specify the determinant of the lifts to be considered, a set of $S$ of places of $F$ where ramification is permitted, and for each $v \in S$ an irreducible component $\cC_v$ of an appropriate lifting ring (and therefore quotient $R_v$ of the universal lifting ring with $\Spec R_v = \cC_v$). We take lifts of determinant $\epsilon^{-2} \omega$, $S$ to be the set of places at which $\pi$ is ramified (which includes $S_p$ by assumption), and the irreducible component $\cC_v$ as follows:
\begin{itemize}
\item If $v \in S$ and $v \nmid p v_0$ then $\cC_v$ is the unique irreducible component of $R_v^\square$ containing the point determined by  $r_{\pi, \iota}|_{G_{F_v}}$ ($\cC_v$ is unique since this point is regular, by \cite[Proposition 1.2.2]{All16}).
\item If $v | p$ then $\cC_v$ is the unique irreducible component of $R_v^{\square, \tau_v, \mathbf{v}_v, N = 0}$ containing the lift $\widetilde{r}_v$ (here the notation $R_v^{\square, \tau_v, \mathbf{v}_v, N = 0}$ is that of \cite[Theorem 3.3.8]{Bel19}, denoting a potentially crystalline lifting ring with fixed inertial type $\tau_v$ and Hodge type $\mathbf{v}_v$, chosen to match the types of the lift $\widetilde{r}_v$; the uniqueness follows because, as stated in \emph{loc.~cit.}, $R_v^{\square, \tau_v, \mathbf{v}_v, N = 0}[1/p]$ is regular). An argument like the one in \cite[Lemma 3.10]{ger} then shows that every $\overline{\bQ}_p$-point of $\cC_v$ corresponds to an ordinary Galois representation (noting that any potentially semistable ordinary representation of the same Hodge type as $\widetilde{r}_v$ is necessarily potentially crystalline; comparing dimensions shows that the ordinary locus in $\Spec R_v^{\square, \tau_v, \mathbf{v}_v}$ is a union of irreducible components in $\Spec R_v^{\square, \tau_v, \mathbf{v}_v, N = 0}$).
\item If $v = v_0$, then $\cC_{v_0}$ is any irreducible component of $R_{v_0}^{\square, \mathbf{1}}$ on which $N$ is generically non-zero ($\mathbf{1}$ denotes the trivial inertial type). Since $\pi_{v_0}$ is tamely dihedral of order $p$, the reduction $\overline{r}_{\pi, \iota}|_{G_{F_{v_0}}}$ has the same semisimplification as $\Ind_{G_{F_{v_0}}}^{G_{F'_{v_0}}}\overline{\psi}$, where $F'_{v_0}/F_{v_0}$ is the quadratic unramified extension and $\overline{\psi}:G_{F'_{v_0}} \to \Fpbarx$ is a character whose inertial restriction $\overline{\psi}|_{I_{F'_{v_0}}}$ has order dividing $p$, and is therefore trivial. We deduce that $\overline{r}_{\pi, \iota}|_{G_{F_{v_0}}}$ is reducible, unipotently ramified, and Frobenius lifts have eigenvalues with ratio $-1 \equiv q_{v_0} \text{ mod }{p}$, so we can write down unipotently ramified lifts to $\overline{\bQ}_p$ by hand in order to see that $\cC_{v_0}$ exists.
\end{itemize}
We can now cite \cite[Proposition 4.2.6]{Bel19} to conclude that the deformation ring $R$ corresponding to this data has Krull dimension at least 1. We can then apply the Khare--Wintenberger method: \cite[Theorem 10.2]{jack} implies that $R$ is a finite $\cO$-algebra, therefore that $R[1/p]$ is non-zero, implying the existence of a lift $\rho : G_F \to \GL_2(\overline{\bQ}_p)$ unramified outside $S$ and with the prescribed local behaviour at places of $S$. An automorphy lifting theorem (for example, \cite[Theorem 9.1]{jack}, valid over a CM field, which can be applied by first making a soluble base change) can then be applied to conclude that $\rho = r_{\sigma, \iota}$ for some RAESDC automorphic representation $\sigma$ satisfying the requirements of the lemma. 
\end{proof}

The rest of \S \ref{sec:aut untwisted} will describe the proof of Theorem \ref{thm_untwisted_tensor_product}. We make some introductory remarks to orient the reader. First, the theorem is trivial if $r = 1$, so we assume for the remainder of \S \ref{sec:aut untwisted} that $r > 1$. The arguments will be carried out with RACSDC twists of our RAESDC representations over CM extensions of $F$, a detail we suppress in these remarks.

The basic idea is simple. The representation $r_{\pi, \iota} \otimes \Sym^{r-1} r_{\sigma, \iota}$ is Hodge--Tate regular and essentially self-dual, by construction, and irreducible, by Lemma \ref{lem_irreducibility_of_tensor_products}. One therefore expects it to be associated to a RAESDC automorphic representation of $\GL_n(\A_F)$. On the other hand, the isomorphism (\ref{eqn_untwisted_congruence}) shows that the associated residual representation is reducible, being a direct sum of two summands, which are residually automorphic, by our assumption that conjectures  $\mathrm{SP}_{r-1}$ and $\mathrm{SP}_{r+1}$ hold. We would like to conclude the automorphy of the tensor product, using an automorphy lifting theorem. 

However, the results of \cite{jackreducible,All20} require the Galois representation in question to be locally Steinberg at some prime-to-$p$ place, a condition that can never be satisfied for a tensor product of two representations of dimension $> 1$. This means we need to work harder. The local behaviour of the representations $\pi$, $\sigma$ at the place $v_0$ is chosen so that $(r_{\pi, \iota} \otimes \Sym^{r-1} r_{\sigma, \iota})|_{G_{F_{v_0}}}$ is indecomposable. One could use the existence of this place to ensure that the space of reducible deformations of the associated residual representation has low dimension,
 which is the main role of the locally Steinberg place in \cite{jackreducible, All20}. To be able to conclude using the theorems proved in these papers, we make use of various different auxiliary deformation problems defined over extensions $F_1 \subset F_2 \subset F_3$ of $F$ to reduce to the case of locally Steinberg deformation problems which we have already treated. The deformation problems will differ only in the local conditions at places over $v_0$ (which will split completely in $F_1$). We indicate the nature of these deformation problems here:
\begin{enumerate}
	\item $\cS_{F_1}$ will have a local deformation condition over $v_0$ corresponding to the representation $r_{\pi, \iota} \otimes \Sym^{r-1} r_{\sigma, \iota}$ which we are interested in.
	\item $\cS'_{F_1}$ will have a Steinberg-type local condition at places over $v_0$. The problems $\cS_{F_1}$ and $\cS'_{F_1}$ will be congruent, in the sense that the special fibres of their deformation rings will have isomorphic reduced subschemes.
	\item $\cS'_{F_2}$ will have a Steinberg-type local condition at the places over $v_0$. We will moreover choose $F_2$ so that if $v | v_0$ is a place of $F_2$ then $q_v \equiv 1 \text{ mod } p$ and the residual representation is trivial at $v$ (the local Steinberg lifting ring is then the one considered in \cite{tay, jackreducible}, where it is shown to be a domain). The local conditions defining $\cS'_{F_1}$, and $\cS'_{F_2}$ are therefore compatible, implying the existence of a morphism $R_{\cS'_{F_2}} \to R_{\cS'_{F_1}}$ of deformation rings, induced by `restriction of deformations to $G_{F_2}$'. (The existence of this morphism will be proved in Lemma \ref{lem_isomorphism_of_residue_rings} below.)
		\item $\cS_{F_3}$ will have a ``unipotent-ramification'' condition at places $v | v_0$ of $F_3$. Any lift of type $\cS'_{F_2}$ or $\cS_{F_1}$ will have a restriction to $F_3$ of type $\cS_{F_3}$. 
\end{enumerate}
The deformation problem $\cS'_{F_2}$ fits in to the framework of \cite{jackreducible,All20}, and we can therefore prove finiteness over an Iwasawa algebra $\Lambda$ of the corresponding deformation ring. From there we can deduce finiteness of  $R_{\cS'_{F_1}}$ and (using the isomorphism of reduced special fibres) $R_{\cS_{F_1}}$, and also show the existence of sufficiently many `generic' dimension 1, characteristic $p$ prime ideals in $R_{\cS_{F_1}}$ (which have several useful properties, including that the associated Galois representations are absolutely irreducible). The methods and results of \cite{jackreducible,All20} will then be sufficient to prove modularity of $r_{\pi, \iota} \otimes \Sym^{r-1} r_{\sigma, \iota}$, first considering its restriction to $F_3$ and then applying soluble descent.

Now we give the details of the proof. We first need to choose an auxiliary place, whose existence is the content of the following lemma. This place is chosen so that the $n$-dimensional residual representation, restricted to the decomposition group at this place, has no ramified deformations, a convenient property that is used in \cite{jackreducible} to simplify the analysis of certain spaces of automorphic forms. Indeed, we can then shrink the level at this place in our spaces of automorphic forms without acquiring contributions from new lifts of our fixed residual representation. This allows us to work with neat or `sufficiently small' level subgroups.  
\begin{lemma}\label{lem_genericity}
There exists a place $v \nmid p$ of $F$ at which $\pi$ is unramified such that if $\alpha_v, \beta_v \in \overline{\bF}_p$ are the eigenvalues of $\overline{r}_{\pi, \iota}(\Frob_v)$ (with multiplicity), then $(\alpha_v / \beta_v)^i \neq q_v^{\pm 1} \text{ mod }p$ for each $i = 0, \dots, {r}$. In particular, $H^2(F_v, \ad (\overline{r}_{\pi, \iota} \otimes \Sym^{r-1} \overline{r}_{\pi, \iota})^{ss}) = 0$.
\end{lemma}
Here $\ad$ denotes the adjoint representation on $\mathfrak{gl}_n$. 
\begin{proof}
The image of the homomorphism $\operatorname{Proj}  \overline{r}_{\pi, \iota} \times \epsilon : G_F \to \PGL_2(\bF_{p^a}) \times \bF_p^\times$ contains $\PSL_2(\bF_{p^a}) \times \{ 1 \}$ and an element of the form $(h, -1)$ (image of complex conjugation), where $h^2 = 1$, $h \neq 1$. Conjugating $\overline{r}_{\pi, \iota}$ by an element of $\GL_2(\bF_{p^a})$, we can assume that $h = \diag(1, -1)$ mod centre. By the Chebotarev density theorem, the first assertion of the lemma will be proved if we can find $g \in \PSL_2(\bF_{p^a})$ such that if $\alpha_v / \beta_v \in \overline{\bF}_p^\times$ is the eigenvalue ratio of $gh$, then $\alpha_v / \beta_v$ has order greater than $2r$. (In particular, we will have $q_v = -1 \text{ mod }p$.) We consider elements of the form $g = \diag(\alpha, \alpha^{-1}) \text{ mod centre}$, $\alpha \in \bF_{p^a}^\times$. The eigenvalue ratio of $gh$ is then $-\alpha^2$, and we're done if we can choose e.g. $\alpha$ to have order greater than $4r$; this is possible provided that $p^a - 1 > 4r$. This inequality holds because we assume that $a > 3$ and $p > \max(4, r)$.

The final sentence follows from Tate duality and existence of an isomorphism 
\[ (\overline{r}_{\pi, \iota} \otimes \Sym^{r-1} \overline{r}_{\pi, \iota})^{ss} \cong \Sym^r \overline{r}_{\pi, \iota} \oplus \det \overline{r}_{\pi, \iota} \otimes \Sym^{r-2} \overline{r}_{\pi, \iota} \]
(showing in particular that the eigenvalues of $(\overline{r}_{\pi, \iota} \otimes \Sym^{r-1} \overline{r}_{\pi, \iota})^{ss}(\Frob_v)$ are among $\alpha_v^{r-i} \beta_v^i$ ($i = 0, \dots, r$)). 
\end{proof}
We fix a choice of place $v_a$ of $F$ satisfying the conclusion of Lemma \ref{lem_genericity}. Let $M / F$ be the extension of $F(\zeta_p)$ cut out by $\overline{r}_{\pi, \iota}$. Let $X_0$ be a finite set of prime-to-$p$ places of $F$ at which $\pi$ is unramified such that for each subfield $M / M' / F$ Galois over $F$ of degree $ > 1$, there is an element of $X_0$ which does not split in $M'$, and such that $v_a \in X_0$. One important consequence of this property is that if $L/F$ is an $X_0$-split extension (i.e.~every place in $X_0$ splits completely in $L$), then $L$ and $M$ are linearly disjoint over $F$.

We now choose suitable RACSDC twists of our RAESDC representations over CM extensions of $F$. Let $K_0 / \bQ$ be an imaginary quadratic field in which $p$ and the residue characteristic of any element of $X_0 \cup \{ v_0 \}$ split. We fix a choice of place $u_0 | p$ of $K_0$.
\begin{lemma}\label{lem_numerology_of_extensions}
We can find totally real extensions $F_2 / F_1 / F$, with $F_2 / F$ and $F_1 / F$ both soluble, and satisfying the following conditions:
\begin{enumerate}
\item $F_1$, $F_2$ are $X_0$-split and the extension $K_1 = K_0 \cdot F_1 / F_1$ is everywhere unramified. $F_1 / F$ has even degree. 
\item For each place $v | p$ of $F_1$, $\overline{r}_{\pi, \iota}|_{G_{F_{1, v}}}$ is trivial, $\zeta_p \in F_{1, v}$, and $[F_{1, v} : \bQ_p] > n(n+1)/2 +1$. For each place $v \nmid p v_0$ of $K_1$, $\mathrm{BC}_{F_1 / F}(\pi)_v$ is unramified. The place $v_0$ splits in $F_1 / F$.
\item For each place $v | v_0$ of $F_2$, $\mathrm{BC}_{F_2 / F}(\pi)_v$ is unramified, $\overline{r}_{\pi, \iota}|_{G_{F_{2, v}}}$ is trivial (in particular, considering the determinant, $q_v = 1 \text{ mod }p$), and if $p^N || (q_v - 1)$ then $p^N > n$. Each place $v | p$ of $F_1$ splits in $F_2$.
\item Let $T_{F_2}$ be the set of places of $F_2$ above $v_0$. If $d_{0, F_2}$ denotes the $\bZ_p$-rank of $\ker(\Delta \to \Delta_0)^{c=-1}$, where $\Delta$ is the Galois group of the maximal abelian pro-$p$ extension of $K_2 = K_0 \cdot F_2$ unramified outside $p$, and $\Delta_0$ is the Galois group of the maximal $T_{F_2}$-split subextension of this extension, then $d_{0, F_2}> 1$.
\item There exist everywhere unramified characters $X, Y : K_1^\times \backslash \bA_{K_1}^\times \to \bC^\times$ of type $A_0$ such that if $x = r_\iota(X)$, $y = r_\iota(Y)$, then $\mathrm{HT}_\tau(x) = \mathrm{HT}_\tau(y) = \{ 0 \}$ if $\tau : K \to \overline{\bQ}_p$ induces the place $u_0$ and moreover $x x^c = \epsilon$, $y y^c = \omega$, and $\overline{x} = \overline{y}$. 
\end{enumerate} 
\end{lemma}
\begin{proof}
We may construct characters $X', Y' : K_0^\times \backslash \bA_{K_0}^\times \to \bC^\times$ of type $A_0$, unramified above the residue characteristic $p_0$ of $v_0$ and the residue characteristic of each element of $X_0$, such that $\mathrm{HT}_{u_0}(x') = \mathrm{HT}_{u_0}(y') = \{ 0 \}$ and $x' (x')^c = \epsilon$, $y' (y')^c = \omega$, and $\overline{x}' = \overline{y}'$, by an application of \cite[Lemma A.2.5]{BLGGT}. We then choose $F_1 / F$ to satisfy the first two points and moreover so that if $X, Y$ denote the composites of $X', Y'$ with $\mathbf{N}_{K_1 / K_0}$, then $X, Y$ are everywhere unramified. We assume moreover that $F_1 / F$ contains an abelian extension of some odd degree $d > 1$ (in which $v_0$ necessarily splits, since it splits in $F_1$).
To satisfy the third point, we can choose $F_2 = F_1 \cdot F_3$ for an $X_0$-split soluble extension $F_3$ such that for each place $v | v_0$ of $F_3$, $F_{3, v}$ is sufficiently large. 

To complete the proof, it remains to show that the fourth point is satisfied. We do this by repeating part of the proof of \cite[Theorem 7.1]{jackreducible}. By class field theory, $d_{0, F_2}$ is equal to the $\bZ_p$-rank of $\left(\overline{\cO_{K_2, T_{F_2}}^\times}\right)^{c=-1}$ (where the overline denotes closure in $\prod_{v | p} \cO_{K_2}^\times(p)$). This is at least the $\bZ_p$-rank of $\left(\overline{\cO_{K_1, T_{F_1}}^\times}\right)^{c=-1}$, where $T_{F_1}$ is the set of places of $F_1$ lying above $v_0$. By \cite[Proposition 19]{Mai02}, and our assumption that $F_1 / F$ contains an abelian extension of degree $d$, this quantity is at least $d > 1$. Therefore $d_{0, F_2} > 1$ and we are done. 
\end{proof}
Let \[\pi_1 = \mathrm{BC}_{K_1 / F}(\Sym^r \sigma) \otimes X Y^{-r} | \cdot |^{(r-1)/2},\] \[\pi_2 = \mathrm{BC}_{K_1 / F}(\Sym^{r-2} \sigma) \otimes X^{-1} Y^{2-r} | \cdot |^{(r-1)/2}.\] Let $s = (r_{\pi, \iota} \otimes \Sym^{r-1} r_{\sigma, \iota})|_{G_{K_1}} \otimes y^{1-r}$.
\begin{lemma}
\begin{enumerate} \item There is an isomorphism $s^c \cong s^\vee \otimes \epsilon^{1-2r}$.
\item $\pi_1, \pi_2$ are cuspidal and conjugate self-dual, $\pi_1 \boxplus \pi_2$ is regular algebraic and $\iota$-ordinary, and there is an isomorphism
\[ \overline{r}_{\pi_1 \boxplus \pi_2, \iota} \cong \overline{s}. \]
\end{enumerate} 
\end{lemma}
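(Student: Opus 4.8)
The plan is to establish both statements by direct manipulation of the associated Galois representations, using the symmetric power liftings provided by the hypotheses $\mathrm{SP}_{r-1}$ and $\mathrm{SP}_{r+1}$. For part (1), write $A = (r_{\pi, \iota} \otimes \Sym^{r-1} r_{\sigma, \iota})|_{G_{K_1}}$, so that $s = A \otimes y^{1-r}$. Since $r_{\pi,\iota}$ and $r_{\sigma,\iota}$ are restrictions to $G_{K_1}$ of representations of $G_{F_1}$, conjugation by a lift of the nontrivial element of $\Gal(K_1/F_1)$ fixes the isomorphism classes of $A$, $\epsilon$ and $\omega$, so $s^c \cong A \otimes (y^c)^{1-r}$. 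On the other hand, $\det r_{\pi,\iota} = \epsilon^{-1}$ gives $r_{\pi,\iota}^\vee \cong r_{\pi,\iota} \otimes \epsilon$, and $\det r_{\sigma,\iota} = \epsilon^{-2}\omega$ gives $\Sym^{r-1} r_{\sigma,\iota}^\vee \cong \Sym^{r-1} r_{\sigma,\iota} \otimes \epsilon^{2r-2}\omega^{1-r}$, whence $s^\vee \cong A \otimes \epsilon^{2r-1}\omega^{1-r} y^{r-1}$ and $s^\vee \otimes \epsilon^{1-2r} \cong A \otimes \omega^{1-r} y^{r-1}$. Since $y y^c = \omega$ by the choice of $Y$ in Lemma \ref{lem_numerology_of_extensions}, we have $(y^c)^{1-r} = (\omega/y)^{1-r} = \omega^{1-r} y^{r-1}$, and the two expressions coincide.

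For part (2), $\mathrm{SP}_{r+1}$ and $\mathrm{SP}_{r-1}$ ensure that $\Sym^r \sigma$ and $\Sym^{r-2}\sigma$ exist as RAESDC automorphic representations of $\GL_{r+1}(\bA_F)$ and $\GL_{r-1}(\bA_F)$, cuspidal when $r \geq 3$ (with $\Sym^0 \sigma$ a Hecke character when $r = 2$), and local-global compatibility gives $r_{\Sym^r \sigma, \iota} \cong \Sym^r r_{\sigma,\iota}$, $r_{\Sym^{r-2}\sigma, \iota} \cong \Sym^{r-2} r_{\sigma, \iota}$ (the normalizing twists cancel, as for $T$ in \S\ref{sec:flt tp}). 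I would then check cuspidality of $\pi_1, \pi_2$: the base change of a cuspidal automorphic representation of $\GL_m$ along a finite extension stays cuspidal once the associated $p$-adic Galois representation remains irreducible, and $\Sym^r r_{\sigma,\iota}$, $\Sym^{r-2} r_{\sigma,\iota}$ are strongly irreducible since $\overline{r}_{\sigma,\iota}(G_F)$ contains a conjugate of $\SL_2(\bF_{p^a})$ with $p^a$ large and $r-1 < p$; twisting by characters preserves cuspidality. Conjugate self-duality of $\pi_1$, $\pi_2$, hence of $\pi_1 \boxplus \pi_2$, follows by the same computation as in part (1), now also using $x x^c = \epsilon$. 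For $\iota$-ordinarity: $\sigma$ is $\iota$-ordinary with $\mathrm{HT}_\tau(r_{\sigma,\iota}) = \{0,2\}$ for every $\tau$, so $\Sym^r r_{\sigma,\iota}$ and $\Sym^{r-2} r_{\sigma,\iota}$ are $\iota$-ordinary with Hodge--Tate weights $\{0, 2, \dots, 2r\}$ and $\{0, 2, \dots, 2r-4\}$, and twisting by the everywhere unramified (hence crystalline at $p$) characters $XY^{-r}|\cdot|^{(r-1)/2}$, $X^{-1}Y^{2-r}|\cdot|^{(r-1)/2}$ and restricting to $G_{K_1}$ preserves this.

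It remains to treat regular algebraicity and the residual isomorphism together. Reducing $s$ modulo $p$, using $\overline{r}_{\sigma,\iota} \cong \overline{r}_{\pi,\iota}$ and the $\GL_2$ decomposition $\mathrm{Std} \otimes \Sym^{r-1} \mathrm{Std} \cong \Sym^r \mathrm{Std} \oplus (\det \otimes \Sym^{r-2} \mathrm{Std})$ (semisimple in characteristic $p$ since $r < p$), one obtains $\overline s \cong \big(\Sym^r \overline{r}_{\pi,\iota}|_{G_{K_1}} \otimes \overline{y}^{\,1-r}\big) \oplus \big(\det \overline{r}_{\pi,\iota} \otimes \Sym^{r-2}\overline{r}_{\pi,\iota}|_{G_{K_1}} \otimes \overline{y}^{\,1-r}\big)$. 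One then matches the $(r+1)$- and $(r-1)$-dimensional summands against $\overline{r}_{\pi_1,\iota}$ and $\overline{r}_{\pi_2,\iota}$, using $\overline{x} = \overline{y}$ and the Hodge--Tate normalizations of $X, Y$ from Lemma \ref{lem_numerology_of_extensions}; this simultaneously yields $\overline{r}_{\pi_1 \boxplus \pi_2, \iota} \cong \overline s$ and shows that the $2r$ Hodge--Tate weights of $r_{\pi_1,\iota} \oplus r_{\pi_2,\iota}$ are pairwise distinct, so $\pi_1 \boxplus \pi_2$ is regular algebraic.

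The main obstacle is precisely this last bookkeeping: one must verify that the twist $|\cdot|^{(r-1)/2}$, the character $\det \overline{r}_{\pi,\iota} \equiv \epsilon^{-1}$ produced by the Clebsch--Gordan decomposition, and the characters $X, Y$ (with their prescribed Hodge--Tate weights) all fit together so that each $\overline{r}_{\pi_i, \iota}$ is the prescribed summand of $\overline s$ and the combined Hodge--Tate weights are regular; once the normalizations are pinned down, everything else is formal.
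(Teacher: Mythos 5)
Your proposal is correct and follows essentially the same route as the paper: part (1) is the same character computation using $\det r_{\pi,\iota}=\epsilon^{-1}$, $\det r_{\sigma,\iota}=\epsilon^{-2}\omega$ and $yy^c=\omega$, and part (2) rests, as in the paper, on the mod-$p$ Clebsch--Gordan decomposition of $\overline{r}_{\pi,\iota}\otimes\Sym^{r-1}\overline{r}_{\pi,\iota}$ together with the identity $\overline{x}=\overline{y}$ to match the summands with $\overline{r}_{\pi_1,\iota}$ and $\overline{r}_{\pi_2,\iota}$ (the paper dispatches cuspidality, conjugate self-duality and regularity as ``routine calculations'' and cites \cite[Lemma 2.6]{Clo14} for $\iota$-ordinarity, which your symmetric-power-of-ordinary argument reproduces). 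The final ``bookkeeping'' you defer is exactly the pair of displayed identities in the paper's proof, and your stated normalizations make it go through.
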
 
\begin{proof}
The first part is a routine calculation. For the second, we observe that since $\overline{r}_{\pi, \iota} \cong \overline{r}_{\sigma, \iota}$, we have
\begin{multline*}  \overline{s} \cong (\overline{r}_{\sigma, \iota} \otimes \Sym^{r-1} \overline{r}_{\sigma, \iota})|_{G_{K_1}} \otimes \overline{y}^{1-r}
\\ \cong  (\Sym^r \overline{r}_{\sigma, \iota})|_{G_{K_1}} \otimes \overline{y}^{1-r} \oplus (\Sym^{r-2} \overline{r}_{\sigma, \iota})|_{G_{K_1}} \otimes \overline{y}^{1-r} \epsilon^{-1}.
\end{multline*} 
Another routine calculation shows that $\pi_1, \pi_2$ are cuspidal, conjugate self-dual, and that $\pi_1 \boxplus \pi_2$ is regular algebraic. We can apply \cite[Lemma 2.6]{Clo14} to see that $\pi_1 \boxplus \pi_2$ is moreover $\iota$-ordinary. It remains to check the associated residual representation is as claimed. We have 
\[ r_{\pi_1 \boxplus \pi_2, \iota} = r_{\pi_1 | \cdot |^{(1-r)/2},\iota}\oplus r_{\pi_2 | \cdot |^{(-1-r)/2},\iota}, \]
and
\[ r_{\pi_1 | \cdot |^{(1-r)/2},\iota}\cong (\Sym^r r_{\sigma, \iota})|_{G_{K_1}} \otimes x y^{-r}, \]
\[r_{\pi_2 | \cdot |^{(-1-r)/2},\iota} \cong (\Sym^{r-2} r_{\sigma, \iota})|_{G_{K_1}} \otimes x^{-1} y^{2-r} \epsilon^{-1}. \]
The residual representations match up because $\overline{x} = \overline{y}$.
\end{proof}
Let $E / \bQ_p$ be a sufficiently large coefficient field. We can and do conjugate $s$ so that it takes values in $\GL_n(\cO)$ and extends to a homomorphism $s : G_{F_1} \to \cG_n(\cO)$ such that $\nu \circ s = \epsilon^{1-n} \delta_{K_1 / F_1}^{n}$ with $\delta_{K_1 / F_1}$ the quadratic character for the extension $K_1/F_1$ (see, e.g.~, \cite[\S1.1]{BLGGT} for a discussion of multipliers of tensor products of polarized Galois representations). Let $S_{p, F_1}$ the set of $p$-adic places of $F_1$, let $T_{F_1}$ denote the set of places of $F_1$ lying above $v_0$, and let $T'_{F_1}$ denote the set of places lying above $v_a$. Let $S_{F_1} = S_{p, F_1} \cup T_{F_1}$ and choose for each $v \in S_{F_1}$ a place $\wv$ of $K_1$ lying above $v$. Let $\widetilde{S}_{F_1} = \{ \wv \mid v \in S_{F_1} \}$. In the remainder of this section, we will use deformation theory for the group $\cG_n$, as set up in \cite{jackreducible}. In particular, in the following lemma we are considering all liftings (not just those with fixed determinant). 
\begin{lemma}\label{lem_application_of_Breuil_Mezard}
Let $v \in T_{F_1}$ and let $R_{v}^\square$ denote the unrestricted lifting ring of $\overline{s}|_{G_{K_{1, \wv}}}$, and let $R_{v}$ denote the quotient, defined in \cite{shottonGLn}, which corresponds to the inertial type $\tau_{s}$ of $s|_{G_{K_{1, \wv}}}$.
Let $R'_{v}$ denote the quotient corresponding to the inertial type $\tau_{\mathrm{Sp}_n}$ of the special representation (i.e.\ the image of the Steinberg representation under $\rec_{K_{1, \wv}}^T$). Then both $R_{v}$ and $R_{v}'$ are non-zero and there is an isomorphism $(R_{v} / (\varpi))^{red} \cong (R'_{v} / (\varpi))^{red}$ of $R_{v}^\square$-algebras.
\end{lemma}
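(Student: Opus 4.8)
The plan is to recognise this as the $\GL_n$-analogue, at the place $v_0$, of the local input underlying Taylor's ``Ihara avoidance'' method \cite{tay}, and to deduce it from Shotton's proof of the Breuil--M\'ezard conjecture for $\GL_n$ in residue characteristic different from $p$ \cite{shottonGLn}. For the non-vanishing: since $v_0$ splits in both $K_1/F$ and $F_1/F$, for $v \in T_{F_1}$ we have $K_{1,\wv} = F_{v_0}$; by the definition of $\tau_s$ the restriction $s|_{G_{K_{1,\wv}}}$ is a lift of $\overline s|_{G_{K_{1,\wv}}}$ over $\cO$ of inertial type $\tau_s$, so it determines an $\cO$-point of $\Spec R_v$, whence $R_v \neq 0$; and $\overline s|_{G_{K_{1,\wv}}}$ is unramified --- because the order-$p$ characters cutting out $\pi_{v_0}$ become trivial modulo $\varpi$, so $\overline r_{\pi, \iota}|_{G_{F_{v_0}}}$, hence $\overline s|_{G_{K_{1,\wv}}}$, is unramified --- so its Teichm\"uller lift is an unramified lift, a fortiori of trivial inertial type $\tau_{\mathrm{Sp}_n}$, whence $R'_v \neq 0$.

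The key step is to identify the two inertial types and observe that they agree modulo $\varpi$. As $\sigma_{v_0}$ is an unramified twist of the Steinberg representation, $r_{\sigma, \iota}|_{G_{F_{v_0}}}$ --- and hence $\Sym^{r-1} r_{\sigma, \iota}|_{G_{F_{v_0}}}$ --- has trivial inertial type. As $\pi_{v_0}$ is tamely dihedral of order $p$ with $q_{v_0} \equiv -1 \bmod p$, we have $r_{\pi, \iota}|_{I_{F_{v_0}}} \cong \psi \oplus \psi^{-1}$ for a character $\psi$ of $I_{F_{v_0}}$ of exact order $p$ valued in $\cO^\times$ (here $q_{v_0} \equiv -1 \bmod p$ is what identifies the Galois conjugate of $\psi$ with $\psi^{-1}$). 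Consequently $\tau_s \cong \psi^{\oplus r} \oplus (\psi^{-1})^{\oplus r}$, whereas $\tau_{\mathrm{Sp}_n}$ is the trivial inertial type of rank $n = 2r$. Since $\psi$ has order $p$ and the residue field $k$ has characteristic $p$, $\overline\psi$ is trivial, so $\tau_s$ and $\tau_{\mathrm{Sp}_n}$ have the same semisimplified reduction modulo $\varpi$: the trivial inertial type of rank $2r$.

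It then remains to apply Shotton's results. Both $R_v$ and $R'_v$ are $\cO$-flat quotients of $R_v^\square$, and by \cite{shottonGLn} the cycle of the special fibre of the fixed-type quotient attached to an inertial type $\tau$, viewed inside $\Spec R_v^\square/(\varpi)$, is given by the Breuil--M\'ezard formula, which expresses it purely in terms of $\overline s|_{G_{K_{1,\wv}}}$ and of the reduction modulo $\varpi$ of $\tau$. As $\tau_s$ and $\tau_{\mathrm{Sp}_n}$ have the same such reduction, the cycles of $R_v/(\varpi)$ and of $R'_v/(\varpi)$ coincide; in particular these two quotients of $R_v^\square/(\varpi)$ have the same minimal primes, so they cut out the same reduced closed subscheme of $\Spec R_v^\square/(\varpi)$, which is the asserted isomorphism $(R_v/(\varpi))^{red} \cong (R'_v/(\varpi))^{red}$ of $R_v^\square$-algebras.

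The substantive input is entirely Shotton's; what remains is routine --- the type computation above, and the standard fact that equality of Breuil--M\'ezard cycles (which have non-negative coefficients) forces equality of supports, hence of reduced special fibres. The main things to be careful about are the precise sense in which an ``inertial type'' is fixed in \cite{shottonGLn} (namely the restriction to inertia of the associated Weil--Deligne representation, with the monodromy operator left unconstrained) and the fact that these fixed-type quotients need not themselves be reduced modulo $\varpi$ --- which is exactly why the conclusion is phrased for reduced subschemes. An alternative route, avoiding the full strength of the Breuil--M\'ezard conjecture, would be to identify $R_v$ via Shapiro's lemma with a lifting ring over the unramified quadratic extension $F'_{v_0}/F_{v_0}$, whose residue cardinality $q_{v_0}^2$ is $\equiv 1 \bmod p$ and where Taylor's original analysis in \cite{tay} applies directly; but this requires a parallel treatment of $R'_v$ and is more roundabout.
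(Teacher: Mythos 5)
The proposal has a genuine gap, which stems from a wrong convention for Shotton's inertial types. You explicitly assume that an inertial type in \cite{shottonGLn} records only $r|_{I}$ ``with the monodromy operator left unconstrained''; the paper states the opposite (``We are therefore incorporating a monodromy operator into our inertial types''), and the subsequent use of $R'_v$ requires it, e.g.\ the identification with the (integral domain) local Steinberg lifting ring $R^{St}_v$ of \cite{jackreducible} in Lemma \ref{lem_isomorphism_of_residue_rings} would be false if $R'_v$ allowed $N=0$ lifts as well. With the correct convention, $\tau_{\mathrm{Sp}_n}$ is the Weil--Deligne type with trivial inertia action and \emph{regular unipotent} $N$, not the trivial type; and $\tau_s$ is not $\psi^{\oplus r}\oplus(\psi^{-1})^{\oplus r}$ with $N=0$ either, since $\Sym^{r-1}$ of the Steinberg contributes monodromy: its $N$ has two Jordan blocks of size $r$. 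This has two consequences: your non-vanishing argument for $R'_v$ fails (the Teichm\"uller lift has $N=0$, hence has the unramified inertial type and is not a point of $R'_v$), and more seriously, $\tau_s$ and $\tau_{\mathrm{Sp}_n}$ do \emph{not} reduce to the same object modulo $\varpi$ in the naive sense --- they carry distinct nilpotent orbits --- so one cannot conclude equality of cycles directly from a statement of the form ``the Breuil--M\'ezard formula depends only on $\tau$ mod $\varpi$''.

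The paper gets around exactly this difficulty by transferring the problem to a division algebra $D_v^\times$ of rank $n$ via Dotto's inertial Jacquet--Langlands correspondence \cite{Dot18}. The Steinberg inertial type $\tau_{\mathrm{Sp}_n}$ corresponds to the \emph{trivial} character $\mathbf{1}$ of $\cO_{D_v}^\times$, while $\tau_s$ corresponds to the order-$p$ character $\sigma_v$; on the $D_v^\times$ side there is no monodromy operator at all, and $\overline{\sigma_v}=\overline{\mathbf{1}}$ trivially. One then invokes \cite[Theorem 6.1]{Dot18}, which says that the resulting cycle in $Z^{n^2}(R_v^\square/(\varpi))$ depends only on the reduction modulo $\varpi$ of the $D_v^\times$-type. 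This Jacquet--Langlands matching, and the verification (via \cite{bushnell-henniart-zero}) that the level-zero $D_v^\times$-types $\mathbf{1}$ and $\sigma_v$ land in the Bernstein components of $\mathrm{Sp}_n$ and of $\pi_{\tau_s}$ respectively, is the substantive input the proposal is missing. Your final passage from equality of cycles to equality of reduced special fibres is fine and matches the paper (via equidimensionality of $R_v^\square/(\varpi)$); it is the step \emph{before} it that needs Dotto's machinery.
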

\begin{proof}
Let $D_v$ be a central division algebra over $K_{1,\wv}$ of rank $n$, and let $\pi_v \in \cO_{D_v}$ be a uniformizer. Then $\cO_{D_v}^\times / (\pi_v) = k_{D_v}$ is a degree $n$ extension of $k(v)$. Let $K'_{1,\wv} / K_{1,\wv}$ be the quadratic unramified extension, let $k(v)'$ be its residue field, and let $\chi_v : k(v)^{\prime, \times} \to \overline{\bQ}_p^\times$ be a character of order $p$ such that $\chi_v \circ \Art_{K_{1,\wv}}^{-1}$ appears in $r_{\pi, \iota}|_{I_{K_{1,\wv}}}$ (thus there are two possible choices which are conjugate under $\Gal(k(v)' / k(v))$). Choose an arbitrary $k(v)$-embedding of $k(v)'$ into $k_{D_v}$ and let $\sigma_v : \cO_{D_v}^\times \to \overline{\bQ}_p^\times$ be the inflation of $\chi_v \circ \mathbf{N}_{k_{D_v} / k(v)'}$. Then $\sigma_v$ is a character of order $p$ and in the notation of \cite[\S 6]{Dot18}, we claim that we have $\operatorname{cyc}_{D_v^\times}(\sigma_v^{-1}) = R_v$, $\operatorname{cyc}_{D_v^\times}(\mathbf{1}) = R'_v$. To verify the claim, we explain a bit more notation. 

Following \cite{shottonGLn}, for us an inertial type is an isomorphism class of continuous representations $\tau:I_{K_{1,\wv}}\to \GL_n(\Qpbar)$ which can be extended to $W_{K_{1,\wv}}$. (This is different to the terminology of \cite{Dot18}, which considers smooth $\tau$. We are therefore incorporating a monodromy operator into our inertial types.) If $\tau$ is an inertial type, we write $\pi_\tau$ for any irreducible generic $\Qpbar$-representation of $\GL_n(K_{1,\wv})$ such that $\rec_{K_{1,\wv}}^T(\pi_\tau)$ has inertial type $\tau$. We need to check that (cf.~the proof of \cite[Theorem 6.1]{Dot18})
$\Hom_{\Qpbar[\GL_n(\cO_{K_{1,\wv}})]}\left(\mathrm{JL}_{\mathbf{K}}(\sigma_v),\pi_\tau \right)$ is non-zero if and only if $\tau = \tau_s$, in which case it is one-dimensional. Similarly, we need to check that $\Hom_{\Qpbar[\GL_n(\cO_{K_{1,\wv}})]}\left(\mathrm{JL}_{\mathbf{K}}(\mathbf{1}),\pi_\tau \right)$ is non-zero if and only if $\tau = \tau_{\mathrm{Sp}_n}$, in which case it is one-dimensional. The Jacquet--Langlands transfer of types (modified to pick out essentially square-integrable representations), $\mathrm{JL}_{\mathbf{K}}$, is defined in \cite[Definition 5.2]{Dot18}. Considering \cite[Example 3.8]{Dot18}, we need to check that if we take an irreducible smooth $D_v^\times$ representation of type $\sigma_v, \mathbf{1}$ respectively, then its Jacquet--Langlands transfer is in the Bernstein component of $\pi_{\tau_s}$, $\pi_{\tau_{\mathrm{Sp}_n}}$ respectively. For the second case, we simply note that the trivial representation of $D_v^\times$ transfers to the Steinberg representation of $\GL_n(K_{1,\wv})$. The first case follows from the main theorem of \cite{bushnell-henniart-zero}, which explicitly describes the image of a level zero representation of $D_v^\times$ under the composition of local Langlands and Jacquet--Langlands. 

We can now invoke \cite[Theorem 6.1]{Dot18} to conclude that the elements of $Z^{n^2}(R_v^\square / (\varpi))$ associated to $R_v$ and $R_v'$ are equal, since $\sigma_v$ and $\mathbf{1}$ have the same reduction to $\Fpbar$. What we need to explain is why this implies that $(R_{v} / (\varpi))^{red} \cong (R'_{v} / (\varpi))^{red}$. Both $R_v / (\varpi)$ and $R_v' / (\varpi)$ are equidimensional of dimension $n^2$ (as $R_v$ and $R'_v$ are equidimensional of dimension $1 + n^2$ and $\cO$-flat, by definition). They are quotients of $R_v^\square / (\varpi)$, which is again equidimensional of dimension $n^2$ (cf. \cite[Theorem 2.5]{shottonGLn}). We see that $(R_{v} / (\varpi))^{red} \cong (R'_{v} / (\varpi))^{red}$ if and only if $\Spec R_v / (\varpi)$ and $\Spec R_v' / (\varpi)$ are equal as closed subspaces of $\Spec R_v^\square / (\varpi)$, and that this is the case if and only if they have the same generic points. This is indeed a consequence of the equality of the images of $R_v$ and $R_v'$ in $Z^{n^2}(R_v^\square / (\varpi))$.
\end{proof}
The following Lemma will be used to verify some technical conditions appearing in an automorphy lifting theorem. 
\begin{lemma}\label{lem_conditions_for_ALT}
Let $F' / F$ be an $X_0$-split totally real extension and let $K' = K_0 F'$. Then:
\begin{enumerate} \item $K' \not\subset F'(\zeta_p)$.
\item There is an isomorphism $\overline{s}|_{G_{K'}} \cong \overline{\rho}_1 \oplus \overline{\rho}_2$, where $\overline{\rho}_i^c \cong \overline{\rho}_i^{\vee} \otimes \epsilon^{1-n}$ ($i = 1, 2$) and $\overline{\rho}_1|_{G_{K'(\zeta_p)}} \not\cong \overline{\rho}_2|_{G_{K'(\zeta_p)}}$.
\item $\overline{s}|_{G_{K'}}$ is primitive (i.e. not isomorphic to a representation induced from a proper closed subgroup of $G_{K'}$).
\item $\overline{s}(G_{K'})$ has no quotient of order $p$.
\end{enumerate}
\end{lemma}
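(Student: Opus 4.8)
The plan is to deduce all four assertions from the large-image hypothesis on $\overline{r}_{\pi,\iota}$, the decomposition $\overline{s} \cong (\Sym^r \overline{r}_{\pi,\iota})|_{G_{K_1}} \otimes \overline{x}^{1-r} \oplus (\Sym^{r-2}\overline{r}_{\pi,\iota})|_{G_{K_1}} \otimes \overline{y}^{1-r}\epsilon^{-1}$ recalled above, and the self-duality $s^c \cong s^\vee \otimes \epsilon^{1-2r}$. First I would record the standing facts used throughout: since $F'$ is totally real and $X_0$-split we have $F' \supseteq F_1$ (so $K' \supseteq K_1$ and $\overline{s}|_{G_{K'}}$ is defined) and $\overline{r}_{\pi,\iota}(G_{F'}) = \overline{r}_{\pi,\iota}(G_F)$ contains a conjugate of $\SL_2(\bF_{p^a})$; as $[K'(\zeta_p):F']$ divides $2(p-1) < 2p$, Lemma \ref{lem_tensor_adequacy}(3) shows $\overline{r}_{\pi,\iota}(G_{K'(\zeta_p)})$ contains a conjugate of $\SL_2(\bF_{p^a})$, which after conjugation I take to be $\SL_2(\bF_{p^a})$ itself; moreover $p$ splits in $K_0$, so every $p$-adic place of $F'$ splits in $K'/F'$; and since $\overline{x} = \overline{y}$ and $\epsilon$ is trivial on $G_{K'(\zeta_p)}$, the restriction $\overline{s}|_{G_{K'(\zeta_p)}}$ equals $(\Sym^r \overline{r}_{\pi,\iota} \oplus \Sym^{r-2}\overline{r}_{\pi,\iota})|_{G_{K'(\zeta_p)}} \otimes \overline{y}^{1-r}$. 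For (1): $K'/F'$ is unramified above $p$, whereas $F'(\zeta_p)/F'$ is totally ramified at each $p$-adic place (as $\bQ_p(\zeta_p)/\bQ_p$ is), hence so is every nontrivial subextension; since $[K':F']=2$, if $K'\subseteq F'(\zeta_p)$ then $K'$ would be that quadratic subextension, a contradiction.

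For (2), put $\overline{\rho}_1 = (\Sym^r \overline{r}_{\pi,\iota})|_{G_{K'}} \otimes \overline{x}^{1-r}$ and $\overline{\rho}_2 = (\Sym^{r-2}\overline{r}_{\pi,\iota})|_{G_{K'}} \otimes \overline{y}^{1-r}\epsilon^{-1}$. Restricting to $\SL_2(\bF_{p^a}) \le \overline{r}_{\pi,\iota}(G_{K'(\zeta_p)})$ identifies $\Sym^r$, $\Sym^{r-2}$ of the standard representation with the restricted irreducibles $L(r)$, $L(r-2)$ (here $0 \le r-2 < r \le p-1$), which stay irreducible; so $\overline{\rho}_1|_{G_{K'(\zeta_p)}}$, $\overline{\rho}_2|_{G_{K'(\zeta_p)}}$ are irreducible of dimensions $r+1$, $r-1$, which differ since $r > 1$, hence are non-isomorphic. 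Reducing $s^c \cong s^\vee \otimes \epsilon^{1-2r}$ modulo $\varpi$ gives $\overline{s}^c \cong \overline{s}^\vee \otimes \epsilon^{1-n}$; matching the irreducible constituents of distinct dimension forces $\overline{s}_i^c \cong \overline{s}_i^\vee \otimes \epsilon^{1-n}$ over $G_{K_1}$, and restricting to $G_{K'}$ yields $\overline{\rho}_i^c \cong \overline{\rho}_i^\vee \otimes \epsilon^{1-n}$.

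Assertions (3) and (4) both follow from the claim that \emph{$\overline{s}(G_{K'})$ preserves no decomposition $\overline{\bF}_p^n = \bigoplus_{i=1}^d U_i$ into $d > 1$ subspaces of equal dimension $n/d$}. Given such a decomposition, restrict to $N = \overline{r}_{\pi,\iota}^{-1}(\SL_2(\bF_{p^a})) \cap G_{K'(\zeta_p)}$: on $N$ the twist $\overline{y}^{1-r}$ acts by scalars, so the permutation action of $\overline{s}(g)$ on $\{U_i\}$ ($g\in N$) equals that of $\Sym^r\overline{r}_{\pi,\iota}(g) \oplus \Sym^{r-2}\overline{r}_{\pi,\iota}(g)$ and so factors through $\overline{r}_{\pi,\iota}(N) = \SL_2(\bF_{p^a})$; as $d \mid n = 2r < p^a+1$ and $\PSL_2(\bF_{p^a})$ has no faithful permutation representation of degree $< p^a+1$, the group $\SL_2(\bF_{p^a})$ must fix every $U_i$. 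But the only $\SL_2(\bF_{p^a})$-stable subspaces of $L(r)\oplus L(r-2)$ are $0$, $L(r)$, $L(r-2)$ and the whole space (as $L(r)\not\cong L(r-2)$), and for $r>1$ these admit no decomposition into $d>1$ equal-dimensional stable subspaces — a contradiction. Assertion (3) is then immediate, since if $\overline{s}|_{G_{K'}} \cong \Ind_{G_{K''}}^{G_{K'}}\tau$ with $[K'':K'] = d > 1$ then the induced structure furnishes exactly such a decomposition.

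For (4): if $\overline{s}(G_{K'})$ had a quotient of order $p$, then so would its normal prime-to-$p$-index subgroup $\overline{s}(G_{K'(\zeta_p)})$; but $\overline{s}(G_{K'(\zeta_p)})$ is, up to a central subgroup of order $\le 2$ (here $p$ is odd), the image of $(\overline{y}^{1-r},\overline{r}_{\pi,\iota})(G_{K'(\zeta_p)})$, which lies in a product of a prime-to-$p$ group with $\overline{r}_{\pi,\iota}(G_{K'(\zeta_p)})$; and $\overline{r}_{\pi,\iota}(G_{K'(\zeta_p)})$ has $\SL_2(\bF_{p^a})$ as a normal subgroup with quotient of order prime to $p$ (its projective image lies in $\PGL_2(\bF_{p^a})$ and contains no field automorphisms, and scalar subgroups of $\GL_m(\overline{\bF}_p)$ have order prime to $p$), so it has no quotient of order $p$; a brief extension-of-groups argument then rules one out for $\overline{s}(G_{K'(\zeta_p)})$ as well. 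The main obstacle, and where I expect most of the effort to lie, is precisely in (3) and (4): getting the bookkeeping right for how the block/permutation structure and the finite-group invariants survive the twisting by $\overline{x},\overline{y}$ and the descent to $G_{K'(\zeta_p)}$ — this is exactly where one needs $\overline{x} = \overline{y}$ and $p^a > \max(4r, 5, p^{a_0(p)})$.
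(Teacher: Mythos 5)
Your argument for part (1) does not work. You assert that $F'(\zeta_p)/F'$ is totally ramified at every $p$-adic place; but for $\overline{s}|_{G_{K'}}$ to be defined one must have $F' \supseteq F_1$ (so that $K' \supseteq K_1$), and Lemma \ref{lem_numerology_of_extensions} guarantees $\zeta_p \in F_{1,v}$ for every $v \mid p$ of $F_1$. Hence $\zeta_p \in F'_v$ for every $p$-adic place $v$ of $F'$, and $F'(\zeta_p)/F'$ is therefore completely split (not ramified) at $p$. Since $K'/F'$ is also split at $p$, the ramification comparison gives no contradiction. The paper's argument uses the defining property of $X_0$ instead: $K'/F'$ is $X_0$-split (because the residue characteristics of $X_0$ split in $K_0$ and $F'/F$ is $X_0$-split), whereas any proper subextension of $F'(\zeta_p)/F'$ descends to a nontrivial Galois subextension of $F(\zeta_p)/F$, in which some element of $X_0$ fails to split by the construction of $X_0$. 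A related, smaller issue: ``$F'$ totally real and $X_0$-split'' does not by itself imply $F' \supseteq F_1$ (take $F' = F$); this needs to be a standing hypothesis, not a consequence.

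Your treatment of parts (2), (3) and (4) is sound. Parts (2) and (4) follow the same line as the paper (two constituents of distinct dimension that remain irreducible, and factoring the image through a group with no order-$p$ quotient up to prime-to-$p$ pieces). Part (3) is a genuinely different argument: the paper excludes imprimitivity via Mackey's formula and a dimension count that forces $r \in \{2,3\}$, then uses solubility of the Galois closure of the inducing field; you instead show there is no $\overline{s}(G_{K'})$-stable decomposition of $\overline{\bF}_p^{\,n}$ into $d>1$ equal-dimensional blocks, by observing that the induced permutation action on blocks (restricted to a normal subgroup where the twists become scalar) factors through $\PSL_2(\bF_{p^a})$, and $\PSL_2(\bF_{p^a})$ has no nontrivial permutation action on a set of size $\leq 2r < p^a+1$. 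This avoids the case analysis entirely and is arguably cleaner.
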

\begin{proof}
The first part holds because $K'$ is $X_0$-split whilst the proper extensions of $F'$ in $F'(\zeta_p)$ are not. The second part holds by construction; note that $\overline{s}(G_{K'}) = \overline{s}(G_K)$, $\overline{s}(G_{K'(\zeta_p)}) = \overline{s}(G_{K(\zeta_p)})$, and that $\overline{s}|_{G_K}$ has two irreducible constituents of distinct dimension which remain irreducible on restriction to $G_{K(\zeta_p)}$. 

For the third part, suppose that there is an isomorphism $\overline{s}|_{G_{K'}} \cong \Ind_{G_L}^{G_{K'}} \overline{t}$ for some proper finite extension $L / K'$ and representation $\overline{t} : G_L \to \GL_{n / [L : K']}(\overline{\bF}_p)$. Then $\overline{r}_{\pi, \iota}(G_L) \subset \overline{r}_{\pi, \iota}(G_{K'})$ is a subgroup of index at most $n < 2p$, so Lemma \ref{lem_tensor_adequacy} shows that $\overline{r}_{\pi, \iota}(G_L)$ contains a conjugate of $\SL_2(\bF_{p^a})$, and hence that the two irreducible constituents of $\overline{s}$ remain irreducible on restriction to $G_L$. By Mackey's formula, $\overline{t}$ is a proper subrepresentation of $\overline{s}|_{G_L}$, so $\overline{t}$ must be irreducible, isomorphic to one of the constituents of $\overline{s}|_{G_L}$, and therefore extend to a representation $\overline{t} : G_{K'} \to \GL_{n / [L : K']}(\overline{\bF}_p)$. Then we have
\[ \overline{s}|_{G_{K'}} \cong \Ind_{G_L}^{G_{K'}} \overline{t} \cong \overline{t} \otimes \Ind_{G_L}^{G_{K'}} \overline{\bF}_p. \] 
The constituents of $\overline{s}$ have dimension $r-1$ and $r+1$, whilst the dimension of $\overline{t}$ is a divisor of $2r$. Taking this into account, we must have $r = 2$ or $3$, $\dim \overline{t}= r-1$ and $[L:K'] = 2r/(r-1) = 3$ or $4$. In particular, the Galois closure $\widetilde{L}$ of $L$ over $K'$ is soluble. This means that $\overline{r}_{\pi, \iota}(G_{\widetilde{L}})$ still contains a conjugate of $\SL_2(\bF_{p^a})$, and the two irreducible constituents of $\overline{s}$ remain irreducible on restriction to $G_{\widetilde{L}}$.  On the other hand, the restriction of $\Ind_{G_L}^{G_{K'}} \overline{\bF}_p$ to $G_{\widetilde{L}}$ is a direct sum of copies of the trivial representation, so we obtain a contradiction.

 The fourth part holds because $\overline{r}_{\pi, \iota}(G_{K'})$ has no quotient of order $p$. 
\end{proof}
If $K' / K_1$ is any CM extension with totally real subfield $F'$, we write $S_{F'}$ (resp. $\widetilde{S}_{F'}$) for the set of places of $F'$  (resp. $K'$) lying above an element of $S_{F_1}$ (resp. $\widetilde{S}_{F_1}$). We define $S_{p, F'}$, $\widetilde{S}_{p, F'}$ etc.\ similarly. If $v \in S_{F'}$ we define $\Lambda_v = \cO \llbracket I_{K'_\wv}^{ab}(p)^n \rrbracket$ and $\Lambda_{F'} = \widehat{\otimes}_{v \in S_{p, F'}} \Lambda_v$. Thus the Krull dimension of $\Lambda_{F'}$ is $n [ F' : \bQ] + 1$. If $v \in S_{p, F'}$, then there are $n$ tautological characters $\psi^v_1, \dots, \psi^v_n : I_{K'_\wv}^{ab}(p)^n \to \Lambda_{F'}^\times$, pushed forward from $\Lambda_v^\times$. These characters play an important role in the definition of `generic prime ideal' later in \S \ref{sec:aut untwisted}.

We consider the global deformation problems (in the sense of \cite[\S 3]{jackreducible}, and where $R_v^\triangle$ is as defined there)
\[ \cS_{F_1} = ( K_1 / F_1, S_{F_1}, \widetilde{S}_{F_1}, \Lambda_{F_1}, \overline{s}, \epsilon^{1-n} \delta_{K_1 / F_1}^n, \{ R_v^\triangle \}_{v \in S_{p, F_1}} \cup \{ R_{v} \}_{v \in T_{F_1}}), \]
\[ \cS'_{F_1} =  ( K_1 / F_1, S_{F_1}, \widetilde{S}_{F_1}, \Lambda_{F_1}, \overline{s}, \epsilon^{1-n} \delta_{K_1 / F_1}^n, \{ R_v^\triangle \}_{v \in S_{p, F_1}} \cup \{ R'_{v} \}_{v \in T_{F_1}}), \]
and
\[ \cS'_{F_2} = (  K_2 / F_2, S_{F_2}, \widetilde{S}_{F_2}, \Lambda_{F_2}, \overline{s}|_{G_{F_2}}, \epsilon^{1-n} \delta_{K_2 / F_2}^n, \{ R_v^\triangle \}_{v \in S_{p, F_2}} \cup \{ R^{St}_{v} \}_{v \in T_{F_2}}), \]
We write $R_{\cS_{F_1}}$, $R_{\cS'_{F_1}},$ etc.\  for the representing objects (the existence is \cite[Proposition 3.8]{jackreducible}).
\begin{lemma}\phantomsection\label{lem_isomorphism_of_residue_rings}
\begin{enumerate} 
\item  There is an isomorphism $(R_{\cS_{F_1}} / (\varpi))^{red} \cong (R_{\cS'_{F_1}} / (\varpi))^{red}$.
\item There is a finite morphism $R_{\cS'_{F_2}} \to R_{\cS'_{F_1}}$.
\end{enumerate}
\end{lemma}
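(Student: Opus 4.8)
The plan is to work entirely within the formalism of $\cG_n$-valued deformation problems from \cite{jackreducible}, exploiting the fact that $\cS_{F_1}$ and $\cS'_{F_1}$ have \emph{identical} defining data except for the local lifting rings at the places $v \in T_{F_1}$ (where $R_v$ is replaced by $R'_v$), while $\cS'_{F_2}$ will be obtained from $\cS'_{F_1}$ by restriction of scalars to $F_2$, with the local condition above $v_0$ changing only in a way made harmless by Lemma~\ref{lem_numerology_of_extensions}.

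For part (1), I would compare the two deformation rings via their framed analogues. Let $R^{\square}$ denote the framed deformation ring (framed at all places of $S_{F_1}$) for the auxiliary problem with the conditions $R_v^\triangle$ at $v \mid p$ but the \emph{unrestricted} lifting condition at each $v \in T_{F_1}$; then $R^{\square}_{\cS_{F_1}}$ and $R^{\square}_{\cS'_{F_1}}$ are obtained from $R^{\square}$ by base change along the surjections $\widehat{\otimes}_{v \in T_{F_1}} R_v^{\square} \to \widehat{\otimes}_{v \in T_{F_1}} R_v$ and $\widehat{\otimes}_{v \in T_{F_1}} R_v^{\square} \to \widehat{\otimes}_{v \in T_{F_1}} R'_v$ respectively, so that $\Spec R^{\square}_{\cS_{F_1}}$ and $\Spec R^{\square}_{\cS'_{F_1}}$ are closed subschemes of $\Spec R^{\square}$. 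By Lemma~\ref{lem_application_of_Breuil_Mezard}, for each $v \in T_{F_1}$ the reduced closed subschemes $\Spec (R_v/(\varpi))^{red}$ and $\Spec (R'_v/(\varpi))^{red}$ of $\Spec R_v^{\square}/(\varpi)$ coincide; since $\Spec R_v \to \Spec R_v^{\square}$ is a closed immersion, the underlying set of a fibre product over $\Spec R_v^{\square}/(\varpi)$ with $\Spec R_v/(\varpi)$ is the preimage of $|\Spec R_v/(\varpi)|$, which therefore does not change if we replace $R_v$ by $R'_v$. Applying this at each $v \in T_{F_1}$ in turn, $(R^{\square}_{\cS_{F_1}}/(\varpi))^{red}$ and $(R^{\square}_{\cS'_{F_1}}/(\varpi))^{red}$ are the \emph{same} reduced closed subscheme of $\Spec R^{\square}/(\varpi)$. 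This identification respects the structure of algebra over $\widehat{\otimes}_{v \in S_{F_1}} R_v^{\square}/(\varpi)$, hence is equivariant for the formal group that implements the passage from framed to unframed deformations (this group depends only on $\overline{s}$ and $S_{F_1}$, not on the local conditions), and therefore descends to the desired isomorphism $(R_{\cS_{F_1}}/(\varpi))^{red} \cong (R_{\cS'_{F_1}}/(\varpi))^{red}$ after killing the canonical framing ideal, using that a formal power series ring over a reduced Noetherian ring is reduced. (Comparisons of this kind are made in \cite{jackreducible} and \cite{All20}.)

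For part (2), let $\rho^{\univ} : G_{F_1} \to \cG_n(R_{\cS'_{F_1}})$ be the universal deformation of type $\cS'_{F_1}$; I claim $\rho^{\univ}|_{G_{F_2}}$ is of type $\cS'_{F_2}$. At a place $v \mid p$ of $F_1$, $v$ splits in $F_2$ by Lemma~\ref{lem_numerology_of_extensions}(3), so the local condition $R_v^\triangle$ restricts to the local condition $R_{v'}^\triangle$ at the places $v'$ of $F_2$ above $v$. At a place $v \in T_{F_1}$ (above $v_0$), being of type $R'_v$ means the associated Weil--Deligne representation has trivial inertial action on its semisimplification and regular nilpotent monodromy operator $N$; restriction to a finite extension of ramification index $e$ leaves the former unchanged and replaces $N$ by $eN$, which is again regular nilpotent since $e \neq 0$ in the coefficient field, so $\rho^{\univ}|_{G_{F_{2,v'}}}$ again has the Steinberg inertial type $\tau_{\mathrm{Sp}_n}$; since Lemma~\ref{lem_numerology_of_extensions}(3) guarantees $q_{v'} \equiv 1 \bmod p$ and $\overline{s}|_{G_{F_{2,v'}}}$ trivial, the relevant lifting ring is precisely $R^{St}_{v'}$. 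Finally $\delta_{K_1/F_1}|_{G_{F_2}} = \delta_{K_2/F_2}$, so the similitude character restricts correctly. Thus restriction of $\rho^{\univ}$ is classified by a morphism $R_{\cS'_{F_2}} \to R_{\cS'_{F_1}}$, and this morphism is finite by the standard argument for base change of Galois deformation rings: $R_{\cS'_{F_1}}$ is topologically generated over the image of $R_{\cS'_{F_2}}$ by the finitely many traces $\tr \rho^{\univ}(g)$ with $g$ running over coset representatives for $G_{F_2}$ in $G_{F_1}$, each of which is integral over $R_{\cS'_{F_2}}$ by the Cayley--Hamilton theorem applied to $\rho^{\univ}|_{G_{F_2}}$ (cf.\ \cite[\S 1]{BLGGT}).

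I expect the delicate step to be the descent in part (1): one must check that the equality of reduced special fibres of the \emph{framed} rings genuinely descends along the framing, which requires care about the interaction of $(-)/(\varpi)$ and $(-)^{red}$ with the completed tensor products and with the formal smoothness of the framed rings over the unframed ones. The verifications in part (2) are essentially routine; the only point to watch is the easy observation that a nonzero scalar multiple of a regular nilpotent matrix is still regular nilpotent, which is what lets the Steinberg condition survive restriction to the extension $F_2/F_1$, ramified above $v_0$.
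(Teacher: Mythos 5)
Your part (1) argument via framed rings is correct and fills in what the paper compresses into a one-line citation of Lemma~\ref{lem_application_of_Breuil_Mezard}. Your part (2), however, has two genuine gaps.

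First, your verification that $\rho^{\univ}|_{G_{F_2}}$ is of type $\cS'_{F_2}$ at places $v'$ above $v_0$ is carried out pointwise in terms of the Weil--Deligne representation (trivial inertia, regular nilpotent $N$, replace $N$ by $eN$). But the ring you need to analyse, $R_{\cS'_{F_1}}$, is not $\overline{\bQ}_p$-valued, so "the associated Weil--Deligne representation" is not directly available. What you must actually show is that the composite $R_{v'}^{\square} \to R_v^{\square} \to R'_v$ factors through the Steinberg quotient of $R_{v'}^{\square}$, and this requires an argument at the level of rings, not points: one uses that $R'_v$ is reduced with a Zariski-dense set of $\overline{\bQ}_p$-points whose restrictions are of the claimed type, whence the factorisation. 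You also silently use that the Steinberg ring $R_{v'}^{St}$ of \cite{jackreducible} coincides with the fixed-type lifting ring $R_{v'}(\operatorname{Sp}_{2r})$ of \cite{shottonGLn}; this is not obvious from the definitions and needs a separate argument (the paper uses a surjection together with the fact that $R_{v'}^{St}$ is a domain of the same dimension).

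Second, your finiteness argument via Cayley--Hamilton and traces is the standard one for residually \emph{irreducible} $\overline{\rho}$, but $\overline{s}$ is residually reducible (this is the entire point of the construction). In that setting $R_{\cS'_{F_1}}$ is not topologically generated by traces over $\cO$; only the pseudodeformation subalgebra $P_{\cS'_{F_1}}$ is. The needed finiteness of $R_\cS$ over the trace subalgebra for residually reducible $\overline{s}$ is a genuine theorem, namely \cite[Proposition 3.29(2)]{jackreducible}; combined with a finiteness statement for pseudodeformation rings under base change \cite[Lemma 5.3]{New21a}, this gives the finiteness of $R_{\cS'_{F_2}} \to R_{\cS'_{F_1}}$. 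Your invocation of \cite[\S 1]{BLGGT} does not cover this case.
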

\begin{proof}
The first part is an immediate consequence of Lemma \ref{lem_application_of_Breuil_Mezard}. For the second part, we first note that for any finite place $v\nmid p$ of $F_1$, inertial type $\tau:I_{K_{1,\wv}} \to \GL_n(\Qpbar)$ and place $w|v$ of $F_2$, the natural map $R_{w}^{\square} \to R_{\wv}(\tau)$ factors through $R_{\tilde{w}}(\tau|_{I_{K_{2,\tilde{w}}}})$, where $R_{\wv}(\tau)$, $R_{\tilde{w}}(\tau|_{I_{K_{2,\tilde{w}}}})$ are the respective fixed type lifting rings  of \cite[Definition 3.5]{shottonGLn}. Indeed, it follows from the definitions that there is a Zariski dense set of points in $\Spec(R_{\wv}(\tau))$ which map to the closed subset $\Spec(R_{\tilde{w}}(\tau|_{I_{K_{2,\tilde{w}}}})) \subset \Spec(R_w^\square)$. Since $R_{\wv}(\tau)$ is reduced, we get the desired factorization. Using \cite[Corollary 3.12]{jackreducible} to handle the $p$-adic places, this gives us a natural map $R_{\cS'_{F_2}} \to R_{\cS'_{F_1}}$, provided we can check that if $v \in T_{F_2}$, then the ring $R_v^{St}$ defined in \cite[\S 3.3.4]{jackreducible} coincides with the ring $R_\wv(\operatorname{Sp}_{2r})$ associated in \cite{shottonGLn} to the inertial type of the special representation. It follows from the definitions that there is a natural surjection $R_v^{St} \to R_\wv(\operatorname{Sp}_{2r})$. Since $R_v^{St}$ is a domain, by \cite[Proposition 3.17]{jackreducible}, and these two rings have the same Krull dimension, it must in fact be an isomorphism. 

The finiteness of the map $R_{\cS'_{F_2}} \to R_{\cS'_{F_1}}$ follows from \cite[Lemma 5.3]{New21a} and \cite[Proposition 3.29(2)]{jackreducible}. 
\end{proof}

Now we can apply the level raising results of \cite{Tho22a} to find an automorphic Galois representation, locally of Steinberg type, lifting $\overline{s}$ and establish a finiteness result for $R_{\cS'_{F_2}}$.
\begin{proposition}\label{prop_steinberg_lift_over_F_4}
$R_{\cS'_{F_2}}$ is a finite $\Lambda_{F_2}$-algebra and there exists a RACSDC automorphic representation $\Pi$ of $\GL_n(\bA_{K_2})$ satisfying the following conditions:
\begin{enumerate}
\item There is an isomorphism $\overline{r}_{\Pi, \iota} \cong \overline{s}|_{G_{K_2}}$.
\item $\Pi$ is $\iota$-ordinary.
\item For each $v \in T_{F_2}$, $\Pi_{\wv}$ is an unramified twist of the Steinberg representation.
\item $\Pi$ is unramified away from $S_{F_2}$.
\end{enumerate}
\end{proposition}
\begin{proof}
We choose a place $v_2$ of $F_2$
(and place $\wv_2$ of $K_2$ lying above it) with the following properties:
\begin{itemize} 
\item The residue characteristic of $v_2$ is prime to that of each element of $X_0 \cup \{ v_0 \}$.
\item $v_2$ is split in $K_0$.
\item $q_{v_2} \equiv 1 \text{ mod }p$, and the representations $\overline{s}|_{G_{K_{2, v_2}}}$ and $\overline{r}_{\pi, \iota}|_{G_{F_{2, v_2}}}$ are both trivial. 
\end{itemize}
By \cite[Theorem 6.11]{Tho22a}, we can find an $\iota$-ordinary RACSDC automorphic representation $\Pi_2$ of $\GL_n(\bA_{K_2})$ satisfying the following conditions:
\begin{itemize}
\item There is an isomorphism $\overline{r}_{\Pi_2, \iota} \cong \overline{s}|_{G_{K_2}}$.
\item $\Pi_{2, \wv_2}$ is an unramified twist of the Steinberg representation.
\end{itemize}
(We indicate why the application of  \cite[Theorem 6.11]{Tho22a} is valid. The genericity hypothesis holds by Lemma \ref{lem_genericity}. The existence of $\iota$-ordinary, locally Steinberg automorphic lifts of the irreducible constituents of $\overline{s}|_{G_{K_2}}$ follows by applying a level-raising theorem on $\GL_2$ and then taking symmetric powers. Note that we have introduced the new place $v_2$, rather than using a place over $v_0$, because  \emph{loc.~cit.}~requires us to be unramified at other places with the same residue characteristic as $v_2$.) 

Consider the auxiliary deformation problem
\begin{multline*} \cS''_{F_2} = (  K_2 / F_2, S_{F_2} \cup \{ v_2 \}, \widetilde{S}_{F_2} \cup \{ \wv_2 \}, \Lambda_{F_2}, \overline{s}|_{G_{F_2}}, \epsilon^{1-n} \delta_{K_2 / F_2}^n, \\ \{ R_v^\triangle \}_{v \in S_{p, F_2}} \cup \{ R^{St}_{v} \}_{v \in T_{F_2} \cup \{ v_2 \}}).  \end{multline*}
By \cite[Theorem 6.2]{All20}, modified as in \cite[Theorem 7.5]{Tho22a} to weaken the condition $K_2(\zeta_p) \not\subset \overline{K}_2^{\ker \ad \overline{s}}$, $R_{\cS''_{F_2}}$ is a finite $\Lambda_{F_2}$-algebra. By the Khare--Wintenberger method (i.e.\ a very similar argument to the one given in the proof of Lemma \ref{lem_hida_theory}), and by \cite[Theorem 7.5]{Tho22a}, we can find an $\iota$-ordinary RACSDC automorphic representation $\Pi_2'$ of $\GL_n(\bA_{K_2})$ satisfying the following conditions:
\begin{itemize}
\item There is an isomorphism $\overline{r}_{\Pi'_2, \iota} \cong \overline{s}|_{G_{K_2}}$.
\item $\Pi_2'$ is unramified away from $S_{F_2} \cup \{ v_2 \}$. For each $v \in T_{F_2} \cup \{ v_2 \}$, $\Pi'_{2, \wv}$ is an unramified twist of the Steinberg representation.
\end{itemize}
(The main step in applying the Khare--Wintenberger method, as in the proof of Lemma \ref{lem_hida_theory}, is to check that a suitable fixed weight ordinary lifting ring is non-empty. We can easily check here that e.g. the crystalline weight 0 ordinary lifting ring is non-empty at $v \in S_{p,F_2}$, by writing down lifts by hand. Since $\zeta_p \in F_{2,v}$ and $\overline{s}|_{G_{K_{2,\wv}}}$ is scalar for $v|p$, a twist of $\oplus_{i=0}^{n-1}\epsilon^{-i}$ gives a lift and shows that the ordinary lifting ring is non-zero.) By  \cite[Theorem 6.2]{All20} again (applied to the deformation problem obtained from $\cS_{F_2}''$ by replacing the local condition at $v_2$ with $R_{v_2}^{\square}$), $R_{\cS'_{F_2}}$ is a finite $\Lambda_{F_2}$-algebra. We can now apply the Khare--Wintenberger method once more to obtain the desired representation $\Pi$ of $\GL_n(\bA_{K_2})$. 
\end{proof}
\begin{proposition}\label{prop_graph}
Consider the graph whose vertices are minimal primes of $R_{\cS'_{F_2}} / (\varpi)$, and where two such vertices $\overline{Q}, \overline{Q}'$ are joined by an edge if there is a prime ideal $\p \subset R_{\cS'_{F_2}} / (\varpi)$ satisfying the following conditions:
\begin{enumerate} \item $(\overline{Q}, \overline{Q}') \subset \p$. 
\item $\p$ is of dimension 1.
\item $\p$ is generic, in the sense of \cite[Definition 3.7]{All20} (we recall the definition below).
\end{enumerate} 
Then this graph is connected. 
\end{proposition}
For the convenience of the reader, we recall here what it means for a prime ideal $\p \subset R_{\cS'_{F_2}}$ (or rather, the specialization $r_\p : G_{F_2, S_{F_2}} \to \cG_n(R_{\cS'_{F_2}} / \p)$ of the universal deformation to $R_{\cS'_{F_2}} / \p$) to be generic, in the sense of \cite[Definition 3.7]{All20}. First, the representation $r_\p|_{G_{K_2}} \otimes \operatorname{Frac} R_{\cS'_{F_2}} / \p$ must be absolutely irreducible. Second, for each $v \in S_{p, F_2}$, the universal characters $\psi_1^v, \dots, \psi_n^v : I_{K_{2, \wv}}^{ab}(p) \to \Lambda_{K_{2, \wv}}^\times$ must remain distinct after pushforward to $(R_{\cS'_{F_2}} / \p)^\times$. Third, there must exist $v \in S_{p, F_2}$ and $\sigma \in I_{K_{2, \wv}}^{ab}(p)$ such that the elements $\psi_i^v(\sigma) \text{ mod } \p \in (R_{\cS'_{F_2}} / \p)^\times$ satisfy no non-trivial $\bZ$-linear relation.
\begin{proof}
We have proved that $R_{\cS'_{F_2}}$ is a finite $\Lambda_{F_2}$-algebra. According to \cite[Lemma 3.9]{All20}, any ideal $I \subset R_{\cS'_{F_2}} / (\varpi)$ satisfying
\[ \dim R_{\cS'_{F_2}} / (\varpi, I) > \max(n [F_2 : \bQ] - d_{0, F_2}, \{ n [F_2 : \bQ] - [F_{2, v} : \bQ_p] \}_{v \in S_{p, F_2}}) \]
is contained in a prime $\p$ of dimension 1 and characteristic $p$ which is generic in the given sense. (Recall the integer $d_{0, F_2}$ was defined in the statement of Lemma \ref{lem_numerology_of_extensions}.) Let $c$ denote the connectedness dimension of $R_{\cS'_{F_2}} / (\varpi)$ (defined as in e.g.\ \cite[Definition 1.7]{jackreducible}), and suppose that we have
\[ c > \max(n [F_2: \bQ] - d_{0, F_2}, \{ n [F_2 : \bQ] - [F_{2, v} : \bQ_p] \}_{v \in S_{p, F_2}}) . \]
We claim that the proposition then follows. Indeed, we suppose for contradiction that the graph is disconnected. By definition of connectedness dimension, we can find $\overline{Q}, \overline{Q}'$ in different connected components such that $\dim R_{\cS'_{F_2}} / (\varpi, \overline{Q}, \overline{Q}') \geq c$, hence
\begin{multline*} \dim R_{\cS'_{F_2}} / (\varpi, \overline{Q}, \overline{Q}') > \max(n [F_2 : \bQ] - d_{0, F_2}, \{ n [F_2 : \bQ] - [F_{2, v} : \bQ_p] \}_{v \in S_{p, F_2}}).
\end{multline*}
This inequality implies (taking the ideal $I$ at the beginning of the proof to be $I = (\overline{Q}, \overline{Q}')$) the existence of a prime $\p$ witnessing the an edge between $\overline{Q}$ and $\overline{Q}'$ in the graph, leading to a contradiction. To finish the proof, we therefore just need to explain why $c$ satisfies the claimed inequality. However, \cite[Lemma 3.21]{jackreducible} (or rather its proof -- since there we consider $c(R_{\cS})$, whereas here we consider $c(R_{\cS'_{F_2}} / (\varpi))$) show that $c \geq n[F_2 : \bQ] - 1$, so we just need $d_{0, F_2} > 1$ and $[F_{2, v} : \bQ_p] > 1$. These inequalities hold by construction (see the statement of Lemma \ref{lem_numerology_of_extensions}, noting that if $v | p$ then $[F_{2, v} : \bQ_p] = [F_{1, v} : \Q_p] > n(n+1)/2 + 1 >1$).
\end{proof}
\begin{proposition}\label{prop:generic SF1}
$R_{\cS_{F_1}}$ is a finite $\Lambda_{F_1}$-algebra. Each of its minimal primes has dimension $\dim \Lambda_{F_1}$ and is of characteristic 0. For each such minimal prime $Q \subset R_{\cS_{F_1}}$, we can find a prime $\p \supset Q$ of dimension 1 and characteristic $p$ which is generic, in the sense of \cite[Definition 3.7]{All20}.
\end{proposition}
\begin{proof}
Proposition \ref{prop_steinberg_lift_over_F_4} and Lemma  \ref{lem_isomorphism_of_residue_rings} together imply that $R_{\cS'_{F_1}} / (\varpi)$ is a finite $\Lambda_{F_1} / (\varpi)$-algebra; it follows from this and Lemma \ref{lem_isomorphism_of_residue_rings} that $R_{\cS_{F_1}}$ is a finite $\Lambda_{F_1}$-algebra. The presentation given in \cite[Proposition 3.9]{jackreducible} then implies that $R_{\cS_{F_1}}$ is equidimensional of dimension $\dim \Lambda_{F_1}$. If $Q \subset R_{\cS_{F_1}}$ is a minimal prime then $\dim R_{\cS_{F_1}} / (Q, \varpi) = n[F_1 : \bQ]$ so we can find $\p$ by arguing as in the proof of Proposition \ref{prop_graph} (and using the trivial estimate $d_{0, F_1} > 0$).
\end{proof} 
Fix $\cP$, a finite set of prime ideals of $R_{\cS'_{F_2}}$ of dimension 1 and characteristic $p$ which are generic, in the sense of \cite[Definition 3.7]{All20}, satisfying the following conditions:
\begin{itemize}
\item For each edge $\overline{Q} - \overline{Q}'$ of the graph described in Proposition \ref{prop_graph}, there is $\p \in \cP$ such that $(\overline{Q}, \overline{Q}') \subset \p$. 
\item For each minimal prime $Q \subset R_{\cS_{F_1}}$, there is a generic prime $\p \supset Q$ of $R_{\cS_{F_1}}$ such that $\p_{F_2} = \ker(R_{\cS'_{F_2}} \to R_{\cS'_{F_1}} / \p) \in \cP$.
\end{itemize}
It follows from Proposition \ref{prop:generic SF1} that such a set exists (noting for the second bullet point that if $\p \subset R_{\cS'_{F_1}}$ is generic, then its pullback to $R_{\cS'_{F_2}}$ is also generic: the properties locally at $p$ are preserved, since each place $v | p$ of $F_1$ splits in $F_2$, and the absolute irreducibility is preserved, by \cite[Proposition 5.3]{jackreducible}).
\begin{proposition}
We can find a totally real extension $F_3 / F_2$ satisfying the following conditions:
\begin{enumerate}
\item $F_3 / F$ is soluble and $X_0$-split.
\item Each place $v | p$ of $F_2$ splits in $F_3$.
\item Let $K_3 = K_0 \cdot F_3$. Then for each $\p \in \cP$, and each $v \in T_{F_3}$, $r_\p|_{G_{K_{3, \wv}}}$ is a scalar representation (i.e.~it is unramified with scalar image on Frobenius). 
\end{enumerate} 
\end{proposition}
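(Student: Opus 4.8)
The plan is to build $F_3$ by prescribing, at the places of $F_2$ above $v_0$, local extensions that trivialise up to twist the finitely many local Galois representations $r_\p|_{G_{K_{2,\wv}}}$ ($\p\in\cP$), and then realising these local conditions globally by class field theory. The crux is a local analysis at $v_0$; the globalisation is routine.

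First I would analyse $r_\p$ at $v_0$. For $\p\in\cP$, genericity implies in particular absolute irreducibility, so $\p$ is carried by a homomorphism $r_\p : G_{K_2}\to\GL_n(A_\p)$, where $A_\p$ is the fraction field of $R_{\cS'_{F_2}}/\p$, a field of characteristic $p$. Fix $v\in T_{F_2}$ and $\wv$ above it (so $K_{2,\wv}=F_{2,v}$, as $v_0$ splits in $K_0$). By definition of $\cS'_{F_2}$, the restriction $r_\p|_{G_{K_{2,\wv}}}$ factors through the Steinberg lifting ring $R^{St}_v$; since $\overline{s}|_{G_{K_{2,\wv}}}$ is trivial, $q_v\equiv 1\bmod p$ (Lemma~\ref{lem_numerology_of_extensions}(3)), and $A_\p$ has characteristic $p$, the cyclotomic twist built into $R^{St}_v$ reduces to the trivial character, so $r_\p|_{G_{K_{2,\wv}}}$ is conjugate to a successive self-extension of a single unramified character $\mathrm{ur}_{\alpha_\p}$, $\alpha_\p\in A_\p^\times$: in a suitable basis, $r_\p(\Frob_{\wv})$ is upper-triangular with constant diagonal $\alpha_\p$, and $r_\p$ is trivial on wild inertia and upper-unitriangular on tame inertia. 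A unipotent $n\times n$ matrix over a field of characteristic $p$ has order dividing $p^M$ with $M=\lceil\log_p n\rceil$, and the monodromy relation with $q_v=1$ in $A_\p$ shows the image of $\Frob_{\wv}$ commutes with the image of inertia; hence $\operatorname{Proj} r_\p(G_{K_{2,\wv}})\subset\PGL_n(A_\p)$ is a finite abelian group of exponent dividing $p^M$ (note $p\nmid n$, as $n=2r<2p$ with $p$ odd). Consequently $r_\p|_{G_{K_{2,\wv}}}$ becomes a scalar representation — unramified with scalar Frobenius image — after restriction to $G_{L_{\p,\wv}}$, where $L_{\p,\wv}/K_{2,\wv}$ is the abelian extension of $p$-power degree at most $p^{2M}$ cut out by $\operatorname{Proj} r_\p$.

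Next I would globalise. It suffices to find a totally real, soluble, $X_0$-split extension $F_3/F_2$ in which every place $v\mid p$ of $F_2$ splits and such that, for every $v\in T_{F_2}$, the completion $F_{3,v}$ contains every $L_{\p,\wv}$, $\p\in\cP$: indeed $v_0$ splits in $K_0$, so for $v\in T_{F_3}$ one has $K_{3,\wv}=F_{3,v}$, and the previous step gives that $r_\p|_{G_{K_{3,\wv}}}$ is scalar. Since $\cP$ and $T_{F_2}$ are finite, each $v\in T_{F_2}$ has residue characteristic $\neq p$, and $\mu_{p^M}\subset F_{2,v}$ for such $v$ (again by Lemma~\ref{lem_numerology_of_extensions}(3)), the finitely many $p$-power-degree extensions $L_{\p,\wv}/F_{2,v}$ can be fitted inside a single abelian $p$-power-degree extension $\widehat{F}_v/F_{2,v}$ (their compositum, built as a tower of cyclic steps). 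Prescribing $F_{3,v}=\widehat{F}_v$ for $v\in T_{F_2}$, $F_{3,v}=F_{2,v}$ for $v\mid p$ and for $v$ above an element of $X_0$, and split behaviour at the archimedean places, we realise all these local conditions by a soluble (indeed a tower of abelian) totally real extension $F_3/F_2$ via the Grunwald--Wang theorem; there is no exceptional case, since $p$ is odd. Because $\widehat{F}_v/F_{2,v}$ is Galois, $F_{3,v}=\widehat{F}_v$ literally contains each $L_{\p,\wv}$.

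Finally, $F_3/F$ is soluble (a tower of soluble extensions over $F_2/F$, soluble by Lemma~\ref{lem_numerology_of_extensions}) and $X_0$-split (by Lemma~\ref{lem_numerology_of_extensions} together with the splitting imposed above), so condition~(1) holds; condition~(2) holds by construction; condition~(3) holds by the first two steps. The step I expect to demand the most care is the local one: extracting from the definition of $R^{St}_v$ that, over a characteristic-$p$ coefficient field with $q_v\equiv 1\bmod p$, the Steinberg condition degenerates to a (tamely ramified) self-extension of a single unramified character, so that the projective image at $v_0$ is a finite $p$-group of order bounded purely in terms of $n$. Granting this, the passage to a global $F_3$ is a routine application of class field theory.
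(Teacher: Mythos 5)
Your proof is correct and matches the paper's intent: the paper's own proof is a one-line appeal to the construction of \cite[Proposition 6.2]{jackreducible} (``adjoin to $F_2$ an $X_0$-split soluble extension of $F$, sufficiently tamely ramified at $v_0$''), and your argument supplies precisely the local analysis (projective image of the decomposition group at $\wv$ is a finite abelian $p$-group of exponent dividing $p^M$, using unipotent inertia, the char-$p$ degeneration $q_v\equiv 1$ of the Steinberg Frobenius eigenvalues, and the commutation forced by $p^N\,\|\,(q_v-1)$ with $p^N>n$) together with the Grunwald--Wang globalisation that this reference encapsulates. One point in your favour: you correctly observe that killing inertia alone leaves the Frobenius image only scalar-times-unipotent, so the prescribed local extensions must also absorb an unramified part of degree $p^M$ --- a detail elided by the paper's phrase ``tamely ramified''.
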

\begin{proof}
We can construct $F_3$ by adjoining to $F_2$ an $X_0$-split soluble extension of $F$, sufficiently tamely ramified at $v_0$ (see the proof of \cite[Proposition 6.2]{jackreducible}).
\end{proof}
We consider the deformation problem
\[ \cS_{F_3} = (  K_3 / F_3, S_{F_3}, \widetilde{S}_{F_3}, \Lambda_{F_3}, \overline{s}|_{G_{F_3}}, \epsilon^{1-n} \delta_{K_3 / F_3}^n, \{ R_v^\triangle \}_{v \in S_{p, F_3}} \cup \{ R^1_{v} \}_{v \in T_{F_3}}),  \]
where $R_v^1$ is the ``unipotently ramified'' lifting ring defined in \cite[\S 3.3]{jackreducible}.
We introduce two (big, ordinary) Hecke algebras $\bT_{F_2}$ and $\bT_{F_3}$, following \cite[\S\S 4.1--4.2]{All20}. They will be finite faithful algebras over $\Lambda_{F_2}$ and $\Lambda_{F_3}$ respectively. The Hecke algebras $\bT_\chi^T(U(\mathfrak{l}^\infty), \cO)_\m$ defined there depend on a choice of CM number field, coefficient field, sets $T = S_p \sqcup S(B) \sqcup R \sqcup S_a$ of places (the set $S(B)$ in particular being used to define a definite unitary group), and a maximal ideal $\mathfrak{m}$ of the abstract Hecke algebra, that appears in the support of a certain space of ordinary automorphic forms.

To define $\bT_{F_2}$, we take the CM number field $K_2$, coefficient field $E$, $S(B) = T_{F_2}$, $R = \emptyset$, $S_a = T'_{F_2}$, and $\m$ to be the maximal ideal associated to the automorphic representation $\Pi$ constructed in Proposition \ref{prop_steinberg_lift_over_F_4} (which descends to the associated definite unitary group by \cite[Proposition 3.3.2]{cht}). (Note that $S(B)$ has even cardinality because $v_0$ splits in $F_1 / F$ and $[F_1 : F]$ is even.) Let $P_{\cS'_{F_2}} \subset R_{\cS'_{F_2}}$ be the closed $\Lambda_{F_2}$-subalgebra topologically generated by the coefficients of the characteristic polynomials $\det(X - r_{\cS'_{F_2}}(g))$ ($g \in G_{K_2}$) of a representative of the universal deformation; then $P_{\cS'_{F_2}} \in \cC_{\Lambda_{F_2}}$ (as one sees from the equivalent characterization given in \cite[Definition 3.27]{jackreducible}) and there is a diagram (cf.~\cite[\S 4.2]{All20})
\[ \xymatrix{ R_{\cS'_{F_2}} & \ar[l] P_{\cS'_{F_2}} \ar[r] & \bT_{F_2}}. \]
To define $\bT_{F_3}$, we take the CM number field $K_3$, coefficient field $E$, $S(B) = \emptyset$, $R = T_{F_3}$, $S_a = T'_{F_3}$, and $\m$ to be the maximal ideal associated to $\mathrm{BC}_{K_3 / K_2}(\Pi)$ (which descends to the associated definite unitary group by \cite[Th\'eor\`eme 5.4]{Lab11}). Define $P_{\cS_{F_3}} \subset R_{\cS_{F_3}}$ and also $P_{\cS_{F_1}} \subset R_{\cS_{F_1}}$ as in the previous paragraph. Then we can extend the above diagram to a commutative diagram of $\Lambda_{F_3}$-algebras:
\[ \xymatrix{ R_{\cS'_{F_2}} & \ar[l] P_{\cS'_{F_2}} \ar[r] & \bT_{F_2} \\
R_{\cS_{F_3}}  \ar[u] \ar[d] & \ar[l] P_{\cS_{F_3}} \ar[d] \ar[u]  \ar[r] & \bT_{F_3} \ar[u] \\
R_{\cS_{F_1}} & \ar[l] P_{\cS_{F_1}},}
 \]
 where the left-most vertical arrows arise from the universal property of $R_{\cS_{F_3}}$ (cf.~the proof of Lemma \ref{lem_isomorphism_of_residue_rings})
 and the right-most vertical arrow may be constructed using soluble base change (cf.~\cite[Proposition 4.18]{jackreducible}, \cite[Lemma 6.7]{New21a}). Let  $J_{\cS'_{F_2}} = \ker(P_{\cS'_{F_2}} \to \bT_{F_2})$ and $J_{\cS_{F_3}} = \ker(P_{\cS_{F_3}} \to \bT_{F_3})$. The commutativity of the diagram implies that $J_{\cS_{F_3}} P_{\cS'_{F_2}} \subset J_{\cS'_{F_2}}$. We will use the following result (a disguised `$R_\p = \bT_\p$' theorem):
 \begin{proposition}\label{prop_R_equals_T}
 Let $\p \subset R_{\cS_{F_3}}$ be a prime of dimension 1 and characteristic $p$ which is generic, in the sense of \cite[Definition 3.7]{All20}. Suppose $\p \supset J_{\cS_{F_3}}$ and that for each $v \in T_{F_3}$, $r_\p|_{G_{K_{3, \wv}}}$ is a scalar representation. Then for each prime ideal $Q \subset \p$, $Q \supset J_{\cS_{F_3}}$.
 \end{proposition}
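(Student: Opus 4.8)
The plan is to prove this by a localised ``$R_\p = \bT_\p$'' argument: one runs the ordinary Taylor--Wiles--Kisin patching construction attached to the Hecke algebra $\bT_{F_3}$ and the deformation problem $\cS_{F_3}$ near the generic prime $\p$, and shows that the patched Hecke module is a \emph{faithful} module over the patched deformation ring; after unpatching this forces the automorphic ideal $J_{\cS_{F_3}}$ into every prime $Q \subset \p$. The argument follows the template of \cite{jackreducible}, in the ordinary refinement of \cite[\S\S 6--7]{Tho22a} and \cite[\S 4]{All20}; the feature distinguishing it from the residually irreducible situation is that $\overline{s}$ is residually reducible, but near the \emph{generic} prime $\p$ this difficulty evaporates.

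First I would extract the consequences of genericity. By \cite[Definition 3.7]{All20}, $\p$ being generic forces a suitable lattice in $r_\p|_{G_{K_3}}$ to have absolutely irreducible reduction $\overline{r}_\p$ (valued in $\GL_n$ of a finite field), with image large enough that the Taylor--Wiles method applies and Taylor--Wiles data of the expected size exist. Since $\overline{r}_\p$ is absolutely irreducible, near $\p$ the deformation ring $R_{\cS_{F_3}}$ is identified with its pseudodeformation subring $P_{\cS_{F_3}}$ (a pseudodeformation with absolutely irreducible residual reduction lifts uniquely to a genuine one), so it is legitimate to regard $\bT_{F_3}$, and the associated ordinary Hecke module, as living over $(R_{\cS_{F_3}})_\p$; this is why the statement earns the name ``$R_\p = \bT_\p$''.

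Next I would carry out the patching over $\Lambda_{F_3}$, with residual representation $\overline{r}_\p$ and the local conditions of $\cS_{F_3}$ (the ordinary conditions $R_v^\triangle$ at $v \mid p$ and the unipotently ramified conditions $R^1_v$ at $v \in T_{F_3}$). Here the hypothesis that $r_\p|_{G_{K_{3,\wv}}}$ is scalar at each $v \in T_{F_3}$ is exactly what is used: together with $q_v \equiv 1 \bmod p$ and the triviality of $\overline{s}$ at these places (arranged in Lemma \ref{lem_numerology_of_extensions}), it ensures that $\p$ meets $R^1_v$ only in its smooth, expected-dimensional part, so that the numerical coincidence $\dim R_\infty = \dim S_\infty$ holds for the patched ring $R_\infty$ (a quotient of a power series ring over $R_{loc}$) and the patched power series ring $S_\infty$ over $\Lambda_{F_3}$. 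Patching then produces a maximal Cohen--Macaulay $S_\infty$-module $M_\infty$ carrying an $R_\infty$-action; since $R_\infty$ is equidimensional of dimension $\dim S_\infty$, $M_\infty$ is maximal Cohen--Macaulay over $R_\infty$, hence $\Supp_{R_\infty} M_\infty = \Spec R_\infty$ and $M_\infty$ is a faithful $R_\infty$-module.

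Finally I would unpatch. Faithfulness of $M_\infty$ over $R_\infty$ descends to the statement that the localisation at $\p$ of the ordinary Hecke module is supported on every minimal prime of $(R_{\cS_{F_3}})_\p \cong (P_{\cS_{F_3}})_\p$; since this module is a localisation of a faithful $\bT_{F_3}$-module, the image of $J_{\cS_{F_3}} = \ker(P_{\cS_{F_3}} \to \bT_{F_3})$ in it vanishes, so $J_{\cS_{F_3}}$ lies in the intersection of all minimal primes of $(R_{\cS_{F_3}})_\p$, i.e.\ in every prime $Q \subset \p$. I expect the main obstacle to be the patching step: one must verify that Allen's genericity hypothesis is precisely strong enough to produce an adequate $\overline{r}_\p$ with the correct Euler-characteristic bookkeeping, and that the local lifting rings at $p$ and at $T_{F_3}$ (with the scalar condition at $\p$) deliver the numerical coincidence; the pseudorepresentation-versus-representation dictionary and the descent between $F_3$- and $K_3$-levels are routine but must be handled carefully.
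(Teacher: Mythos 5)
Your overall strategy --- patch at the reducible maximal ideal and then localise at the generic prime $\p$, where $r_\p$ is irreducible and the deformation ring agrees with the pseudodeformation ring --- is indeed the argument underlying the paper's proof, which simply cites \cite[Theorem 4.1]{All20} (itself deduced from \cite{jackreducible}) and explains how to relax its hypothesis at $T_{F_3}$ from ``trivial'' to ``scalar''. But your sketch has a genuine gap at the decisive step. From ``$M_\infty$ is maximal Cohen--Macaulay over $S_\infty$ and $\dim R_\infty = \dim S_\infty$'' you conclude that $\Supp_{R_\infty} M_\infty = \Spec R_\infty$ and hence that $M_\infty$ is faithful. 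The dimension count only shows that $\Supp_{R_\infty} M_\infty$ is a \emph{union of irreducible components} of $\Spec R_\infty$; to conclude it is all of them you would need $R_\infty$ to be irreducible (or a connectivity argument), and in this residually reducible setting $R_\infty$ is not known to be irreducible. Indeed, if $M_\infty$ were faithful over $R_\infty$ one would obtain $J_{\cS_{F_3}} \subset Q$ for \emph{every} minimal prime $Q$ of $R_{\cS_{F_3}}$, i.e.\ a full $R = \bT$ theorem, which is precisely what is not accessible here and is why the paper needs the graph-connectivity argument of Propositions \ref{prop_graph} and \ref{prop_propagation}. The correct statement is local at $\p$: after completing $R_\infty$ at the pullback of $\p$, one shows (this is \cite[Lemma 3.40]{jackreducible}) that the irreducible components of the completion are in bijection with those of $\Lambda_{F_3}$, and only then does the support computation force every component of $\Spec R_{\cS_{F_3}}$ through $\p$ into the support of the Hecke module. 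This component analysis is exactly where the hypothesis on $r_\p|_{G_{K_{3,\wv}}}$ for $v \in T_{F_3}$ enters, and it is the only point the paper's proof actually has to address: \cite[Theorem 4.1]{All20} assumes $r_\p$ is \emph{trivial} at these places, and the content of the paper's proof is that the unramified twisting argument already present in \cite[Lemma 3.40]{jackreducible} (there used at the places in $S(B)$) reduces the scalar case to the trivial one.

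A secondary inaccuracy: genericity in the sense of \cite[Definition 3.7]{All20} does not give an absolutely irreducible \emph{residual} representation $\overline{r}_\p$ --- the reduction of $r_\p$ modulo the maximal ideal is $\overline{s}|_{G_{K_3}}$, which is reducible by construction. What genericity gives is irreducibility (plus further conditions) of $r_\p$ itself over the characteristic-$p$ fraction field of $R_{\cS_{F_3}}/\p$, and it is this that identifies the completion of the deformation ring at $\p$ with that of the pseudodeformation ring. Your intended argument survives this correction, but the patching itself must be performed at the reducible maximal ideal (with pseudorepresentations, as in \cite{jackreducible}), not ``with residual representation $\overline{r}_\p$''.
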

 \begin{proof}
 In the case that for each $v \in T_{F_3}$, $r_\p|_{G_{K_{3, \wv}}}$ is the trivial representation, this is \cite[Theorem 4.1]{All20} (which in turn is deduced from the results of \cite{jackreducible}). We need to explain why this condition can be weakened. The stronger hypothesis is used only to be able to invoke \cite[Lemma 3.40]{jackreducible}, which essentially asserts that the irreducible components of the localization and completion of $R_{\cS_{F_3}}^{loc}$ at the pullback of $\p$ are in bijection with those of $\Lambda_{F_3}$. The same twisting argument which is used in \emph{loc.~cit.} to deal with the possibility that the representations $r_\p|_{G_{F_\wv}}$ ($v \in S(B)$) are scalar but non-trivial can be used at the places $v \in R := T_{F_3}$ to get the desired conclusion here. 
 \end{proof}
\begin{proposition}\label{prop_propagation}
Let $\overline{Q} \subset R_{\cS'_{F_2}} / (\varpi)$ be a prime ideal, and let $\overline{Q}_{F_3}$ denote its pullback to $R_{\cS_{F_3}}$. Then $J_{\cS_{F_3}} \subset \overline{Q}_{F_3}$. 
\end{proposition}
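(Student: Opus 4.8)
The plan is to reduce to the case where $\overline{Q}$ is a \emph{minimal} prime of $R_{\cS'_{F_2}}/(\varpi)$ and then to propagate the assertion along the connected graph of Proposition \ref{prop_graph}. For the reduction: an arbitrary prime $\overline{Q}$ contains a minimal prime $\overline{Q}_0$, and since the pullback to $R_{\cS_{F_3}}$ is inclusion‑preserving we have $\overline{Q}_{F_3} \supseteq (\overline{Q}_0)_{F_3}$; so it is enough to treat minimal primes, which are precisely the vertices of the graph in Proposition \ref{prop_graph}. As that graph is connected, it then suffices to (i) exhibit one minimal prime $Q_0$ with $J_{\cS_{F_3}} \subseteq (Q_0)_{F_3}$, and (ii) show that if $Q$ is a minimal prime with $J_{\cS_{F_3}} \subseteq Q_{F_3}$ and $Q - Q'$ is an edge, then also $J_{\cS_{F_3}} \subseteq Q'_{F_3}$. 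Granting (i) and (ii), the set of ``good'' minimal primes is a non‑empty union of connected components of the graph, hence is everything.

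For the base case (i) I would use the automorphic representation $\Pi$ of Proposition \ref{prop_steinberg_lift_over_F_4}. Extended to $\cG_n$ via its polarisation, $r_{\Pi,\iota}$ is a deformation of $\overline{s}|_{G_{F_2}}$ of type $\cS'_{F_2}$ (it is $\iota$‑ordinary at $p$, an unramified twist of Steinberg at the places of $T_{F_2}$, and unramified outside $S_{F_2}$), so it determines a characteristic‑$0$ point $\mathfrak{a} \in \Spec R_{\cS'_{F_2}}$. The coefficients of its characteristic polynomials of Frobenius are the Hecke eigenvalues of $\Pi$, so $P_{\cS'_{F_2}} \hookrightarrow R_{\cS'_{F_2}} \twoheadrightarrow R_{\cS'_{F_2}}/\mathfrak{a}$ factors through $P_{\cS'_{F_2}} \to \bT_{F_2}$; hence $J_{\cS'_{F_2}} \subseteq \mathfrak{a}$, and by commutativity of the big diagram together with the inclusion $J_{\cS_{F_3}} P_{\cS'_{F_2}} \subseteq J_{\cS'_{F_2}}$ we get $J_{\cS_{F_3}} \subseteq \mathfrak{a}_{F_3}$. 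Finally, since $R_{\cS'_{F_2}}$ is a finite, equidimensional $\Lambda_{F_2}$‑algebra (Proposition \ref{prop_steinberg_lift_over_F_4} and the presentation of \cite[Proposition 3.9]{jackreducible}), a standard specialisation argument shows that the component through $\mathfrak{a}$ meets $V(\varpi)$ in a minimal prime $Q_0$ of $R_{\cS'_{F_2}}/(\varpi)$, which still contains the image of $J_{\cS'_{F_2}}$, hence $J_{\cS_{F_3}} \subseteq (Q_0)_{F_3}$.

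For the inductive step (ii), let $Q - Q'$ be an edge; by the defining property of $\cP$ it is witnessed by some $\p \in \cP$ with $Q, Q' \subseteq \p$, where $\p$ is a dimension‑$1$, characteristic‑$p$, generic prime of $R_{\cS'_{F_2}}$. Let $\p_{F_3} \subset R_{\cS_{F_3}}$ be its pullback. By the defining property of $F_3$, $r_{\p_{F_3}}|_{G_{K_{3,\wv}}}$ is a scalar representation for each $v \in T_{F_3}$; and because $F_3/F_2$ is soluble and $X_0$‑split — so $\overline{r}_{\pi,\iota}(G_{K_3})$ still contains a conjugate of $\SL_2(\bF_{p^a})$ and the imposed local conditions persist — the prime $\p_{F_3}$ is again of dimension $1$, characteristic $p$, and generic in the sense of \cite[Definition 3.7]{All20}. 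Assuming $J_{\cS_{F_3}} \subseteq Q_{F_3}$, the inclusion $Q \subseteq \p$ gives $Q_{F_3} \subseteq \p_{F_3}$, so $J_{\cS_{F_3}} \subseteq \p_{F_3}$; Proposition \ref{prop_R_equals_T} applied to $\p_{F_3}$ then shows that every prime of $R_{\cS_{F_3}}$ contained in $\p_{F_3}$ contains $J_{\cS_{F_3}}$. Since $Q' \subseteq \p$ gives $Q'_{F_3} \subseteq \p_{F_3}$, we conclude $J_{\cS_{F_3}} \subseteq Q'_{F_3}$.

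The main obstacle I anticipate is the verification that $\p_{F_3}$ inherits \emph{all} the hypotheses of Proposition \ref{prop_R_equals_T} — in particular that restriction along $G_{F_3} \hookrightarrow G_{F_2}$ neither drops the dimension of the component through $\p$ nor destroys genericity — together with the careful bookkeeping in the base case, i.e.\ that the automorphic component of $R_{\cS'_{F_2}}$ reduces modulo $\varpi$ to (a component supporting) a minimal prime on which $J_{\cS_{F_3}}$ vanishes.
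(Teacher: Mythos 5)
Your overall architecture is the paper's: reduce to minimal primes of $R_{\cS'_{F_2}}/(\varpi)$, exhibit one ``automorphic'' vertex, and propagate along the connected graph of Proposition \ref{prop_graph} using Proposition \ref{prop_R_equals_T}. The inductive step is carried out exactly as in the paper (including the observation that the hypothesis $\p_{F_3}\supset J_{\cS_{F_3}}$ of Proposition \ref{prop_R_equals_T} is supplied by the inclusion $Q_{F_3}\subset\p_{F_3}$), and the point you flag about $\p_{F_3}$ remaining generic of dimension $1$ is the same implicit point the paper relies on.

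The base case, however, has a genuine gap. You pass from the characteristic-zero automorphic point $\mathfrak{a}\in\Spec R_{\cS'_{F_2}}$ (which does contain $J_{\cS'_{F_2}}R_{\cS'_{F_2}}$) to a minimal prime $Q_0$ of $R_{\cS'_{F_2}}/(\varpi)$ lying on ``the component through $\mathfrak{a}$'', and assert that $Q_0$ still contains the image of $J_{\cS'_{F_2}}$. This does not follow: if $Q_{\min}\subset\mathfrak{a}$ is the minimal prime cutting out that component, nothing forces $J_{\cS'_{F_2}}\subset Q_{\min}$ --- containment in the single point $\mathfrak{a}$ says nothing about the whole component --- and $Q_0$ only contains $Q_{\min}+(\varpi)$, not $\mathfrak{a}$. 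Without $J_{\cS'_{F_2}}\subset Q_0$ you cannot deduce $J_{\cS_{F_3}}\subset (Q_0)_{F_3}$, since the only access to $J_{\cS_{F_3}}$ is via $J_{\cS_{F_3}}P_{\cS'_{F_2}}\subset J_{\cS'_{F_2}}$. The missing input is dimension-theoretic: one must show $V(J_{\cS'_{F_2}})$ contains an entire irreducible component. The paper does this inside $P_{\cS'_{F_2}}$: since $P_{\cS'_{F_2}}$ is finite over $\Lambda_{F_2}$ while $P_{\cS'_{F_2}}/J_{\cS'_{F_2}}$ maps onto the nonzero big ordinary Hecke algebra $\bT_{F_2}$ of dimension $\dim\Lambda_{F_2}$, the ideal $J_{\cS'_{F_2}}$ lies in a \emph{minimal} prime $Q$ of $P_{\cS'_{F_2}}$; lying-over for the finite injection $P_{\cS'_{F_2}}\hookrightarrow R_{\cS'_{F_2}}$ then yields a minimal prime of $R_{\cS'_{F_2}}$ contracting to $Q$ (hence containing $J_{\cS'_{F_2}}$), and reducing mod $\varpi$ using equidimensionality gives a minimal prime $\overline{Q}$ of $R_{\cS'_{F_2}}/(\varpi)$ containing $J_{\cS'_{F_2}}$. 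Replacing your component-through-$\mathfrak{a}$ step with this argument repairs the proof.
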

\begin{proof}
It suffices to consider minimal such primes $\overline{Q}$. We first show that there is at least one minimal prime $\overline{Q}$ such that $J_{\cS_{F_3}} \subset \overline{Q}_{F_3}$. Since $R_{\cS'_{F_2}}$ is a finite $\Lambda_{F_2}$-algebra, $P_{\cS'_{F_2}}$ is a finite $\Lambda_{F_2}$-algebra. Since $\bT_{F_2}$ is a faithful finite 
$\Lambda_{F_2}$-algebra, $\Spec(\bT_{F_2})$ has an irreducible component of dimension $\dim \Lambda_{F_2}$, so there is a minimal prime $Q \subset P_{\cS'_{F_2}}$ such that $J_{\cS'_{F_2}} \subset Q$. Since $P_{\cS'_{F_2}} \to R_{\cS'_{F_2}}$ is finite, we can find a minimal prime $\overline{Q} \subset R_{\cS'_{F_2}} / (\varpi)$ such that $J_{\cS'_{F_2}} \subset \overline{Q}$.

Now let $\overline{Q}' \subset R_{\cS'_{F_2}} / (\varpi)$ be some other minimal prime. By Proposition \ref{prop_graph}, we can find minimal primes $\overline{Q} = \overline{Q}_1, \overline{Q}_2, \dots, \overline{Q}_N = \overline{Q}' \subset  R_{\cS'_{F_2}} / (\varpi)$ and prime ideals $\p_1, \dots, \p_{N-1} \in \cP$ such that $(\overline{Q}_i, \overline{Q}_{i+1}) \subset \p_i$ for each $i = 1, \dots, N-1$. By construction $J_{\cS_{F_3}} \subset \overline{Q}_{1, F_3}$. Applying Proposition \ref{prop_R_equals_T} to $\p_{1, F_3}$, we find that $J_{\cS_{F_3}} \subset \overline{Q}_{2, F_3}$. Repeating this argument $N-2$ times, we find $J_{\cS_{F_3}} \subset \overline{Q}'_{F_3}$, as required.
\end{proof}
We can now complete the proof of Theorem \ref{thm_untwisted_tensor_product}. Let $f : R_{\cS_{F_1}} \to \cO$ be the homomorphism associated to the lifting $s$, and let $\q = \ker f$. Let $Q \subset \q$ be a minimal prime. By construction, we can find a prime $\p \supset Q$ of dimension 1 and characteristic $p$ such that $\p_{F_2} \in \cP$, hence (by Proposition \ref{prop_propagation}) $J_{\cS_{F_3}} \subset \p_{F_3}$. Applying Proposition \ref{prop_R_equals_T} one more time, we find that $J_{\cS_{F_3}} \subset Q_{F_3}$, hence $J_{\cS_{F_3}} \subset \q_{F_3}$, hence that there is a homomorphism $\bT_{F_3} \to \cO$ associated to $s|_{G_{K_3}}$. It now follows from \cite[Lemma 2.25]{ger} and \cite[Corollaire 5.3]{Lab11} that $s|_{G_{K_3}}$ is automorphic, associated to a RACSDC automorphic representation of $\GL_n(\bA_{K_3})$. Applying soluble descent and untwisting, we find that $r_{\pi, \iota} \otimes \Sym^{r-1}r_{\sigma, \iota}$ is automorphic, as required. \qed

\section{A functoriality lifting theorem for tensor products}\label{sec:flt tp}

Let $F$ be a totally real field, let $p$ be a prime, and let $\iota : \overline{\bQ}_p \to \bC$ be an isomorphism. In \S \ref{sec:aut untwisted} we proved the automorphy of certain untwisted tensor products $r_{\pi, \iota} \otimes \Sym^{r-1} r_{\sigma, \iota}$, where $\pi, \sigma$ are cuspidal, regular algebraic automorphic representations of $\GL_2(\A_F)$ which have \emph{isomorphic} residual representations $\overline{r}_{\pi, \iota} \cong \overline{r}_{\sigma, \iota}$. Our next goal will be to deduce from this the automorphy of the twisted tensor product ${}^{\varphi_p} r_{\pi, \iota} \otimes \Sym^{r-1} r_{\sigma, \iota}$, where $\varphi_p \in G_{\bQ_p}$ is an arithmetic Frobenius lift.  We will accomplish this in \S \ref{sec: aut twisted} using an argument, sketched in the introduction, that exploits the field of definition of $\pi$. 

An indispensable tool in this argument will be Theorem \ref{thm_FLT_for_TP} below, an example of a `functoriality lifting theorem'. The proof of this theorem will occupy \S \ref{sec:flt tp}. Before proceeding to the statement, we give an idea of the argument. For this theorem, we take two cuspidal, regular algebraic automorphic representations $\pi, \pi'$ of $\GL_2(\A_F)$ of weight 0 such that $\overline{r}_{\pi, \iota} \cong \overline{r}_{\pi', \iota}$. We fix an integer $r \geq 1$ and a RAESDC automorphic representation $\sigma$ of $\GL_r(\A_F)$ such that $r_{\sigma, \iota}$ has Hodge--Tate weights $\{0, 2, \dots, 2r-2\}$ (in applications, $\sigma$ will be the degree-$r$ symmetric power of a cuspidal, regular algebraic automorphic representation of $\GL_2(\A_F)$, as in the previous paragraph). We wish to deduce the automorphy of the tensor product of $r_{\pi'. \iota} \otimes r_{\sigma, \iota}$ from that of $r_{\pi, \iota} \otimes r_{\sigma, \iota}$. 

The key insight is that although this is a statement about the automorphy of a Galois representation of degree $2r$, it is enough, in applying the Taylor--Wiles--Kisin method, to be able to control the patched deformation ring $R_\infty$ of the degree 2 residual representation $\overline{r}_{\pi, \iota}$, in combination with the relatively coarse information on the localization and completion of the patched pseudodeformation ring at the point corresponding to $r_{\pi, \iota} \otimes r_{\sigma, \iota}$ given by the vanishing of the adjoint Bloch--Kato Selmer group (which follows from e.g.\ the automorphy of this Galois representation and the main result of \cite{New19a}). In fact, $\Spec R_\infty$ is irreducible when $p$ is odd, as follows from the theory developed in \cite{Kis09}; when $p = 2$, we need to give a slight modification of the argument, following the ideas introduced in \cite{Kha09b}. 

We now give the statement. 
\begin{theorem}\label{thm_FLT_for_TP}
Let $F$ be a totally real number field, let $r \geq 1$ be an integer, let $p$ be a prime number, and let $\iota : \overline{\bQ}_p \to \bC$ be an isomorphism.  Suppose given RAESDC automorphic representations $\pi, \pi'$ of $\GL_2(\bA_F)$ and a RAESDC automorphic representation $\sigma$ of $\GL_r(\bA_F)$, all satisfying the following conditions:
\begin{enumerate}
\item $\pi$ and $\pi'$ are of weight 0 and are non-CM.
\item There is an isomorphism $\overline{r}_{\pi, \iota} \cong \overline{r}_{\pi', \iota}$.
\item $\overline{r}_{\pi, \iota}(G_F)$ contains a conjugate of $\SL_2(\bF_{p^a})$ for some $p^a > 5$.
\item For each place $v | p$ of $F$, $r_{\pi, \iota}|_{G_{F_v}}$ is ordinary (resp. potentially crystalline) if and only if $r_{\pi', \iota}|_{G_{F_v}}$ is ordinary (resp. potentially crystalline). If $p = 2$, they are both potentially crystalline. 
\item For each place $v \nmid p$ of $F$, $\pi_v$ is a character twist of the Steinberg representation if and only if $\pi'_v$ is.
\item\label{assm:H} There is a perfect subgroup $H \leq (\overline{r}_{\pi, \iota} \otimes \overline{r}_{\sigma, \iota})^{ss}(G_F)$ and $h \in H$ with
$2r$ distinct eigenvalues. 
\item For each embedding $\tau : F \to \overline{\bQ}_p$, we have $\mathrm{HT}_\tau(r_{\sigma, \iota}) = \{ 0, 2, \dots, 2r-2 \}$.
\item The tensor product $r_{\pi, \iota} \otimes r_{\sigma, \iota}$ is automorphic and strongly irreducible.
\item The tensor product $r_{\pi', \iota} \otimes r_{\sigma, \iota}$ is irreducible. 
\end{enumerate} 
Then the tensor product $r_{\pi', \iota} \otimes r_{\sigma, \iota}$ is automorphic. 
\end{theorem}
We write $n = 2r$. We remark that we could make do with a weaker condition on the Galois representation $r_{\sigma, \iota}$ than its automorphy. However, it is convenient here and is satisfied in our intended application. Likewise, it is not necessary to assume that the Hodge--Tate weights of $r_{\sigma, \iota}$ are $0, 2, \dots, 2r-2$, but to treat a more general case would require additional notation (note that this implies that the Hodge--Tate weights of $r_{\pi, \iota} \otimes r_{\sigma, \iota}$ are $\{ 0, 1, \dots, 2r-1 \}$, and therefore that $r_{\pi, \iota} \otimes r_{\sigma, \iota}$ will correspond to an automorphic representation of weight 0). The assumption that $H$ is perfect in condition (\ref{assm:H}) ensures that this condition will still be satisfied after any soluble base change.

The proof will follow closely the proof of \cite[Theorem 2.1]{New21b}, with appropriate modifications. We have also made a small simplification to the level structures chosen at Taylor--Wiles places. We begin with some preliminary reductions. After applying \cite[Lemma 4.1.4]{cht} to construct suitable Hecke characters and replacing $F$ by a soluble totally real extension, and $\pi, \pi'$ by their base changes, we can assume that the following additional conditions are satisfied:
\begin{enumerate}
 \setcounter{enumi}{9}
 \item $\pi, \pi'$ are self-dual (in particular, $\det r_{\pi, \iota} = \det r_{\pi', \iota} = \epsilon^{-1}$).
\item $[F : \bQ]$ is even.
\item For each finite place $v$ of $F$, $\pi_v$, $\pi'_v$ and $\sigma_v$ are all Iwahori-spherical. The number of places $v$ such that $\pi_v$ and $\pi'_v$ are ramified is even.
\item Let $S_p$ denote the set of $p$-adic places of $F$, let $\Sigma$ denote the set of places where $\pi$ is ramified, and let $\Sigma_p = \Sigma \cap S_p$, $\Sigma^p = \Sigma - \Sigma_p$, $S = S_p \cup \Sigma$. Then for each $v \in S$, $\overline{r}_{\pi, \iota}|_{G_{F_v}}$ is trivial. Moreover, for each $v \in \Sigma^p$, $q_v \equiv 1 \text{ mod }p$ and $\pi_v$ and $\pi'_v$ are isomorphic to the Steinberg representation (not just a twist of the Steinberg representation). We observe that if $\Sigma_p$ is non-empty, then we must have $p > 2$ (as if $v \in \Sigma_p$ then $\pi_v$ is an unramified twist of the Steinberg representation, and so $r_{\pi, \iota}|_{G_{F_v}}$ is not potentially crystalline, by local-global compatibility, implying that $p > 2$ by assumption (4) of the Theorem).
\item There exists a quadratic CM extension $K / F$ which is $S$-split and everywhere unramified. We fix a CM type $\Phi$ of $K$, as well as a set $\widetilde{S}$ of finite places of $K$ such that for each $v \in S$, there is exactly one place $\wv$ of $K$ such that $\wv$ lies above $v$ and $\wv \in \widetilde{S}$. 
\item Since $\sigma$ is RAESDC, there is given a character $\chi : F^\times \backslash \bA_F^\times \to \bC^\times$ of type $A_0$ such that $\sigma \cong \sigma^\vee \otimes \chi$ and $\chi_v(-1)$ is independent of $v | \infty$. This character necessarily has the property that $\chi |\cdot|^{r-1}$ is of finite order (compare the Hodge--Tate weights of $r_{\sigma,\iota}$ and $r_{\sigma^\vee,\iota} \cong r_{\sigma,\iota}^\vee(1-r)$). We assume that there exists an everywhere unramified character $X : K^\times \backslash \bA_K^\times \to \bC^\times$ of type $A_0$ such that $X \circ \mathbf{N}_{K / F} = \chi \circ \mathbf{N}_{K / F}$, and an everywhere unramified character $Y : K^\times \backslash \bA_K^\times \to \bC^\times$ of type $A_0$ such that $Y \circ \mathbf{N}_{K / F} = | \cdot |$. We assume moreover that if $\tau \in \Phi$ induces a place $v$ of $K$, then $X|_{K_v^\times}(z) = \tau(z)^{1-r}$ and $Y|_{K_v^\times}(z) = \tau(z)$ for each $\tau \in \Phi$.
\item The representations $(r_{\pi, \iota} \otimes r_{\sigma, \iota})|_{G_{K(\zeta_{p^\infty})}}$ and $(r_{\pi', \iota} \otimes r_{\sigma, \iota})|_{G_{K}}$ are both irreducible.
\end{enumerate}
Let $\pi_K = \mathrm{BC}_{K / F}(\pi)$, $\pi'_K = \mathrm{BC}_{K / F}(\pi')$, and $\sigma_K = \mathrm{BC}_{K / F}(\sigma) \otimes X^{-1}$. Then $\pi_K, \pi'_K, \sigma_K$ are all RACSDC automorphic representations. We write $T$ for the RACSDC automorphic representation of $\GL_{n}(\bA_K)$ such that $r_{T, \iota} \cong r_{\pi_K, \iota} \otimes r_{\sigma_K, \iota} \otimes r_{Y, \iota}^{1-r}$. Our assumptions imply that $T$ is of weight 0. 

Let $E / \bQ_p$ be a coefficient field. After possibly enlarging $E$, we can find conjugates $r, r'$ and $s$ of the representations $r_{\pi, \iota}, r_{\pi', \iota}$ and $r_{\sigma, \iota}$ which take values in $\GL_2(\cO)$, $\GL_2(\cO)$ and $\GL_r(\cO)$ (respectively) and assume that the characters $r_{X, \iota}$, $r_{Y, \iota}$ take values in $\cO^\times$. We can also assume that the residual representations $\overline{r} = r \text{ mod }\varpi$ and $\overline{r}' = r' \text{ mod }\varpi$ are equal (not just conjugate) and that the eigenvalues of each element in the images of $\overline{r} = \overline{r}'$ and $\overline{s}$ lie in the residue field $k$. 

Recall from \S \ref{subsec_notation} that we write $\cC_\cO$ for the category of complete Noetherian local $\cO$-algebras with residue field $k$. If $v$ is a place of $F$, we write $R_v^\square \in \cC_\cO$ for the object representing the functor $\operatorname{Lift}_v : \cC_\cO \to \mathrm{Sets}$ which associates to $A \in \cC_\cO$ the set of homomorphisms $\widetilde{r} : G_{F_v} \to \GL_2(A)$ lifting $\overline{r}|_{G_{F_v}}$ and such that $\det \widetilde{r} = \epsilon^{-1}|_{G_{F_v}}$. As in \cite{New21b}, we introduce certain quotients of $R_v^\square$:
\begin{itemize}
\item If $v \in S_p$ and $r_{\pi, \iota}|_{G_{F_v}}$ is non-ordinary, the smallest reduced $\cO$-torsion-free quotient $R_v$ of $R_v^\square$ such that if $f : R_v^\square \to \overline{\bQ}_p$ is a homomorphism such that the pushforward of the universal lifting to $\overline{\bQ}_p$ is crystalline of Hodge--Tate weights $\{ 0, 1 \}$ and is not ordinary, then $f$ factors through $R_v$. By \cite[Corollary 2.3.13]{Kis09b}$, R_v$ is a domain of dimension $4 + [F_v : \bQ_p]$. 
\item If $v \in S_p$ and $r_{\pi, \iota}|_{G_{F_v}}$ is ordinary and crystalline, the smallest reduced $\cO$-torsion-free quotient $R_v$ of $R_v^\square$ such that if $F : R_v^\square \to \overline{\bQ}_p$ is a homomorphism such that the pushforward of the universal lifting to $\overline{\bQ}_p$ is crystalline of Hodge--Tate weights $\{ 0, 1 \}$ and is ordinary, then $F$ factors through $R_v$. By \cite[Proposition 2.4.6]{Kis09b}, $R_v$ is a domain of dimension $4 + [F_v : \bQ_p]$. 
\item If $v \in S_p$ and $r_{\pi, \iota}|_{G_{F_v}}$ is ordinary and non-crystalline (i.e. if $v \in \Sigma_p$), the smallest reduced $\cO$-torsion-free quotient $R_v$ of $R_v^\square$ such that if $F : R_v^\square \to \overline{\bQ}_p$ is a homomorphism such that the pushforward of the universal lifting to $\overline{\bQ}_p$ is semi-stable non-crystalline of Hodge--Tate weights $\{ 0, 1 \}$, then $F$ factors through $R_v$. By \cite[Proposition 4.3.1]{Sno18}, $R_v$ is a domain of dimension $4 + [F_v : \bQ_p]$. (Here it is helpful to again note that if $\Sigma_p$ is non-empty, then $p > 2$.)
\item If $v \in \Sigma^p$, the smallest reduced $\cO$-torsion-free quotient $R_v$ of $R_v^\square$ such that if $f : R_v^\square \to \overline{\bQ}_p$ is a homomorphism such that the pushforward of the universal lifting to $\overline{\bQ}_p$ is an extension of $\epsilon^{-1}$ by the trivial character, then $f$ factors through $R_v$. By \cite[Proposition 2.5.2]{Kis09b}, $R_v$ is a domain of dimension 4.
\item If $v \in S_\infty$ (the set of infinite places of $F$), the quotient $R_v$ of $R_v^\square$ denoted $R_{V_{\mathbf{F}}}^{-1, \square}$ in \cite[Proposition 2.5.6]{Kis09b}. Then $R_v$ is a domain of dimension 3.
\end{itemize}
If $Q$ is a finite set of finite places of $F$, disjoint from $S$, then we write $\operatorname{Def}_Q : \cC_\cO \to \Sets$ for the functor which associates to $A \in \cC_\cO$ the set of $1 + M_2(\mathfrak{m}_A)$-conjugacy classes of lifts $\widetilde{r} : G_F \to \GL_2(A)$ of $\overline{r}$ satisfying the following conditions:
\begin{itemize}
\item $\widetilde{r}$ is unramified outside $S \cup Q$, and $\det \widetilde{r} = \epsilon^{-1}$.
\item For each $v \in S \cup S_\infty$, the homomorphism $R_v^\square \to A$ determined by $\widetilde{r}|_{G_{F_v}}$ factors through the quotient $R_v^\square \to R_v$ introduced above.
\end{itemize} 
Then $\operatorname{Def}_Q$ is represented by an object $R_Q \in \cC_\cO$; we write $R_\emptyset = R$. We also have some ``framed'' variants of these rings. We let $R_{loc} = \widehat{\otimes}_{v \in S \cup S_\infty} R_v$. Then $R_{loc}$ is an $\cO$-flat domain of dimension $1 + 3 [F : \bQ] + 3 |S|$. We write $\operatorname{Def}_Q^\square : \cC_\cO \to \Sets$ for the functor of $1 + M_2(\mathfrak{m}_A)$-conjugacy classes of tuples $(\widetilde{r}, \{A_v \}_{v \in S \cup S_\infty})$, where $\widetilde{r}$ is of type $\operatorname{Def}_Q$ and $A_v \in 1 + M_2(\mathfrak{m}_A)$, and the action of $1 + M_2(\mathfrak{m}_A)$ is as defined on \cite[p.\ 124]{New21b}. The functor $\operatorname{Def}_Q^\square$ is represented by an object $R_Q^\square$ and there is a tautological homomorphism $R_{loc} \to R_Q^\square$.

We also introduce the functor $\operatorname{Def}'_Q : \cC_\cO \to \Sets$ of $1 + M_2(\mathfrak{m}_A)$-conjugacy classes of lifts $\widetilde{r} : G_F \to \GL_2(A)$ of $\overline{r}$ satisfying the following conditions:
\begin{itemize}
\item $\widetilde{r}$ is unramified outside $S \cup Q$, and for each $v \in S$, $\det \widetilde{r}|_{G_{F_v}} = \epsilon^{-1}|_{G_{F_v}}$.
\item For each $v \in S \cup S_\infty$, the homomorphism $R_v^\square \to A$ determined by $\widetilde{r}|_{G_{F_v}}$ factors through the quotient $R_v^\square \to R_v$ introduced above.
\end{itemize}
(Thus we are not fixing the determinant of our deformations on whole Galois group, but only its restriction to decomposition groups at places in $S$.) This functor is represented by an object $R'_Q \in \cC_\cO$, and there are framed variants $\operatorname{Def}^{\prime, \square}_Q$, $R^{\prime, \square}_Q$ and a homomorphism $R_{loc} \to R_Q^{\prime, \square}$. These objects are used in implementing the modification of the Taylor--Wiles method introduced by Khare--Wintenberger \cite{Kha09b} in the case $p = 2$. 

Let $t, t'$ denote the group determinants (in the sense of \cite{chenevier_det}) of $G_K$ over $\cO$ associated to the tensor products $(r \otimes s)|_{G_K}  \otimes r_{X^{-1} Y^{1-r}, \iota}$ and $(r' \otimes s)|_{G_K} \otimes r_{X^{-1} Y^{1-r}, \iota}$, respectively, and let $\overline{t}$ denote their common reduction modulo $(\varpi)$. Let $a < b$ be integers such that the Hodge--Tate weights of $(r \otimes s)|_{G_K}  \otimes r_{X^{-1} Y^{1-r}, \iota}$ all lie in the interval $[a, b]$. We write $P \in \cC_\cO$ for the quotient $R_S$ of $R_{\overline{t}, S}^{[a, b]}$ introduced in \cite[\S 2.4]{New19a} following \cite{WWE_pseudo_def_cond}. Informally, $P$ represents the functor of conjugate self-dual group determinants of $G_{K, S}$ lifting $\overline{t}$ which are semi-stable with Hodge--Tate weights in the interval $[a, b]$. 
\begin{lemma}\label{lem:Kisin vs WWE def rings}
Let $A \in \cC_\cO$ be Artinian, let $v \in S_p$, and let $\widetilde{r}_A : G_{F_v} \to \GL_2(A)$ be a lift of $\overline{r}|_{G_{F_v}}$ of determinant $\epsilon^{-1}|_{G_{F_v}}$ such that the associated homomorphism $R_v^\square \to A$ factors through $R_v$. View $A^{2r} \cong A^2 \otimes_\cO \cO^r$ as an $A[G_{F_v}]$-module via the tensor product representation $\widetilde{r}_A \otimes_\cO s|_{G_{K_\wv}}  \otimes r_{X^{-1} Y^{1-r}, \iota}$. Then $A^{2r}$ is isomorphic, as $\bZ_p[G_{F_v}]$-module, to a subquotient of a lattice in a semistable $\bQ_p[G_{F_v}]$-module with all Hodge--Tate weights in the interval $[a, b]$.
\end{lemma}
\begin{proof}
In the crystalline case (i.e.\ when $v \in S_p - \Sigma_p$), we can employ essentially the same argument as in the proof of \cite[Lemma 2.2]{New21b} -- $A^2$ is isomorphic, as $\bZ_p[G_{F_v}]$-module, to a subquotient of a lattice in a semistable (even crystalline) $\bQ_p[G_{F_v}]$-representation with Hodge--Tate weights in $\{0, 1 \}$. It follows that  $A^2 \otimes_\cO s|_{G_{K_\wv}} \otimes r_{X^{-1} Y^{1-r}, \iota}$ is also isomorphic to a subquotient of a lattice in a crystalline $\bQ_p[G_{F_v}]$-representation with bounded Hodge--Tate weights. 

In the semistable non-crystalline case (i.e.\ when $v \in \Sigma_p$), the same argument applies, provided we can show that $A^2$ is isomorphic, as  $\bZ_p[G_{F_v}]$-module, to a subquotient of a lattice in a semistable $\bQ_p[G_{F_v}]$-representation with Hodge--Tate weights in $\{ 0, 1\}$. An additional argument is required in this case since the local lifting ring $R_v$ is defined in a different way. Let $R_v^\square$ denote the ring representing the functor  of all lifts $\overline{r}|_{G_{F_v}}$ of determinant $\epsilon^{-1}$, and let $R_v'$ denote the quotient of $R_v^\square$ such that a map $R_v^\square \to B$ with $B \in \cC_\cO$ Artinian factors through $R_v'$ if and only if $B^2$ is isomorphic, as $\bZ_p[G_{F_v}]$-module, to a subquotient of a lattice in a semistable $\bQ_p[G_{F_v}]$-representation with Hodge--Tate weights in $\{ 0, 1 \}$ (the existence of $R_v'$ follows from \cite[Theorem 2.3.4]{WWE_pseudo_def_cond}, together with the observation that `torsion semistable of Hodge--Tate weights in $\{0, 1\}$' is a stable condition, cf.~\cite[\S 5.2]{WWE_pseudo_def_cond}). We need to show that the map  $R_v^\square \to R_v$ factors through $R_v'$. The ring $R_v$ is reduced and the maximal ideals of $R_v[1/p]$ are dense in $\Spec R_v$, so it is enough to show that if $R_v^\square \to \overline{\bQ}_p$ factors through $R_v$ then it factors through $R_v'$. It follows from \cite[Proposition 4.3.1]{Sno18} that any such homomorphism corresponds to a lifting $\widetilde{r} : G_{F_v} \to \GL_2(\overline{\bZ}_p)$ of $\overline{r}|_{G_{F_v}}$ such that $\widetilde{r} \otimes_{\overline{\bZ}_p} \overline{\bQ}_p$ is an extension of $\epsilon^{-1}$ by the trivial character. In particular, any such extension is semistable. This completes the proof.
\end{proof}
\begin{cor}
The assignment $\widetilde{r} \mapsto (\widetilde{r} \otimes s)|_{G_K}  \otimes r_{X^{-1} Y^{1-r}, \iota}$ determines a morphism $P \to R$ in $\cC_\cO$. In particular, $t, t'$ determine homomorphisms $P \to \cO$.
\end{cor}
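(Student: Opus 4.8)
The plan is to apply the universal property of $P$. Since $\overline{r}$ is absolutely irreducible, $\operatorname{Def}_\emptyset$ is representable by $R$ with a genuine universal deformation $\widetilde{r}^{\mathrm{univ}} \colon G_F \to \GL_2(R)$. I would form the tensor product representation $\rho^{\mathrm{univ}} := (\widetilde{r}^{\mathrm{univ}} \otimes s)|_{G_K} \otimes r_{X^{-1}Y^{1-r}, \iota} \colon G_K \to \GL_n(R)$ and let $D$ denote its associated $n$-dimensional group determinant over $R$. The first step is to check that $D$ is a point of the functor represented by $P$. Indeed: $D \bmod \varpi$ is the group determinant of $(\overline{r} \otimes \overline{s})|_{G_K} \otimes \overline{r_{X^{-1}Y^{1-r}, \iota}}$, which is exactly $\overline{t}$; $D$ factors through $G_{K, S}$, since $\widetilde{r}^{\mathrm{univ}}$, $s$ and the characters $X, Y$ are unramified outside $S$ (by the conditions arranged at the start of the section) and $K/F$ is $S$-split; and $D$ is conjugate self-dual with the same similitude character as $\overline{t}$, which is a formal consequence of $\det \widetilde{r}^{\mathrm{univ}} = \epsilon^{-1}$, the relation $\sigma \cong \sigma^\vee \otimes \chi$, and the defining properties (14) of $X$ and $Y$: it is the same computation that renders $r_{T, \iota}$ conjugate self-dual, now carried out with coefficients in $R$ rather than $\cO$.

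The substantive point, and the one I expect to be the main obstacle, is that $D$ is semi-stable with Hodge--Tate weights in $[a, b]$, in the sense that cuts out $R_{\overline{t}, S}^{[a,b]}$ (and hence $P$) in \cite{New19a, WWE_pseudo_def_cond}. This is imposed as a deformation condition on Artinian quotients, so, since $R = \varprojlim_m R/\frakm_R^m$ with each $R/\frakm_R^m$ Artinian, it suffices to verify, for each $m$ and each $p$-adic place $\wv$ of $K$ lying above a place $v \mid p$ of $F$, that the $\bZ_p[G_{K_\wv}]$-module underlying $\rho^{\mathrm{univ}} \bmod \frakm_R^m$, restricted to $G_{K_\wv}$, is a subquotient of a lattice in a semi-stable $\bQ_p[G_{K_\wv}]$-module with all Hodge--Tate weights in $[a, b]$. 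Identifying $G_{K_\wv}$ with $G_{F_v}$ via the $S$-splitness of $K/F$, and recalling that by definition of $\operatorname{Def}_\emptyset$ the homomorphism $R_v^\square \to R/\frakm_R^m$ induced by $\widetilde{r}^{\mathrm{univ}}|_{G_{F_v}}$ factors through $R_v$, this is precisely the conclusion of Lemma \ref{lem:Kisin vs WWE def rings} with $A = R/\frakm_R^m$. Hence $D$ defines an $R$-point of the functor represented by $P$, and there is a unique morphism $P \to R$ in $\cC_\cO$ carrying the universal group determinant over $P$ to $D$.

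Lemma \ref{lem:Kisin vs WWE def rings} has been established exactly in order to clear this obstacle, so the remaining verifications are routine. For the final assertion: the reductions at the start of the section ensure that $r$ (resp.\ $r'$), the chosen conjugate of $r_{\pi, \iota}$ (resp.\ $r_{\pi', \iota}$) valued in $\GL_2(\cO)$, satisfies the local conditions defining the rings $R_v$ for $v \in S \cup S_\infty$ --- at $v \in S_p$ it is crystalline or semi-stable of Hodge--Tate weights $\{0, 1\}$, and ordinary exactly when $r_{\pi, \iota}|_{G_{F_v}}$ is; at $v \in \Sigma^p$ it is an extension of $\epsilon^{-1}$ by the trivial character; at $v \in S_\infty$ it is totally odd of determinant $\epsilon^{-1}$ --- and hence defines an $\cO$-point $x \colon R \to \cO$ (resp.\ $x' \colon R \to \cO$). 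Composing with the morphism $P \to R$ just constructed yields homomorphisms $P \to \cO$ which, unwinding the construction of $D$, classify $t = x^\ast D$ and $t' = (x')^\ast D$ respectively.
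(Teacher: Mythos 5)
Your proposal is correct and follows essentially the same route as the paper, whose proof of this corollary is exactly the one-line observation that it follows from the definitions together with Lemma \ref{lem:Kisin vs WWE def rings}. You have simply written out the routine verifications (reduction $\overline{t}$, ramification, conjugate self-duality) and identified the one substantive point --- checking the semi-stability condition on Artinian quotients $R/\ffrm_R^m$ via that lemma --- which is precisely the intended content.
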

\begin{proof}
This follows from the definitions and Lemma \ref{lem:Kisin vs WWE def rings}. 
\end{proof}
In this section of the paper, we will call a Taylor--Wiles datum of level $N \geq 1$ a tuple 
\[ (Q, \widetilde{Q}, (\alpha_v, \beta_v)_{v \in Q}, (\gamma_{v, 1}, \dots, \gamma_{v, r})_{v \in Q}) \] satisfying the following conditions:
\begin{itemize}
\item $Q$ is a finite set of places of $F$, split in $K$, such that for each $v \in Q$, $q_v \equiv 1 \text{ mod }p^N$.
\item For each $v \in Q$, we have fixed a factorization $v = \wv \wv^c$ in $K$, and $\widetilde{Q} = \{ \wv \mid v \in Q \}$.
\item For each $v \in Q$, $(\overline{r} \otimes \overline{s})(\Frob_\wv)$ has $2r$ distinct eigenvalues, and $\alpha_v, \beta_v \in k$ are the eigenvalues of $\overline{r}(\Frob_\wv)$, while $\gamma_{v, 1}, \dots, \gamma_{v, r} \in k$ are the eigenvalues of $(\overline{s}|_{G_{K}} \otimes \overline{r}_{X^{-1} Y^{1-r}, \iota})(\Frob_\wv)$.
\end{itemize}
Note that this definition differs to the one given in \cite[\S 2]{New21b}, reflecting our interest here in tensor products (as opposed to symmetric powers). If $Q$ is a Taylor--Wiles datum then $R_Q$ has a natural structure of $\cO[\Delta_Q]$-algebra, where $\Delta_Q = \prod_{v \in Q} k(v)^\times(p)$ is the maximal $p$-power quotient of the finite abelian group $\prod_{v \in Q} k(v)^\times$. The natural map $R_Q \to R$ factors through an isomorphism $R_Q \otimes_{\cO[\Delta_Q]} \cO \cong R$. 

We write $P_Q \in \cC_\cO$ for the quotient of the quotient $R_{S \cup Q}$ of $R_{\overline{t}, S \cup Q}^{[a, b]}$ introduced in \cite[\S 2.19]{New19a} corresponding to pseudodeformations $\widetilde{t}$ of $\overline{t}$ satisfying the following additional conditions:
\begin{itemize}
\item For each $v \in Q$, $\widetilde{t}|_{G_{K_\wv}}$ factors through the maximal Hausdorff abelian quotient $G_{K_\wv} \to G_{K_\wv}^{ab}$. As in \cite[pg.~125]{New21b}, there are characters $\chi_{v, 1}, \dots, \chi_{v, n}$ of $W_{K_\wv} $ such that $\widetilde{t}|_{G_{K_\wv}}$ is the pseudocharacter associated to $\chi_{v, 1} \oplus \dots \oplus \chi_{v, n}$, and such that each character $\overline{\chi}_{v, i}$ is unramified, satisfying $\overline{\chi}_{v, i} = \alpha_v \gamma_{v, i}$ if $1 \leq i \leq r$ and $\overline{\chi}_{v, i} = \beta_v \gamma_{v, i-r}$ if $r+1 \leq i \leq n$. (The characters $\chi_{v, 1}, \dots, \chi_{v, n}$ are then uniquely determined.)
\end{itemize} 

The characters $\chi_{v, i}|_{I_{K_\wv}} \circ \Art_{K_\wv}$ ($v \in Q, 1\le i \le n$) collectively endow $P_Q$ with the structure of a $\cO[\Delta'_Q]$-algebra, where $\Delta'_Q = (\Delta_Q)^n$. Tensor product with $s|_{G_K}  \otimes r_{X^{-1} Y^{1-r}, \iota}$ determines a morphism $P_Q \to R_Q$ of $\cO$-algebras.

We now introduce spaces of automorphic forms, beginning with forms on a definite quaternion algebra. We define the following notation, following \cite[p. 126]{New21b}:
\begin{itemize}
\item $D$ is a quaternion algebra over $F$ ramified exactly at $\Sigma \cup S_\infty$.
\item $\cO_D \subset D$ is a fixed choice of maximal order, equipped with, for each finite place $v\not\in \Sigma$, a choice of identification $\cO_D \otimes_{\cO_F} \cO_{F_v} \cong M_2(\cO_{F_v})$.
\item If $U = \prod_v U_v \subset  (D \otimes_F \A_{F}^\infty)^\times$ is an open subgroup, then $H_D(U)$ is the set of functions $f : (D \otimes_F \A_F^\infty)^\times \to \cO$ such that
for all $\gamma \in D^\times$, $z \in (\A_F^\infty)^\times$, $g \in (D \otimes_F \A_F^\infty)^\times$, and $u \in U$, we have $f(\gamma gzu) = f(g)$.
\item $U_0 = \prod_v U_{0, v} = \prod_{v \not\in \Sigma} (\cO_D \otimes_{\cO_F} \cO_{F_v})^\times \times (\prod_{v \in \Sigma} (D \otimes_F F_v)^\times$ is an open subgroup of $(D \otimes_F \A_{F}^\infty)^\times$. 
\item If $(Q, \widetilde{Q}, \{ \alpha_v, \beta_v \}_{v \in Q})$ is a Taylor--Wiles datum of level $N \geq 1$, then the subgroups $U_1(Q; N) = \prod_v U_1(Q; N)_v \subset U_0(Q) = \prod_v U_0(Q)_v \subset U_0$ are defined by $U_1(Q; N)_v = U_0(Q)_v = U_{0, v}$ if $v \not\in Q$, and $U_0(Q)_v = \Iw_v$ and $U_1(Q; N)_v = \{ \begin{psmallmatrix} a & b \\ c & d \end{psmallmatrix} \in \Iw_v \mid ad^{-1} \mapsto 1 \in k(v)^\times(p) / (p^N) \}$ if $v \in Q$. (Here we recall from \S \ref{subsec_notation} that $\Iw_v$ denotes the standard Iwahori subgroup of $\GL_2(\cO_{F_v})$.)
\item If $v \not\in \Sigma \cup Q$, then $\bT^{\univ}_{D, \Sigma \cup Q}$ denotes the polynomial ring over $\cO$ in the indeterminates $T_v^{(1)}, T_v^{(2)}$ ($v \not\in \Sigma \cup Q$), and $\bT^{univ, Q}_{D, \Sigma \cup Q} = \bT^{univ}_{D, \Sigma \cup Q}[\{ U_{\varpi_v}^{(1)} \}_{v \in Q}]$. Then $H_D(U_0(Q))$ and $H_D(U_1(Q; N))$ are $\bT^{univ, Q}_{D, \Sigma \cup Q}$-modules, where each indeterminate $T_v^{(i)}$ and $U_{\varpi_v}^{(1)}$ acts by the Hecke operator of the same name. 
\end{itemize} 
There is a unique maximal ideal $\mathfrak{m}_D \subset \bT^{univ}_{D, \Sigma}$ of residue field $k$ and with the following properties:
\begin{itemize}
\item If $v \not\in S$, then $\det(X - \overline{r}(\Frob_v))$ equals $X^2 - T_v^{(1)} X + q_v T_v^{(2)} \text{ mod }\ffrm_D$.
\item If $v \in S_p - \Sigma_p$ and $r|_{G_{F_v}}$ is non-ordinary, then $T_v^{(1)} \in \ffrm_D$. If $v \in S_p - \Sigma_p$ and $r|_{G_{F_v}}$ is ordinary, then $T_v^{(1)} - 1 \in \ffrm_D$. In either case, $T_v^{(2)} - 1 \in \ffrm_D$. 
\end{itemize}
If $ (Q, \widetilde{Q}, (\alpha_v, \beta_v)_{v \in Q}, (\gamma_{v, 1}, \dots, \gamma_{v, r})_{v \in Q}) $ is a Taylor--Wiles datum then we write $\ffrm_{D, Q} \subset \bT_{D, \Sigma \cup Q}^{univ, Q}$ generated by $\ffrm_D \cap \bT^{univ}_{D, \Sigma \cup Q}$ and the elements $U_{\varpi_v}^{(1)} - \alpha_v$ ($v \in Q$). 

If 
\[ \chi : F^\times \backslash (\bA_F^\infty)^\times / \det U_1(Q; N) \to \cO^\times \]
is a quadratic character and $f \in H_D(U_1(Q; N))$, then we define $f \otimes \chi \in H_D(U_1(Q; N))$ by the formula $(f \otimes \chi)(g) = \chi(\det(g)) f(g)$. We observe that if $p = 2$ and $f \in H_D(U_1(Q; N))_{\ffrm_{D, Q}}$ then $f \otimes \chi \in H_D(U_1(Q; N))_{\ffrm_{D, Q}}$. 

Let $m_D \geq 0$ denote the $p$-adic valuation of the least common multiple of the exponents of the Sylow $p$-subgroups of the finite groups $F^\times \backslash (U(\bA_F^\infty)^\times \cap t^{-1} D^\times t)$, for $t \in (D \otimes_F \bA_F^\infty)^\times$ (cf.~\cite[\S 7.2]{Kha09b}). 
\begin{prop}
Let $N \geq 1$ and let \[ (Q, \widetilde{Q}, (\alpha_v, \beta_v)_{v \in Q}, (\gamma_{v, 1}, \dots, \gamma_{v, r})_{v \in Q}) \]  be a Taylor--Wiles datum of level $N + m_D$. Then:
\begin{enumerate}
\item The maximal ideals $\ffrm_D$ and $\ffrm_{D, Q}$ are in the support of $H_D(U_0)$ and $H_D(U_0(Q))$, respectively.
\item $H_D(U_1(Q; N))_{\ffrm_{D, Q}}$ is a $\bT^{univ}_{D, \Sigma \cup Q} \otimes_\cO \cO[\Delta_Q / (p^N)]$-module free as $\cO[\Delta_Q / (p^N)]$-module, and there is an isomorphism
\[ H_D(U_1(Q; N))_{\ffrm_{D, Q}} \otimes_{\cO[\Delta_Q / (p^N)]} \cO \cong H_D(U_0)_{\ffrm_D} \]
of $\bT^{univ}_{D, \Sigma \cup Q}$-modules. 
\item There exists a structure on $H_D(U_1(Q; N))_{\ffrm_{D, Q}}$ of $R_Q$-module such that for any representative $r_Q^{univ}$ of the universal deformation of $\overline{r}$ and for any finite place $v \not\in S \cup Q$ of $F$, $\tr r_Q^{univ}(\Frob_v)$ acts as $T_v^{(1)}$ and $\det r_Q^{univ}(\Frob_v)$ acts as $q_v T_v^{(2)}$. Moreover, the $\cO[\Delta_Q]$-module structure induced by the map $\cO[\Delta_Q] \to R_Q$ agrees with the one in the second part of the lemma.
\end{enumerate}
\end{prop}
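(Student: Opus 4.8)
The plan is to re-run the definite-quaternionic Taylor--Wiles argument of \cite{New21b} on the $\GL_2$-side, with only notational changes. The one structural difference is the notion of Taylor--Wiles datum used here, but the conditions imposed on $v \in Q$ --- namely $q_v \equiv 1 \bmod p^N$ and that $(\overline{r}\otimes\overline{s})(\Frob_\wv)$ has $2r$ distinct eigenvalues $\alpha_v\gamma_{v,i},\ \beta_v\gamma_{v,i}$ --- force $\alpha_v \ne \beta_v$, i.e.\ $\overline{r}(\Frob_\wv)$ has distinct eigenvalues. This is exactly the hypothesis on $\overline{r}$ needed for the choices of $Q$-level $U_0(Q)$, $U_1(Q;N)$ and the localisations at $\ffrm_D$, $\ffrm_{D,Q}$ to behave as in \cite{New21b}.

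\emph{Parts (1) and (2).} After the soluble base change arranged above, $\pi$ is non-CM, discrete series at every infinite place, and ramified at an even number of finite places; its Jacquet--Langlands transfer to $D^\times(\bA_F)$ therefore contributes a nonzero vector to $H_D(U_0)$ whose $\bT^{univ}_{D,\Sigma}$-eigenvalue system reduces mod $\varpi$ to the one cutting out $\ffrm_D$ (using local--global compatibility for $r_{\pi,\iota}$ at unramified places and the ordinarity dichotomy built into the definition of $\ffrm_D$ at $v \in S_p - \Sigma_p$). Since each $v \in Q$ lies outside $S$, $\pi_v$ is unramified, the $\Iw_v$-fixed subspace of $\pi_v$ is two-dimensional, and $U_{\varpi_v}^{(1)}$ acts there with characteristic polynomial $X^2 - T_v^{(1)} X + q_v T_v^{(2)}$, whose roots reduce to $\alpha_v, \beta_v$; as $\alpha_v \ne \beta_v$, the $\alpha_v$-eigenline survives localisation at $\ffrm_{D,Q}$, giving (1). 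For (2), $\Delta_Q/(p^N)$ acts on $H_D(U_1(Q;N))$ via diamond operators; the shift from level $N$ to level $N + m_D$, together with the non-Eisenstein-ness of $\ffrm_{D,Q}$, makes $H_D(U_1(Q;N))_{\ffrm_{D,Q}}$ finite free over $\cO$ with the action factoring through $\Delta_Q/(p^N)$. Freeness over $\cO[\Delta_Q/(p^N)]$ and the displayed isomorphism are then a local statement at $Q$: every representation contributing to the localised module has, at $v \in Q$, a tamely ramified principal series local component whose $U_1(Q;N)$-fixed vectors with $U_{\varpi_v}^{(1)}$-eigenvalue a lift of $\alpha_v$ form a rank-one free $\cO[k(v)^\times(p)/(p^N)]$-module, whose $\Delta_Q$-coinvariants are the $\Iw_v$-fixed $\alpha_v$-eigenline; running this over all $v \in Q$ (and again using $\alpha_v \ne \beta_v$ to discard oldform contributions) yields $\cO[\Delta_Q/(p^N)]$-freeness and, on $\Delta_Q$-coinvariants, the $\bT^{univ}_{D,\Sigma\cup Q}$-equivariant identification with $H_D(U_0)_{\ffrm_D}$.

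\emph{Part (3).} Let $\bT \subset \End_\cO(H_D(U_1(Q;N))_{\ffrm_{D,Q}})$ be the image of $\bT^{univ,Q}_{D,\Sigma\cup Q}$; it lies in $\cC_\cO$ and is $\cO$-flat by (2). Transferring back to $\GL_2(\bA_F)$ and using the existence of Galois representations attached to Hilbert modular forms of regular weight, each $\overline{\bQ}_p$-point of $\Spec \bT$ carries a Galois representation; since $\overline{r} = \overline{r}_{\pi,\iota}$ is absolutely irreducible, these interpolate to a continuous $r_Q^{univ} : G_F \to \GL_2(\bT)$ lifting $\overline{r}$ with $\tr r_Q^{univ}(\Frob_v) = T_v^{(1)}$ and $\det r_Q^{univ}(\Frob_v) = q_v T_v^{(2)}$ for $v \notin S \cup Q$ (cf.\ the corresponding step of \cite{New21b}). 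Local--global compatibility shows $r_Q^{univ}$ is of type $\Def_Q$: its determinant is $\epsilon^{-1}$; at $v \in S_p$ the contributing forms have weight $0$ and are ordinary precisely when prescribed by $\ffrm_D$, so $r_Q^{univ}|_{G_{F_v}}$ is crystalline or semistable non-crystalline of Hodge--Tate weights $\{0,1\}$ as appropriate and factors through the quotient $R_v$ introduced above; at $v \in \Sigma^p$ the Steinberg hypothesis makes $r_Q^{univ}|_{G_{F_v}}$ an extension of $\epsilon^{-1}$ by the trivial character; and at $v \in S_\infty$ the oddness condition holds. We thus obtain a map $R_Q \to \bT$ and the asserted $R_Q$-module structure. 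Finally, local--global compatibility at $v \in Q$ matches the tame inertia characters of $r_Q^{univ}|_{G_{F_v}}$ --- which define the $\cO[\Delta_Q]$-algebra structure on $R_Q$ --- with the diamond operators, so this $\cO[\Delta_Q]$-structure agrees with the one from (2).

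\emph{Main obstacle.} The delicate point is (2): controlling $\cO$-torsion (the role of the shift $N \mapsto N + m_D$ and of localising at a non-Eisenstein maximal ideal) and bootstrapping from the rank-one computation at the Taylor--Wiles places to $\cO[\Delta_Q/(p^N)]$-freeness and the coinvariants isomorphism. One must also verify that the ``small simplification to the level structures at Taylor--Wiles places'' noted above does not disrupt this, which it does not since $\overline{r}(\Frob_\wv)$ still has distinct eigenvalues. The local--global compatibility at $p$ in the ordinary and semistable non-crystalline cases of (3) also requires the usual care but is by now standard.
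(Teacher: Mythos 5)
Your proposal is correct and follows essentially the same route as the paper, which simply invokes \cite[Proposition 2.3]{New21b} and then isolates the two genuinely new points: that the condition $T_v^{(1)}-1\in\ffrm_D$ is the right one in the crystalline ordinary case (because $\overline{r}|_{G_{F_v}}$ is trivial, the unit Frobenius eigenvalue is $\equiv 1$), and that representations contributing to $H_D(U_0)_{\ffrm_D}$ are of type $R_v$ at $v\in S_p$ (using that $\pi_{1,v}$ is genuinely Steinberg, not a twist, when $v\in\Sigma_p$ since $U_{0,v}=(D\otimes_F F_v)^\times$). You address both of these, and your observation that the modified Taylor--Wiles datum still forces $\alpha_v\neq\beta_v$ matches the paper's remark that the analysis at the places of $Q$ is unchanged.
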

\begin{proof}
This is essentially the same as \cite[Proposition 2.3]{New21b} (and is exactly the same when  $\Sigma_p = \emptyset$ and $r|_{G_{F_v}}$ is non-ordinary for each $v \in S_p$). We explain what is different when there are places $v \in S_p$ such that either $r|_{G_{F_v}}$ is crystalline ordinary or $v \in \Sigma_p$. The two things to check are that $\ffrm_D$ occurs in the support of $H_D(U_0)$ and that any automorphic representation occurring in $H_D(U_0)_{\ffrm_D}$ gives rise to a lifting of $\overline{r}$ which is ``of type $R_v$'' (the analysis of the extra structures determined by the Taylor--Wiles datum is unchanged). If $r|_{G_{F_v}}$ is crystalline ordinary then the trace of Frobenius on $\mathrm{WD}(r|_{G_{F_v}})$ is a $p$-adic unit which is congruent to $1 \text{ mod }\ffrm_{\overline{\bbZ}_p}$, because we assume $\overline{r}|_{G_{F_v}}$ is the trivial representation. Thus the condition $T_v^{(1)} - 1 \in \ffrm_D$ is the right one to ensure $\ffrm_D$ occurs in the support of $H_D(U_0)$.

Now suppose that $\pi_1$ is an automorphic representation of $\GL_2(\bA_F)$ whose Jacquet--Langlands transfer to $D$ contributes to $H_D(U_0)_{\ffrm_D}$. We need to check that if $v \in S_p$ and $r|_{G_{F_v}}$ is crystalline ordinary, then $r_{\pi_1, \iota}|_{G_{F_v}}$ is crystalline ordinary; and that if $v \in \Sigma_p$, then $r_{\pi_1, \iota}|_{G_{F_v}}$ is an extension of $\epsilon^{-1}$ by the trivial character. In the first instance, $\iota^{-1} \pi_{1, v}$ is unramified and the eigenvalue of $T^{(1)}_v$ is a $p$-adic unit. If $v \in \Sigma_p$, then $\pi_{1, v}$ is the Steinberg representation (not just a twist of the Steinberg representation -- recall that by definition, $U_{0, v} = (D \otimes_F F_v)^\times$). In either case, the desired result follows from local-global compatibility. 
\end{proof}
We next describe automorphic forms on a certain definite unitary group $G$. We introduce the following notation, again following \cite{New21b}:
\begin{itemize}
\item $G$ is the reductive group scheme over $\cO_F$ whose functor of points is given by $G(R) = \{ g \in \GL_{n} (R \otimes_{\cO_{F}} \cO_K) \mid g = (1 \otimes c)(g)^{-t} \}$. Thus $G$ is a definite unitary group in $n = 2r$ variables. 
\item If $v$ is a finite place of $F$ which splits $v = w w^c$ in $K$, then $\iota_w : G_{\cO_{F_v}} \to \Res_{\cO_{K_w} / \cO_{F_v}} \GL_{n}$ is the canonical isomorphism (induced by the inclusion $G \subset \Res_{\cO_K / \cO_F} \GL_{n}$ and then projection). 
\item If $V = \prod_v \subset G(\bA_F^\infty)$ is an open compact subgroup, then we write $H_G(V)$ for the set of functions $f : G(F) \backslash G(\bA_F^\infty) / V \to \cO$. 
\item $V_0 = \prod_v V_{0, v} = \prod_{v \in \Sigma} \iota_\wv^{-1}(\Iw_\wv) \times \prod_{v \not\in \Sigma} G(\cO_{F_v})$ is an open compact subgroup of $G(\A_F^\infty)$. 
\end{itemize} 
If $(Q, \widetilde{Q}, (\alpha_v, \beta_v)_{v \in Q}, (\gamma_{v, 1}, \dots, \gamma_{v, r})_{v \in Q})$ is a Taylor--Wiles datum of level $N \geq 1$, then we define $V_1(Q; N) = \prod_v V_1(Q; N)_v \subset V_0(Q) = \prod_v V_0(Q)_v$ by $V_1(Q; N)_v = V_0(Q)_v = V_{0, v}$ if $v \not\in Q$, and $V_0(Q)_v = \iota_\wv^{-1} \Iw_\wv$  and 
\[ V_1(Q; N)_v = \iota_\wv^{-1} \{ (a_{i,j}) \in \Iw_\wv \mid a_{i,i}  \mapsto 1 \in k(\wv)^\times(p) / (p^N) \text{ for }1 \le i \le n\} \]
if $v \in Q$. Thus $V_1(Q; N) \subset V_0(Q)$ is an open normal subgroup with quotient 
\[ V_0(Q) / V_1(Q; N) \cong \Delta'_Q / (p^N). \] Note that the level structure $V_1(Q;N)$ is defined differently than in \cite{New21b}; in fact, it coincides with the level structure denoted $V'_1(Q;N)$ in the proof of \cite[Proposition 2.5]{New21b}.

We write $\bT^{univ}_{G, S \cup Q}$ for the Hecke algebra introduced in \cite{New21b} (polynomial algebra over $\cO$ in the unramified Hecke operators $T_w^{(i)}$ at split places $w$ of $K$ prime to $S$). We define $\bT^{univ, Q}_{G, S \cup Q} = \bT^{univ}_{G, S \cup Q}[\{U_{\varpi_\wv}^{(i)} \}_{v \in Q}^{i = 1, \dots, n}]$. There is a unique maximal ideal $\ffrm_G \subset \bT_{G, S}^{univ}$ of residue field $k$ such that for all finite places $w$ of $K$, split over $F$ and not lying above $S$, $\overline{t}(X - \Frob_v)$ equals $\sum_{i=0}^{2r} (-1)^i q_w^{i(i-1)/2} T_w^{(i)} X^{n-i} \text{ mod }\ffrm_G$. If \[ (Q, \widetilde{Q}, (\alpha_v, \beta_v)_{v \in Q}, (\gamma_{v, 1}, \dots, \gamma_{v, r})_{v \in Q}) \]  is a Taylor--Wiles datum, then we write $\ffrm_{G, Q}$ for the maximal ideal of $\bT^{univ, Q}_{G, S \cup Q}$ generated by $\ffrm_G \cap \bT^{univ}_{G, S \cup Q}$ and the elements
\[ U_{\varpi_\wv}^{(i)}  - q_\wv^{i(1-i)/2} \alpha_v^i \prod_{j=1}^i \gamma_{v, j} \]
(if $1 \leq i \leq r$) and
\[ U_{\varpi_\wv}^{(i)} - q_\wv^{i(1-i)/2} \alpha_v^r \beta_v^{i-r} \prod_{j=1}^{i-r} \gamma_{v, j} \]
(if $r+1 \leq i \leq n$). 

The unitary group $G$ comes with a determinant map $\det : G \to U_1$, where $U_1 = \ker(\mathbf{N}_{K / F} : \Res_{K / F} \bG_m \to \bG_m)$. If
\[ \theta : U_1(F) \backslash U_1(\bA_F^\infty) / \det V_1(Q; N) \to \cO^\times \]
is a character and $f \in H_G(V_1(Q; N))$ then we define $f \otimes \theta = H_G(V_1(Q; N))$ by the formula $(f \otimes \theta)(g) = \theta(\det(g))f(g)$. If $f \in H_G(v_1(Q; N))_{\ffrm_{G, Q}}$ and $\overline{\theta}$ is trivial, then $f \otimes \theta \in H_G(V_1(Q; N))_{\ffrm_{G, Q}}$. 
\begin{lemma}\label{lem_twisting_characters}
Suppose that $p = 2$ and that 
\[ \chi : F^\times \backslash (\bA_F^\infty)^\times / \det U_1(Q; N) \to \cO^\times \]
is a quadratic character. Then there exists a unique quadratic character
\[ \theta_\chi : U_1(F) \backslash U_1(\bA_F^\infty) / \det V_1(Q; N) \to \cO^\times \]
such that for all $z \in (\bA_K^\infty)^\times$, we have $\theta_\chi(z / z^c) = \chi \circ \mathbf{N}_{K / F}(z)$. 
\end{lemma}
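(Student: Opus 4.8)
The plan is to construct $\theta_\chi$ directly from $\chi$ by means of the norm map, exploiting the short exact sequence of $F$-tori
$1\to\mathbf{G}_m\to\Res_{K/F}\mathbf{G}_m\xrightarrow{\ \psi\ }U_1\to 1$,
in which $\mathbf{G}_m$ is embedded diagonally and $\psi(z)=z/z^c$, together with the hypothesis that $\chi$ is quadratic.

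First I would record that $\psi$ induces surjections $K^\times\twoheadrightarrow U_1(F)$ and $(\bA_K^\infty)^\times\twoheadrightarrow U_1(\bA_F^\infty)$, each with kernel the diagonally embedded $F^\times$, resp.\ $(\bA_F^\infty)^\times$. Surjectivity follows from Hilbert~90, i.e.\ $H^1(F,\mathbf{G}_m)=0$ and $H^1(F_v,\mathbf{G}_m)=0$, together with the fact that, because $K/F$ is everywhere unramified, the local map $\cO_{K_v}^\times\to U_1(\cO_{F_v})$, $z\mapsto z/z^c$, is surjective at every finite place $v$ (by Lang's theorem on residue fields plus successive approximation), so that the restricted-product argument for the adelic statement goes through. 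Now $\mathbf{N}_{K/F}$ restricts to $a\mapsto a^2$ on $(\bA_F^\infty)^\times=\ker\psi$ and $\chi$ is quadratic, so $\chi\circ\mathbf{N}_{K/F}\colon(\bA_K^\infty)^\times\to\cO^\times$ kills $\ker\psi$ and therefore factors through a continuous character $\theta_\chi\colon U_1(\bA_F^\infty)\to\cO^\times$ with $\theta_\chi(z/z^c)=\chi(\mathbf{N}_{K/F}(z))$. Uniqueness of such a $\theta_\chi$ is forced by surjectivity of $\psi$, and $\theta_\chi^2=1$ because $\chi^2=1$.

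Next I would check that $\theta_\chi$ descends to $U_1(F)\backslash U_1(\bA_F^\infty)/\det V_1(Q;N)$. Triviality on $U_1(F)$ is immediate: any $u\in U_1(F)$ is $z/z^c$ with $z\in K^\times$, and then $\theta_\chi(u)=\chi(\mathbf{N}_{K/F}(z))=1$ since $\mathbf{N}_{K/F}(z)\in F^\times$ and $\chi$ is trivial on $F^\times$. For triviality on $\det V_1(Q;N)$, I would argue place by place, using that $\chi$ is trivial on $\det U_1(Q;N)$ and that all the groups involved are restricted products of local factors. For $v\notin Q$, the local level $V_1(Q;N)_v$ is contained in a hyperspecial maximal compact of $G(F_v)$, so, since $\det$ is conjugation-invariant, $\det V_1(Q;N)_v\subseteq U_1(\cO_{F_v})$; unwinding $\psi_v$ and $\mathbf{N}_{K/F}$ and using that $K_v/F_v$ is unramified, one finds $\mathbf{N}_{K/F}\bigl(\psi_v^{-1}(U_1(\cO_{F_v}))\bigr)=\{w\in F_v^\times : v(w)\in 2\Z\}\subseteq\cO_{F_v}^\times\cdot(F_v^\times)^2$, on which $\chi_v$ vanishes because $\chi_v$ is quadratic and is trivial on $\cO_{F_v}^\times\subseteq\det U_1(Q;N)_v$. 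For $v\in Q$, the place $v$ splits in $K$, so $U_1(F_v)\cong K_{\wv}^\times=F_v^\times$, and under this identification $\theta_{\chi,v}$ becomes $\chi_v$; a direct Iwahori computation then identifies $\det V_1(Q;N)_v$ with the subgroup of $\cO_{F_v}^\times$ consisting of units congruent to $1$ in $k(v)^\times(p)/(p^N)$, which is contained in $\det U_1(Q;N)_v$ upon comparing the congruence conditions defining the two level subgroups at $v$. This gives triviality of $\theta_{\chi,v}$ on $\det V_1(Q;N)_v$ at every place, so $\theta_\chi$ is a well-defined quadratic character of the asserted quotient.

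I expect the only real obstacle to be the local bookkeeping at the Taylor--Wiles places $v\in Q$, namely writing down the local model of $U_1$ and of the determinant map $G\to U_1$ at a split place and checking that the congruence conditions cutting out $U_1(Q;N)_v\subset\GL_2(F_v)$ and $V_1(Q;N)_v\subset G(F_v)$ are transported correctly by $z\mapsto z/z^c$ and $\mathbf{N}_{K/F}$; everything else is a formal consequence of Hilbert~90 and the quadraticity of $\chi$.
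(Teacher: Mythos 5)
Your proof is correct and follows essentially the same route as the paper: the Hilbert--90/norm construction of $\theta_\chi$ on $U_1(F)\backslash U_1(\bA_F^\infty)$ is exactly the argument the paper outsources to \cite[Lemma 2.4]{New21b}, and the new content — triviality on $\det V_1(Q;N)$ — is verified the same way, by reducing at $v\in Q$ to the observation that the determinant of an element of $V_1(Q;N)_v$ lands in the preimage of $1\in k(v)^\times(2)/(2^N)$, which is contained in $\det U_1(Q;N)_v$.
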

\begin{proof}
The argument used to prove \cite[Lemma 2.4]{New21b} shows that there is a unique quadratic character $\theta_\chi : U_1(F) \backslash U_1(\bA_F^\infty) \to \cO^\times$ such that $\theta_\chi(z / z^c) = \chi \circ \mathbf{N}_{K / F}(z)$. We need to check that $\theta_\chi$ is trivial on $\det V_1(Q; N)$. Note that the level structure $V_1(Q; N)_v$ is defined slightly differently here to in \cite{New21b} if $v \in Q$. Unwinding the definitions, we see that we need to check that if $v \in Q$ and $(a_1, \dots, a_n) \in \prod_{i=1}^n \cO_{K_{\wv}}^\times$ satisfies $a_i \mapsto 1 \in  k(v)^\times / (2^N)$, then $\chi( a_1 \dots a_n ) = 1$. This follows from the fact that $\chi$ is trivial on $\det U_1(Q; N)_v$, which contains the pre-image of $1 \in  k(v)^\times / (2^N)$.
\end{proof}
We remark that any quadratic character $\chi : F^\times \backslash (\bA_F^\infty)^\times \to \cO^\times$ that is unramified away from $Q$ is in fact trivial on $\det U_1(Q; N)$ (because if $v \in Q$ then $\det U_1(Q; N)_v \subset (\cO_{F_v}^\times)^2$). 

Let $m_G \geq 0$ denote the $p$-adic valuation of the least common multiple of the exponents of the Sylow $p$-subgroups of the finite groups $G(F) \cap t V_0 t^{-1}$ ($t \in G(\bA_F^\infty)$). The following is our analogue of \cite[Proposition 2.5]{New21b}:
\begin{prop}
Let $N \geq 1$ and let \[ (Q, \widetilde{Q}, (\alpha_v, \beta_v)_{v \in Q}, (\gamma_{v, 1}, \dots, \gamma_{v, r})_{v \in Q}) \]  be a Taylor--Wiles datum of level $N + m_G$. Then:
\begin{enumerate}
\item The maximal ideals $\ffrm_G$, $\ffrm_{G, Q}$ are in the support of $H_G(V_0)$ and $H_G(V_0(Q))$, respectively.
\item $H_G(V_1(Q; N))_{\ffrm_{G, Q}}$ is a $\bT^{univ}_{G, S \cup Q} \otimes_\cO \cO[\Delta'_Q / (p^N)]$-module free as $\cO[\Delta'_Q / (p^N)]$-module, and there is an isomorphism $H_G(V_1(Q; N))_{\ffrm_{G, Q}} \otimes_{\cO[\Delta'_Q / (p^N)]}\cO \cong H_G(V_0)_{\ffrm_G}$ of $\bT^{univ}_{G, S \cup Q}$-modules.
\item There exists a structure on $H_G(V_1(Q; N))_{\ffrm_{G, Q}}$ of $P_Q$-module such that if $\Lambda_i^{univ} : G_K \to P_Q$ ($i = 1, \dots, n$) are the coefficients of the universal characteristic polynomial, then for any finite place $w$ of $K$ which is split over a place $v\not\in S\cup Q$ of $F$, $\Lambda_i^{univ}(\Frob_w)$ acts on $H_G(V_1(Q; N))_{\ffrm_{G, Q}}$ as $q_w^{i(i-1)/2} T_w^{(i)}$. Moreover, the $\cO[\Delta'_Q]$-module structure on $H_G(V_1(Q; N))_{\ffrm_{G, Q}}$ induced by the map $\cO[\Delta'_Q] \to P_Q$ agrees with the one in the second part of the lemma.
\end{enumerate} 
\end{prop}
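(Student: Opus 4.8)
The plan is to mirror the proof of \cite[Proposition 2.5]{New21b}, which establishes the analogous statement in the symmetric power setting, and to flag the modifications forced by our tensor-product setup. There are three such modifications to keep track of: we work with the tensor-product pseudorepresentation rather than a symmetric power (so the local conditions defining $P_Q$ come from the rings $R_v$ via the tensor construction in the corollary to Lemma \ref{lem:Kisin vs WWE def rings}); we must carry along the auxiliary characters $X$, $Y$ fixed in the preliminary reductions in order to land among conjugate self-dual automorphic representations of $\GL_n(\bA_K)$; and the level subgroup $V_1(Q; N)$ used here is the one written $V'_1(Q; N)$ in the proof of \cite[Proposition 2.5]{New21b}, so the verification that the $V_0(Q; N)/V_1(Q; N)$-action is free is the one carried out there for that subgroup. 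I would prove the three parts in turn.

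For part (1), I would start from hypothesis (7) of Theorem \ref{thm_FLT_for_TP}: the automorphy of $r_{\pi, \iota} \otimes r_{\sigma, \iota}$ gives, after the base changes and twists defining $\pi_K$, $\sigma_K$ and $T$, a RACSDC automorphic representation $T$ of $\GL_n(\bA_K)$ of weight $0$ with $r_{T, \iota} \cong r_{\pi_K, \iota} \otimes r_{\sigma_K, \iota} \otimes r_{Y, \iota}^{1-r}$. Since $K/F$ is everywhere unramified, $S$-split, and $T$ is Iwahori-spherical at every finite place (condition (11), base-changed), $T$ descends to an automorphic representation of the definite unitary group $G$ contributing a nonzero class to $H_G(V_0)$, exactly as in \cite{New21b}; at the places $v \in \Sigma$, where $T_v$ is only Iwahori-spherical, one uses that $V_{0, v}$ is itself an Iwahori subgroup. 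Matching unramified Hecke eigenvalues with $\overline{t}$ puts this class into $H_G(V_0)_{\ffrm_G}$. For $\ffrm_{G, Q}$ I would observe that at each $v \in Q$ the component $T_{\wv}$ is an unramified principal series whose Satake parameters have distinct reductions, namely the eigenvalues of $\overline{t}(\Frob_{\wv})$, so the Iwahori-fixed space at $\wv$ contains an eigenline for the operators $U_{\varpi_\wv}^{i}$ with exactly the eigenvalues appearing in the definition of $\ffrm_{G, Q}$, and this line is not killed by localization at $\ffrm_{G, Q}$.

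Part (2) is the standard Taylor--Wiles freeness input: because $G$ is definite, $H_G(V_1(Q; N))$ is just functions on a finite set, and once one localizes at $\ffrm_{G, Q}$ and uses that the datum has level $N + m_G$, the action of $V_0(Q; N)/V_1(Q; N) \cong \Delta'_Q/(p^N)$ becomes free — the argument of \cite[Proposition 2.5]{New21b} applies verbatim since $V_1(Q; N)$ is the subgroup $V'_1(Q; N)$ of loc. cit. The descent isomorphism then follows by comparing levels and using that at each $v \in Q$ the $\ffrm_{G, Q}$-localization of the Iwahori-level space has rank one over that of the spherical-level space (distinct Satake parameters again). For part (3), I would interpolate the Galois representations attached to the RACSDC automorphic representations of $\GL_n(\bA_K)$ detected by the localized Hecke algebra (using \cite{Caraianilp}) into a conjugate self-dual group determinant of $G_{K, S \cup Q}$ lifting $\overline{t}$; verify via Lemma \ref{lem:Kisin vs WWE def rings} and its corollary that at $p$-adic places this pseudodeformation is semistable with Hodge--Tate weights in $[a, b]$, and that at places $v \in Q$ it is unramified on inertia with the prescribed Frobenius eigenvalue reductions; conclude that the map $R_{\overline{t}, S \cup Q}^{[a, b]} \to \bT^{univ, Q}_{G, S \cup Q}$ factors through $P_Q$; and finally read off the identities $\Lambda_i^{univ}(\Frob_w) = q_w^{i(i-1)/2} T_w^{(i)}$ and the compatibility of the two $\cO[\Delta'_Q]$-structures (through $\chi_{v, i}|_{I_{K_\wv}} \circ \Art_{K_\wv}$) from local-global compatibility, exactly as in \cite[Proposition 2.5]{New21b}.

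I expect the main obstacle to be the $p$-adic local analysis in part (3): showing that the group determinant carried by $H_G(V_1(Q; N))_{\ffrm_{G, Q}}$ genuinely satisfies the semistability and Hodge--Tate bounds defining $P_Q$ at the ordinary-crystalline and semistable-non-crystalline places, where the rings $R_v$ are defined by Kisin-/Snowden-style conditions on the $\GL_2$-representation rather than directly on the tensor product. This is precisely the content of Lemma \ref{lem:Kisin vs WWE def rings} and the corollary giving the map $P \to R$, so the work is to invoke those correctly; the conjugate self-duality and the correct polarizing character are handled by the choices of $X$, $Y$ and the CM type $\Phi$ in the preliminary reductions, and should pose no difficulty beyond bookkeeping.
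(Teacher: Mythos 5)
Your proposal takes essentially the same approach as the paper, whose proof simply observes that $V_1(Q; N)$ here coincides with $V'_1(Q; N)$ in the proof of \cite[Proposition 2.5]{New21b}, so the desired statement already appears as an intermediate result there. Your more detailed reconstruction — identifying hypothesis (7) as the source of the class in $H_G(V_0)_{\ffrm_G}$, noting the verbatim applicability of the freeness argument for $V'_1(Q;N)$, and routing the $p$-adic local conditions through Lemma \ref{lem:Kisin vs WWE def rings} — correctly spells out what the paper leaves implicit.
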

\begin{proof}
The statement is the same as that of \cite[Proposition 2.5]{New21b}, except that the level structure differs at places $v \in Q$. The exact statement we need here appears as an intermediate result in the proof of \emph{loc.~cit.}.
\end{proof}
\begin{lemma}\label{lem:enormous}
	The subgroup $\left((r \otimes s)|_{G_K}  \otimes r_{X^{-1} Y^{1-r}, \iota}\right)(G_{K(\zeta_{p^\infty})}) \subset \GL_n(\cO)$ is enormous, in the sense of \cite[Definition 2.23]{New19a}.
\end{lemma}
\begin{proof}
Let $G \subset \GL_n$ denote the Zariski closure of the image of $(r \otimes s)|_{G_K} \otimes r_{X^{-1}Y^{1-r}, \iota}$, and let $G^0$ denote its identity component. According to \cite[Lemma 2.28]{New19a}, it suffices to know that the derived subgroup $G^1$ of $G^0$ contains regular semisimple elements and acts absolutely irreducibly. Indeed, writing $\rho = ((r \otimes s)|_{G_K} \otimes r_{X^{-1}Y^{1-r}, \iota})$, we see that $G^1$ is contained in the Zariski closure of $\rho(G_{K(\zeta_{p^\infty})})$. 

The group $G^0$ acts (absolutely) irreducibly because $\rho$ is strongly irreducible. In particular, its centre is contained in $\bG_m$ (i.e. the subgroup of $\GL_n$ consisting of scalar matrices), and we therefore have either $G^0 = G^1$ or $G^0 = G^1 \bG_m$. In either case we see that $G^0$ and $G^1$ have the same invariant subspaces, and so $G^1$ acts irreducibly. 

Take $v \in S_p$ and fix an embedding $\jmath : \overline{\bQ}_p \to \widehat{\overline{K}}_\wv$. According to \cite[Lemma 2.2.5, Lemma 2.2.7]{Pat19},
$\Lie G^0 \otimes_{\overline{\bQ}_p, \jmath}  \widehat{\overline{K}}_\wv$ contains an element $\Theta_{\rho, \jmath}$ (the Sen operator) which has $n$ distinct eigenvalues when considered in $\Lie \GL_n \otimes_{\overline{\bQ}_p, \jmath}  \widehat{\overline{K}}_\wv$ (here we use the fact that $\rho|_{G_{K_\wv}}$ is Hodge--Tate regular). The condition of having $n$ distinct eigenvalues is Zariski open, so we see that $\Lie G^0$ contains elements of $\Lie \GL_n$ with $n$ distinct eigenvalues. This implies that $G^0$ contains elements of $\GL_n$ with $n$ distinct eigenvalues. Indeed, let $T$ be a maximal torus of $G^0$. Every semisimple element of $\Lie G^0$ is $G^0$-conjugate to an element of $\Lie T$, so it's enough to know that if $\Lie T$ contains elements of $\Lie \GL_n$ with $n$ distinct eigenvalues, then $T$ contains elements of $\GL_n$ with $n$ distinct eigenvalues. Conjugating so that $T$ is contained in the diagonal maximal torus $D$, we want to show that $T$ is not contained in the union of $\GL_n$-root subgroups of $D$. Since $T$ is irreducible, it is equivalent to show that is it not contained in any single root subgroup, and this now follows from what we have already observed about $\Lie T$, together with \cite[Ch. II, \S 7.1]{Bor91}. 

Since $G^0 \bG_m = G^1 \bG_m$, we see that $G^0$ contains elements with $n$ distinct eigenvalues if and only if $G^1$ does. This completes the proof. \end{proof}
We are now ready to complete the proof of Theorem \ref{thm_FLT_for_TP}. As in \cite{New21b}, we treat the cases $p > 2$ and $p = 2$ separately. Define $H_G = H_G(V_0)_{\ffrm_G}$ and $H_D = H_D(U_0)_{\ffrm_D}$.
\begin{proof}[Proof of Theorem \ref{thm_FLT_for_TP}, case $p > 2$] First we state a version of \cite[Proposition 2.6]{New21b}. \begin{proposition}\label{prop_construction_of_patched_data_p_odd}
	We can find an integer $q \geq 0$ with the following property: let 
	\[ W_\infty = \cO \llbracket Y_1, \dots, Y_q, Z_1, \dots, Z_{4 |S \cup S_\infty| -1 } \rrbracket \]
	and
	\[ W'_\infty = \cO \llbracket (Y^{(i)}_1, \dots, Y^{(i)}_q)_{1\le i \le n }, Z_1, \dots, Z_{4 |S \cup S_\infty| -1} \rrbracket. \]
	 Then we can find the following data:
	\begin{enumerate}
		\item A complete Noetherian local $W'_\infty$-algebra $P_\infty$ and a complete Noetherian local $W_\infty$-algebra $R_\infty$, equipped with isomorphisms $P_\infty \otimes_{W'_\infty} \cO \cong P$ and $R_\infty \otimes_{W_\infty} \cO \cong R$ in $\cC_\cO$.
		\item A surjection $R_{loc} \llbracket X_1, \dots, X_g \rrbracket \to R_\infty$ in $\cC_\cO$, where $g = q + |S \cup S_\infty| - 1$.
		\item A $P_\infty$-module $H_{G,\infty}$ and an $R_\infty$-module $H_{D,\infty}$, finite free over $W'_\infty$ and $W_\infty$ respectively, together with isomorphisms $H_{G,\infty} \otimes_{W'_\infty} \cO \cong H_G$ (as $P$-module) and $H_{D,\infty} \otimes_{W_\infty} \cO \cong H_D$ (as $R$-module).
		\item A morphism $P_\infty \to R_\infty$ of $\cO$-algebras making the diagram
		\[ \xymatrix{ P_\infty \ar[r] \ar[d] &  R_\infty \ar[d] \\
			P \ar[r] & R } \]
		commute.
	\end{enumerate}
\end{proposition}
The proof of this Proposition is essentially identical to that of \cite[Proposition 2.6]{New21b}, we have simply made a different (more standard) choice for the unitary group level structures at the Taylor--Wiles places, which means that the patched module $H_{G,\infty}$ lives over the bigger power series algebra $W'_\infty$. In \emph{loc.~cit.} we choose level structures so that $H_{G,\infty}$ would live over the same ring $W_\infty$ as $H_{D,\infty}$, but we subsequently noticed that this is unnecessary.

Let $\p' \subset P$ denote the kernel of the morphism $P \to \cO$ associated to $t'$. It is enough to show that $\p'$ is in the support of $H_G$ as $P$-module. Equivalently, we must show that if $\p'_\infty$ is the pre-image of $\p'$ under the morphism $P_\infty \to P$, then $\p'_\infty$ is in the support of $H_{G,\infty}$ as $P_\infty$-module. 

The $P_\infty$-module $H_{G,\infty}$ is a Cohen--Macaulay module, since $H_{G,\infty}$ is a finite free $W'_\infty$-module. It follows that each irreducible component of $\Supp_{P_\infty} H_{G,\infty}$ has dimension $qn + 4|S \cup S_\infty|$. Similarly, we see that each irreducible component of $\Supp_{R_\infty}  H_{D,\infty}$ has dimension $q + 4|S \cup S_\infty|$. Since $R_\infty$ is a quotient of $R_{loc} \llbracket X_1, \dots, X_g \rrbracket $, a domain of Krull dimension $q + 4|S \cup S_\infty|$, we see that $R_{loc} \llbracket X_1, \dots, X_g \rrbracket \to R_\infty$ is an isomorphism, that $R_\infty$ is a domain, and that $H_{D,\infty}$ is a faithful $R_\infty$-module.

Let $\p \subset P$ denote the kernel of the morphism $P \to \cO$ associated to $t$, and let $\p_\infty$ denote the pre-image of $\p$ under the morphism $P_\infty \to P$. Then $\p_\infty \in \Supp_{P_\infty} H_{G,\infty}$, by the automorphy hypothesis, and therefore $\dim(P_{\infty, (\p_\infty)}) \ge qn + 4|S \cup S_\infty|-1$. We claim that the Zariski tangent space to the local ring $P_{\infty, (\p_\infty)}$ has dimension at most $qn + 4|S \cup S_\infty|-1$. Indeed, it suffices to show that the quotient $P_{\infty, (\p_\infty)} / ((Y^{(i)}_1, \dots, Y^{(i)}_q)_{1 \le i \le n}, Z_1, \dots, Z_{4 |S \cup S_\infty|-1}) = P_{(\p)}$ equals its residue field $E$. This follows from the vanishing of the adjoint Bloch--Kato Selmer group of $(r \otimes s)|_{G_K}  \otimes r_{X^{-1} Y^{1-r}, \iota}$ (i.e.\ from \cite[Theorem 4.32]{New19a}, which applies thanks to Lemma \ref{lem:enormous}). We deduce that $P_{\infty, (\p_\infty)}$ is a regular local ring of dimension $qn + 4|S \cup S_\infty|-1$, so there is a unique irreducible component $Z$ of $\Spec P_\infty$ containing the point $\p_\infty$, which has dimension $qn + 4|S \cup S_\infty|$ and is contained in $\Supp_{P_\infty} H_{G,\infty}$.

Since $\Spec R_\infty$ is irreducible and the image of the morphism $\Spec R_\infty \to \Spec P_\infty$ contains $\p_\infty$, we find that the morphism $\Spec R_\infty \to \Spec P_\infty$ factors through $Z$. In particular, $\p'_\infty$ lies in $Z$, hence in $\Supp_{P_\infty} H_{G,\infty}$. This completes the proof of Theorem \ref{thm_FLT_for_TP} for $p > 2$. 
\end{proof}
\begin{proof}[Proof of Theorem \ref{thm_FLT_for_TP}, case $p = 2$] As on \cite[p.\ 136]{New21b}, we follow \cite{Kha09b} in writing $\operatorname{Sp}_A : \cC_\cO \to \operatorname{Sets}$ for the functor represented by an object $A \in \cC_\cO$, and $\widehat{\bG}_m : \cC_\cO \to \operatorname{Groups}$ for the functor $A \mapsto \ker(A^\times \to (A / \ffrm_A)^\times)$. We can now state a version of \cite[Proposition 2.8]{New21b}. 
\begin{proposition}\label{prop_construction_of_patched_data_p_equals_2}
	We can find an integer $q \geq |S \cup S_\infty|$ with the following property: let 
	\[ W_\infty = \cO \llbracket Y_1, \dots, Y_q, Z_1, \dots, Z_{4 |S \cup S_\infty| -1 } \rrbracket, \]
	
	\[ W'_\infty = \cO \llbracket (Y^{(i)}_1, \dots, Y^{(i)}_q)_{1\le i \le n }, Z_1, \dots, Z_{4 |S \cup S_\infty| -1} \rrbracket,\] 
	and let $\gamma = 2 - |S \cup S_\infty| + q$.
	 Then we can find the following data:
	\begin{enumerate}
		\item A complete Noetherian local $W'_\infty$-algebra $P_\infty$ and a complete Noetherian local $W_\infty$-algebra $R_\infty$, equipped with isomorphisms $P_\infty \otimes_{W'_\infty} \cO \cong P$ and $R_\infty \otimes_{W_\infty} \cO \cong R$ in $\cC_\cO$.
		\item A complete Noetherian local $\cO$-algebra $R'_\infty$, together with surjections $R'_\infty \to R_\infty$ and $R_{loc} \llbracket X_1, \dots, X_g \rrbracket \to R'_\infty$ in $\cC_\cO$, where $g = 2q+1$.
		\item A $P_\infty$-module $H_{G,\infty}$ and an $R_\infty$-module $H_{D,\infty}$, finite free over $W'_\infty$ and $W_\infty$ respectively, together with isomorphisms $H_{G,\infty} \otimes_{W'_\infty} \cO \cong H_G$ (as $P$-module) and $H_{D,\infty} \otimes_{W_\infty} \cO \cong H_D$ (as $R$-module).
		\item A morphism $P_\infty \to R_\infty$ of $\cO$-algebras making the diagram
		\[ \xymatrix{ P_\infty \ar[r] \ar[d] &  R_\infty \ar[d] \\
			P \ar[r] & R } \]
		commute.
		\item A free action of $\widehat{\bG}_m^\gamma$ on $\Sp_{R'_\infty}$ and a $\widehat{\bG}_m^\gamma$-equivariant morphism $\delta : \Sp_{R'_\infty} \to \widehat{\bG}_m^\gamma$, where $\widehat{\bG}_m^\gamma$ acts on itself by the square of the standard action. 
	\end{enumerate}
	These objects have the following additional properties:
	\begin{enumerate} 
	\setcounter{enumi}{5}
	\item We have $\delta^{-1}(1) = \operatorname{Sp}_{R_\infty}$. The induced action of $\widehat{\bG}_m^\gamma[2](\cO)$ on $R_\infty$ lifts to $H_{D, \infty}$.
	\item There exists an action of $\widehat{\bG}_m^\gamma[2]$ on $\Sp_{P_\infty}$ such that the morphism $\Sp_{R_\infty} \to \Sp_{P_\infty}$ is $\widehat{\bG}_m^\gamma[2]$-equivariant, and the induced action of $\widehat{\bG}_m^\gamma[2](\cO)$ on $P_\infty$ lifts to $H_{G, \infty}$. 
	\end{enumerate}
\end{proposition}
The proof of this Proposition is very similar to that of \cite[Proposition 2.8]{New21b}. We briefly describe the necessary modifications. First, we have chosen a different (finer) level-structure for $G$ at Taylor--Wiles places (as in the case $p > 2$ treated above). This necessitates the introduction of the ring $W'_\infty$ but does not require any other modifications to the proof. Second, we need to adapt \cite[Lemma 2.9]{New21b} (existence of Taylor--Wiles data), which is itself a strengthening of \cite[Lemma 5.10]{Kha09b}. Looking at the proof of \emph{loc.~cit.}, we see that we need to be able find $g \in G_{K \cdot \widetilde{F}_n}$ such that $(\overline{r} \otimes \overline{s})(g)$ has $2r$ distinct eigenvalues, where $\widetilde{F}_n$ is a certain Kummer extension of $F(\zeta_{2^n})$. The existence of such elements follows from assumption (\ref{assm:H}) of Theorem \ref{thm_FLT_for_TP}. The third thing we need to do is explain how, if
\[ (Q_N, \widetilde{Q}_N, (\alpha_v, \beta_v)_{v \in Q_N}, (\gamma_{v, 1}, \dots, \gamma_{v, r})_{v \in Q_N}) \]
is a Taylor--Wiles datum, the group functor $\check{\Theta}_{Q_N}$ should act on $\operatorname{Sp}_{R'_N}$, and how $\check{\Theta}_{Q_N}[2]$ should act on $P_N$ and $\check{\Theta}_{Q_N}[2](\cO)$ should act on $H_G(V_1(Q; N))_{\ffrm_{G, Q_N}}$. We take the same actions as the one defined on \cite[p.\ 138]{New21b} in the case that $n-1$ is odd (twisting by quadratic characters). 

We now show how to use Proposition \ref{prop_construction_of_patched_data_p_equals_2} to complete the proof of Theorem \ref{thm_FLT_for_TP}. Let $\p, \p' \subset P$ denote the kernels of the morphism $P \to \cO$ associated to $t$, $t'$, respectively, and let $\p_\infty, \p'_\infty$ be the pre-images of $\p, \p'$ under the morphism $P_\infty \to P$. We will show that $\p'_\infty$ is in the support of $H_{G,\infty}$ as $P_\infty$-module. As in the case $p > 2$, we see that  $H_{G,\infty}$ is a Cohen--Macaulay module, that $P_{\infty, (\p_\infty)}$  is a regular local ring of dimension $\dim W'_\infty - 1$, and that there is a unique irreducible component $Z$ of $\Spec P_\infty$ containing the point $\p_\infty$, which has dimension $qn + 4|S \cup S_\infty|$ and is contained in $\Supp_{P_\infty} H_{G,\infty}$.

 An identical argument to the one in the first paragraph of \cite[p.\ 137]{New21b} shows that $\Supp_{R_\infty} H_{D, \infty} = \Spec R_\infty$ and that $\widehat{\bG}_m^\gamma[2](\cO)$ acts transitively on the set of irreducible components of $\Spec R_\infty$. Let $\mathfrak{r}$, $\mathfrak{r}' \in \Spec R$ be the points associated to $r, r'$, respectively, and let $\mathfrak{r}_\infty$, $\mathfrak{r}'_\infty \in \Spec R_\infty$ be their respective images. Then there is a point $\mathfrak{r}''_\infty \in \Spec R_\infty$ which is in the same irreducible component of $\Spec R_\infty$ as $\mathfrak{r}_\infty$ and which is in the $\widehat{\bG}_m^\gamma[2](\cO)$-orbit of $\mathfrak{r}'_\infty$. Let $\mathfrak{p}''_\infty \in \Spec P_\infty$ denote the image of $\mathfrak{r}''_\infty$. By local irreducibility of $\Spec P_\infty$ at $\mathfrak{p}_\infty$, we find that $\mathfrak{p}''_\infty$ and $\mathfrak{p}_\infty$ lie on a common irreducible component of $\Spec P_\infty$, namely $Z$, and therefore that $\mathfrak{p}''_\infty \in \Supp_{P_\infty} H_{G, \infty}$. Since the map $\Spec R_\infty \to \Spec P_\infty$ is $\widehat{\bG}_m^\gamma[2](\cO)$-equivariant and $\Supp_{P_\infty} H_{G, \infty}$ is invariant under the action of this group, we find that $\mathfrak{p}'_\infty \in \Supp_{P_\infty} H_{G, \infty}$, as desired.  This completes the proof of Theorem \ref{thm_FLT_for_TP} in the remaining case $p = 2$.
\end{proof}

\section{Automorphy of a twisted tensor product}\label{sec: aut twisted}

In this section we will study automorphy of the twisted tensor products $r_{{}^\gamma \pi, \iota} \otimes \Sym^{r-1} r_{\sigma, \iota}$, where $F$ is a totally real field, $p \geq 5$ is a prime, $\iota : \overline{\bQ}_p \to \bC$ is an isomorphism, $0 < r < p$ is an integer, $\gamma \in \Aut(\bC)$, and $\pi, \sigma$ are cuspidal, regular algebraic automorphic representations of $\GL_2(\A_F)$ which have the same associated residual representation $\overline{r}_{\pi, \iota} \cong \overline{r}_{\sigma, \iota}$, and which satisfy certain further conditions. 

We first describe the argument, which has already been sketched in the introduction. Taking in hand the results of \S \ref{sec:aut untwisted}, we can assume the automorphy of the tensor product $r_{ \pi, \iota} \otimes \Sym^{r-1} r_{\sigma, \iota}$ (corresponding to the choice $\gamma = 1$). The path to the general case uses the field of definition $K_\pi \subset \bC$ of $\pi$ (which is a number field) and its Galois closure $\widetilde{K}_\pi \subset \bC$. The isomorphism class of the representation ${}^\gamma \pi$ depends only on the image of $\gamma$ in $\Gal(\widetilde{K}_\pi / \bQ)$; we suppose first that this image lies in the inertia subgroup $I_{w / l}$ associated to some prime $l$ and place $w$ of $\widetilde{K}_\pi$, and let $\iota_l : \overline{\bQ}_l \to \bC$ be an isomorphism such that $\iota_l^{-1}$ induces the place $w$ of $\widetilde{K}_\pi$. In this case, a simple computation shows that there is an isomorphism of residual representations $\overline{r}_{{}^\gamma \pi, \iota_l} \cong \overline{r}_{\pi, \iota_l}$, and therefore an isomorphism
\[ \overline{r}_{{}^\gamma \pi, \iota_l} \otimes \Sym^{r-1} \overline{r}_{\sigma, \iota_l} \cong \overline{r}_{ \pi, \iota_l} \otimes \Sym^{r-1} \overline{r}_{\sigma, \iota_l}. \]
We can therefore hope to deduce the automorphy of $r_{{}^\gamma \pi, \iota_l} \otimes \Sym^{r-1} r_{\sigma, \iota_l}$ from that of $r_{ \pi, \iota_l} \otimes \Sym^{r-1} r_{\sigma, \iota_l}$ 
using the functoriality lifting theorem, Theorem \ref{thm_FLT_for_TP}, established in the previous section. This would imply the automorphy of $r_{{}^\gamma \pi, \iota} \otimes \Sym^{r-1} r_{\sigma, \iota}$ because this representation lies in the same compatible system. We remark that our choices will mean that this tensor product is Hodge--Tate regular, essentially self-dual, and irreducible (this last by Lemma \ref{lem_irreducibility_of_tensor_products}). For a general $\gamma \in \Aut(\bC)$, we will write the image of $\gamma$ in $\Gal(\widetilde{K}_\pi / \bQ)$ as a product $\delta_1 \dots \delta_s$ of elements of inertia groups $I_{w_1 / l_1}, \dots, I_{w_s / l_s}$  associated to primes $l_1, \dots, l_s$ (which is possible as $\Gal(\widetilde{K}_\pi / \bQ)$ is generated by its inertia subgroups), and argue by induction on $s$. 

What are the obstacles to putting this into practice? We need to apply Theorem \ref{thm_FLT_for_TP} $s$ times, and we have no control over the residue characteristics $l_1, \dots, l_s$ that may appear. Looking at conditions (3), (6) of Theorem \ref{thm_FLT_for_TP}, we see that this theorem requires the residual representations $\overline{r}_{\pi, \iota_l}$, $\overline{r}_{\sigma, \iota_l}$ to have large image, in some sense, potentially for any prime $l$ that is ramified in the coefficient field $K_\pi$. For the representation $\sigma$, this is not a serious issue: using the powerful automorphy lifting theorems proved in \cite{BLGGT}, we can replace $\sigma$ if needed by another representation $\sigma'$ that does have this property, boosting the size of the image by a suitable choice of local conditions. For the representation $\pi$, the problem is more serious, since if we change $\pi$ by e.g. passing along a level-raising congruence modulo some large prime $t$, then we may well change $K_\pi$ (a number field whose dependence on $\pi$ is rather mysterious) at the same time. To circumvent this issue, we employ a generalization of an argument given in \cite{Die11}, which shows that if $\pi$ satisfies suitable local conditions, then \emph{every} member of its associated compatible system of Galois representations has large residual image. 

We can now describe the structure of \S \ref{sec: aut twisted}. In \S \ref{subsec_preparations}, we first recall the basic facts about the coefficient fields of regular algebraic automorphic representations of $\GL_n(\A_F)$, and write down the needed generalization of \cite[Proposition 6.1]{Die11}. In \S \ref{subsect_aut_twisted_first_version}, we follow the above sketch to prove Theorem \ref{thm_automorphic_tensor_product}, which gives the automorphy of the twisted tensor products $r_{{}^\gamma \pi, \iota} \otimes \Sym^{r-1} r_{\sigma, \iota}$ when $\pi$ has the favourable (but hard to verify!) property that every member of its associated compatible system has large residual image. (In order to give a self-contained statement, we in actual fact phrase the conclusion of the theorem in terms of the residual automorphy of the representation ${}^{\varphi_p} \overline{r}_{ \pi, \iota} \otimes \Sym^{r-1} \overline{r}_{\pi, \iota}$, where $\varphi_p$ is arithmetic Frobenius -- this makes no reference to $\sigma$ and is what is needed in applications.)

Finally, in \S \ref{subsect_aut_twisted_second_version}, we use the results of \S \ref{subsec_preparations} to deduce Theorem \ref{thm_big_image_implies_tensor_product}, which gives the residual automorphy of the representation ${}^{\varphi_p} \overline{r}_{ \pi, \iota} \otimes \Sym^{r-1} \overline{r}_{\pi, \iota}$ under only local conditions on $\pi$. This is the result that we will use in \S \ref{sec: the end} to complete the proofs of our main theorems. 
 
\subsection{Preparations for the proof}\label{subsec_preparations}

Let $F$ be a number field. We first recall the definition of the coefficient field $K_\pi$ of a regular algebraic, cuspidal automorphic representation of $\GL_n(\bA_F)$, and how it interacts with the Galois representations that may be associated to $\pi$. These results were proved by Clozel \cite{Clo90} using the rational structure determined by the Betti cohomology of the arithmetic locally symmetric spaces associated to $\GL_n(\bA_F)$. 

The representation $\pi^\infty$ is an irreducible admissible $\bC[\GL_n(\bA_F^\infty)]$-module. If $\sigma \in \Aut(\bC)$, then ${}^\sigma \pi^\infty = \bC \otimes_{\bC, \sigma} \pi^\infty$
 is again an irreducible admissible $\bC[\GL_n(\bA_F^\infty)]$-module\footnote{Note that in this tensor product, we have $\lambda \otimes v = 1 \otimes \sigma^{-1} (\lambda) v$.}, and if $\tau \in \Aut(\bC)$ then ${}^{\sigma \tau} \pi^\infty = {}^\sigma({}^\tau \pi^\infty)$. We define $K_\pi$ to be the subfield of $\bC$ fixed by the stabilizer in $\Aut(\bC)$ of the isomorphism class of $\pi^\infty$. The following theorem is \cite[Th\'eor\`eme 3.13]{Clo90}:
\begin{theorem}\label{thm_field_of_definition}
\begin{enumerate}
\item $K_\pi$ is a number field, and $\pi^\infty$ may be defined over $K_\pi$. 
\item If $\sigma \in \Aut(\bC)$ then there exists a cuspidal, regular algebraic  automorphic representation ${}^\sigma \pi$ of $\GL_n(\bA_F)$ such that $({}^\sigma \pi)^\infty \cong {}^\sigma \pi^\infty$.
\end{enumerate} 
\end{theorem}
Let $\widetilde{K}_\pi \subset \bC$ be the normal closure of $K_\pi$, and let $\Gamma_\pi = \Gal(\widetilde{K}_\pi / \bQ)$. We see that the isomorphism class of ${}^\sigma \pi$ depends only on the image of $\sigma$ in $\Gamma_\pi$. 

We now relate this to Galois representations. Suppose that $F$ is a totally real (resp. CM) field and that $\pi$ is RAESDC (resp. RAECSDC). Then for any prime number $p$ and isomorphism $\iota : \overline{\bQ}_p \to \bC$, there is an associated Galois representation $r_{\pi, \iota} : G_F \to \GL_n(\overline{\bQ}_p)$. If $\sigma_p \in G_{\bQ_p}$, then we can act on the coefficients to get another continuous representation ${}^{\sigma_p} r_{\pi, \iota} = \overline{\bQ}_p \otimes_{\overline{\bQ}_p, \sigma_p} r_{\pi, \iota}$. If $\tau_p \in G_{\bQ_p}$, then ${}^{\sigma_p \tau_p} r_{\pi, \iota} \cong {}^{\sigma_p} ({}^{\tau_p} r_{\pi, \iota})$. The following lemma follows from the definitions.
\begin{lemma}\label{lem_galois_conjugation}
\begin{enumerate} \item Let $\sigma \in \Aut(\bC)$. Then there is an isomorphism $r_{{}^\sigma \pi, \iota} \cong r_{\pi, \sigma^{-1}\iota }$.
\item Let $\sigma_p \in G_{\bQ_p}$, and let $\sigma = \iota \sigma_p \iota^{-1} \in \Aut(\bC)$. Then there are isomorphisms 
\[ {}^{\sigma_p} r_{\pi, \iota} \cong  r_{\pi, \iota \sigma_p^{-1}}  \cong r_{\pi, \sigma^{-1} \iota} \cong r_{{}^\sigma \pi, \iota}. \]
\end{enumerate}
\end{lemma}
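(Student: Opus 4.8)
The plan is to deduce both parts directly from the defining properties already in play, using two standard inputs. First, a continuous semisimple representation $r : G_F \to \GL_n(\overline{\bQ}_p)$ is determined up to isomorphism by the characteristic polynomials $\det(1 - X\, r(\Frob_v))$ at the finite places $v \nmid p$ at which it is unramified (Brauer--Nesbitt together with the Chebotarev density theorem); so to establish any of the asserted isomorphisms it suffices to check that the two representations satisfy the same local--global compatibility at almost all finite $v$. Second, the Tate-normalised local Langlands correspondence $\rec^T_{F_v}$ is compatible with extension of scalars along field isomorphisms, hence equivariant for the action of field automorphisms on coefficients; this is exactly the point of passing from $\rec$ to $\rec^T$ (the half-integral powers of $q_v$ in the untwisted normalisation are not $\Aut$-equivariant), and it is recorded in \cite[\S 2.1]{Clo14}. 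Granting these, everything reduces to composing the relevant field isomorphisms in the correct order.

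For (1), I would first note that both sides are defined: by Clozel's theorem ${}^\sigma\pi$ is again cuspidal and regular algebraic, and since $\Aut(\bC)$-conjugation commutes with forming the contragredient, with twisting by Hecke characters of type $A_0$, and (in the CM case) with the conjugation automorphism $c$ of $F$, the representation ${}^\sigma\pi$ is again RAESDC (resp.\ RAECSDC); meanwhile $\sigma^{-1}\iota : \overline{\bQ}_p \to \bC$ is an isomorphism, so the general theory attaches $r_{\pi, \sigma^{-1}\iota}$ to $\pi$. Now for each finite place $v$ one has $({}^\sigma\pi)_v \cong {}^\sigma(\pi_v)$ because $\Aut(\bC)$-base change commutes with restricted tensor products, and base-changing $\pi_v$ first along $\sigma$ and then along $\iota^{-1}$ is base change along the composite $\iota^{-1}\sigma = (\sigma^{-1}\iota)^{-1}$. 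Hence the local condition $\mathrm{WD}(r_{{}^\sigma\pi,\iota}|_{G_{F_v}})^{F-ss} \cong \rec^T_{F_v}(\iota^{-1}({}^\sigma\pi)_v)$ defining $r_{{}^\sigma\pi,\iota}$ is literally the same as the condition $\mathrm{WD}(r_{\pi,\sigma^{-1}\iota}|_{G_{F_v}})^{F-ss} \cong \rec^T_{F_v}((\sigma^{-1}\iota)^{-1}\pi_v)$ defining $r_{\pi,\sigma^{-1}\iota}$, and the first input gives the isomorphism.

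For (2), the relation $\sigma = \iota\sigma_p\iota^{-1}$ gives $\sigma^{-1}\iota = \iota\sigma_p^{-1}$ as maps $\overline{\bQ}_p \to \bC$, so $r_{\pi,\iota\sigma_p^{-1}}$ and $r_{\pi,\sigma^{-1}\iota}$ are literally equal, and the latter is isomorphic to $r_{{}^\sigma\pi,\iota}$ by part (1). It then remains to identify ${}^{\sigma_p}r_{\pi,\iota}$ (which is again de Rham above $p$, since $\sigma_p$ is continuous) with $r_{\pi,\iota\sigma_p^{-1}}$. Here $\sigma_p$-conjugation of coefficients commutes with restriction to $G_{F_v}$ and with the formation of $\mathrm{WD}(-)^{F-ss}$, while $\rec^T_{F_v}$ commutes with $\sigma_p$-conjugation by the second input; hence, at each finite place $v$,
\[ \mathrm{WD}({}^{\sigma_p}r_{\pi,\iota}|_{G_{F_v}})^{F-ss} \cong {}^{\sigma_p}\!\bigl(\rec^T_{F_v}(\iota^{-1}\pi_v)\bigr) \cong \rec^T_{F_v}\bigl((\sigma_p\iota^{-1})\pi_v\bigr) = \rec^T_{F_v}\bigl((\iota\sigma_p^{-1})^{-1}\pi_v\bigr), \]
which is exactly the local condition defining $r_{\pi,\iota\sigma_p^{-1}}$; the first input then yields ${}^{\sigma_p}r_{\pi,\iota} \cong r_{\pi,\iota\sigma_p^{-1}}$, completing the chain.

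The only step beyond bookkeeping is the appeal to the compatibility of $\rec^T$ with field automorphisms of the coefficients (used in the chain in (2)); the one place to be careful is the order in which $\iota$, $\sigma$, $\sigma_p$ are composed and inverted, and a convenient sanity check is that any two choices of $\iota$ differing by an element of $\Aut(\bC)$ produce members of one and the same strictly compatible system, so the displayed identifications must be mutually consistent.
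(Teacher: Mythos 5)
Your proof is correct and takes essentially the same approach as the paper, which states only that the lemma ``follows from the definitions''; your argument is exactly the careful unwinding that claim demands, using the characterising local--global compatibility at unramified places (via Chebotarev and Brauer--Nesbitt) together with the $\Aut(\bC)$-equivariance built into the Tate normalisation $\rec^T$ and the elementary identity $\sigma^{-1}\iota = \iota\sigma_p^{-1}$.
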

Next, we take $F$ be a totally real number field, and describe conditions on a cuspidal, regular algebraic automorphic representation $\pi$ of $\GL_2(\A_F)$ under which each member of its associated compatible system of Galois representations has large residual image. Let $H / F$ denote the ray class field of modulus $(8) \cdot \{ v | \infty \}$. If $w$ is a finite place of $F$ of odd residue characteristic which splits in $H$ then there exists a totally positive element $\lambda_w \in \cO_F$ such that $(\lambda_w) = \p_w$ (i.e.\ $\lambda_w$ generates the prime ideal $\p_w \subset \cO_F$ associated to $w$) and for each place $v | 2$ of $F$, $\lambda_w \in (F_v^\times)^2$. If $S$ is any set of finite places of $F$, then we write $F(S)$ for the maximal abelian extension of $F$ of exponent 2 which is unramified outside $S$. 
\begin{lemma}\label{lem_quadratic_reciprocity}
Let $S$ be a finite set of finite places of $F$, and let $v, w \nmid 2$ be finite places of $F$, not in $S$, which split in $F(S) \cdot H / F$. Then $v$ splits in $F(S \cup \{ w \})$ if and only if $w$ splits in $F(S \cup \{ v \})$.
\end{lemma}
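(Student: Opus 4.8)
The plan is to interpret both splitting conditions via quadratic Hilbert symbols and then invoke global quadratic reciprocity (Hilbert's reciprocity law). First I would recall that, since $v, w$ split in $H$, we may choose totally positive generators $\lambda_v, \lambda_w \in \cO_F$ of $\p_v, \p_w$ which are squares in $F_u^\times$ for every place $u \mid 2$; this is exactly the content of the paragraph preceding the lemma, and it is the mechanism that removes the dyadic and archimedean places from the reciprocity computation. The field $F(S \cup \{w\})$ is the compositum of $F(S)$ with $F(\sqrt{\lambda_w})$ (up to the ambiguity of $\lambda_w$ by squares of units and by generators of primes in $S$, all of which are already split in the relevant fields by hypothesis), so ``$v$ splits in $F(S \cup \{w\})$'' becomes the conjunction of ``$v$ splits in $F(S)$'' — which holds by assumption — and ``$\lambda_w$ is a square in $F_v^\times$'', i.e.\ the Hilbert symbol $(\lambda_v, \lambda_w)_v = 1$ (using that $v$ is unramified of odd residue characteristic and $\lambda_v$ is a uniformizer at $v$). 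Symmetrically, ``$w$ splits in $F(S \cup \{v\})$'' is equivalent to $(\lambda_w, \lambda_v)_w = 1$.

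Next I would run through the remaining places $u$ of $F$ and check that the Hilbert symbol $(\lambda_v, \lambda_w)_u$ is trivial at all of them except possibly $u \in \{v, w\}$. At a finite place $u \nmid 2$ with $u \neq v, w$: both $\lambda_v$ and $\lambda_w$ are units at $u$ (they generate $\p_v$, $\p_w$), so the tame symbol vanishes and $(\lambda_v,\lambda_w)_u = 1$. At a place $u \mid 2$: $\lambda_w \in (F_u^\times)^2$ by construction, so $(\lambda_v, \lambda_w)_u = 1$. At an archimedean (real) place $u$: $\lambda_v, \lambda_w$ are both totally positive, hence positive in $F_u \cong \bR$, so $(\lambda_v,\lambda_w)_u = 1$. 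Therefore Hilbert's product formula
\[ \prod_{u} (\lambda_v, \lambda_w)_u = 1 \]
collapses to $(\lambda_v,\lambda_w)_v \cdot (\lambda_v,\lambda_w)_w = 1$, and since each factor lies in $\{\pm 1\}$ we get $(\lambda_v,\lambda_w)_v = (\lambda_v,\lambda_w)_w$. Combined with the symmetry $(\lambda_v,\lambda_w)_w = (\lambda_w,\lambda_v)_w$ of the Hilbert symbol, this gives exactly the desired equivalence: $v$ splits in $F(S \cup \{w\})$ iff $(\lambda_v,\lambda_w)_v = 1$ iff $(\lambda_w,\lambda_v)_w = 1$ iff $w$ splits in $F(S\cup\{v\})$.

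The main obstacle I anticipate is bookkeeping the identification of $F(S \cup \{w\})$ with $F(S)\cdot F(\sqrt{\lambda_w})$ cleanly: the element $\lambda_w$ is only well-defined modulo $(\cO_F^\times)^2$ and modulo generators of the primes in $S$ and of the ramified primes implicit in $H$, so one must argue that changing $\lambda_w$ by such an ambiguity does not change whether $\lambda_w$ is a square in $F_v^\times$ — which works precisely because $v$ is assumed to split in $F(S)\cdot H$, so all those ambiguity classes are local squares at $v$. Likewise one should check that adjoining $\sqrt{\lambda_w}$ really does cut out the degree-$2$ ray-class-type extension unramified outside $S \cup \{w\}$ and not something smaller; here the hypothesis $w \nmid 2$ and $w \notin S$ ensures $\lambda_w$ has odd valuation at $w$ and unit valuation away from $S \cup \{w\}$ and $2$, and the square-at-$2$ condition controls ramification at the dyadic places. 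Once this dictionary between ``splits in $F(S \cup \{\bullet\})$'' and ``Hilbert symbol at $\bullet$ is trivial'' is in place, the reciprocity law finishes the argument immediately.
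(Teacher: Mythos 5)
Your proof is correct and follows essentially the same route as the paper: reduce to the quadratic subextensions $F(\sqrt{\lambda_v})$, $F(\sqrt{\lambda_w})$ via Kummer theory, translate splitting into vanishing of Hilbert symbols $(\lambda_v,\lambda_w)_v$ and $(\lambda_v,\lambda_w)_w$, and apply the product formula together with the local triviality of $(\lambda_v,\lambda_w)_u$ at all $u \neq v,w$ (using that $\lambda_v,\lambda_w$ are units away from $v,w$ and local squares at $u \mid 2\infty$). The only cosmetic difference is that you spend more words on the well-definedness of $\lambda_w$ and the Kummer identification, which the paper simply asserts.
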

\begin{proof}
Write $\p_v = (\lambda_v)$, $\p_w = (\lambda_w)$, as above. By Kummer theory, $F(S \cup \{ v \}) = F(S) \cdot F(\sqrt{\lambda_v})$ and $F(S \cup \{ w \}) = F(S) \cdot F(\sqrt{\lambda_w})$. We therefore just need to show that $w$ splits in $F(\sqrt{\lambda_v})$ if and only if $v$ splits in $F(\sqrt{\lambda_w})$, or in other words that $(\lambda_v, \lambda_w)_{F_v} = (\lambda_v, \lambda_w)_{F_w}$ (equality of Hilbert symbols). By reciprocity, we have
\[ \prod_u (\lambda_v, \lambda_w)_u = 1, \]
so it's enough to show that $(\lambda_v, \lambda_w)_u = 1$ if $u \neq v, w$ is any other place of $F$. If $u | 2 \infty$, this is true because $\lambda_v$, $\lambda_w$ are squares locally in $F_u^\times$. If $u \nmid 2 \infty$, this is true because $\lambda_v, \lambda_w$ are units. This completes the proof. 
\end{proof}

The following result is an analogue over $F$ of \cite[Proposition 6.1]{Die11}.
\begin{proposition}\label{prop_large_image_everywhere}
Fix $M > 5$. Let $\pi$ be a RAESDC automorphic representation of $\GL_2(\bA_F)$ of weight 0 satisfying the following conditions:
\begin{enumerate}
\item There exist prime numbers $t_1, t_2 > M$ and places $v_1, v_2$ of $F$ of residue characteristics $p_1, p_2 > 3$ unramified in $F$ such that $\pi_{v_i}$ is tamely dihedral of order $t_i$ ($i = 1, 2$) and $p_1, p_2, t_1, t_2$ are all distinct. 
\item Let $S$ denote the set of finite places $v$ of $F$ such that $v \neq v_1, v_2$ and either $\pi_v$ is ramified or $v$ is ramified over $\bQ$, together with the places of residue characteristic $2$ or $3$. Then $v_1$ splits in $F(S \cup \{ v_2 \})$ and $v_2$ splits in $F(S \cup \{ v_1 \})$.
\end{enumerate}
Then for any prime number $p$ and any isomorphism $\iota : \overline{\bQ}_p \to \bC$, $\overline{r}_{\pi, \iota}(G_F)$ contains a conjugate of $\SL_2(\bF_{p^a})$ for some $p^a > M$.
\end{proposition}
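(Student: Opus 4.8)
The plan is the following. Fix a prime $p$ and an isomorphism $\iota : \overline{\bQ}_p \to \bC$; since $t_1 \neq t_2$ we may choose $i \in \{1,2\}$ with $p \neq t_i$, and write $j$ for the other index. Put $G = \overline{r}_{\pi, \iota}(G_F) \subseteq \GL_2(\overline{\bF}_p)$ and let $\overline{G}$ be its image in $\PGL_2(\overline{\bF}_p)$. By Dickson's classification of subgroups of $\PGL_2$, $\overline{G}$ is cyclic, dihedral, isomorphic to $A_4$, $S_4$ or $A_5$, or conjugate to a subgroup containing $\PSL_2(\bF_{p^a})$ for some $a \geq 1$. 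I would rule out the first five cases; once $\PSL_2(\bF_{p^a}) \subseteq \overline{G}$, a standard commutator argument (valid as soon as $p^a > 3$, using perfectness of $\PSL_2(\bF_{p^a})$) shows that $G$ contains a conjugate of $\SL_2(\bF_{p^a})$, and the bound $p^a > M$ will be extracted from the local behaviour at $v_i$.

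First I would determine $\overline{r}_{\pi, \iota}|_{G_{F_{v_i}}}$. Since $\pi_{v_i}$ is tamely dihedral of order $t_i$, local--global compatibility (together with $p \neq t_i$, so that the order of a tamely ramified character is unchanged under reduction modulo $p$) gives $\overline{r}_{\pi, \iota}|_{G_{F_{v_i}}} \cong \Ind_{G_{F'_{v_i}}}^{G_{F_{v_i}}} \overline{\chi}$, where $F'_{v_i}/F_{v_i}$ is the unramified quadratic extension and $\overline{\chi}|_{I_{F_{v_i}}}$ has exact order $t_i$. As $q_{v_i} \equiv -1 \text{ mod } t_i$ and $t_i > 2$, the character $\overline{\chi}$ and its $\Gal(F'_{v_i}/F_{v_i})$-conjugate differ already on inertia, so $\overline{r}_{\pi, \iota}|_{G_{F_{v_i}}}$ is irreducible. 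Consequently $\overline{r}_{\pi, \iota}$ is irreducible, so $\overline{G}$ is not cyclic; and the image of tame inertia at $v_i$ provides an element of $\overline{G}$ of order $t_i > M > 5$, so $\overline{G}$ is not $A_4$, $S_4$ or $A_5$. (In particular the hypotheses force $\pi$ to be non-CM.) The same element, viewed in $\overline{G} \subseteq \PGL_2(\bF_{p^a})$, forces $t_i \mid p^{2a} - 1$ (since $t_i \neq p$), which together with $t_i > M$ yields $p^a > M$ once we know $\overline{G} \supseteq \PSL_2(\bF_{p^a})$.

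The heart of the matter is ruling out the dihedral case, and this is where hypothesis (2) (and, behind it, Lemma \ref{lem_quadratic_reciprocity}) enters. Suppose $\overline{r}_{\pi, \iota} \cong \Ind_{G_L}^{G_F} \psi$ for a quadratic extension $L/F$; I would constrain the ramification of $L/F$ place by place. At $v_i$: since $\overline{r}_{\pi, \iota}|_{G_{F_{v_i}}}$ is irreducible and becomes reducible precisely over $F'_{v_i}$, necessarily $L_{v_i} = F'_{v_i}$, so $v_i$ is inert in $L$ and $L/F$ is unramified at $v_i$. At $v_j$: if $p \neq t_j$ the same argument gives $L/F$ unramified at $v_j$; if $p = t_j$ then the order-$p$ inertia character at $v_j$ reduces trivially, so $\overline{r}_{\pi, \iota}$ is unramified at $v_j$, and since a quadratic extension ramified at $v_j \nmid p$ would force an induced representation to be ramified there, again $L/F$ is unramified at $v_j$. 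At a place $v \mid p$ with $v \notin S$ (the cases $v \in \{v_i, v_j\}$ being already treated): such $v$ is unramified over $\bQ$ and does not lie above $2$ or $3$, so $p \geq 5$ and $F_v/\bQ_p$ is unramified; since $\pi$ has weight $0$, $r_{\pi, \iota}|_{G_{F_v}}$ is crystalline with Hodge--Tate weights $\{0,1\}$, so by Fontaine--Laffaille theory $\ad^0 \overline{r}_{\pi, \iota}|_{G_{F_v}}$ contains no ramified quadratic character and $L/F$ is unramified at $v$. Finally $\overline{r}_{\pi, \iota}$ is unramified at every remaining place outside $S$, so $L/F$ is unramified there as well. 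Hence $L \subseteq F(S)$. But hypothesis (2) says $v_i$ splits in $F(S \cup \{v_j\}) \supseteq F(S)$, so $v_i$ splits in $L$, contradicting that $v_i$ is inert in $L$. This rules out the dihedral case, and with it the proof is complete modulo the commutator argument of the first paragraph.

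I expect the dihedral step to be the main obstacle, precisely because the conclusion is required for \emph{every} prime $p$ --- in particular $p \in \{2,3\}$, primes ramified in $F$, and $p \in \{t_1, t_2\}$ --- and one must then control the ramification of $L/F$ at the awkward places. Three features of the hypotheses make this possible and should be emphasised in the writeup: (i) $S$ is defined to contain every place of residue characteristic $2$ or $3$ and every place ramified over $\bQ$, so those places never obstruct the inclusion $L \subseteq F(S)$; (ii) having \emph{two} tamely dihedral places of distinct orders ensures that, for any $p$, one of them ($v_i$, with $p \neq t_i$) has the clean tame local structure used above while the other ($v_j$) either has the same structure or becomes unramified modulo $p$; and (iii) the two halves of hypothesis (2), shown equivalent by Lemma \ref{lem_quadratic_reciprocity}, cover the two possible choices of $i$. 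A subsidiary point is that the Fontaine--Laffaille input at $v \mid p$ is exactly why one needs $p \geq 5$ there, which is why $2$ and $3$ were included in $S$ from the start.
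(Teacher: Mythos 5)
Your proof follows the same strategy as the paper's: show $\overline{r}_{\pi,\iota}$ is irreducible with a large-order element in its projective image by analysing a tamely dihedral place $v_i$ with $p \neq t_i$, rule out the exceptional subgroups, and exclude the dihedral case by controlling the ramification of the inducing quadratic field $L/F$ and invoking hypothesis (2). The core ideas are identical; the Dickson classification step, the tame-inertia order-$t_i$ element, the FL argument at $p$-adic places outside $S$, and the contradiction via $v_i$ being inert in $L$ but split in $F(S\cup\{v_j\})$ all match the paper.

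There are two places where you over-reach relative to the paper, and one of them creates a genuine (though easily repairable) gap. First, you only require $p \neq t_i$. But with that criterion you could land on an $i$ with $p = p_i$, so $v_i \mid p$. Then $\pi_{v_i}$ is a ramified potentially crystalline place, and your statements that $\overline{r}_{\pi,\iota}|_{G_{F_{v_i}}} \cong \Ind_{G_{F'_{v_i}}}^{G_{F_{v_i}}}\overline{\chi}$ with $\overline{\chi}|_{I}$ of \emph{exact} order $t_i$, and that the restriction becomes reducible precisely over $F'_{v_i}$, are no longer the direct mod-$p$ reduction of the tame Weil--Deligne data --- the residual inertial characters acquire a fundamental-character contribution that your argument does not address. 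Since $p_1,p_2,t_1,t_2$ are pairwise distinct you can \emph{always} choose $i$ with $p\notin\{p_i,t_i\}$; the paper does exactly this (it assumes $p\notin\{p_1,t_1\}$, else swaps indices), and you should too. Second, you go out of your way to prove $L/F$ is unramified at $v_j$, concluding $L\subset F(S)$. This is both unnecessary and, in the sub-case $p=p_j$ (with $p\neq t_j$), unjustified for the same reason as above: $v_j\mid p$ and ``the same argument'' does not carry over verbatim. The paper sidesteps all of this by simply permitting $L$ to ramify at $v_j$ and concluding $L\subset F(S\cup\{v_j\})$, which is precisely the extension appearing in hypothesis (2); once you make the fix in the choice of $i$, you should drop the $v_j$ analysis entirely and adopt this weaker but sufficient conclusion. (A shared minor point: the deduction from $t_i\mid p^a\pm 1$ and $t_i>M$ gives $p^a\geq M$ but does not by itself give the strict inequality $p^a>M$ when $t_i\mid p^a+1$; this is also present in the paper and harmless for the applications.)
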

\begin{proof}
Take a prime number $p$ and isomorphism $\iota : \overline{\bQ}_p \to \bC$. Suppose first that $p \not\in \{ p_1, t_1 \}$. Then $\overline{r}_{\pi, \iota}|_{G_{F_{v_1}}}$ is irreducible, so $\overline{r}_{\pi, \iota}$ is irreducible and its projective image contains elements of order $t_1$. The classification of finite subgroups of $\PGL_2(\overline{\bF}_p)$ shows that the projective image of $\overline{r}_{\pi, \iota}$ is either conjugate to one of $\PSL_2(\bF_{p^a})$ or $\PGL_2(\bF_{p^a})$ for some $a \geq 1$, or that $\overline{r}_{\pi, \iota}$ is dihedral. In the first case, we see that $\PSL_2(\bF_{p^a})$ contains an element of order $t_1$ (and $t_1 \neq p$), and therefore that $t_1$ divides $p^a - 1$ or $p^a + 1$. In particular, we must have $p^a > M$.

We rule out the case that $\overline{r}_{\pi, \iota}$ is dihedral, i.e.\ that there is an isomorphism $\overline{r}_{\pi, \iota} \cong \Ind_{G_K}^{G_F} \overline{\chi}$ for some quadratic extension $K / F$ and character $\overline{\chi} : G_K \to \overline{\bF}_p^\times$. If $K/F$ is ramified at a place $v$ of $F$, then $\overline{r}_{\pi, \iota}$ is likewise ramified with inertial image of order twice that of $\overline{\chi}$; thus we must have $v \in S \cup \{ v_1, v_2 \}$ or $v | p$. In fact $\overline{r}_{\pi, \iota}(I_{F_{v_1}})$ has odd order, so $v \in S \cup \{ v_2 \}$ or $v | p$. If $v | p$ and $v \not\in S \cup \{ v_2 \}$, then $F_v / \bQ_p$ is unramified, $p \geq 5$ and $\pi_v$ is unramified. There is moreover an isomorphism $\overline{r}_{\pi, \iota}|_{I_{F_v}} \cong \chi_v \oplus \chi_v \delta_{K_v / F_v}$ for some character $\chi_v : I_{F_v} \to \overline{\bF}_p^\times$. This contradicts the fact that $\overline{r}_{\pi, \iota}|_{G_{F_v}}$ is Fontaine--Laffaille with weights $\{ 0, 1 \}$ with respect to any embedding $\tau : F_v \to \overline{\bQ}_p$ (which implies that $\overline{r}_{\pi, \iota}|_{I_{F_v}}$ is a sum of two products of distinct fundamental characters). Therefore $K / F$ is ramified only at places $v \in S \cup \{ v_2 \}$, hence $K \subset F(S \cup \{v_2\})$. This leads to a contradiction. Indeed, our hypothesis implies that $v_1$ splits in $K$, contradicting the fact that $\overline{r}_{\pi, \iota}|_{G_{F_{v_1}}}$ is irreducible.

In the remaining case $p \in \{ p_1, t_1 \}$, we see that $p \not\in \{ p_2, t_2 \}$, so we can apply the same argument after reversing the roles of $v_1$ and $v_2$.
\end{proof}

\subsection{Automorphy of a twisted tensor product -- first version}\label{subsect_aut_twisted_first_version}

We now state and prove the first theorem in this section concerning the automorphy of certain twisted tensor products. 
\begin{theorem}\label{thm_automorphic_tensor_product}
Let $F$ be a totally real field and let $p \geq 5$ be a prime. Let $0 < r < p$, and suppose that $\mathrm{SP}_{r-1}$, $\mathrm{SP}_r$, and $\mathrm{SP}_{r+1}$ hold. 
Let $\iota : \overline{\bQ}_p \to \bC$ be an isomorphism, and let $\pi$ be a RAESDC automorphic representation of $\GL_2(\bA_F)$ satisfying the following conditions:
\begin{enumerate}
\item $\det r_{\pi, \iota} = \epsilon^{-1}$.
\item $\pi$ is of weight $0$ and for each place $v | p$ of $F$, $\pi_v$ is an unramified twist of the Steinberg representation. In particular, $\pi$ is $\iota$-ordinary. 
\item There exists a place $v_0$ of $F$ such that $q_{v_0} \equiv -1 \text{ mod }p$ and $\pi_{v_0}$ is tamely dihedral of order $p$. 
\item For any prime number $l$ and any isomorphism $\iota_l : \overline{\bQ}_l \to \bC$, $\overline{r}_{\pi, \iota_l}(G_F)$ contains a conjugate of $\SL_2(\bF_{l^a})$ for some $l^a > \max(4r, 5)$. Moreover, $\overline{r}_{\pi, \iota}(G_F)$ contains a conjugate of $\SL_2(\bF_{p^a})$ for some $a > a_0(p)$ ($a_0(p)$ as defined in the statement of Lemma \ref{lem_tensor_adequacy}). \item For each finite place $v \nmid p$ of $F$, $\pi_v$ is potentially unramified.
\end{enumerate}
Let $\varphi_p \in G_{\bQ_p}$ be an arithmetic Frobenius lift. Then there exists an $\iota$-ordinary RAESDC automorphic representation $\Pi$ of $\GL_{2r}(\bA_F)$ such that $\overline{r}_{\Pi, \iota} \cong {}^{\varphi_p} \overline{r}_{\pi, \iota} \otimes \Sym^{r-1} \overline{r}_{\pi, \iota}$.
\end{theorem}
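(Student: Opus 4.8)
The plan is to combine Lemma~\ref{lem_hida_theory}, Theorem~\ref{thm_untwisted_tensor_product}, and the functoriality lifting theorem Theorem~\ref{thm_FLT_for_TP}, following the strategy sketched in the introduction. Apply Lemma~\ref{lem_hida_theory} to obtain a RAESDC automorphic representation $\sigma$ of $\GL_2(\bA_F)$ which is $\iota$-ordinary with $\mathrm{HT}_\tau(r_{\sigma,\iota}) = \{0,2\}$ for all $\tau$, $\det r_{\sigma,\iota} = \epsilon^{-2}\omega$, $\overline{r}_{\sigma,\iota}\cong\overline{r}_{\pi,\iota}$, an unramified twist of Steinberg at $v_0$, and $r_{\sigma,\iota}|_{G_{F_v}}\sim r_{\pi,\iota}|_{G_{F_v}}$ for all $v\nmid p$ with $v\neq v_0$. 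Then $\sigma$ is non-CM, and since it has the same inertial types as $\pi$ away from $p$ (and the same tamely dihedral structure), the weight-$\{0,2\}$ analogue of Proposition~\ref{prop_large_image_everywhere} applies to it, so $\overline{r}_{\sigma,\iota_l}(G_F)$ contains a conjugate of $\SL_2(\bF_{l^b})$ with $l^b$ large for every prime $l$ and every isomorphism $\iota_l:\overline{\bQ}_l\to\bC$. Since $\pi$ satisfies the hypotheses of \S\ref{sec:aut untwisted}, Theorem~\ref{thm_untwisted_tensor_product} (which uses $\mathrm{SP}_{r-1}$ and $\mathrm{SP}_{r+1}$) shows that $r_{\pi,\iota}\otimes\Sym^{r-1}r_{\sigma,\iota}$ is automorphic, associated to an $\iota$-ordinary RAESDC automorphic representation.

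Set $\gamma = \iota\varphi_p\iota^{-1}\in\Aut(\bC)$. It suffices to prove that $r_{{}^\gamma\pi,\iota}\otimes\Sym^{r-1}r_{\sigma,\iota}$ is automorphic: if $\Pi$ is the associated RAESDC automorphic representation of $\GL_{2r}(\bA_F)$, then $r_{\Pi,\iota}\cong{}^{\varphi_p}r_{\pi,\iota}\otimes\Sym^{r-1}r_{\sigma,\iota}$ by Lemma~\ref{lem_galois_conjugation}(2); this is $\iota$-ordinary at each $v\mid p$ (being a tensor product of ordinary representations with complementary Hodge--Tate weights, so that the tensor product flag is again a full $G_{F_v}$-stable ordinary flag), and reducing modulo $\ffrm$ gives $\overline{r}_{\Pi,\iota}\cong{}^{\varphi_p}\overline{r}_{\pi,\iota}\otimes\Sym^{r-1}\overline{r}_{\sigma,\iota}\cong{}^{\varphi_p}\overline{r}_{\pi,\iota}\otimes\Sym^{r-1}\overline{r}_{\pi,\iota}$, the right-hand side being irreducible (hence semisimple) by Steinberg's tensor product theorem since $0<r<p$. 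To prove the claim, note that $\Gamma_\pi = \Gal(\widetilde{K}_\pi/\bQ)$ is generated by its inertia subgroups, as no nontrivial extension of $\bQ$ is everywhere unramified; so we may write the image of $\gamma$ in $\Gamma_\pi$ as $\delta_1\cdots\delta_s$ with each $\delta_i$ lying in the inertia group $I_{w_i/l_i}$ of a place $w_i$ of $\widetilde{K}_\pi$ above a prime $l_i$, and we induct on $s$. The case $s=0$ is the automorphy of $r_{\pi,\iota}\otimes\Sym^{r-1}r_{\sigma,\iota}$ just established, since ${}^\gamma\pi$ depends only on the image of $\gamma$ in $\Gamma_\pi$.

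For the inductive step, write $\gamma = \delta\gamma'$ with $\delta\in I_{w/l}$ and suppose $r_{{}^{\gamma'}\pi,\iota}\otimes\Sym^{r-1}r_{\sigma,\iota}$ is automorphic. Choose $\iota_l:\overline{\bQ}_l\to\bC$ with $\iota_l^{-1}$ inducing $w$ and a lift $\widetilde{\delta}\in I_{\bQ_l}$ of $\delta$; then $r_{{}^\gamma\pi,\iota_l}\cong{}^{\widetilde{\delta}}r_{{}^{\gamma'}\pi,\iota_l}$, and since $\widetilde{\delta}$ acts trivially on the residue field, $\overline{r}_{{}^\gamma\pi,\iota_l}\cong\overline{r}_{{}^{\gamma'}\pi,\iota_l}$. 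We now apply Theorem~\ref{thm_FLT_for_TP} at the prime $l$ with the isomorphism $\iota_l$, taking its ``$\pi$'' to be ${}^{\gamma'}\pi$, its ``$\pi'$'' to be ${}^\gamma\pi$, and its ``$\sigma$'' to be $\Sym^{r-1}\sigma$, which is a RAESDC automorphic representation of $\GL_r(\bA_F)$ by $\mathrm{SP}_r$, with $\mathrm{HT}_\tau(r_{\Sym^{r-1}\sigma,\iota_l}) = \{0,2,\dots,2r-2\}$. The preliminary hypotheses and conditions (1), (2), (6) are immediate (using $\overline{r}_{{}^{\gamma'}\pi,\iota_l}\cong\overline{r}_{\pi,\gamma'^{-1}\iota_l}$ and the large-image hypothesis of Theorem~\ref{thm_automorphic_tensor_product}, together with the congruence just established). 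Conditions (3) and (4) of Theorem~\ref{thm_FLT_for_TP} hold because ${}^\gamma\pi$ is a coefficient-conjugate of ${}^{\gamma'}\pi$ by an element of $G_{\bQ_l}$: this preserves ordinariness and potential crystallinity at places over $l$ (and, when $l=2$, both representations are potentially crystalline there, as $\pi_v$ is potentially unramified), while away from $l$ the local components ${}^\gamma\pi_v$, ${}^{\gamma'}\pi_v$ are field-automorphism conjugates, so one is a twist of Steinberg iff the other is. Condition (7) is the $\iota_l$-incarnation of the inductive hypothesis (using $\Sym^{r-1}r_{\sigma,\iota_l}\cong r_{\Sym^{r-1}\sigma,\iota_l}$), valid because these representations lie in a single compatible system. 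Condition (8) follows from Lemma~\ref{lem_irreducibility_of_tensor_products}, since $r_{{}^\gamma\pi,\iota_l}$ and $r_{\sigma,\iota_l}$ are strongly irreducible ($\pi$ and $\sigma$ being non-CM) and there is a place $v\mid l$ at which their $\tau$-Hodge--Tate weight gaps are $1$ and $2$. Finally, condition (5) is obtained from the large residual images of ${}^{\gamma'}\pi$ and $\sigma$ at $l$ by a Goursat argument and the construction of regular semisimple elements with eigenvalue ratios of large order, exactly as in the proofs of Lemmas~\ref{lem_tensor_adequacy} and~\ref{lem_genericity}. Theorem~\ref{thm_FLT_for_TP} then yields automorphy of $r_{{}^\gamma\pi,\iota_l}\otimes\Sym^{r-1}r_{\sigma,\iota_l}$, hence of $r_{{}^\gamma\pi,\iota}\otimes\Sym^{r-1}r_{\sigma,\iota}$, completing the induction.

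The main point of difficulty, and the reason this argument does not simply reduce to our previous treatment of the case $F=\bQ$, is that we have no control over which primes ramify in $\widetilde{K}_\pi$: Theorem~\ref{thm_FLT_for_TP} must be applicable at \emph{every} prime $l$ occurring in the factorisation of $\gamma$, including $l=2$. This is what forces the hypothesis that $\overline{r}_{\pi,\iota_l}(G_F)$ be large for all $l$, the need to arrange the same for $\sigma$, and the generality (arbitrary residue characteristic, including the delicate case $p=2$) in which the functoriality lifting theorem of \S\ref{sec:flt tp} has been proved.
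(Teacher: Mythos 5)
Your overall architecture --- construct $\sigma$ via Lemma \ref{lem_hida_theory}, establish automorphy of the untwisted product via Theorem \ref{thm_untwisted_tensor_product}, then propagate to $r_{{}^\gamma\pi,\iota}\otimes\Sym^{r-1}r_{\sigma,\iota}$ by induction on a factorisation of $\gamma$ into inertial elements, applying Theorem \ref{thm_FLT_for_TP} at each step --- is exactly the paper's strategy. But there is a genuine gap in the inductive step: you apply Theorem \ref{thm_FLT_for_TP} with the auxiliary representation taken to be $\Sym^{r-1}\sigma$ itself, and you claim that hypothesis (\ref{assm:H}) (the perfect subgroup $H$ containing a regular semisimple element) follows ``by a Goursat argument'' from large residual images of ${}^{\gamma'}\pi$ and $\sigma$ modulo $l$. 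Two things fail here. First, the hypotheses of Theorem \ref{thm_automorphic_tensor_product} give large image of $\overline{r}_{\pi,\iota_l}$ for all $l$, but say nothing about $\sigma$; your appeal to ``the weight-$\{0,2\}$ analogue of Proposition \ref{prop_large_image_everywhere}'' is unsupported, since $\pi$ is not assumed to have the auxiliary tamely dihedral places that proposition requires, and Lemma \ref{lem_hida_theory} gives no control over $\overline{r}_{\sigma,\iota_l}$ for $l\neq p$. Second, and more seriously, even granting large image for both factors individually, the Goursat dichotomy leaves open the possibility that $\operatorname{Proj}\overline{r}_{{}^{\gamma'}\pi,\iota_l}$ and $\operatorname{Proj}\overline{r}_{\sigma,\iota_l}$ cut out essentially the same extension of $F$, in which case the image of the product is (close to) a diagonal copy of $\PGL_2(\bF_{l^a})$ and contains no $\SL_2\times\SL_2$; in that entangled case elements of the form $g\otimes\Sym^{r-1}g$ have repeated eigenvalues, so hypothesis (\ref{assm:H}) genuinely fails. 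The characteristic-zero Hodge--Tate argument of Lemma \ref{lem_irreducibility_of_tensor_products} that rules out the graph case has no residual analogue at an arbitrary prime $l<B$ (which may be small, or ramified in $F$, or a place of ramification of $\pi$), so you cannot exclude this entanglement.

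This is precisely why the paper inserts Lemma \ref{lem_amenable_sigma} between Theorem \ref{thm_untwisted_tensor_product} and the induction: one chooses a large auxiliary prime $t>B$ at which a Fontaine--Laffaille argument (comparing the Hodge--Tate weights of $\Ad r_{\pi,\iota_t}$ and $\Ad r_{\sigma,\iota_t}$) shows the relevant mod-$t$ tensor products are irreducible, and then replaces $\sigma$ by a congruent-mod-$t$ representation $\sigma'$ which is tamely dihedral of order $t$ at a new place $v_1$ of residue characteristic $>B$. This single local condition simultaneously forces $\overline{r}_{\sigma',\iota_l}$ to have large image for every $l<B$ and forces the extensions cut out by $\operatorname{Proj}\overline{r}_{\sigma',\iota_l}$ and $\operatorname{Proj}\overline{r}_{{}^\gamma\pi,\iota_l}$ to meet in at most a quadratic extension (the former is ramified at $v_1$ with inertial image of odd order $t$, the latter is unramified there), which is what makes your Goursat step valid. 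The automorphy of $r_{\pi,\iota_t}\otimes\Sym^{r-1}r_{\sigma',\iota_t}$ is then transferred from $\sigma$ to $\sigma'$ by a standard (residually irreducible, potentially diagonalisable) automorphy lifting theorem at $t$. Without this intermediate construction your induction cannot get off the ground at the primes ramified in $K_\pi$.
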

We assume that the hypotheses of Theorem \ref{thm_automorphic_tensor_product} are in effect for the remainder of \S \ref{subsect_aut_twisted_first_version}. A first consequence, which uses the assumption of $\mathrm{SP}_{r-1}$, $\mathrm{SP}_r$, and $\mathrm{SP}_{r+1}$, together with the results of \S \ref{sec:aut untwisted}, is as follows. 
\begin{lemma}\label{lem_sigma_with_tensor_product}
There exists a RAESDC automorphic representation $\sigma$ of $\GL_2(\bA_F)$ satisfying the following conditions:
\begin{enumerate}
\item $\sigma$ is $\iota$-ordinary and for each embedding $\tau : F \to \overline{\bQ}_p$, $\mathrm{HT}_\tau(r_{\sigma, \iota}) = \{ 0, 2 \}$. 
\item We have $\det r_{\sigma, \iota} = \epsilon^{-2} \omega$, where $\omega$ denotes the Teichm\"uller lift of $\epsilon \text{ mod }p$.
\item There is an isomorphism $\overline{r}_{\sigma, \iota} \cong \overline{r}_{\pi, \iota}$.
\item $\sigma_{v_0}$ is an unramified twist of the Steinberg representation.
\item For each place $v \nmid p$ of $F$ such that $v \neq v_0$, $r_{\sigma, \iota}|_{G_{F_v}} \sim r_{\pi, \iota}|_{G_{F_v}}$. 
\item $r_{\pi, \iota} \otimes \Sym^{r-1} r_{\sigma, \iota}$ is automorphic. 
\end{enumerate}
\end{lemma}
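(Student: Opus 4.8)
The plan is to read off $\sigma$ from Lemma \ref{lem_hida_theory} and then to deduce condition (6) directly from Theorem \ref{thm_untwisted_tensor_product}. The first point is that the hypotheses on $\pi$ in the present theorem put us in the setting of \S\ref{sec:aut untwisted}: conditions (1), (2), (3), (5) here are literally the conditions (1), (2), (3), (5) imposed there, and condition (4) here implies the big-image hypothesis (4) there, namely that $\overline{r}_{\pi,\iota}(G_F)$ contains a conjugate of $\SL_2(\bF_{p^a})$ for some $p^a > \max(4r, 5, p^{a_0(p)})$; moreover a RAESDC representation is in particular regular algebraic and cuspidal. Note also that if $r = 1$ then condition (6) is just the automorphy of $r_{\pi,\iota} \otimes \Sym^0 r_{\sigma,\iota} = r_{\pi,\iota}$, which holds since $\pi$ is RAESDC; so we may assume $r > 1$.

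First I would apply Lemma \ref{lem_hida_theory} to produce a RAESDC automorphic representation $\sigma$ of $\GL_2(\bA_F)$ satisfying conditions (1)--(5) of the statement: $\sigma$ is $\iota$-ordinary with $\mathrm{HT}_\tau(r_{\sigma,\iota}) = \{0, 2\}$ for every embedding $\tau : F \to \overline{\bQ}_p$, $\det r_{\sigma,\iota} = \epsilon^{-2}\omega$, $\overline{r}_{\sigma,\iota} \cong \overline{r}_{\pi,\iota}$, $\sigma_{v_0}$ is an unramified twist of the Steinberg representation, and $r_{\sigma,\iota}|_{G_{F_v}} \sim r_{\pi,\iota}|_{G_{F_v}}$ for every finite place $v \nmid p$ with $v \neq v_0$. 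Since this $\sigma$ is exactly the representation fixed in \S\ref{sec:aut untwisted} once $\pi$ is given, the pair $(\pi,\sigma)$ meets all of the running hypotheses of Theorem \ref{thm_untwisted_tensor_product}; the additional inputs $\mathrm{SP}_{r-1}$ and $\mathrm{SP}_{r+1}$ needed by that theorem are among the hypotheses of the present one. Hence Theorem \ref{thm_untwisted_tensor_product} applies and shows that $r_{\pi,\iota} \otimes \Sym^{r-1} r_{\sigma,\iota}$ is automorphic, indeed associated to an $\iota$-ordinary RAESDC automorphic representation of $\GL_{2r}(\bA_F)$. This is condition (6), and the proof is complete.

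There is no genuine obstacle at the level of this lemma: all the real content is absorbed into Lemma \ref{lem_hida_theory} (a Hida-theory construction of a congruent $\iota$-ordinary lift of Hodge--Tate weights $\{0,2\}$, followed by the Khare--Wintenberger method and an automorphy lifting theorem) and into Theorem \ref{thm_untwisted_tensor_product} (the residually reducible automorphy argument carried out over the tower of soluble totally real extensions $F_1 \subset F_2 \subset F_3$). The only thing one must check carefully is that the $\sigma$ furnished by Lemma \ref{lem_hida_theory} is literally the one with respect to which Theorem \ref{thm_untwisted_tensor_product} is stated, so that its conclusion can be quoted without modification; and, as noted above, that the slightly ill-posed case $r=1$ is disposed of by hand rather than by invoking Theorem \ref{thm_untwisted_tensor_product}.
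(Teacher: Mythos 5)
Your proposal is correct and is exactly the paper's argument: the authors' proof is the one-line statement ``This combines Lemma \ref{lem_hida_theory} and Theorem \ref{thm_untwisted_tensor_product},'' and you have simply spelled out the routine hypothesis-checking. (One minor point: Theorem \ref{thm_untwisted_tensor_product} is stated for all $0 < r < p$ and already disposes of $r=1$ as trivial in its proof, so your separate treatment of $r=1$ is harmless but unnecessary.)
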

\begin{proof}
This combines Lemma \ref{lem_hida_theory} and Theorem \ref{thm_untwisted_tensor_product}.
\end{proof}
 Let $K_\pi \subset \bC$ denote the field of definition of $\pi$, and let $B \geq 1$ be an integer such that $K_\pi$ is ramified only at primes $l < B$. Let $\widetilde{K}_\pi$ denote the Galois closure of $K_\pi$ in $\bC$, and let $\Gamma_\pi = \Gal(\widetilde{K}_\pi / \bQ)$. Let $\varphi = \iota \varphi_p \iota^{-1} \in \Aut(\bC)$. 
\begin{lemma}\label{lem_amenable_sigma}
We can find a prime number $t$, an isomorphism $\iota_t : \overline{\bQ}_t \to \bC$, and a RAESDC automorphic representation $\sigma'$ of $\GL_2(\bA_F)$ with the following properties:
\begin{enumerate}
\item $t > \max(5,4r+2, B)$, $t$ is unramified in $F$, and $\pi, \sigma, \sigma'$ are unramified at the $t$-adic places of $F$.
\item The representations $\overline{r}_{\pi, \iota_t} \otimes \Sym^{r-1} \overline{r}_{\sigma, \iota_t}$ and $\overline{r}_{{}^\varphi \pi, \iota_t} \otimes \Sym^{r-1} \overline{r}_{\sigma, \iota_t}$ are absolutely irreducible.  
\item There is an isomorphism $\overline{r}_{\sigma, \iota_t} \cong \overline{r}_{\sigma', \iota_t}$. Moreover, $\sigma'_{v_0}$ is an unramified twist of the Steinberg representation. 
\item The representation $r_{\pi, \iota_t} \otimes \Sym^{r-1} r_{\sigma', \iota_t}$ is automorphic. Moreover, $r_{{}^\varphi \pi, \iota_t} \otimes \Sym^{r-1} r_{\sigma, \iota_t}$ is automorphic if and only if $r_{{}^\varphi \pi, \iota_t} \otimes \Sym^{r-1} r_{\sigma', \iota_t}$ is automorphic. 
\item For each prime $l < B$ and for each isomorphism $\iota_l : \overline{\bQ}_l \to \bC$, $\overline{r}_{\sigma', \iota_l}(G_F)$ contains a conjugate of $\SL_2(\bF_{l^{a}})$ for some $l^a > \max(4r,5)$. Moreover, for any $\gamma \in \Gamma_\pi$ the intersection of the extensions of $F$ cut out by the representations $\operatorname{Proj} \overline{r}_{\sigma', \iota_l}$ and $\operatorname{Proj} \overline{r}_{{}^\gamma \pi, \iota_l}$ has degree at most 2 over $F$.
\end{enumerate} 
\end{lemma}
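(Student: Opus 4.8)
The plan is to leave $\sigma$ untouched and to produce $\sigma'$ as a lift of $\overline{r}_{\sigma,\iota_t}$ which acquires ramification at exactly one new place $v_1$, of a controlled \emph{tamely dihedral of order $t$} kind. Because the order of the tame character is $t$, it dies modulo $t$, so $\overline{r}_{\sigma',\iota_t}$ stays unramified at $v_1$ and the congruence with $\sigma$ survives; modulo any prime $l<B$ (so $l\neq t$) the dihedral ramification at $v_1$ is genuine and, via a single instance of the reciprocity argument of Proposition~\ref{prop_large_image_everywhere}, it forces $\overline{r}_{\sigma',\iota_l}$ to have large image and to be projectively inequivalent to every Galois conjugate of $\overline{r}_{\pi,\iota_l}$.

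First I would choose $t$. Of the infinitely many primes $t>\max(5,4r+2,B)$ that are unramified in $F$ and prime to the residue characteristics of the ramified places of $\pi$ and $\sigma$, all but finitely many also satisfy that $\overline{r}_{\sigma,\iota_t}(G_F)$ contains a conjugate of $\SL_2(\bF_t)$, by the usual large-image theorem for the non-CM Hilbert modular form $\sigma$ (non-CM because $\overline{r}_{\sigma,\iota}\cong\overline{r}_{\pi,\iota}$ has large image); I fix such a $t$ and any $\iota_t$, which gives (1) for $\pi$ and $\sigma$. For (2): $\overline{r}_{\pi,\iota_t}$ and $\overline{r}_{{}^\varphi\pi,\iota_t}=\overline{r}_{\pi,\varphi^{-1}\iota_t}$ have large image by hypothesis~(4) of Theorem~\ref{thm_automorphic_tensor_product}, and since $\pi,\sigma$ are unramified above $t$ their restrictions to a decomposition group at $v\mid t$ are crystalline and Fontaine--Laffaille, with Fontaine--Laffaille weights $\{0,1\}$ and $\{0,2\}$ respectively; as the spread of these weights is unchanged by a character twist or a coefficient field automorphism, $\operatorname{Proj}\overline{r}_{\pi,\iota_t}$ and $\operatorname{Proj}\overline{r}_{{}^\varphi\pi,\iota_t}$ are each inequivalent to $\operatorname{Proj}\overline{r}_{\sigma,\iota_t}$. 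A Goursat argument as in the proof of Lemma~\ref{lem_irreducibility_of_tensor_products} (now over $\overline{\bF}_t$, using $t>2r$) then gives (2).

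To build $\sigma'$ I would fix a prime $p_1>B$ with $p_1\neq t$, let $T$ be the finite set of places of $F$ lying over $2$, $3$, a prime $l<B$, or a place where $\sigma$ ramifies, and choose (Chebotarev, inside the compositum of $F(\zeta_t)$, the field cut out by $\overline{r}_{\sigma,\iota_t}$, and $F(T)$) a place $v_1\nmid 2tp_1$ of residue characteristic $p_1$, distinct from $v_0$, unramified for $\pi$ and $\sigma$, with $q_{v_1}\equiv-1\bmod t$, $\tr\overline{r}_{\sigma,\iota_t}(\Frob_{v_1})=0$, and $v_1$ split completely in $F(T)$; such $v_1$ exist because $\overline{r}_{\sigma,\iota_t}$ has large image. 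Then I would run the Khare--Wintenberger method for $\overline{r}_{\sigma,\iota_t}$ as in the proof of Lemma~\ref{lem_hida_theory}: with fixed determinant $\epsilon^{-2}\omega$, the crystalline Fontaine--Laffaille condition of Hodge type $\{0,2\}$ above $t$, the component determined by $r_{\sigma,\iota_t}|_{G_{F_v}}$ at each other place $v\neq v_1$ where $\sigma$ or $\overline{r}_{\sigma,\iota_t}$ is ramified (including $v\mid p$), and at $v_1$ the tamely-dihedral-of-order-$t$ inertial type, whose lifting ring is nonempty (using $\tr\overline{r}_{\sigma,\iota_t}(\Frob_{v_1})=0$ and $q_{v_1}\equiv-1\bmod t$; note $t\mid 1+q_{v_1}$ kills the transfer of the order-$t$ tame character, so the determinant, which is unramified at $v_1$, is compatible) and a domain of the expected dimension by \cite{shottonGLn}. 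The usual dimension count gives a deformation ring of Krull dimension $\geq 1$, finite over $\cO$ by potential automorphy (\cite[Theorem 10.2]{jack}); an $\overline{\bQ}_t$-point of it is automorphic by an automorphy lifting theorem over a CM base change anchored at $\sigma$ and permitting the ramified local behaviour at $v_1$ (\cite[Theorem 9.1]{jack}, applied as in Lemma~\ref{lem_hida_theory}), yielding a RAESDC $\sigma'$ with $\overline{r}_{\sigma',\iota_t}\cong\overline{r}_{\sigma,\iota_t}$, parallel Hodge--Tate weights $\{0,2\}$, unramified above $t$, and tamely dihedral of order $t$ at $v_1$. This is (3) and completes (1). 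For (5): if $l<B$ then $l\neq t,p_1$, so the dihedral structure survives mod $l$ and $\overline{r}_{\sigma',\iota_l}|_{G_{F_{v_1}}}$ is irreducible (its inertia image has odd order $t>2$), hence $\overline{r}_{\sigma',\iota_l}$ is irreducible with an element of order $t$ in its projective image. Since $t>5$ is prime it is not in $\{2,3,5\}$, so this element is not in $A_4$, $S_4$ or $A_5$, and $\overline{r}_{\sigma',\iota_l}$ is not dihedral --- otherwise the quadratic field $L/F$ from which it is induced is unramified at $v_1$ (the local structure there is induced from the unramified quadratic extension) and outside $T$, so $L\subseteq F(T)$, contradicting $v_1$ inert in $L$ while split in $F(T)$, exactly the reciprocity mechanism of Lemma~\ref{lem_quadratic_reciprocity}. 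Thus $\operatorname{Proj}\overline{r}_{\sigma',\iota_l}$ is conjugate to $\PSL_2(\bF_{l^a})$ or $\PGL_2(\bF_{l^a})$ with $l^a\geq t+1>\max(4r,5)$, so $\overline{r}_{\sigma',\iota_l}(G_F)$ contains a conjugate of $\SL_2(\bF_{l^a})$. For the last clause of (5): each ${}^\gamma\pi$ has the same ramification set as $\pi$, hence is unramified at $v_1$, so the field cut out by $\operatorname{Proj}\overline{r}_{{}^\gamma\pi,\iota_l}$ is unramified at $v_1$ while that cut out by $\operatorname{Proj}\overline{r}_{\sigma',\iota_l}$ is ramified there; as the latter has Galois group $\PSL_2(\bF_{l^a})$ or $\PGL_2(\bF_{l^a})$, simple modulo its centre of order $\leq 2$, the intersection of the two fields is either all of the former (impossible, by the ramification at $v_1$) or of degree $\leq 2$ over $F$.

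Finally, for (4) I would invoke a residually irreducible automorphy lifting theorem, the point being that $\overline{r}_{\sigma',\iota_t}\cong\overline{r}_{\sigma,\iota_t}$ makes the relevant residual tensor products coincide and that (2) makes them absolutely irreducible with large (hence adequate, by Lemma~\ref{lem_tensor_adequacy}) image. Transporting Theorem~\ref{thm_untwisted_tensor_product} to $\iota_t$, $r_{\pi,\iota_t}\otimes\Sym^{r-1}r_{\sigma,\iota_t}$ is automorphic, so $\overline{r}_{\pi,\iota_t}\otimes\Sym^{r-1}\overline{r}_{\sigma',\iota_t}$ is residually automorphic; as $\pi,\sigma'$ are unramified above $t$, the lift $r_{\pi,\iota_t}\otimes\Sym^{r-1}r_{\sigma',\iota_t}$ is crystalline Fontaine--Laffaille above $t$ with the same regular Hodge--Tate weights $\{0,1,\dots,2r-1\}$ as the automorphic representation, hence $\sim$ to it at places above $t$ and with the same residual restriction elsewhere, so after a soluble base change and twist to impose conjugate self-duality \cite[Theorem 9.1]{jack} (or \cite{BLGGT}) gives that $r_{\pi,\iota_t}\otimes\Sym^{r-1}r_{\sigma',\iota_t}$ is automorphic. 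Running the same argument with ${}^\varphi\pi$ (again unramified above $t$, with residual irreducibility from (2)) shows that automorphy of either $r_{{}^\varphi\pi,\iota_t}\otimes\Sym^{r-1}r_{\sigma,\iota_t}$ or $r_{{}^\varphi\pi,\iota_t}\otimes\Sym^{r-1}r_{\sigma',\iota_t}$ implies that of the other, since they share a residual representation. I expect the construction of $\sigma'$ to be the crux: meeting all the Chebotarev conditions on $v_1$ simultaneously, checking the tamely dihedral local deformation condition, and --- above all --- running the potential-automorphy and CM-descent automorphy lifting machinery to produce an honest RAESDC representation with the prescribed ramified local behaviour at $v_1$; everything else is bookkeeping with Chebotarev, the subgroup structure of $\PGL_2$, and standard automorphy lifting.
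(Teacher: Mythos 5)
Your proposal is correct and follows essentially the same route as the paper's proof: pick a large non\nobreakdash-exceptional prime $t$, separate $\operatorname{Proj} \overline{r}_{\pi,\iota_t}$ from $\operatorname{Proj} \overline{r}_{\sigma,\iota_t}$ via the Fontaine--Laffaille weights of the adjoints and conclude (2) by Goursat, construct $\sigma'$ by Khare--Wintenberger level-raising to a tamely dihedral-of-order-$t$ place $v_1$ split in $F(S)$, get (4) from a potentially diagonalisable automorphy lifting theorem, and get (5) from the odd order-$t$ inertia at $v_1$. The one place the paper is more careful than you is the simultaneous Chebotarev conditions on $v_1$ (projective image an involution, $q_{v_1}\equiv -1\bmod t$, split in $F(S)$): the paper arranges $t\equiv 1\bmod 4$ and $t$ split in $K_\sigma$ so that the projective image of complex conjugation lands in $\PSL_2(\bF_t)\subseteq \operatorname{Proj}\overline{r}_{\sigma,\iota_t}(G_{F(S)})$, whereas you assert compatibility from ``large image'' alone; your claim is still true (every involution in $\PGL_2(\bF_{t^a})$ has eigenvalue ratio $-1$ and both cosets of $\PSL_2$ contain involutions, and $F(\zeta_t)\cap F(T)=F$ since $t\notin T$), but it needs this short argument rather than a bare appeal, and note also that adequacy of the residual tensor product in (4) comes from absolute irreducibility together with $t>4r+2$, not from Lemma \ref{lem_tensor_adequacy}, which treats only the diagonal case.
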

\begin{proof}
Let $\lambda$ be a RAESDC automorphic representation of $\GL_2(\bA_F)$ without CM. According to \cite[Proposition 3.8]{MR2172950}, the set of prime numbers $l$ for which there exists an isomorphism $\iota_l : \overline{\bQ}_l \to \bC$ such that $\overline{r}_{\lambda, \iota_l}(G_F)$ does not contain a conjugate of $\SL_2(\bF_l)$ is finite. Call such primes $l$ exceptional for $\lambda$. We claim that if $t$ is prime satisfying the first point of the lemma and that is not exceptional for $\pi$, ${}^\varphi \pi$, or $\sigma$, and $\iota_t : \overline{\bQ}_t \to \bC$ is any isomorphism, then the second point of the lemma is satisfied. We just give the argument for $\overline{r}_{\pi, \iota_t} \otimes \Sym^{r-1} \overline{r}_{\sigma, \iota_t}$. 

Indeed, let $M_\pi$, $M_{{}^\varphi \pi}$, $M_\sigma$ be the extensions of $F$ cut out by the associated projective representations $\operatorname{Proj} \overline{r}_{\pi, \iota_t}$, $\operatorname{Proj} \overline{r}_{{}^{\varphi} \pi, \iota_t}$, and $\operatorname{Proj} \overline{r}_{\sigma, \iota_t}$. If $M_\pi \cap M_\sigma \neq F$ then either $M_\pi \cap M_\sigma / F$ is quadratic or $M_\pi = M_\sigma$ (as $\Gal(M_\pi / F) \cong \PSL_2(\bF_{t^{a}})$ or $\PGL_2(\bF_{t^{a}})$ for some $a \geq 1$ and $\PSL_2(\bF_{t^{a}})$ is simple and is the unique normal subgroup of $\PGL_2(\bF_{t^a})$, and likewise for $M_\sigma$).

In the quadratic case the restriction of each of $\overline{r}_{\pi, \iota_t}$ and $\Sym^{r-1} \overline{r}_{\sigma, \iota_t}$ to $G_{M_\pi \cap M_\sigma}$ is irreducible, so the tensor product is irreducible. In the case $M_\pi = M_\sigma$ we find that $\operatorname{Proj} \overline{r}_{\pi, \iota_t}(G_F)$ and $\operatorname{Proj} \overline{r}_{\sigma, \iota_t}(G_F)$ are actually conjugate (since there is a unique conjugacy class of subgroups of $\PGL_2(\overline{\bF}_p)$ isomorphic to $\Gal(M_\pi / F)$). The automorphisms of  $\operatorname{Proj} \overline{r}_{\pi, \iota_t}(G_F)$ are generated by $\PGL_2(\bF_{t^a})$ and the Frobenius automorphism (see \cite[Ch. IV, \S 6]{Die63}).  It follows that, writing $\Ad$ for the adjoint representation on the Lie algebra of $\mathfrak{sl}_2$, the representations $\Ad \overline{r}_{\pi, \iota_t}$, $\Ad \overline{r}_{\sigma, \iota_t}$ are isomorphic, up to a Frobenius twist in the coefficients. This is a contradiction. Indeed, $\Ad r_{\pi, \iota_t}$ is crystalline with Hodge--Tate weights $\{ -1,  0, 1 \}$ (with respect to any embedding $F \to \overline{\bQ}_t$) and $\Ad r_{\sigma, \iota_t}$ is crystalline with Hodge--Tate weights $\{-2, 0, 2 \}$. Since $t$ is unramified in $F$ and $t > 5$, Fontaine--Laffaille theory \cite{fl} implies  that the associated residual representations cannot be isomorphic. 

It follows that all isomorphisms $\iota_t : \overline{\bQ}_t \to \bC$, where $t$ is a sufficiently large prime satisfying the first point of the lemma, also satisfy the second point. We fix a choice of $t$ and $\iota_t$ with this property such that moreover $t \equiv 1 \text{ mod }4$ and $t$ splits in $K_\sigma$, so $\overline{r}_{\sigma, \iota_t}(G_F)$ can be assumed (after replacing by a conjugate) to contain $\SL_2(\bF_t)$ and be contained in $\GL_2(\bF_t)$. We can then find (by arguing e.g.~as in Lemma \ref{lem_hida_theory}) a RAESDC automorphic representation $\sigma'$ of $\GL_2(\bA_F)$ satisfying the following conditions:
\begin{itemize}
\item $\det r_{\sigma, \iota_t} = \det r_{\sigma', \iota_t}$ and $\overline{r}_{\sigma, \iota_t} \cong \overline{r}_{\sigma', \iota_t}$.
\item Let $S$ denote the set of places of $F$ of residue characteristic $l < B$ or at which $\sigma$ is ramified. Then there is a place $v_1 \nmid t$ of $F$, split in $F(S)$ ($F(S)$ as defined in \S \ref{subsec_preparations} above), of residue characteristic $p_1 > B$, at which $\sigma_{v_1}$ is unramified, but such that $\sigma'_{v_1}$ is tamely dihedral of order $t$. \item For each place $v \neq v_1$ of $F$, $r_{\sigma, \iota_t}|_{G_{F_v}} \sim r_{\sigma', \iota_t}|_{G_{F_v}}$.
\end{itemize}
We explain why there exists a place $v_1$ of $F$, split in $F(S)$, such that $q_{v_1} \equiv -1 \text{ mod }t$ and $\overline{r}_{\sigma, \iota_t}(\Frob_{v_1})$ has eigenvalues with ratio $-1$, which is a necessary condition for $\sigma'$ to exist.
We consider the extension of $F$ cut out by $\operatorname{Proj} \overline{r}_{\sigma, \iota_t}$. It has Galois group isomorphic either to $\PSL_2(\bF_t)$ or $\PGL_2(\bF_t)$. In either case the image of complex conjugation is in $\PSL_2(\bF_t)$ (because $t \equiv 1 \text{ mod 4}$) so we can find $g \in G_{F(S)}$ whose image under $\operatorname{Proj} \overline{r}_{\sigma, \iota_t}$ is conjugate to the image of complex conjugation. Denote by $L$ the extension of $F(S)$ cut out by $\operatorname{Proj} \overline{r}_{\sigma, \iota_t}$. Since $[\PGL_2(\bF_t) : \PSL_2(\bF_t)] = 2$, the cyclic quotient $\Gal(L\cap F(S)(\zeta_t)/F(S))$ of $\Gal(L/F(S))$ has order $\le 2$. Since $t$ is unramified in $F(S)$ and $t \equiv 1 \text{ mod }4$, there is an element $h \in \Gal(F(S)(\zeta_t)/L\cap F(S)(\zeta_t))$ with $\epsilon(h) \equiv -1 \text{ mod }t$. We can therefore additionally impose the condition that $\epsilon(g) \equiv -1 \text{ mod }t$. The existence of $v_1$ then follows by the Chebotarev density theorem.

We claim that this $\sigma'$ satisfies the remaining requirements of the lemma. The third point holds by construction. The fourth point holds because the representations $r_{\pi, \iota_t} \otimes \Sym^{r-1} r_{\sigma', \iota_t}$ and $r_{{}^\varphi \pi, \iota_t} \otimes \Sym^{r-1} r_{\sigma', \iota_t}$ are residually irreducible (hence adequate, as $t > 4r+2$) and potentially diagonalisable, by construction, so we can apply e.g.\ \cite[Theorem 4.2.1]{BLGGT} (using Lemma \ref{lem_sigma_with_tensor_product} to verify the residual automorphy of the first of these representations). To check the fifth point, take a prime $l < B$ and an isomorphism $\iota_l : \overline{\bQ}_l \to \bC$. The argument to check the size of the image of $\overline{r}_{\sigma', \iota_l}$ uses that $\sigma'_{v_1}$ is tamely dihedral of order $t$, as in the proof of Proposition \ref{prop_large_image_everywhere}: this implies that $\overline{r}_{\sigma', \iota_l}$ is irreducible, and that $\operatorname{Proj} \overline{r}_{\sigma', \iota_l}(G_F)$ contains an element of order $t > 5$, showing that this projective image either contains $\SL_2(\bF_{l^a})$ for some $l^a > t$ or is dihedral. In the dihedral case the inducing field is ramified only at places at which $\sigma$ is ramified or at places of residue characteristic $< B$ -- this implies that $v_1$ splits in the inducing field (since it splits in $F(S)$), a contradiction to the irreducibility of $\overline{r}_{\sigma', \iota_l}|_{G_{F_{v_1}}}$.

Finally we see that $\operatorname{Proj} \overline{r}_{\sigma', \iota_l}$ and $\operatorname{Proj} \overline{r}_{{}^\gamma \pi, \iota_l}$ cut out extensions which have at most quadratic intersection because the first extension is ramified at $v_1$ (and inertia has image of odd order $t$) while the second is unramified at $v_1$.
\end{proof}
We see that in order to prove the automorphy of $r_{{}^\varphi \pi, \iota} \otimes \Sym^{r-1} r_{\sigma, \iota}$, it is enough to show that $r_{{}^{\varphi} \pi, \iota} \otimes \Sym^{r-1} r_{\sigma', \iota}$ is automorphic. We will deduce this from the automorphy of $r_{\pi, \iota} \otimes \Sym^{r-1} r_{\sigma', \iota}$ using the following lemma.
\begin{lemma}\label{lem_FLT_along_an_inertia_group}
Let $\gamma \in \Gamma_\pi$. Let $l$ be a prime, let $w | l$ be a place of $\widetilde{K}_\pi$, and let $\delta \in I_{w / l} \subset \Gamma_\pi$. Suppose that $r_{{}^\gamma \pi, \iota} \otimes \Sym^{r-1} r_{\sigma', \iota}$ is automorphic. Then $r_{{}^{\delta \gamma} \pi, \iota} \otimes \Sym^{r-1} r_{\sigma' ,\iota}$ is automorphic.
\end{lemma}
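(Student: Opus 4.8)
The plan is to deduce the statement from the functoriality lifting theorem, Theorem~\ref{thm_FLT_for_TP}, applied with $l$ playing the role of $p$, with ${}^{\gamma}\pi$ playing the role of $\pi$, with ${}^{\delta\gamma}\pi$ playing the role of $\pi'$, and with $\Sym^{r-1}\sigma'$ playing the role of $\sigma$. Here $\Sym^{r-1}\sigma'$ exists as a RAESDC automorphic representation of $\GL_r(\bA_F)$ by $\mathrm{SP}_r$, using that $\sigma'$ is non-CM (which holds because $\overline{r}_{\sigma',\iota_l}$ has large image). The case $r=1$ is trivial (the tensor product is then just $r_{{}^{\delta\gamma}\pi,\iota}$, which is automorphic since ${}^{\delta\gamma}\pi$ is), so we assume $r\ge 2$.

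First I would dispose of the trivial case $\delta=1$, and then, assuming $\delta\neq 1$, note that $l$ ramifies in $\widetilde{K}_\pi$ and hence (as $\widetilde{K}_\pi/\bQ$ is the Galois closure of $K_\pi$) divides the discriminant of $K_\pi$, so $l<B$. Next I would choose an isomorphism $\iota_l:\overline{\bQ}_l\to\bC$ such that $\iota_l^{-1}$ induces the place $w$ of $\widetilde{K}_\pi$, and a lift $\widetilde{\delta}\in I_{\bQ_l}$ of $\delta$, i.e.\ with $\iota_l\widetilde{\delta}\iota_l^{-1}\in\Aut(\bC)$ restricting to $\delta$ on $\widetilde{K}_\pi$. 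Since the representations $r_{{}^{\gamma}\pi,\iota}\otimes\Sym^{r-1}r_{\sigma',\iota}$ and $r_{{}^{\gamma}\pi,\iota_l}\otimes\Sym^{r-1}r_{\sigma',\iota_l}$ (and likewise with ${}^{\delta\gamma}\pi$ in place of ${}^{\gamma}\pi$) are members of the same compatible system, it suffices to prove the automorphy of $r_{{}^{\delta\gamma}\pi,\iota_l}\otimes\Sym^{r-1}r_{\sigma',\iota_l}$, knowing that of $r_{{}^{\gamma}\pi,\iota_l}\otimes\Sym^{r-1}r_{\sigma',\iota_l}$ (which is hypothesis (7) of Theorem~\ref{thm_FLT_for_TP}). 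The key observation is that, since ${}^{\delta\gamma}\pi={}^{(\iota_l\widetilde{\delta}\iota_l^{-1})}({}^{\gamma}\pi)$, Lemma~\ref{lem_galois_conjugation}(2) gives an isomorphism $r_{{}^{\delta\gamma}\pi,\iota_l}\cong {}^{\widetilde{\delta}}r_{{}^{\gamma}\pi,\iota_l}$. Because $\widetilde{\delta}$ lies in $I_{\bQ_l}$ it acts trivially on the residue field, so $\overline{r}_{{}^{\delta\gamma}\pi,\iota_l}\cong\overline{r}_{{}^{\gamma}\pi,\iota_l}$; and because $\widetilde{\delta}$ fixes $\bQ_l$, conjugation by $\widetilde{\delta}$ preserves, at each place $v\mid l$, the properties of being ordinary and of being potentially crystalline, and preserves, at each place $v\nmid l$, the property of being a character twist of the Steinberg representation. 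This verifies hypotheses (2), (3) and (4) of Theorem~\ref{thm_FLT_for_TP}.

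It then remains to check the other hypotheses, and the standing assumptions on ${}^{\gamma}\pi$. That ${}^{\gamma}\pi$ and ${}^{\delta\gamma}\pi$ have weight $0$ and are non-CM follows from the same properties of $\pi$ (both preserved under $\Aut(\bC)$); the standing large-image assumption on $\overline{r}_{{}^{\gamma}\pi,\iota_l}$ follows from condition (4) of Theorem~\ref{thm_automorphic_tensor_product} via $\overline{r}_{{}^{\gamma}\pi,\iota_l}\cong\overline{r}_{\pi,\widetilde{\gamma}^{-1}\iota_l}$, which yields $l^a>\max(4r,5)>5$. Since $\sigma'$ has parallel Hodge--Tate weights $\{0,2\}$ (built into its construction, where it is put on a common deformation component with $\sigma$), $\Sym^{r-1}r_{\sigma',\iota_l}$ has Hodge--Tate weights $\{0,2,\dots,2r-2\}$ with respect to every embedding, giving hypothesis (6). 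Hypothesis (8) follows from Lemma~\ref{lem_irreducibility_of_tensor_products}, applied with $m=1$, $m'=r-1$: the four $\GL_2$-representations at issue are strongly irreducible because their residual representations have large image, and for an embedding $\tau$ inducing a place $v\mid l$ one has $\mathrm{HT}_\tau(r_{{}^{\gamma}\pi,\iota_l})=\{0,1\}$ (weight $0$) while $\mathrm{HT}_\tau(r_{\sigma',\iota_l})=\{0,2\}$, with $1\neq\pm 2$; the proof of that lemma in fact shows the Zariski closure of the image on $G_{F(\zeta_{l^\infty})}$ contains a copy of $\SL_2\times\SL_2$, which upgrades the resulting irreducibility to the strong irreducibility required.

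The step I expect to be the main obstacle is hypothesis (5): the existence of a perfect subgroup $H$ of $(\overline{r}_{{}^{\gamma}\pi,\iota_l}\otimes\Sym^{r-1}\overline{r}_{\sigma',\iota_l})^{ss}(G_F)$ containing an element with $2r$ distinct eigenvalues. Here I would use the large images of $\overline{r}_{{}^{\gamma}\pi,\iota_l}$ and $\overline{r}_{\sigma',\iota_l}$ together with the final assertion of Lemma~\ref{lem_amenable_sigma} (available since $l<B$, and phrased there for all ${}^{\gamma}\pi$) that the extensions of $F$ cut out by $\operatorname{Proj}\overline{r}_{\sigma',\iota_l}$ and $\operatorname{Proj}\overline{r}_{{}^{\gamma}\pi,\iota_l}$ have intersection of degree at most $2$ over $F$. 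A Goursat-type argument then shows that the image of $\overline{r}_{{}^{\gamma}\pi,\iota_l}\times\overline{r}_{\sigma',\iota_l}$, restricted to a suitable finite extension of $F$, contains a conjugate of $\SL_2(\bF_{l^a})\times\SL_2(\bF_{l^b})$; its image under the tensor map $(A,B)\mapsto A\otimes\Sym^{r-1}B$ is perfect, and since $l^a,l^b>4r$ it contains elements with $2r$ distinct eigenvalues (the $2r$ tensor eigenvalues are pairwise distinct once the eigenvalue ratios of $A$ and $B$ are chosen of order $>4r$ and suitably multiplicatively independent). With all hypotheses in place, Theorem~\ref{thm_FLT_for_TP} shows $r_{{}^{\delta\gamma}\pi,\iota_l}\otimes\Sym^{r-1}r_{\sigma',\iota_l}$ is automorphic, and transporting back along the compatible system completes the proof. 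Conceptually, the only new input beyond Theorem~\ref{thm_FLT_for_TP} is the remark that conjugating the coefficients by an inertia element at $l$ changes neither the residual representation nor any of the relevant local conditions — which is exactly what makes the induction on the number of inertia generators of $\gamma$ work.
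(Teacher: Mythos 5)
Your proposal is correct and takes essentially the same approach as the paper: reduce to the prime $l$ via the compatible system and Lemma~\ref{lem_the_meaning_of_existence}, use Lemma~\ref{lem_galois_conjugation} with an inertial lift $\widetilde\delta\in I_{\bQ_l}$ to get $r_{{}^{\delta\gamma}\pi,\iota_l}\cong{}^{\widetilde\delta}r_{{}^\gamma\pi,\iota_l}$ (hence an isomorphism of residual representations and agreement of local conditions), then invoke Theorem~\ref{thm_FLT_for_TP}, with hypothesis (5) verified via the large images of $\overline{r}_{{}^\gamma\pi,\iota_l}$, $\overline{r}_{\sigma',\iota_l}$ and the at-most-quadratic-intersection clause of Lemma~\ref{lem_amenable_sigma}, and hypothesis (8) via Lemma~\ref{lem_irreducibility_of_tensor_products}.
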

Before giving the proof of Lemma \ref{lem_FLT_along_an_inertia_group}, we show how to conclude the proof of Theorem \ref{thm_automorphic_tensor_product}. We first show that for any $\gamma \in \Gamma_\pi$, $r_{{}^\gamma \pi, \iota} \otimes \Sym^{r-1} r_{\sigma', \iota}$ is automorphic. Since $\bQ$ admits no everywhere unramified extensions, $\Gamma_\pi$ is generated by its inertia groups. More precisely, we can find prime numbers $l_1, \dots, l_s$, places $w_1, \dots, w_s$ of $\widetilde{K}_\pi$ lying above these primes, and elements $\delta_i \in I_{w_i / l_i}$ ($i = 1, \dots, s$) such that $\gamma = \delta_s \dots \delta_1$. We know by Lemma \ref{lem_amenable_sigma} that $r_{\pi, \iota} \otimes \Sym^{r-1} r_{\sigma', \iota}$ is automorphic. We now apply Lemma \ref{lem_FLT_along_an_inertia_group} $s$ times (at the $i$th step, taking $\delta = \delta_{i}$ and $\gamma =\delta_{i-1} \dots \delta_1$) to find that $r_{{}^\gamma \pi, \iota} \otimes \Sym^{r-1} r_{\sigma', \iota}$ is automorphic.

Next, we take $\gamma = \varphi$ and find, using Lemma \ref{lem_amenable_sigma}, that $r_{{}^\varphi \pi, \iota} \otimes \Sym^{r-1} r_{\sigma, \iota}$ is automorphic. Let $\Pi$ be the associated RAESDC automorphic representation of $\GL_{2r}(\A_F)$. To finish the proof, we need to check that $\Pi$ is $\iota$-ordinary. However, this is a simple check from Definition \ref{def_ordinary} using that both ${}^\varphi \pi$ and $\sigma$ are $\iota$-ordinary and of parallel weight (that is, with weight vector $\lambda = (\lambda_\tau)$ such that $\lambda_\tau$ is independent of $\tau$).  \qed
\vspace{\baselineskip}
\begin{proof}[Proof of Lemma \ref{lem_FLT_along_an_inertia_group}]
We
can assume that $\delta \neq 1$, in which case $l < B$. It suffices to verify that  $r_{{}^{\delta \gamma} \pi, \jmath} \otimes \Sym^{r-1} r_{\sigma' , \jmath}$ is automorphic for a single prime $q$ and isomorphism $\jmath : \overline{\bQ}_q \to \bC$ (cf.~Lemma \ref{lem_the_meaning_of_existence}). We choose $q = l$, $\jmath = \iota_l : \overline{\bQ}_l \to \bC$ to be an isomorphism such that $\iota_l^{-1}$ induces the place $w$ of $\widetilde{K}_\pi$. Let $\widetilde{\delta} \in I_{\bQ_l}$ be any element such that $(\iota_l \widetilde{\delta} \iota_l^{-1})|_{\widetilde{K}_\pi} = \delta$. By Lemma \ref{lem_galois_conjugation}, there is an isomorphism
\[ r_{{}^{\delta \gamma} \pi, \iota_l} \cong {}^{\widetilde{\delta}} r_{{}^\gamma \pi, \iota_l}. \]
Since $\widetilde{\delta} \in I_{\bQ_l}$, the two representations ${}^{\widetilde{\delta}} r_{{}^\gamma \pi, \iota_l}$, $r_{{}^\gamma \pi, \iota_l}$ have isomorphic residual representations. We are therefore in a position to apply Theorem \ref{thm_FLT_for_TP} with the aim of deducing the automorphy of ${}^{\widetilde{\delta}} r_{{}^\gamma \pi, \iota_l} \otimes \Sym^{r-1} r_{\sigma', \iota_l}$. We first specify the data to which we will apply the theorem. The automorphic representations of $\GL_2(\A_F)$ are taken to be ${}^\gamma \pi$ and ${}^{\delta \gamma} \pi$. The automorphic representation of $\GL_r(\A_F)$ is taken to be $\Sym^{r-1} \sigma'$ (which exists since we are assuming $\mathrm{SP}_r$ holds -- see the statement of Theorem \ref{thm_automorphic_tensor_product}). We work with respect to the isomorphism $\iota_l : \overline{\bQ}_l \to \bC$. We now check the hypotheses (1)--(9) of Theorem \ref{thm_FLT_for_TP} in turn:
\begin{enumerate}
\item ${}^\gamma \pi$ and ${}^{\delta \gamma} \pi$ are of weight 0 and are non-CM by construction.
\item There is an isomorphism $\overline{r}_{{}^{\delta \gamma} \pi, \iota_l} \cong \overline{r}_{{}^\gamma \pi, \iota_l}$ by construction.
\item $\overline{r}_{\pi, \iota_l}(G_F)$ contains a conjugate of $\SL_2(\bF_{l^a})$ for some $l^a > 5$ by hypothesis. 
\item We need to show that for each place $v | l$ of $F$, $r_{{}^\gamma \pi, \iota_l}|_{G_{F_v}}$ is ordinary (resp. potentially crystalline) if and only if $r_{{}^{\delta \gamma} \pi, \iota_l}|_{G_{F_v}} \cong {}^{\widetilde{\delta}} r_{{}^\gamma \pi, \iota_l}$ is ordinary (resp. potentially crystalline), and that if $l = 2$, then they are both potentially crystalline. If $l = p$ (in which case $l \geq 5)$, both $r_{{}^\gamma \pi, \iota_l}|_{G_{F_v}}$ and ${}^{\widetilde{\delta}} r_{{}^\gamma \pi, \iota_l}$ are ordinary and not potentially crystalline (cf. point (5) below), so we're done. If $l \neq p$ then they are both potentially crystalline (as $\pi_v$ is potentially unramified, a property which is invariant under Galois conjugation), so we just need to check that the property of a representation $\rho : G_{F_v} \to \GL_2(\overline{\bQ}_l)$ being ordinary of weight 0 is invariant under the action of $I_{\bQ_l}$ on the coefficients. This is an easy check.
\item For each place $v$ of $F$, the property of $\pi_v$ being a character twist of the Steinberg representation is invariant under Galois conjugation, so in particular ${}^\gamma \pi_v$ has this property if and only if ${}^{\delta \gamma} \pi_v$ does. 
\item We need to justify  the existence of a perfect subgroup $H  \leq (\overline{r}_{{}^\gamma \pi, \iota_l} \otimes \Sym^{r-1} \overline{r}_{\sigma', \iota_l})^{ss}(G_F)$ containing regular semisimple elements. By construction,  $\operatorname{Proj} \overline{r}_{{}^\gamma \pi, \iota_l}(G_F) = G_1$ is conjugate to $\PSL_2(\bF_{l^{a_1}})$ or $\PGL_2(\bF_{l^{a_1}})$ for some $l^{a_1} > \max(4r,5)$, and $\operatorname{Proj} \overline{r}_{ \sigma', \iota_l}(G_F) = G_2$ is conjugate to $\PSL_2(\bF_{l^{a_2}})$ or $\PGL_2(\bF_{l^{a_2}})$ for some $l^{a_2} > \max(4r,5)$, and the intersection of the extensions of $F$ cut out by these two projective representations is at most quadratic. In particular, the image of the product of these two projective representations contains (a conjugate of) $\PSL_2(\bF_{l^{a_1}}) \times \PSL_2(\bF_{l^{a_2}})$, and the image of $\overline{r}_{{}^\gamma \pi, \iota_l}\times \overline{r}_{ \sigma', \iota_l}$ contains a conjugate of $\SL_2(\bF_{l^{a_1}}) \times \SL_2(\bF_{l^{a_2}})$. We take $H$ to be the image of this conjugate under $(\overline{r}_{{}^\gamma \pi, \iota_l} \otimes \Sym^{r-1} \overline{r}_{\sigma', \iota_l})^{ss}$. It is a perfect group because $l^{a_1}, l^{a_2} > 5$. We need to show that we can find $(g_1, g_2) \in \SL_2(\bF_{l^{a_1}}) \times \SL_2(\bF_{l^{a_2}})$ such that $g_1 \otimes \Sym^{r-1} g_2$ is regular semisimple. We can choose $g_2 = \diag(x, x^{-1})$ for any $x \in \bF^\times_{l^{a_2}}$ of order greater than $2(r-1)$, and then $g_1 = \diag(y, y^{-1})$ for any $y \in \bF^\times_{l^{a_1}}$ avoiding the elements $\pm x^{i}$ ($i = r-1, r-2, \dots, 1-r$). This is possible since $l^{a_1} > 4r$.
\item The Hodge--Tate weights of $\Sym^{r-1} r_{\sigma' ,\iota}$ are $\{ 0, 2, \dots, 2r-2 \}$, with respect to any embedding $F \to \overline{\bQ}_l$, by construction.
\item The tensor product $r_{{}^\gamma \pi, \iota} \otimes \Sym^{r-1} r_{\sigma', \iota}$ is automorphic, by assumption, and strongly irreducible by Lemma \ref{lem_irreducibility_of_tensor_products} (note that $\sigma'$ is without CM, since $\sigma'_{v_0}$ is an unramified twist of the Steinberg representation). 
\item The tensor product $r_{{}^{\delta \gamma} \pi, \iota} \otimes \Sym^{r-1} r_{\sigma', \iota}$ is again irreducible, and even strongly irreducible, by Lemma \ref{lem_irreducibility_of_tensor_products}. 
\end{enumerate} 
Since all of the hypotheses are satisfied, we can therefore apply Theorem \ref{thm_FLT_for_TP} to deduce that $r_{{}^{\delta \gamma} \pi, \iota_l} \otimes \Sym^{r-1} r_{\sigma' ,\iota_l}$ is automorphic. This completes the proof of the lemma. 
\end{proof}

\subsection{Automorphy of a twisted tensor product -- second version}\label{subsect_aut_twisted_second_version}

We next state and prove the  variant of Theorem \ref{thm_automorphic_tensor_product} that will be useful for our main application, requiring as it does only local conditions on the automorphic representation $\pi$. This variant will also allow for a place $v\nmid p$ where $\pi_v$ is an unramified twist of Steinberg. A Steinberg place will be needed in the proof of Proposition \ref{prop_automorphy_in_favourable_case} when we apply the level raising and automorphy lifting theorems \cite[Proposition 7.4, Theorem 7.5]{Tho22a}.

\begin{theorem}\label{thm_big_image_implies_tensor_product}
 Let $F$ be a totally real field and let $p \geq 5$ be a prime. Let $M > p^{a_0(p)}$. Let $0 < r < p$. Let $\iota : \overline{\bQ}_p \to \bC$ be an isomorphism and let $\pi$ be a RAESDC automorphic representation of $\GL_2(\bA_F)$ satisfying the following conditions:
\begin{enumerate} 
\item $\det r_{\pi, \iota} = \epsilon^{-1}$.
\item $\pi$ is of weight 0 and for each place $v | p$ of $F$, $\pi_v$ is an unramified twist of the Steinberg representation. In particular, $\pi$ is $\iota$-ordinary.
\item There exists a place $v_0$ of $F$ such that $q_{v_0} \equiv -1 \text{ mod }p$ and $\pi_{v_0}$ is tamely dihedral of order $p$.
\item There exists a place $v_1$ of $F$ such that $q_{v_1} \equiv 1 \text{ mod }p$ and $\pi_{v_1}$ is an unramified twist of the Steinberg representation.
\item For each place $v \nmid p v_1$ of $F$, $\pi_v$ is potentially unramified.
\item There exist prime numbers $t_1, t_2 > M$ and places $w_1, w_2$ of $F$ of residue characteristics $p_1, p_2 > 3$ unramified in $F$ such that $\pi_{w_i}$ is tamely dihedral of order $t_i$ and $p_1, p_2, t_1, t_2$ are all distinct. 
\item Let $H / F$ denote the ray class field of modulus $(8) \cdot \{ v | \infty \}$, and let $S$ denote the set of places $v \neq w_1, w_2$ of $F$ such that $\pi_v$ is ramified, or such that $v$ is ramified over $\bQ$, or is of  residue characteristic $2$ or $3$. Then $w_1$ splits in $H \cdot F(S)$ and $w_2$ splits in $H \cdot F(S \cup \{ w_1 \})$.
\end{enumerate} 
Let $\varphi_p \in G_{\bQ_p}$ be an arithmetic Frobenius lift. Then $\overline{r}_{\pi, \iota}(G_F)$ contains a conjugate of $\SL_2(\bF_{p^a})$ for some $p^a > M$ and if $\mathrm{SP}_{r-1}$, $\mathrm{SP}_r$ and $\mathrm{SP}_{r+1}$ hold then there exists an $\iota$-ordinary RAESDC automorphic representation $\Pi$ of $\GL_{2r}(\bA_F)$ such that $\overline{r}_{\Pi, \iota} \cong {}^{\varphi_p} \overline{r}_{\pi, \iota} \otimes \Sym^{r-1} \overline{r}_{\pi, \iota}$.
\end{theorem}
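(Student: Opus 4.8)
The plan is to deduce both assertions from results already established: the big-image claim from Proposition~\ref{prop_large_image_everywhere}, and the automorphy of the twisted tensor product from Theorem~\ref{thm_automorphic_tensor_product} (more precisely from its proof, suitably adapted to allow the Steinberg place $v_1$).

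First I would prove that $\overline{r}_{\pi,\iota}(G_F)$ contains a conjugate of $\SL_2(\bF_{p^a})$ with $p^a > M$. This is an application of Proposition~\ref{prop_large_image_everywhere} with its parameter taken to be $\max(4r,M)$; this is legitimate since $\max(4r,M) > 5$ (as $M\geq 5$). The hypothesis that $\pi$ has weight $0$ is condition~(2); hypothesis~(1) of Proposition~\ref{prop_large_image_everywhere} is our condition~(6), with $w_1,w_2$ playing the roles of $v_1,v_2$; and hypothesis~(2) of that proposition follows from our condition~(7) via Lemma~\ref{lem_quadratic_reciprocity}. Indeed $w_1$ and $w_2$ both split in $H\cdot F(S)$ (for $w_2$ this is contained in its splitting in $H\cdot F(S\cup\{w_1\})$), neither lies in $S$, and both have odd residue characteristic $>3$; so Lemma~\ref{lem_quadratic_reciprocity} applies with this $S$ and shows that $w_1$ splits in $F(S\cup\{w_2\})$ if and only if $w_2$ splits in $F(S\cup\{w_1\})$, and the latter holds because $w_2$ splits in $H\cdot F(S\cup\{w_1\})$. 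Proposition~\ref{prop_large_image_everywhere} then yields, for \emph{every} prime $l$ and isomorphism $\iota_l:\overline{\bQ}_l\to\bC$, a conjugate of $\SL_2(\bF_{l^a})$ inside $\overline{r}_{\pi,\iota_l}(G_F)$ with $l^a > \max(4r,M)$. Taking $l=p$ and the given $\iota$ proves the first assertion, and (since $M\geq\max(5,p^{a_0(p)})$) this inequality also records $a\geq a_0(p)$.

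For the automorphy statement, assume $\mathrm{SP}_{r-1}$, $\mathrm{SP}_r$, $\mathrm{SP}_{r+1}$. I would verify that $\pi$ satisfies the hypotheses of Theorem~\ref{thm_automorphic_tensor_product}, up to one point needing a mild extension of the argument. Conditions (1)--(3) of Theorem~\ref{thm_automorphic_tensor_product} are our conditions (1)--(3); condition~(4) of Theorem~\ref{thm_automorphic_tensor_product} (big image for all primes, of the required size at $p$) is exactly the output of the previous paragraph, using $M\geq 5$ and $M\geq p^{a_0(p)}$. The only discrepancy is condition~(5): Theorem~\ref{thm_automorphic_tensor_product} asks that $\pi_v$ be potentially unramified at every finite $v\nmid p$, whereas here $\pi_{v_1}$ is an unramified twist of Steinberg. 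But the proof of Theorem~\ref{thm_automorphic_tensor_product}---which runs through Lemma~\ref{lem_hida_theory}, Theorem~\ref{thm_untwisted_tensor_product}, and the inductive argument via Theorem~\ref{thm_FLT_for_TP}---accommodates a single additional Steinberg place $v_1$ with $q_{v_1}\equiv 1\bmod p$ without essential change: in Lemma~\ref{lem_hida_theory} one additionally arranges $\sigma_{v_1}$ to be an unramified twist of Steinberg (choosing the Steinberg component of the local lifting ring at $v_1$, exactly as is done at $v_0$), and one weakens the local equivalence clause accordingly; in the proof of Theorem~\ref{thm_untwisted_tensor_product} one adds $v_1$ to the set of places where ramification is permitted in each auxiliary global deformation problem, imposing the Steinberg (special) local condition there. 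Since $q_{v_1}\equiv 1\bmod p$, after the soluble base changes made in that proof the residual representation is trivial at the places above $v_1$, so the relevant local ring is the Steinberg lifting ring studied in \cite{tay,jackreducible,shottonGLn}, a domain of the expected dimension; and a Steinberg place is, if anything, exactly what the residually-reducible automorphy lifting theorems of \cite{jackreducible,All20} are designed to exploit. Theorem~\ref{thm_FLT_for_TP} already allows Steinberg places (its hypothesis~(4) only constrains which places are Steinberg, consistently for $\pi$ and $\pi'$), so the remainder of the proof of Theorem~\ref{thm_automorphic_tensor_product} applies verbatim. Taking $\gamma$ to be the restriction to $\widetilde{K}_\pi$ of $\iota\varphi_p\iota^{-1}$, exactly as there, produces the $\iota$-ordinary RAESDC representation $\Pi$ of $\GL_{2r}(\bA_F)$ with $\overline{r}_{\Pi,\iota}\cong{}^{\varphi_p}\overline{r}_{\pi,\iota}\otimes\Sym^{r-1}\overline{r}_{\pi,\iota}$.

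The step I expect to require genuine care is the insertion of $v_1$ into the deformation-theoretic core of the proof of Theorem~\ref{thm_untwisted_tensor_product}: one must confirm that adding $v_1$ to the ramification set, with the Steinberg local condition, does not disturb the dimension counts in Propositions~\ref{prop:generic SF1} and~\ref{prop_graph}, the connectedness of the associated graph, or the finiteness over the Iwasawa algebra. All of this goes through because the Steinberg lifting ring at a place with $q_v\equiv 1\bmod p$ and trivial residual representation contributes in precisely the same way as the local conditions already imposed at the places above $v_0$ in that proof; the only new ingredient is bookkeeping. The reciprocity computation in the first paragraph, by contrast, is immediate once Lemma~\ref{lem_quadratic_reciprocity} is in hand.
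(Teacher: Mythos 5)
Your first paragraph (the big-image claim via Lemma \ref{lem_quadratic_reciprocity} and Proposition \ref{prop_large_image_everywhere}) matches the paper's argument exactly. The second part, however, diverges from the paper and contains a genuine gap. You propose to re-run the entire proof of Theorem \ref{thm_automorphic_tensor_product} -- including Lemma \ref{lem_hida_theory} and the deformation-theoretic core of Theorem \ref{thm_untwisted_tensor_product} -- with the extra Steinberg place $v_1$ added to the ramification set, ``imposing the Steinberg (special) local condition there.'' This cannot work as stated for the $\GL_{2r}$-valued deformation problems of \S\ref{sec:aut untwisted}: the representation whose automorphy one must establish there is $r_{\pi,\iota}\otimes\Sym^{r-1}r_{\sigma,\iota}$, and a tensor product of two representations is never locally Steinberg -- this is precisely the obstruction the paper flags at $v_0$ and the reason the local condition at $T_{F_1}$ is built from the inertial type of $s$ rather than from $R^{St}_v$. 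Imposing the Steinberg condition at places above $v_1$ would therefore exclude the very lift $s$ one is trying to capture. Fixing this would require designing a new local condition at $v_1$ for the tensor product, redoing the Breuil--M\'ezard-type comparison of Lemma \ref{lem_application_of_Breuil_Mezard} for that type, and re-checking the dimension and connectedness arguments; this is far from the ``bookkeeping'' you describe. (There is also a secondary issue: in Lemma \ref{lem_FLT_along_an_inertia_group} the hypothesis that $\pi$ is potentially unramified away from $p$ is used to guarantee potential crystallinity of the $2$-adic representations, which a Steinberg place of residue characteristic $2$ would violate.)

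The paper takes a much shorter route that you should compare with: it does not touch the proof of Theorem \ref{thm_automorphic_tensor_product} at all. Instead, using the Khare--Wintenberger method as in Lemma \ref{lem_hida_theory}, it produces a congruent representation $\pi'$ with $\overline{r}_{\pi',\iota}\cong\overline{r}_{\pi,\iota}$, agreeing with $\pi$ locally away from $v_1$, but whose component at $v_1$ is a tamely ramified principal series (possible because $q_{v_1}\equiv 1\bmod p$, via the mod-$\varpi$ congruence between the unipotent and tamely-ramified-type local lifting rings). Then $\pi'$ is potentially unramified at every $v\nmid p$, still satisfies the big-image hypotheses (it is ramified at the same places as $\pi$, so Proposition \ref{prop_large_image_everywhere} still applies), and Theorem \ref{thm_automorphic_tensor_product} applies to $\pi'$ verbatim. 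Since the conclusion only involves the residual representation, and $\overline{r}_{\pi',\iota}\cong\overline{r}_{\pi,\iota}$, this yields the theorem. You should adopt this level-lowering step in place of your proposed modification of \S\ref{sec:aut untwisted}.
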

\begin{proof}
We first remark that Lemma \ref{lem_quadratic_reciprocity} shows that $w_1$ splits in $H \cdot F(S \cup \{ w_2 \})$. The hypotheses of Proposition \ref{prop_large_image_everywhere} therefore apply to $\pi$ and we conclude that for any prime number $l$ and isomorphism $\iota_l : \overline{\bQ}_l \to \bC$, $\overline{r}_{\pi, \iota_l}(G_F)$ contains a conjugate of $\SL_2(\bF_{l^b})$ for some $l^b > M$. 
Employing the Khare--Wintenberger method as in Lemma \ref{lem_hida_theory}, we can find another RAESDC automorphic representation $\pi'$ of $\GL_2(\bA_F)$ satisfying the following conditions:
\begin{itemize}
\item $\det r_{\pi', \iota} = \epsilon^{-1}$ and $\pi'$ is of weight 0.
\item There is an isomorphism $\overline{r}_{\pi', \iota} \cong \overline{r}_{\pi, \iota}$.
\item For each place $v \nmid p v_1$ of $F$, $r_{\pi, \iota}|_{G_{F_v}} \sim r_{\pi', \iota}|_{G_{F_v}}$. For each place $v | p$ of $F$, $\pi'_v$ is an unramified twist of the Steinberg representation. 
\item There is an isomorphism $\rec_{F_{v_1}}(\pi'_{v_1}) \cong \chi_1 \oplus \chi_2$ for characters $\chi_1, \chi_2 : F_{v_1}^\times \to \bC^\times$ such that $(\chi_1 / \chi_2)|_{\cO_{F_{v_1}}^\times}$ has order $p$. In particular, $\pi'_{v_1}$ is potentially unramified. 
\end{itemize}
(As in the proof of Lemma \ref{lem_hida_theory}, we need to verify that the corresponding local lifting ring at the place $v_1$ is non-zero; the situation considered here is  simpler since we do not change the local behaviour at the $p$-adic places of $F$. This non-vanishing follows from the congruence modulo $\varpi$ between the unipotent and tamely-ramified-type lifting rings, cf. \cite[\S 3]{tay} or indeed \cite{shottonGLn}. We note that the same congruence is exploited in \cite{Ski01}.)

The hypotheses of Proposition \ref{prop_large_image_everywhere} still apply to $\pi'$ (because $\pi$ and $\pi'$ are ramified at the same sets of places of $F$). Therefore the representations $\overline{r}_{\pi', \iota_l}$ also have large image. Theorem \ref{thm_automorphic_tensor_product} applies to $\pi'$ (because it is potentially unramified at every place $v \nmid p$ of $F$; note too that we have $p^{a_0(p)} > \max(4r, 5)$, as $p \geq 5$ and $a_0(p) \geq 3$, so the condition $l^b > M$ implies that the condition on residual images is satisfied). We conclude the existence of an $\iota$-ordinary RAESDC automorphic representation $\Pi'$ of $\GL_{2r}(\bA_F)$ such that $\overline{r}_{\Pi', \iota} \cong {}^{\varphi_p} \overline{r}_{\pi, \iota} \otimes \Sym^{r-1} \overline{r}_{\pi, \iota}$. This completes the proof.
\end{proof}

\section{Getting to the end}\label{sec: the end}
In this section, we assemble the ingredients laid out so far and deduce Theorem \ref{introthm_symmetric_power_functoriality} from the introduction (which is stated again as Theorem \ref{thm_sym_power_functoriality_general_case} below). We explain the argument. By a standard reduction, it is enough to prove Conjecture \ref{conj_SP_n} (i.e.\ symmetric power power functoriality in degree $n$ for those cuspidal, automorphic representations $\pi$ of $\GL_2(\A_F)$, without CM, that are furthermore regular algebraic, and so have associated Galois representations). We argue by induction on $n$, and (assuming $n \geq 6$) write $n = p + r$ for a prime $p \geq 5$ and some integer $0 < r < p$. We then have, for any isomorphism $\iota : \overline{\bQ}_p \to \bC$, the congruence
\[ \Sym^{n-1} \overline{r}_{\pi, \iota} \overset{ss}{\cong} ((\det \overline{r}_{\pi, \iota})^r \otimes \Sym^{p-r-1} \overline{r}_{\pi, \iota}) \oplus ({}^{\varphi_p} \overline{r}_{\pi, \iota} \otimes \Sym^{r-1} \overline{r}_{\pi, \iota}). \]
By induction, and by Theorem \ref{thm_big_image_implies_tensor_product}, we have now established the residual automorphy of the summands on the right-hand side for representations $\pi$ satisfying suitable local conditions. Our tasks are therefore (i) to complete the proof of the automorphy of $\Sym^{n-1} r_{\pi, \iota}$ for such representations, using the automorphy lifting theorems established in \cite{jackreducible, All20} (in fact, we find it convenient to use a slightly more general statement, given in \cite{Tho22a}), and (ii) to show that the case of a general $\pi$ can be reduced to this one.  Both steps are a more-or-less routine variation on similar arguments given in \cite{Clo14}. 

We begin with the reduction to the case of $\pi$ satisfying favourable local conditions:
\begin{proposition}\label{prop_reduction_to_favourable_case}
Let $F$ be a totally real field and let $\pi$ be a non-CM RAESDC automorphic representation of $\GL_2(\bA_F)$. Let $p \geq 5$ be a prime, and let $0 < r < p$ be an integer. Then we can find a soluble totally real extension $E / F$, an isomorphism $\iota : \overline{\bQ}_p \to \bC$, and a RAESDC automorphic representation $\pi'$ of $\GL_2(\bA_E)$ satisfying the following conditions:
\begin{enumerate}  
\item $\Sym^{p+r-1} r_{\pi, \iota}$ is automorphic if and only if $\Sym^{p+r-1} r_{\pi', \iota}$ is. 
\item $\pi'$ is of weight 0 and for each place $v | p$ of $E$, $\pi'_v$ is an unramified twist of the Steinberg representation. In particular, $\pi'$ is $\iota$-ordinary.
\item $\det r_{\pi', \iota} = \epsilon^{-1}$.
\item There exists a place $v_0$ of $E$ such that $q_{v_0} \equiv -1 \text{ mod }p$ and $\pi'_{v_0}$ is tamely dihedral of order $p$.
\item There exists a place $v_1$ of $E$ such that $q_{v_1} \equiv 1 \text{ mod }p$ and $\pi'_{v_1}$ is an unramified twist of the Steinberg representation. 
\item For each place $v \nmid p v_1$ of $E$, $\pi'_v$ is potentially unramified.
\item There exist prime numbers $t_1, t_2 > p^{a_0(p)}$ and places $w_1, w_2$ of $E$ of residue characteristics $p_1, p_2 > 3$ unramified in $E$ such that $\pi'_{w_i}$ is tamely dihedral of order $t_i$ and $p_1, p_2, t_1, t_2$ are all distinct. 
\item Let $H / E$ denote the ray class field of modulus $(8) \cdot \{ v | \infty \}$, and let $S$ denote the set of places $v \neq w_1, w_2$ of $E$ such that $\pi'_v$ is ramified, such that $v$ is ramified over $\bQ$, or is of characteristic $2$ or $3$. Then $w_1$ splits in $H \cdot E(S)$ and $w_2$ splits in $H \cdot E(S \cup \{ w_1 \})$.
\end{enumerate} 
\end{proposition}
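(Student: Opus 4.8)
The argument follows \cite[\S 7]{Tho22a} closely. The plan is to produce $E$ and $\pi'$ through a finite chain of operations of three kinds, each of which preserves the truth value of condition (1): (a) twisting by an algebraic Hecke character; (b) replacing the current base field and representation by a soluble totally real base change; and (c) replacing the current representation by a congruent RAESDC automorphic representation. For (a) this is clear, since $\Sym^{p+r-1}$ of a twist of $r_{\pi,\iota}$ is a twist of $\Sym^{p+r-1} r_{\pi,\iota}$; for (b) it holds because $\pi$ is non-CM, so the relevant symmetric power representations are cuspidal whenever they are automorphic, and one applies cyclic descent and base change for $\GL_n$ \cite{MR1007299}; for (c) we use Lemma \ref{lem_the_meaning_of_existence}, according to which automorphy of the $(p+r-1)^\text{th}$ symmetric power of a compatible system of two-dimensional representations may be tested modulo any prime. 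Indeed, if $\ell$ is a large auxiliary prime, unramified in the current field and with the current representation unramified at $\ell$, for which the residual image $\overline{r}_{\pi,\iota_\ell}(G_F)$ is large, then $\Sym^{p+r-1} \overline{r}_{\pi,\iota_\ell}$ is irreducible and adequate and $\Sym^{p+r-1} r_{\pi,\iota_\ell}$ is potentially diagonalisable, so an automorphy lifting theorem such as \cite[Theorem 4.2.1]{BLGGT} shows that $\Sym^{p+r-1} r_{\pi,\iota_\ell}$ is automorphic if and only if $\Sym^{p+r-1} r_{\pi'',\iota_\ell}$ is, for any $\pi''$ congruent to $\pi$ modulo $\ell$ and itself unramified at $\ell$. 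Such auxiliary primes are plentiful: by \cite[Proposition 3.8]{MR2172950}, only finitely many $\ell$ admit an isomorphism $\iota_\ell$ with $\overline{r}_{\pi,\iota_\ell}(G_F)$ not containing a conjugate of $\SL_2(\bF_\ell)$.

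Granting this flexibility, we proceed as follows. First, after a soluble totally real base change chosen so that, for a fixed large auxiliary prime, the residual representation becomes trivial at all places above it while keeping its global image large (this is a purely local condition at the auxiliary prime, so it can be imposed by an extension disjoint from the field cut out by the residual representation), the residual representation becomes the reduction of an ordinary, parallel weight two RAESDC representation by Hida theory, exactly as in the proof of Lemma \ref{lem_hida_theory}; replacing $\pi$ by such a representation and twisting (using a Hecke character as in \cite[Lemma 4.1.4]{cht}), we arrange conditions (2) and (3). Next, working now with the fixed prime $p$ and isomorphism $\iota$, we successively install, each time by a level-raising congruence at a single new finite place realised by the Khare--Wintenberger method together with non-vanishing of the relevant local lifting ring (for unipotently ramified and tamely dihedral types this is the congruence between the unipotent and tamely ramified local lifting rings, cf.\ \cite[\S 3]{tay}, \cite{shottonGLn}): an unramified twist of Steinberg at each place above $p$ (condition (2)); a place $v_0$ with $q_{v_0} \equiv -1 \bmod p$ where $\pi$ is tamely dihedral of order $p$ (condition (4)); a place $v_1$ with $q_{v_1} \equiv 1 \bmod p$ where $\pi$ is an unramified twist of Steinberg (condition (5)); and two places $w_1, w_2$ of distinct residue characteristics $p_1, p_2 > 3$, unramified in the field, where $\pi$ is tamely dihedral of large prime orders $t_1, t_2 > \max(4r+2, p^{a_0(p)})$ (condition (7)). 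Interspersed with these we make further soluble totally real base changes, both to trivialise residual representations at the newly ramified places (as needed for the Khare--Wintenberger step) and to remove any ramification of $\pi$ away from $p$ and $v_1$ which is not potentially unramified, yielding condition (6). The places $v_0, v_1, w_1, w_2$ are produced by the Chebotarev density theorem.

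It remains to secure the splitting conditions (8). One chooses $w_1$ inside a Chebotarev set forcing it to split in $H \cdot E(S)$, as well as in whatever auxiliary field is needed to make $\pi$ tamely dihedral of order $t_1$ at $w_1$; then, after adjoining $w_1$ to $S$, one chooses $w_2$ similarly so as to split in $H \cdot E(S \cup \{w_1\})$. Taking the $t_i$ large makes these requirements compatible with $w_i$ being unramified for $\pi$ and with the eigenvalue ratio of $\overline{r}_{\pi,\iota}(\Frob_{w_i})$ having order $t_i$. Lemma \ref{lem_quadratic_reciprocity} is exactly what guarantees that these two splitting conditions are mutually consistent, and it is also what yields the symmetric form of the hypothesis used in the proof of Theorem \ref{thm_big_image_implies_tensor_product}.

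The main obstacle here is organisational rather than conceptual: one must order the base changes, the weight and level adjustments, and the choices of places so that no step undoes an earlier one. In particular, at each congruence step an auxiliary prime with large residual image must be available (legitimising the transfer of automorphy of $\Sym^{p+r-1}$); the base changes used to trivialise residual representations at newly ramified places must be taken disjoint from the fields cut out by those residual representations, lest the global images needed later collapse; and the splitting conditions (8) must survive all subsequent base changes, which is arranged by only ever passing to extensions in which the finitely many relevant places already split. Each individual ingredient is standard and already appears in \cite{Tho22a}; the content of the proof lies in assembling them in the correct order.
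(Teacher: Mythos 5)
Your framework---passing from $\pi$ to $\pi'$ through a chain of soluble base changes, twists and level-raising congruences, each preserving the truth value of condition (1)---agrees with the paper's proof, and your identification of the Khare--Wintenberger method, the lifting-ring congruences of \cite{tay,shottonGLn}, Lemma \ref{lem_quadratic_reciprocity} for condition (8), and the Chebotarev arguments is correct. However, your mechanism (c) for transferring automorphy of $\Sym^{p+r-1}$ across a congruence step is justified only in the case of a congruence modulo a large auxiliary prime $\ell$ for which $\Sym^{p+r-1}\overline{r}_{\pi,\iota_\ell}$ is irreducible, via \cite[Theorem 4.2.1]{BLGGT}. This does not cover the central steps of the construction, which are intrinsically mod-$p$ congruences. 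The clearest instance is condition (4): making $\pi'_{v_0}$ tamely dihedral of order $p$ is a level-raising congruence modulo $p$, and it cannot be recast as a congruence modulo any $\ell\neq p$, because the tame inertial character at $v_0$ has exact order $p$ and therefore remains nontrivial modulo $\ell$, whereas the representation you start from is unramified at $v_0$. The Hida-theoretic weight-lowering and the Steinberg-at-$p$ requirement in condition (2) raise the same issue. Modulo $p$ the representation $\Sym^{p+r-1}\overline{r}_{\pi,\iota}$ is reducible (as $p+r>p$), so the irreducibility/adequacy/potential-diagonalisability package you invoke is unavailable, and the argument you give does not transfer automorphy across these steps.

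The missing ingredient, which is what the paper actually uses, is the residually reducible automorphy lifting theorem \cite[Theorem 7.5]{Tho22a} (building on \cite{jackreducible,All20}), together with \cite[Proposition 7.7]{Tho22a} which packages the base change and Hida-theoretic preparation and already delivers conditions (a)--(f) of the paper's proof with the automorphy transfer built in. That lifting theorem is also why the auxiliary Steinberg place $v_1$ with $q_{v_1}\equiv 1\bmod p$ appears in the statement: it is a \emph{hypothesis} required by the residually reducible lifting theorem, not merely one more local condition to be arranged for downstream use. Once the mod-$p$ steps are handled this way and $\pi_1$ (tamely dihedral at $v_0$) is in hand, the level raisings at $w_1$ and $w_2$ are indeed congruences modulo the large primes $t_1,t_2>p^{a_0(p)}>p+r$, for which $\Sym^{p+r-1}$ is residually irreducible, and your mechanism (c) then applies exactly as you say. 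So your proposal is essentially correct for the second half of the construction but omits the residually reducible input that carries the first half.
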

\begin{proof}
Let $n = p+r$. We
 first claim that we can find an isomorphism $\iota : \overline{\bQ}_p \to \bC$, a soluble totally real extension $E / F$, and a RAESDC automorphic representation $\pi_0$ of $\GL_2(\bA_E)$, all satisfying the following conditions:
\begin{itemize}
\item[(a)] $\pi_0$ is of weight 0 and for each place $v | p$ of $E$, $\pi_{0, v}$ is an unramified twist of the Steinberg representation. In particular, $\pi_0$ is $\iota$-ordinary.
\item[(b)] $\det r_{\pi_0, \iota} = \epsilon^{-1}$.
\item[(c)] $\overline{r}_{\pi_0, \iota}(G_E)$ contains a conjugate of $\SL_2(\bF_{p^a})$ for some $a > a_0(p)$.
\item[(d)] There exists a place $v_1 \nmid p$ of $E$ such that $q_{v_1} \equiv 1 \text{ mod }p$ and $\pi_{0, v_1}$ is an unramified twist of the Steinberg representation.
\item[(e)] $\Sym^{p+r-1} r_{\pi, \iota}$ is automorphic if and only if $\Sym^{p + r - 1} r_{\pi_0, \iota}$ is automorphic.
\item[(f)] For each place $v \nmid p v_1$ of $E$, $\pi_{0, v}$ is potentially unramified.
\end{itemize} 
(In other words, $\pi_0$ satisfies conditions (1)-(3) and (5)-(6) from the statement of the Proposition.) To prove the claim, we follow closely the proofs of  \cite[Propositions 5.2, 5.3]{Clo14}. By \cite[Proposition 3.8]{MR2172950}, we can find a prime $q$ and an isomorphism $\iota_q : \overline{\bQ}_q \to \bC$ with the following properties:
\begin{itemize}
\item $q$ is unramified in $F$ and prime to the conductor of $\pi$.
\item For each embedding $\tau : F \to \overline{\bQ}_q$, the Hodge--Tate weights $\mathrm{HT}_\tau(r_{\pi, \iota_q})$ differ by at most $q-2$.
\item $q > 2n+2$. In particular, $q > p$.
\item $\overline{r}_{\pi, \iota_q}(G_F)$ contains a conjugate of $\SL_2(\bF_q)$. 
\end{itemize} 
In particular, $r_{\pi, \iota_q}$ is potentially diagonalisable, by \cite[Lemma 1.4.1]{BLGGT}. Let $E / F$ be a soluble totally real extension with the following properties: 
\begin{itemize}
\item $E$ is linearly disjoint over $F$ from the composite of $F(\zeta_q)$ and the fixed field $\overline{F}^{\ker \overline{r}_{\pi, \iota_q}}$. In particular, $\overline{r}_{\pi, \iota_q}(G_E)$ contains a conjugate of $\SL_2(\bF_q)$ and $[E(\zeta_q):E] = q-1$. 
\item For each place $v | p$ of $E$, $\overline{r}_{\pi, \iota_q}|_{G_{E_{ v}}}$ is trivial and $q_v \equiv 1 \text{ mod } q$.
\item For each place $v | q$ of $E$, $\overline{r}_{\pi, \iota_q}|_{G_{E_{v}}}$ is trivial and $\zeta_q \in E_{v}$. 
\item For each place $v \nmid pq$ of $E$, $\overline{r}_{\pi, \iota_q}|_{G_{E_{v}}}$ is unramified. 
\item There is an everywhere unramified character $\chi : G_{E} \to \overline{\bQ}_q^\times$ of finite order such that $\overline{\chi}^2 \det \overline{r}_{\pi, \iota_q} = \epsilon^{-1}$. (The obstruction to finding a square-root of the totally even Galois character $\epsilon\det\overline{r}_{\pi, \iota_q}$ is given by a two-torsion element of the Brauer group of $F$. The Brauer class becoming trivial on restriction to $E$ corresponds to purely local conditions on $E$ at the (finite) places where the class is non-zero. Extending $E$ further if necessary, the square-root can be made everywhere unramified.)
\end{itemize} 
By \cite[Theorem 6.1.9]{blggord}, we can find a cuspidal, regular algebraic automorphic representation $\pi_a$ of $\GL_2(\A_{E})$ and a place $v_1 \nmid pq$ of $E$ with the following properties: 
\begin{itemize}
\item $\pi_a$ has weight 0, and is unramified away from $p v_1$. 
\item $\det r_{\pi_a, \iota_q} = \epsilon^{-1}$. 
\item For each place $v | p$ of $E$, $\pi_{a, v}$ is an unramified twist of the Steinberg representation.
\item $q_{v_1} \equiv 1 \text{ mod }p$ and $\pi_{a, v_1}$ is an unramified twist of the Steinberg representation.
\item There is an isomorphism $\overline{r}_{\pi_a, \iota_q} \cong \overline{r}_{\pi, \iota_q}|_{G_{E}} \otimes \overline{\chi}$. 
\end{itemize}

We choose the place $v_1$ to split completely in the composite of $E(\zeta_p)$ and the extension of $E$ cut out by $(\overline{r}_{\pi, \iota_q}|_{G_{E}} \otimes \overline{\chi})$. This also implies that $q_{v_1} \equiv 1 \text{ mod }q$ (considering the determinant of $\overline{r}_{\pi, \iota_q}$), so the $q$-adic local lifting ring at $v_1$ has a Steinberg component. 

Now  choose a prime $t > \max(q,p^{a_0(p)})$ with the following properties: 
\begin{itemize}
\item $t$ is unramified in $E$ and prime to the conductor of $\pi$.
\item $t$ splits in $K_{\pi_a}(\sqrt{-1})$ (recall that $K_{\pi}$ denotes the field of definition of an automorphic representation $\pi$). 
\item There is an isomorphism $\iota_t : \overline{\bQ}_t \to \bC$ such that $\overline{r}_{\pi_a, \iota_t}(G_E)$ contains a conjugate of $\SL_2(\bF_t)$. 
\end{itemize} 
After conjugation, we can assume that $\overline{r}_{\pi_a, \iota_t}(G_E)$ is sandwiched between $\SL_2(\bF_t)$ and $\GL_2(\bF_t)$. Let $T$ denote the set of places of $E$ at which $\pi_a$ is ramified, and choose a place $v_a$ of $E$, prime to $p t v_1$ and split in $E(T)$ (the extension defined in \S\ref{subsec_preparations}), such that $q_{v_a} \equiv -1 \text{ mod }t$ and $\Proj \,\overline{r}_{\pi_a, \iota_t}(\Frob_{v_a})$ is conjugate to complex conjugation (this is possible for the same reasons as explained in the proof of Lemma \ref{lem_amenable_sigma} -- in particular, as $t \equiv 1 \text{ mod }4$, the image of complex conjugation under the projective representation lies in $\PSL_2(\bF_t)$). Raising the level at $v_a$ (e.g.~using the Khare--Wintenberger method as in the proof of Lemma \ref{lem_hida_theory}), we can then find another cuspidal, regular algebraic automorphic representation $\pi_0$ of $\GL_2(\A_{E})$ with the following properties:
\begin{itemize}
\item $\pi_0$ has weight 0.
\item $\det r_{\pi_0, \iota_t} = \epsilon^{-1}$. 
\item For each place $v | p$ of $E$, $\pi_{0, v}$ is an unramified twist of the Steinberg representation.
\item $\pi_{0, v_1}$ is an unramified twist of the Steinberg representation.
\item $\pi_{0, v_a}$ is tamely dihedral of order $t$.
\item For each place $v \nmid pv_1v_a$ of $E$, $\pi_{0, v}$ is unramified.
\item There is an isomorphism $\overline{r}_{\pi_0, \iota_t} \cong \overline{r}_{\pi_a, \iota_t}$. 
\end{itemize}
In particular, repeating the argument of Lemma \ref{lem_amenable_sigma} (cf. \cite[Proposition 5.3]{Clo14}) shows that $\overline{r}_{\pi_0, \iota}(G_E)$ contains a conjugate of $\SL_2(\bF_{p^a})$ for some $a > a_0(p)$. This pair $(E, \pi_0)$ has all the properties (a) -- (f) claimed above, by construction, except possibly (e). We justify this now. In fact, there is an isomorphism of residual representations $\overline{\chi}^{p+r-1} \otimes \Sym^{p+r-1} \overline{r}_{\pi, \iota_q}|_{G_E} \cong \Sym^{p+r-1} \overline{r}_{\pi_a, \iota_q}$, so we see that $\Sym^{p+r-1} r_{\pi, \iota}$ is automorphic if and only if $\Sym^{p+r-1} r_{\pi_a, \iota_q}$ is automorphic using soluble base change and \cite[Theorem 4.2.1]{BLGGT}, noting that $r_{\pi_a, \iota_q}$ is also potentially diagonalisable by \cite[Lemma 4.4.1]{geekisin}. There is also an isomorphism of residual representations $\Sym^{p+r-1} \overline{r}_{\pi_a, \iota_t} \cong \Sym^{p+r-1} \overline{r}_{\pi_0, \iota_t}$, so another application of \cite[Theorem 4.2.1]{BLGGT} shows that $\Sym^{p+r-1} r_{\pi_a, \iota_q}$ is automorphic if and only if $\Sym^{p+r-1} r_{\pi_0, \iota_t}$ is automorphic. We note also that $q, t > 2n+2$, $\rbar_{\pi,\iota_q}(G_{E(\zeta_q)})$ contains a conjugate of $\SL_2(\bF_q)$, and $\overline{r}_{\pi_a, \iota_t}(G_{E(\zeta_t)})$ contains a conjugate of $\SL_2(\bF_t)$. This implies the irreducibility hypothesis in \cite[Theorem 4.2.1]{BLGGT} holds in both cases where we apply this automorphy lifting theorem. Thus we have completed the proof of the claim. 

Our next step is to construct an automorphic representation $\pi_1$ satisfying conditions (1)-(6) in the statement of the Proposition. Raising the level modulo $p$ at a place $v_0$ of $E$ such that $\overline{r}_{\pi_0, \iota}(\Frob_{v_0}) = \overline{r}_{\pi_0, \iota}(c)$, we can find another RAESDC automorphic representation $\pi_1$ of $\GL_2(\bA_E)$ such that $\overline{r}_{\pi_0, \iota} \cong \overline{r}_{\pi_1, \iota}$, satisfying (a)--(d) and (f) above, and such that $\pi_1$ is tamely dihedral of order $p$ at $v_0$ (note that $\det \overline{r}_{\pi_0, \iota} = \epsilon^{-1}$, so our choice of $v_0$ ensures that $q_{v_0} \equiv -1 \text{ mod }p$). We can  apply \cite[Theorem 7.5]{Tho22a}, checking the hypotheses on the residual representation as in the proof of  \cite[Theorem 7.6]{Tho22a}, to conclude that (e) also holds with $\pi_0$ replaced by $\pi_1$. 

Finally, we need to pass through two further congruences to achieve the remaining conditions (7)-(8) in the Proposition. Let $S$ denote the union of the set of places of $E$ at which $\pi_1$ is ramified with the set of places of $E$ which are ramified over $\bQ$ or which have residue characteristic dividing $6$. Choose a prime $t_1 > \max(2n+2,p^{a_0(p)})$, prime to the conductor of $\pi_1$, split in $K_{\pi_1}(\sqrt{-1})$ and unramified in $E$, and an isomorphism $\iota_{t_1} : \overline{\bQ}_{t_1} \to \bC$ such that $\overline{r}_{\pi_1, \iota_{t_1}}(G_E)$ is conjugate to a subgroup sandwiched between $\SL_2(\bF_{t_1})$ and $\GL_2(\bF_{t_1})$.
As in the proof of Lemma \ref{lem_amenable_sigma}, choose a place $w_1$ of $E$, prime to $t_1$ and every element of $S$, which splits in $H \cdot E(S)$ ($H$ is the ray class field appearing in condition (8)), and such that $q_{w_1} \equiv -1 \text{ mod }t_1$ and $\operatorname{Proj} \overline{r}_{\pi_1, \iota_{t_1}}(\Frob_{w_1}) = \operatorname{Proj} \overline{r}_{\pi_1, \iota_{t_1}}(c)$. 
Raising the level modulo $t_1$ at $w_1$ and applying \cite[Theorem 4.2.1]{BLGGT}, we can find a RAESDC automorphic representation $\pi_2$ of $\GL_2(\bA_E)$ satisfying the following conditions:
\begin{itemize}
\item There is an isomorphism $\overline{r}_{\pi_2, \iota_{t_1}} \cong \overline{r}_{\pi_1, \iota_{t_1}}$.
\item $\pi_2$ is of weight 0 and for each place $v | p$ of $E$, $\pi_{2, v}$ is an unramified twist of the Steinberg representation. In particular, $\pi_2$ is $\iota$-ordinary.
\item $\det r_{\pi_2, \iota} = \epsilon^{-1}$.
\item  $\pi_{2, v_1}$ is an unramified twist of the Steinberg representation. $\pi_2$ is tamely dihedral of order $p$ at $v_0$ and of order $t_1$ at $w_1$.
\item $\Sym^{p+r-1} r_{\pi, \iota}$ is automorphic if and only if $\Sym^{p + r - 1} r_{\pi_2, \iota}$ is automorphic.
\item For each place $v \nmid p v_1$ of $E$, $\pi_{2, v}$ is potentially unramified; for each place $v \not\in S \cup \{ w_1 \}$, $\pi_{2, v}$ is unramified.
\end{itemize} 
Let $p_1$ denote the residue characteristic of $w_1$. Now choose a prime $t_2 > \max(p^{a_0(p)}, p_1, t_1)$, prime to the conductor of $\pi_2$, split in $K_{\pi_2}(\sqrt{-1})$ and unramified in $E$, and an isomorphism $\iota_{t_2} : \overline{\bQ}_{t_2} \to \bC$ such that $\overline{r}_{\pi_2, \iota_{t_2}}$ is conjugate to a subgroup sandwiched between $\SL_2(\bF_{t_2})$ and $\GL_2(\bF_{t_2})$. Choose a place $w_2$ of $E$, prime to $p_1 t_1 t_2$ and every element of $S$, which splits in $H \cdot E(S \cup \{ w_1 \})$, and such that $q_{w_2} \equiv -1 
\text{ mod }t_2$ and $\operatorname{Proj} \overline{r}_{\pi_2, \iota_{t_2}}(\Frob_{w_2}) = \operatorname{Proj} \overline{r}_{\pi_2, \iota_{t_2}}(c)$. Raising the level modulo $t_2$ at $w_2$, we can find a RAESDC automorphic representation $\pi_3$ of $\GL_2(\bA_E)$ satisfying the following conditions:
\begin{itemize}
\item There is an isomorphism $\overline{r}_{\pi_3, \iota_{t_2}} \cong \overline{r}_{\pi_2, \iota_{t_2}}$.
\item $\pi_3$ is of weight 0 and for each place $v | p$ of $E$, $\pi_{3, v}$ is an unramified twist of the Steinberg representation. In particular, $\pi_3$ is $\iota$-ordinary.
\item $\det r_{\pi_3, \iota} = \epsilon^{-1}$.
\item  $\pi_{3, v_1}$ is an unramified twist of the Steinberg representation. $\pi_3$ is tamely dihedral of order $p$ at $v_0$, of order $t_1$ at $w_1$, and of order $t_2$ at $w_2$.
\item $\Sym^{p+r-1} r_{\pi, \iota}$ is automorphic if and only if $\Sym^{p + r - 1} r_{\pi_3, \iota}$ is automorphic.
\item For each place $v \nmid p v_1$ of $E$, $\pi_{3, v}$ is potentially unramified; for each place $v \not\in S \cup \{ w_1, w_2 \}$, $\pi_{3, v}$ is unramified. 
\end{itemize} 
Let $p_2$ be the residue characteristic of $w_2$. Then the primes $p_1, p_2, t_1, t_2$ are distinct, by construction.  The proof is complete on taking $\pi' = \pi_3$.
\end{proof}
\begin{cor}\label{cor_reduction_to_favourable_case}
Let $F$ be a totally real field and let $\pi$ be a non-CM RAESDC automorphic representation of $\GL_2(\bA_F)$. Let $p \geq 5$ be a prime, and let $0 < r < p$ be an integer. Suppose that $\mathrm{SP}_{r-1}$, $\mathrm{SP}_r$, and $\mathrm{SP}_{r+1}$ hold. Then we can find a soluble totally real extension $E / F$, an isomorphism $\iota : \overline{\bQ}_p \to \bC$, and a RAESDC automorphic representation $\pi'$ of $\GL_2(\bA_E)$ satisfying the following conditions:
\begin{enumerate} 
\item $\Sym^{p+r-1} r_{\pi, \iota}$ is automorphic if and only if $\Sym^{p+r-1} r_{\pi', \iota}$ is. 
\item $\pi'$ is of weight 0 and for each place $v | p$ of $F$, $\pi'_v$ is an unramified twist of the Steinberg representation. In particular, $\pi'$ is $\iota$-ordinary.
\item $\overline{r}_{\pi', \iota}(G_E)$ contains a conjugate of $\SL_2(\bF_{p^a})$ for some $a > a_0(p)$.
\item There exists a place $v_1 \nmid p$ such that $\pi'_{v_1}$ is an unramified twist of the Steinberg representation.
\item There exists an $\iota$-ordinary RAESDC automorphic representation $\Pi$ of $\GL_{2r}(\bA_E)$ such that $\overline{r}_{\Pi, \iota} \cong {}^{\varphi_p} \overline{r}_{\pi', \iota} \otimes \Sym^{r-1} \overline{r}_{\pi', \iota}$. 
\end{enumerate}
\end{cor}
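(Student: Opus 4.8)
The plan is to obtain the corollary essentially for free by combining the two results just established: Proposition~\ref{prop_reduction_to_favourable_case} provides a favourable $\pi'$, and Theorem~\ref{thm_big_image_implies_tensor_product} then supplies the large image and the twisted tensor product lift.

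First I would apply Proposition~\ref{prop_reduction_to_favourable_case} to $F$, $\pi$, $p$, and $r$. This yields a soluble totally real extension $E/F$, an isomorphism $\iota : \overline{\bQ}_p \to \bC$, and a RAESDC automorphic representation $\pi'$ of $\GL_2(\bA_E)$ satisfying conditions (1)--(9) of that proposition. Conditions (1), (2), and (4) of the corollary are then immediate: they are respectively conditions (1), (2), and (5) of the proposition (note $q_{v_1} \equiv 1 \bmod p$ and $p \ge 5$ force $v_1 \nmid p$).

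Next I would verify that $(E, p, r, \iota, \pi')$, together with $M := \max(5, p^{a_0(p)})$, satisfies the hypotheses of Theorem~\ref{thm_big_image_implies_tensor_product}. Hypotheses (1)--(5) of that theorem are exactly conditions (3), (2), (4), (5), and (6) of Proposition~\ref{prop_reduction_to_favourable_case}; hypothesis (6) follows from condition (7) of the proposition, since $\max(4r+2, p^{a_0(p)}) \ge \max(4r, M)$ (here $p^{a_0(p)} \ge 5$ as $a_0(p) \ge 3$ and $p \ge 5$); and hypothesis (7) is condition (9) of the proposition, the auxiliary set $S$ being defined identically in both statements. Granting the assumed cases $\mathrm{SP}_{r-1}$, $\mathrm{SP}_r$, and $\mathrm{SP}_{r+1}$, Theorem~\ref{thm_big_image_implies_tensor_product} then produces the remaining two assertions: $\overline{r}_{\pi', \iota}(G_E)$ contains a conjugate of $\SL_2(\bF_{p^a})$ for some $p^a > M \ge p^{a_0(p)}$, hence $a \ge a_0(p)$, which is condition (3); and there is an $\iota$-ordinary RAESDC automorphic representation $\Pi$ of $\GL_{2r}(\bA_E)$ with $\overline{r}_{\Pi, \iota} \cong {}^{\varphi_p} \overline{r}_{\pi', \iota} \otimes \Sym^{r-1} \overline{r}_{\pi', \iota}$, which is condition (5). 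This completes the proof.

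The only thing demanding any care is the numerical bookkeeping: one has to keep track of the various thresholds ($4r$ versus $4r+2$, and $M$ versus $p^{a_0(p)}$) to be sure that the output of Proposition~\ref{prop_reduction_to_favourable_case} matches the hypotheses required by Theorem~\ref{thm_big_image_implies_tensor_product}. All of the serious work --- the soluble descent, the repeated level-raising, the Khare--Wintenberger argument, and the functoriality lifting theorem for tensor products --- has already been carried out in those two results, so no new automorphy input is needed beyond the three assumed cases of $\mathrm{SP}$.
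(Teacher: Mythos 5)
Your proposal is correct and is exactly the paper's argument: the paper's proof simply takes $\pi'$ from Proposition \ref{prop_reduction_to_favourable_case} and observes that it satisfies the hypotheses of Theorem \ref{thm_big_image_implies_tensor_product}, which you have spelled out condition by condition, with the numerical thresholds checked correctly. The only blemish is a harmless miscount (the proposition has eight conditions, not nine, so the splitting condition you cite as ``condition (9)'' is condition (8)).
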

\begin{proof}
Let $\pi'$ be as in Proposition \ref{prop_reduction_to_favourable_case}. We see that $\pi'$ satisfies the hypotheses of Theorem \ref{thm_big_image_implies_tensor_product}, and the Corollary follows immediately from this.
\end{proof}
\begin{proposition}\label{prop_automorphy_in_favourable_case}
Let $p \geq 5$ be a prime, let $0 < r < p$ be an integer, and let $F$ be a totally real field. Let $\pi$ be a RAESDC automorphic representation of $\GL_2(\bA_F)$ satisfying the following conditions:
\begin{enumerate} 
\item There exists an isomorphism $\iota : \overline{\bQ}_p \to \bC$ such that $\pi$ is $\iota$-ordinary and $\overline{r}_{\pi, \iota}(G_F)$ contains a conjugate of $\SL_2(\bF_{p^a})$ for some $a > a_0(p)$.
\item $\pi$ has weight 0.
\item There exists a place $v_1 \nmid p$ such that $\pi_{v_1}$ is an unramified twist of the Steinberg representation.
\item There exists an $\iota$-ordinary RAESDC automorphic representation $\pi_1$ of $\GL_{p-r}(\bA_F)$ such that $\overline{r}_{\pi_1, \iota} \cong \Sym^{p-r-1} \overline{r}_{\pi, \iota}$.
\item There exists an $\iota$-ordinary RAESDC automorphic representation $\pi_2$ of $\GL_{2r}(\bA_F)$ such that $\overline{r}_{\pi_2, \iota} \cong {}^{\varphi_p} \overline{r}_{\pi, \iota} \otimes \Sym^{r-1} \overline{r}_{\pi, \iota}$.
\end{enumerate}
Then the $(p+r-1)^\text{th}$ symmetric power of $\pi$ exists: there is an $\iota$-ordinary RAESDC automorphic representation $\Pi$ of $\GL_{p+r}(\bA_F)$ such that $r_{\Pi, \iota} \cong \Sym^{p+r-1} r_{\pi, \iota}$. 
\end{proposition}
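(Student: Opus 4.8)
The plan is to prove that $\rho := \Sym^{p+r-1} r_{\pi, \iota}$ is automorphic, following the strategy of \cite{Clo14} as executed in \cite[\S 7]{Tho22a}; the statement then follows from local--global compatibility at finite places and the computation of the archimedean $L$-parameters from Hodge--Tate weights, exactly as in the proof of Lemma \ref{lem_the_meaning_of_existence}.

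The key point is that $\overline{\rho} = \Sym^{p+r-1} \overline{r}_{\pi, \iota}$ is residually reducible: as recalled in the introduction, its semisimplification is $\overline{\rho}_1 \oplus \overline{\rho}_2$, where $\overline{\rho}_1 = (\det \overline{r}_{\pi, \iota})^r \otimes \Sym^{p-r-1} \overline{r}_{\pi, \iota}$ has dimension $p-r$ and $\overline{\rho}_2 = {}^{\varphi_p} \overline{r}_{\pi, \iota} \otimes \Sym^{r-1} \overline{r}_{\pi, \iota}$ has dimension $2r$. By hypothesis (5) the summand $\overline{\rho}_2$ is automorphic, and twisting $\pi_1$ by the $r$-th power of the Hecke character whose associated $p$-adic Galois character is $\det r_{\pi, \iota}$, hypothesis (4) shows that $\overline{\rho}_1$ is automorphic as well; both are $\iota$-ordinary. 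Since $\overline{r}_{\pi, \iota}(G_F)$ contains a conjugate of $\SL_2(\bF_{p^a})$ with $a \geq a_0(p)$, Lemma \ref{lem_tensor_adequacy} shows that the image of $\overline{\rho}$ --- and of its restriction to any soluble extension --- is adequate, supplying the ``big image'' hypothesis of the residually reducible automorphy lifting theorems of \cite{jackreducible, All20}.

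The main step is to construct an automorphic representation of $\GL_{p+r}$ (working, after a soluble base change to a CM field $K/F$ and passing to RACSDC twists, or after endoscopic transfer to a definite unitary group $U(p+r)$ split by $K$) whose attached residual Galois representation is $\overline{\rho}|_{G_K}$ and which is an unramified twist of the Steinberg representation at a suitable place. First one forms the isobaric sum $\Pi_1 \boxplus \Pi_2$ of the representations witnessing the automorphy of $\overline{\rho}_1$ and $\overline{\rho}_2$ --- equivalently, an Eisenstein series on $\GL_{p+r}$, or an endoscopic lift from $U(2r) \times U(p-r)$ to $U(p+r)$: this has residual representation $\overline{\rho}$ but is neither cuspidal nor square-integrable. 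One then invokes the level-raising results of \cite{Tho22a} (which prove all the needed cases of $\mathrm{LR}_{p+r}$) to produce a congruent cuspidal (resp.\ stable) representation $\Pi_0$ which is an unramified twist of Steinberg at the Steinberg place $v_1$ of $\pi$ (or at a newly chosen auxiliary place of the same type, with residue cardinality $\equiv 1 \bmod p$ and $\overline{r}_{\pi, \iota}$ trivial, as demanded by the level-raising and lifting hypotheses). Because $\pi$, and hence $\Sym^{p+r-1} r_{\pi, \iota}$, is $\iota$-ordinary at $p$, and because $\Sym^{p+r-1}$ of an unramified twist of $\mathrm{St}_2$ is an unramified twist of the Steinberg representation of $\GL_{p+r}$, the representation $\rho$ itself satisfies the ordinary ($p$-adic) and Steinberg-type ($v_1$) local conditions attached to the relevant deformation problem, so that the automorphy lifting machinery can be applied to $\rho|_{G_K}$.

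With $\Pi_0$ in hand, the residually reducible automorphy lifting theorem (in the form used in \cite[\S 7]{Tho22a}) shows that $\rho|_{G_K}$ is automorphic, associated to an $\iota$-ordinary RACSDC automorphic representation of $\GL_{p+r}(\bA_K)$; soluble descent then produces an $\iota$-ordinary RAESDC automorphic representation $\Pi$ of $\GL_{p+r}(\bA_F)$ with $r_{\Pi, \iota} \cong \Sym^{p+r-1} r_{\pi, \iota}$, which is the desired conclusion. I expect the middle step to be the main obstacle: a symmetric (or tensor) power can never itself be locally Steinberg, so the congruent cuspidal form with a square-integrable local component must be found by level-raising, and the auxiliary ramification (the Steinberg place, the CM extension $K$, the finite set of places where $\overline{r}_{\pi, \iota}$ is rendered trivial, and the behaviour at $p$) has to be arranged carefully enough that both the Eisenstein/endoscopic source and the resulting cusp form fit the precise hypotheses of the residually reducible automorphy lifting theorems while retaining essential self-duality and $\iota$-ordinarity. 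By comparison the bookkeeping with the cyclotomic twist $(\det \overline{r}_{\pi, \iota})^r$ relating $\overline{\rho}_1 \oplus \overline{\rho}_2$ to $\Sym^{p+r-1} \overline{r}_{\pi, \iota}$, and the descent along $K/F$, are routine.
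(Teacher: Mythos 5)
Your proposal is correct and follows essentially the same route as the paper: the paper's proof of this proposition is simply the observation that it is \cite[Theorem 7.6]{Tho22a} with the blanket assumption $\mathrm{TP}_r$ there replaced by the weaker hypothesis (5), which is all that theorem actually uses. What you have written is an accurate unpacking of the argument of that cited theorem (residual decomposition into the two automorphic summands, isobaric/endoscopic source, level raising to a locally Steinberg congruent form, residually reducible automorphy lifting over a CM extension, soluble descent), so the only difference is that the paper defers all of these steps wholesale to \emph{loc.\ cit.}\ rather than reproving them.
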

\begin{proof}
This will follow from the automorphy lifting theorem \cite[Theorem 7.5]{Tho22a}, applied to an RACSDC twist of $\Sym^{p+r-1} r_{\pi,\iota}$ over a CM extension of $F$ as in the proof of \cite[Theorem 7.6]{Tho22a}. To verify the residual automorphy, we apply \cite[Proposition 7.4]{Tho22a} with hypotheses (4) and (5) (which are all that are needed) replacing the assumptions in \emph{loc.~cit.} of the automorphy of $\Sym^{r-1}r_{\pi,\iota}$ and $\Sym^{p-r-1}r_{\pi,\iota}$, as well as $\mathrm{TP}_r$ ($\GL_2 \times \GL_r \to \GL_{2r}$ tensor product functoriality). Checking the remaining hypotheses on the residual representation is done exactly as in the proof of \cite[Theorem 7.6]{Tho22a}.
\end{proof}
\begin{theorem}\label{thm_sym_power_functoriality_alg_case}
Conjecture $\mathrm{SP}_n$ holds for all $n \geq 2$.
\end{theorem}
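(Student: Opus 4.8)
The plan is to deduce $\mathrm{SP}_n$ for all $n\geq 1$ by strong induction on $n$, realising the strategy of \cite{Clo14} sketched in the introduction but with the conditional input $\mathrm{TP}_r$ replaced by the tensor-product results proved above. First I would dispose of the base cases: $\mathrm{SP}_n$ is trivial for $n\leq 1$ (and $\mathrm{SP}_0$ is to be read as vacuous), while for $2\leq n\leq 5$ it is already known over arbitrary totally real fields, e.g.\ from the work recorded in \cite{ctii,ctiii,Tho22a} (for $n\leq 4$ one may instead appeal to the classical results of Gelbart--Jacquet, Kim--Shahidi and Kim).

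For the inductive step, fix $n\geq 6$ and assume $\mathrm{SP}_m$ for every $m<n$. By Bertrand's postulate we may write $n=p+r$ with $p\geq 5$ prime and $0<r<p$. The key observation is that the four indices $r-1$, $r$, $r+1$ and $p-r$ that will enter the argument all lie in $\{0,1,\dots,n-1\}$: indeed $r+1\leq p\leq n-1$ since $r\geq 1$, and $p-r\leq p-1<n$. Hence $\mathrm{SP}_{r-1}$, $\mathrm{SP}_r$, $\mathrm{SP}_{r+1}$ and $\mathrm{SP}_{p-r}$ are all supplied by the inductive hypothesis (together with the base cases). Let $F$ be totally real and $\pi$ a non-CM RAESDC automorphic representation of $\GL_2(\bA_F)$; by Lemma \ref{lem_the_meaning_of_existence} it is enough to prove that $\Sym^{p+r-1}r_{\pi,\iota}$ is automorphic for a single isomorphism $\iota:\overline{\bQ}_p\to\bC$. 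I would then invoke Corollary \ref{cor_reduction_to_favourable_case} (whose hypotheses $\mathrm{SP}_{r-1}$, $\mathrm{SP}_r$, $\mathrm{SP}_{r+1}$ hold by induction) to replace $F$ by a soluble totally real extension $E$ and $\pi$ by a RAESDC representation $\pi'$ of $\GL_2(\bA_E)$, with $\Sym^{p+r-1}r_{\pi,\iota}$ automorphic if and only if $\Sym^{p+r-1}r_{\pi',\iota}$ is, and with $\pi'$ of weight $0$, $\iota$-ordinary, whose residual image contains a conjugate of $\SL_2(\bF_{p^a})$ for some $a\geq a_0(p)$, Steinberg at some $v_1\nmid p$, and — the essential point — such that ${}^{\varphi_p}\overline{r}_{\pi',\iota}\otimes\Sym^{r-1}\overline{r}_{\pi',\iota}$ is the reduction of an $\iota$-ordinary RAESDC automorphic representation $\Pi$ of $\GL_{2r}(\bA_E)$. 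This last clause is precisely where the tensor-product apparatus of the paper (Theorems \ref{thm_FLT_for_TP}, \ref{thm_untwisted_tensor_product}, \ref{thm_automorphic_tensor_product} and \ref{thm_big_image_implies_tensor_product}) is brought to bear.

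It then remains to feed $\pi'$ into Proposition \ref{prop_automorphy_in_favourable_case}. Hypotheses (1)--(3) and (5) there are exactly the output of Corollary \ref{cor_reduction_to_favourable_case} (taking $\pi_2=\Pi$), and hypothesis (4) follows from $\mathrm{SP}_{p-r}$ applied to $\pi'$ — which is non-CM since its residual representation has large image — together with Lemma \ref{lem_the_meaning_of_existence}; the resulting automorphic representation $\pi_1$ of $\GL_{p-r}(\bA_E)$ is automatically $\iota$-ordinary because $r_{\pi',\iota}$ is ordinary at every place above $p$ (the Steinberg condition) and symmetric powers of an ordinary representation are ordinary. Proposition \ref{prop_automorphy_in_favourable_case} then yields that $\Sym^{p+r-1}r_{\pi',\iota}$, hence $\Sym^{p+r-1}r_{\pi,\iota}$, is automorphic, and Lemma \ref{lem_the_meaning_of_existence} upgrades this to the existence of $\Sym^{n-1}\pi$. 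This establishes $\mathrm{SP}_n$ and closes the induction.

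I do not anticipate a genuine obstacle at this stage: all of the substantive work has been placed in the earlier sections, and the content of this final theorem is the bookkeeping that threads them together. The two points that require care are (a) verifying that every symmetric-power conjecture invoked in the inductive step — $\mathrm{SP}_{r-1}$, $\mathrm{SP}_r$, $\mathrm{SP}_{r+1}$, $\mathrm{SP}_{p-r}$ — has index strictly below $n$, which is what makes the strong induction legitimate, and (b) propagating the $\iota$-ordinarity hypothesis through the soluble base change of Corollary \ref{cor_reduction_to_favourable_case} and through the passage to symmetric powers; both are routine. The genuinely new ingredient that converts the conditional implication \eqref{eqn_implication} into an unconditional statement is Theorem \ref{thm_big_image_implies_tensor_product}.
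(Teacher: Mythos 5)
Your proof is correct and follows exactly the same route as the paper: strong induction with base cases $n\le 5$, Bertrand's postulate to write $n=p+r$, then Corollary \ref{cor_reduction_to_favourable_case} (using $\mathrm{SP}_{r-1}$, $\mathrm{SP}_r$, $\mathrm{SP}_{r+1}$), Proposition \ref{prop_automorphy_in_favourable_case} (whose hypothesis (4) comes from $\mathrm{SP}_{p-r}$), and Lemma \ref{lem_the_meaning_of_existence} to go between automorphy of $\Sym^{p+r-1}r_{\pi,\iota}$ and existence of $\Sym^{n-1}\pi$. The only difference is that you spell out the bookkeeping (the index bounds $r-1,r,r+1,p-r<n$, the $\iota$-ordinarity of symmetric powers, and the non-CM property of $\pi'$) which the paper's one-line proof leaves implicit.
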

\begin{proof}
We prove it by induction on $n \geq 6$, the cases $n \le 5$ being known already. In general we follow \cite[Corollary 7.2]{Tho22a}: choose a prime $p \geq 5$ such that $p < n < 2p$ and write $n = p + r$. Choosing a non-CM RAESDC automorphic representation $\pi$ of $\GL_2(\bA_F)$, we see that the existence of $\Sym^{n-1} \pi$ follows from Corollary \ref{cor_reduction_to_favourable_case}, Proposition \ref{prop_automorphy_in_favourable_case}, and the induction hypothesis. \end{proof}
Finally, we deduce the following more general statement, which includes those automorphic representations associated to Hilbert modular forms of regular (but possibly non-paritious) weight. (We recall that these automorphic representations do not have associated Galois representations, although they do have algebraic Hecke eigenvalues; their description in terms of Langlands parameters is given in \cite[\S 8]{Tho22a}. They fit into the broader class of `$W$-algebraic' automorphic representations introduced in \cite{Pat19}.)

The natural class of automorphic representations for us to consider is those cuspidal automorphic representations $\pi$ of $\GL_2(\bA_F)$, without CM, such that $\pi_\infty$ is essentially square-integrable. In other words, for each $v|\infty$, the local factor $\pi_v$ is essentially discrete series (of some weight $k_v \ge 2$), an infinite dimensional subquotient of the normalized parabolic induction $\Ind_{B(\R)}^{\GL_2(\R)}\chi_1\otimes\chi_2$ with $\chi_i = \mathrm{sgn}^{\epsilon_i}|\cdot|^{s_i}: \R^\times \to \bC^\times$ for some $s_i \in \bC$ and $\epsilon_i = 0,1$ satisfying $s_1 - s_2 \in \Z_{>0}$ and $k_v = s_1 - s_2 + 1 \equiv \epsilon_1+\epsilon_2 \text{ mod }2$. 
\begin{theorem}\label{thm_sym_power_functoriality_general_case}
\begin{enumerate} \item Let $F$ be a totally real number field, and let $\pi$ be a cuspidal automorphic representation of $\GL_2(\bA_F)$, without CM, such that $\pi_\infty$ is essentially square-integrable. Let $n \geq 2$. Then $\Sym^{n-1} \pi$ exists, in the sense that there is a cuspidal automorphic representation $\Pi_n$ of $\GL_n(\bA_F)$ such that for every place $v$ of $F$, we have
\[ \rec_{F_v}(\Pi_{n, v}) \cong \Sym^{n-1} \circ \rec_{F_v}(\pi_v). \]
\item Let $E$ be a CM number field, and let $\pi$ be a RAECSDC automorphic representation of $\GL_2(\bA_E)$ which is not automorphically induced from a quadratic extension. Let $n \geq 2$. Then $\Sym^{n-1} \pi$ exists, in the sense that there is a cuspidal automorphic representation $\Pi_n$ of $\GL_n(\bA_E)$ such that for every place $w$ of $E$, we have
\[ \rec_{E_w}(\Pi_{n, w}) \cong \Sym^{n-1} \circ \rec_{E_w}(\pi_w). \]
\end{enumerate} 
\end{theorem}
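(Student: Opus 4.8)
The plan is to deduce both parts from Theorem~\ref{thm_sym_power_functoriality_alg_case}, i.e.\ from $\mathrm{SP}_n$ for all $n\geq 1$, which already covers non-CM RAESDC automorphic representations of $\GL_2(\bA_F)$ over a totally real field. Two reductions are needed: in part~(1), removing the regular algebraicity/paritiousness hypotheses on $\pi$; in part~(2), passing from the CM field $E$ down to its maximal totally real subfield $E^{+}$.

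For part~(1) I would first record the twisting principle: for any Hecke character $\chi$ of $F$ one has, by local--global compatibility at every place, $\Sym^{n-1}(\pi\otimes(\chi\circ\det))\cong\Sym^{n-1}\pi\otimes(\chi^{n-1}\circ\det)$, so $\Sym^{n-1}\pi$ exists if and only if $\Sym^{n-1}(\pi\otimes(\chi\circ\det))$ does; this lets us twist $\pi$ freely. Since $\pi_\infty$ is essentially square-integrable, after such a twist and a further twist by a power of $|\cdot|$ the representation $\pi$ corresponds to a Hilbert modular form of regular weight $(k_v)_{v\mid\infty}$, all $k_v\geq 2$. If that weight is paritious then $\pi$ is regular algebraic, hence (as $\pi^\vee\cong\pi\otimes\omega_\pi^{-1}$ on $\GL_2$) RAESDC, and non-CM by hypothesis; Theorem~\ref{thm_sym_power_functoriality_alg_case} then gives that $\Sym^{n-1}r_{\pi,\iota}$ is automorphic for some (equivalently every) $\iota$, and Lemma~\ref{lem_the_meaning_of_existence} produces a cuspidal $\Pi_n$ with $\rec_{F_v}(\Pi_{n,v})\cong\Sym^{n-1}\circ\rec_{F_v}(\pi_v)$ at every place, including the infinite ones. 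For the general weight (in particular the non-paritious case) I would argue exactly as in \cite[Theorem 8.1]{Tho22a} (compare also \cite{ctiii}): the point is that a non-paritious form still carries an attached compatible system of two-dimensional $G_F$-representations, and that the conclusion demands only a cuspidal automorphic representation realizing the correct Weil--Deligne parameters, not a regular algebraic one.

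For part~(2), write $F=E^{+}$, so $E/F$ is CM quadratic with nontrivial automorphism $c$. Writing $\pi^c\cong\pi^\vee\otimes\mu$ and using $\pi^\vee\cong\pi\otimes\omega_\pi^{-1}$, I would produce a type $A_0$ Hecke character $\psi$ of $E$ with $\psi^{c}/\psi=\omega_\pi\mu^{-1}$ (after possibly twisting $\pi$ by a finite-order character and, if necessary, making a soluble base change of $F$, which is harmless by the standard descent argument for symmetric power liftings, cf.\ \cite[Proposition 7.7]{Tho22a}); then $\pi\otimes(\psi\circ\det)$ is $c$-invariant and cuspidal, so by cyclic base change \cite{MR1007299} it equals $\mathrm{BC}_{E/F}(\tau)$ for a cuspidal, necessarily regular algebraic (hence RAESDC) automorphic representation $\tau$ of $\GL_2(\bA_F)$. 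Since $\pi$ is not automorphically induced from a quadratic extension, $\tau$ is non-CM (otherwise $\mathrm{BC}_{E/F}(\tau)$ would be automorphically induced from a quadratic extension of $E$, or non-cuspidal). Applying part~(1) to $\tau$ gives a cuspidal $\Sym^{n-1}\tau$ on $\GL_n(\bA_F)$; because $\tau$ is non-dihedral, $\Sym^{n-1}\tau$ is irreducible with no character self-twists, so $\Sym^{n-1}\tau\not\cong\Sym^{n-1}\tau\otimes\chi_{E/F}$ and its base change to $E$ is cuspidal. Untwisting by $\psi^{-(n-1)}\circ\det$ yields a cuspidal automorphic representation $\Pi_n$ of $\GL_n(\bA_E)$ with $\rec_{E_w}(\Pi_{n,w})\cong\Sym^{n-1}\circ\rec_{E_w}(\pi_w)$ at every place $w$, as required.

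The main obstacle is the non-paritious case of part~(1): a non-paritious Hilbert modular form is not regular algebraic, and its symmetric powers are of the same type, so $\mathrm{SP}_n$ cannot be applied to it directly; one must route the argument through the (non-de-Rham) compatible systems of Patrikis--Taylor and track the archimedean parameters explicitly. I expect no genuinely new difficulty beyond faithfully transcribing the argument of \cite[Theorem 8.1]{Tho22a} to the Hilbert setting, but this is the one step that is more than formal twisting, base change, and descent.
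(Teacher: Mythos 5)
Your proposal is correct and follows essentially the same route as the paper: part~(1) is, as you say, Theorem~\ref{thm_sym_power_functoriality_alg_case} combined with \cite[Theorem 8.1]{Tho22a} (the twisting and Patrikis--Taylor compatible-systems argument needed to remove paritiousness is entirely in that reference and is what the paper cites); and for part~(2) the paper reduces from RAECSDC to RACSDC by a twist supplied by \cite[Lemma 4.1.4]{cht} and then invokes \cite[Proposition 7.6]{ctiii}, which encapsulates precisely the descent-to-$E^+$/base-change-back argument you have written out. The only inaccuracy worth flagging is your assertion that the descended representation $\tau$ is ``necessarily regular algebraic (hence RAESDC)'': a $c$-invariant regular algebraic cuspidal representation of $\GL_2(\bA_E)$ need not descend to a \emph{paritious} (hence RAESDC) representation of $\GL_2(\bA_{E^+})$, since the parity obstruction disappears over the CM field. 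This is harmless in your argument because your part~(1) already handles arbitrary regular weight, but the parenthetical ``hence RAESDC'' is an overclaim; one should either drop it and apply part~(1) in its full strength, or arrange the twist $\psi$ more carefully so that the descent is paritious.
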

Part (1) is exactly Theorem \ref{introthm_symmetric_power_functoriality} from the introduction.
\begin{proof}
The first part follows from Theorem \ref{thm_sym_power_functoriality_alg_case} and \cite[Theorem 8.1]{Tho22a}. The second part follows, in the case that $\pi$ is RACSDC, from Theorem \ref{thm_sym_power_functoriality_alg_case} and \cite[Proposition 7.6]{ctiii}. The general case can be reduced to this one by twisting (using \cite[Lemma 4.1.4]{cht}).
\end{proof}

\bibliographystyle{amsalpha}
\bibliography{CMpatching}

\end{document}